\numberwithin{equation}{section}
\newtheorem{theorem}{Theorem}[section]
\newtheorem{corollary}[theorem]{Corollary}
\newtheorem{proposition}[theorem]{Proposition}
\newtheorem{lemma}[theorem]{Lemma}
\theoremstyle{definition}
\newtheorem{definition}[theorem]{Definition}
\newtheorem{question}[theorem]{Question}
\newtheorem{example}[theorem]{Example}
\newtheorem{remark}[theorem]{Remark}
\newtheorem{ques}[theorem]{Question}
\newcommand{\Z}{\mathbb{Z}}
\newcommand{\Q}{\mathbb{Q}}
\newcommand{\C}{\mathbb{C}}
\newcommand{\PP}{\mathbb{P}}
\def\CC{\mathbb{C}}
\def\QQ{\mathbb{Q}}
\def\RR{\mathbb{R}}
\def\ZZ{\mathbb{Z}}
\def\sO{{\mathscr O}}
\def\sK{\mathscr{K}}
\def\sP{\mathscr{P}}
\def\sQ{\mathscr{Q}}
\def\sR{\mathscr{R}}
\def\sT{\mathscr{T}}
\def\sX{\mathscr{X}}
\newcommand{\cal}{\mathcal}
\def\cA{{\cal A}}
\def\cC{{\cal C}}
\def\cD{{\cal D}}
\def\cL{{\cal L}}
\def\cM{{\cal M}}
\def\sO{{\cal O}}
\def\fp{\mathfrak{p}}
\def\fq{\mathfrak{q}}
\def\fQ{\mathfrak{Q}}
\def\fc{\mathfrak{c}}
\def\tX{{\widetilde{X}}}
\def\hbar{\overline{h}}
\def\Mbar{\overline{M}}
\def\mzn{\overline{\cM}_{0,n} }
\def\mzno{\overline{\cM}_{0,n+1} }
\def\mgn{\overline{\cM}_{g,n}}
\def\GL{\mathrm{GL} }
\def\PGL{\mathrm{PGL} }
\def\Ind{\mathrm{Ind} }
\def\Res{\mathrm{Res} }
\def\Stab{\mathrm{Stab} }
\def\dim{\mathrm{dim} }
\def\Pic{\mathrm{Pic} }
\def\log{\mathrm{log} }
\def\ch{\mathrm{ch} }
\def\and{\quad{\rm and}\quad}
\def\lra{\longrightarrow }
\def\beq{\begin{equation}}
\def\eeq{\end{equation}}
\def\ben{\begin{enumerate}}
\def\een{\end{enumerate}}
\def\DM{Deligne-Mumford }
\def\and{\quad\text{and}\quad}
\def\a{\alpha}
\def\d{\delta}
\def\e{\epsilon}
\def\w{\omega}
\title{Representations on the cohomology of $\overline{\mathcal{M}}_{0,n}$}
\date{October 9, 2023}
\author{Jinwon Choi}
\address{Department of Mathematics and Research Institute of Natural Science, Sookmyung Women's University, Seoul 04310, Korea}
\email{jwchoi@sookmyung.ac.kr}
\author{Young-Hoon Kiem}
\address{School of Mathematics, Korea Institute for Advanced Study, 85 Hoegiro, Dongdaemun-gu, Seoul 02455, Korea}
\email{kiem@kias.re.kr}
\author{Donggun Lee}
\address{Center for Complex Geometry, Institute for Basic Science (IBS), 55 Expo-ro, Yuseong-gu, Daejeon 34126, Korea}
\email{dglee@ibs.re.kr}
\thanks{JC was partially supported by NRF grant 2018R1C1B6005600. YHK was partially supported by NRF grant 2021R1F1A1046556. DL was supported by NRF grant 2019H1A2A1076706.}
\begin{document}

\begin{abstract}
The moduli space $\overline{\mathcal{M}}_{0,n}$ of $n$ pointed stable curves of genus $0$ admits an action of the symmetric group $S_n$ by permuting the marked points.
We provide a closed formula for the character of
the $S_n$-action on the cohomology of $\overline{\mathcal{M}}_{0,n}$. This is achieved by studying wall crossings of the moduli spaces of quasimaps which provide us with a new inductive construction of $\overline{\mathcal{M}}_{0,n}$, equivariant with respect to the symmetric group action.
Moreover we prove that $H^{2k}(\overline{\mathcal{M}}_{0,n})$ for $k\le 3$ and $H^{2k}(\overline{\mathcal{M}}_{0,n})\oplus H^{2k-2}(\overline{\mathcal{M}}_{0,n})$ for any $k$ are permutation representations.
Our method works for related moduli spaces as well and we provide a closed formula for the character of the $S_n$-representation on the cohomology of the Fulton-MacPherson compactification $\PP^1[n]$ of the configuration space of $n$ points on $\PP^1$ and more generally on the cohomology of the moduli space  $\overline{\mathcal{M}}_{0,n}(\PP^{m-1},1)$ of stable maps.
\end{abstract}

\maketitle
\tableofcontents

\section{Introduction}

The moduli space $$\cM_{0,n}=[(\PP^1)^n-\Delta]/\mathrm{Aut}(\PP^1), \quad \Delta=\bigcup_{i\ne j}\{p_i=p_j\}$$of $n$ distinct points $p_1,\cdots, p_n$ on projective line $\PP^1$ up to projective equivalence is one of the most studied objects in algebraic geometry since the nineteenth century and its compactification $\overline{\mathcal{M}}_{0,n}$ by \DM \cite{DM} has played a major role in enumerative algebraic geometry and related areas.
The symmetric group acts on $\overline{\mathcal{M}}_{0,n}$ by permuting the marked points and hence on the cohomology $H^*(\overline{\mathcal{M}}_{0,n})$.
Thanks to \cite{BM, Get, Has, Kap, Kee, KM, Manin, RS} to name a few, a lot is known about $H^*(\overline{\mathcal{M}}_{0,n})$ but we still cannot answer simple questions like
\begin{ques}\label{q1}
\begin{enumerate}
\item Can we give a closed formula for the character of the $S_n$-representation on $H^{2k}(\overline{\mathcal{M}}_{0,n})$ for each $k$?
\item For each $k$, is $H^{2k}(\overline{\mathcal{M}}_{0,n})$ a permutation representation of $S_n$? (\cite[Question 1]{RS}.)
\end{enumerate}
\end{ques}
\noindent Here a representation $\phi:S_n\to \GL_N(\CC)$ is called a permutation representation if  $\phi$ factors through the subgroup $S_N\subset \GL_N(\CC)$ of permutation matrices.

The purpose of this paper is to provide answers to the above questions through a new inductive construction of the moduli space $\overline{\mathcal{M}}_{0,n}$ motivated by our investigation \cite{CK} on the Landau-Ginzburg/Calabi-Yau correspondence.
More precisely,
\begin{itemize}
\item (Theorem \ref{Thm.ClosedForm}) we provide a closed formula for the $S_n$-character on $H^*(\overline{\mathcal{M}}_{0,n})$;
\item (Corollary \ref{Cor.SingleDeg}) we prove that $H^{2k}(\overline{\mathcal{M}}_{0,n})$ is a permutation representation for $k\le 3$ or $k\ge n-6$;
\item (Theorem \ref{Cor.SubseqDeg}) we also prove that $H^{2k}(\overline{\mathcal{M}}_{0,n})\oplus H^{2k-2}(\overline{\mathcal{M}}_{0,n})$ is a permutation representation for each $k$.
\end{itemize}
In principle, our closed formula should enable us to answer Question \ref{q1} (2) in full generality but the combinatorics seems too complicated to work out for the moment.
Note that the $S_n$-representation on $H^{2k}(\overline{\mathcal{M}}_{0,n})$ was proved to be a permutation representation for $k=1$ or $k=n-4$ by Farkas-Gibney \cite{FG} and for $k=2$ or $k=n-5$ by Ramadas-Silversmith \cite{RS} through completely different methods.

We remark that our method works for other moduli spaces including the Fulton-MacPherson compactification (cf. \cite{FM})
$$\PP^1[n]=\overline{\mathcal{M}}_{0,n}(\PP^1,1)$$
and enables us to provide a closed formula (Theorem \ref{FMmain}) for the character of the $S_n$-representation on the cohomology of $\overline{\mathcal{M}}_{0,n}(\PP^{m-1},1)$ for any $m\ge 2$.

In this paper, all varieties and schemes are defined over the complex number field $\CC$ and we will use rational coefficients $\QQ$ for cohomology groups.

\subsection{Cohomology of $\overline{\mathcal{M}}_{0,n}$}

Let us recall previously known results about the cohomology of $\overline{\mathcal{M}}_{0,n}$.

An $n$-pointed stable curve of genus 0 is a connected projective curve $C$ of arithmetic genus 0 with at worst nodal singularities together with $n$ distinct smooth points $p_1,\cdots, p_n$ such that there is no nontrivial automorphism of $C$ preserving the marked points $p_i$ (cf. \cite{DM}).
For convenience, we will often write $C$ instead of $(C,p_1,\cdots, p_n)$ when the meaning is clear from the context.
The moduli functor of $n$-pointed stable curves of genus $0$ is represented by a smooth projective variety of dimension $n-3$ and an explicit inductive construction was first provided by Knudsen
in \cite{knu} in 1983.

Motivated by enumerative geometry and string theory, the cohomology ring of $\overline{\mathcal{M}}_{0,n}$ was investigated in 1990s by several authors. In \cite{Manin}, Manin used the virtual Poincar\'e polynomial coming from Deligne's mixed Hodge structure to give an inductive formula for the Betti numbers of the cohomology $H^*(\overline{\mathcal{M}}_{0,n})$.
In \cite{Kee}, Keel discovered an alternative inductive construction of $\overline{\mathcal{M}}_{0,n+1}$ by applying a sequence of blowups to $\overline{\mathcal{M}}_{0,n}\times \PP^1$ along smooth subvarieties.
By the blowup formulas \cite[p.605]{GrHa} and \cite[Theorem 6.7]{Ful}, he proved that
the cycle class map $$A^*(\overline{\mathcal{M}}_{0,n})\lra H^*(\overline{\mathcal{M}}_{0,n})$$ from the Chow group to the cohomology is an isomorphism and $H^*(\overline{\mathcal{M}}_{0,n})$ is generated by boundary divisors with only some evident relations like WDVV.
In particular, there is no odd degree cohomology part and the cohomology group $H^{2k}(\overline{\mathcal{M}}_{0,n})$ coincides with the Chow group $A^k(\overline{\mathcal{M}}_{0,n})$.

As mentioned above, the symmetric group $S_n$ acts on $\overline{\mathcal{M}}_{0,n}$ by permuting the marked points $p_1,\cdots, p_n$. In fact, there are no other automorphisms of $\overline{\mathcal{M}}_{0,n}$ for $n\ge 5$ by \cite{BrM}.
Thus it is natural to think of the cohomology of $\overline{\mathcal{M}}_{0,n}$ as an $S_n$-representation. Unfortunately, Keel's construction consists of morphisms and blowups which are not $S_n$-equivariant, and hence it cannot be used to compute the representations on the cohomology of $\overline{\mathcal{M}}_{0,n}$.

In \cite{CGK}, Chen, Gibney and Krashen studied a different but similar construction of $\overline{\mathcal{M}}_{0,n+1}$ by applying a sequence of blowups to a projective bundle over $\overline{\mathcal{M}}_{0,n}$.\footnote{We thank the referee who kindly pointed out that \cite{CGK} contains a similar construction.} See \cite[Theorem 3.3.1 and Proposition 3.4.3]{CGK}. However, their construction is not $S_n$-equivariant although obviously $S_{n-1}$-equivariant. This is basically because their interpretation detects only an $S_{n-1}$-action on $\overline{\mathcal{M}}_{0,n}$ which fixes one marked point, called the \emph{root} in their terminology (\cite[Definition 2.0.1]{CGK}). See Appendix~\ref{App.CGK} for details.

In \cite{Get}, Getzler gave an algorithm for the characters of $H^*(\overline{\mathcal{M}}_{0,n})$ using the language of operads, and managed to compute them for $n\le 6$.
In 1994, Kapranov discovered a new construction of $\overline{\mathcal{M}}_{0,n}$ by a sequence of blowups starting with $\PP^{n-3}$. In fact, after fixing $n-1$ general points $q_1,\cdots, q_{n-1}$ in $\PP^{n-3}$, we blow up $\PP^{n-3}$ along the $n-1$ points $\{q_i\}_{1\le i<n}$, along the proper transforms of the $\binom{n-1}{2}$ lines $\{\overline{q_iq_j}\}$, along the proper transforms of $\binom{n-1}{3}$ planes $\{\overline{q_iq_jq_k}\}$, and so on. After these $n-2$ blowups, we end up with $\overline{\mathcal{M}}_{0,n}$. For each permutation of the points $q_1,\cdots, q_{n-1}$, there is a unique projective transformation of $\PP^{n-3}$ which extends the permutation and hence we can think of $S_{n-1}$ as a subgroup of $\PGL_{n-2}(\CC)$. Obviously, the blowup centers are invariant with respect to this subgroup $S_{n-1}$ and the induced action of $S_{n-1}$ on $\overline{\mathcal{M}}_{0,n}$ coincides with the action of the subgroup $S_{n-1}$ of $S_n$ permuting $\{2,\cdots, n\}$ with $1$ fixed.
By applying the blowup formula, we can calculate the cohomology $H^*(\overline{\mathcal{M}}_{0,n})$ as an $S_{n-1}$-representation.
This calculation will be explicitly worked out in \S\ref{Sec.11}.

In 2003, Hassett introduced in \cite{Has} the notion of weighted pointed stable curves and showed that Kapranov's blowups are actually the natural morphisms among a special type of moduli spaces of weighted pointed stable curves of genus 0. Moreover the blowup centers are also the moduli spaces of suitable weighted pointed stable curves of genus 0.
Using Hassett's moduli spaces and morphisms, Bergstr\"om and Minabe in \cite{BM}
gave a recursive algorithm to compute the characters of the $S_n$-representations $H^*(\overline{\mathcal{M}}_{0,n})$ and managed to compute them for $n\le 20$ (cf. \cite{RS}).
The algorithm starts with a result of the second named author and Moon in \cite{KM} that factorizes the natural morphism $$\overline{\mathcal{M}}_{0,n}\lra (\PP^1)^n/\!/\PGL_2(\CC)$$ to the geometric invariant theory (GIT) quotient into a sequence of explicit blowups.
The algorithm uses the blowup formula and pushes the computation little by little to $\overline{\cM}_{0,k}$ with $k<n$.
However, no algorithms known so far have provided us with a closed formula for the characters of
the $S_n$-representation $H^*(\overline{\mathcal{M}}_{0,n})$.
The first goal of this paper is to provide a closed formula for these characters
(Theorem \ref{Thm.ClosedForm}). Our result answers Question \ref{q1} (1) positively.

Recall that an $S_n$-representation $V$ is called a \emph{permutation representation} if it has a basis $B$ (called a \emph{permutation basis}) such that an $S_n$-action on $B$ induces the $S_n$-action on $V$.
An interesting question related to $\overline{\mathcal{M}}_{0,n}$ is whether the cohomology group $A^k(\overline{\mathcal{M}}_{0,n})=H^{2k}(\overline{\mathcal{M}}_{0,n})$ is a permutation representation or not for each $k$ (Question \ref{q1} (2)).
Here is some evidence on why the answer to this question may be yes.
\begin{enumerate}
\item A recent work \cite{CT} of Castravet and Tevelev which is again based on \cite{KM} tells us that the derived category of $\overline{\mathcal{M}}_{0,n}$ admits a full exceptional collection which is invariant under the action of $S_n$.
Hence the Grothendieck group of coherent sheaves on $\overline{\mathcal{M}}_{0,n}$ is a permutation representation of $S_n$ and so is the total cohomology
$\oplus_{k=0}^{n-3}A^k( \overline{\mathcal{M}}_{0,n})$ by Grothendieck-Riemann-Roch (GRR).
However, as GRR mixes the degrees, it does not imply that $A^k( \overline{\mathcal{M}}_{0,n})$ itself is a permutation representation for each $k$.
\item
For $k=1$ (and hence for $k=n-4$ by Poincar\'e duality), Farkas and Gibney \cite[Lemma 2]{FG} found a permutation basis for the Picard group of $ \overline{\mathcal{M}}_{0,n}$. Ramadas \cite{Ram} found another permutation basis for $A^1( \overline{\mathcal{M}}_{0,n})$ which is not isomorphic to that in \cite{FG}.
\item Ramadas and Silversmith in \cite{RS} extended the technique of \cite{Ram} to give a permutation basis for $A^2( \overline{\mathcal{M}}_{0,n})$.
\item Ramadas and Silversmith checked that $A^k( \overline{\mathcal{M}}_{0,n})$ is a permutation representation for $n\le 10$ and for all $k$ by using (unpublished) computational data of Bergstr\"om-Minabe.
\end{enumerate}
The second goal of this paper is to answer Question \ref{q1} (2) as much as we can.
Using the closed formula (Theorem \ref{Thm.ClosedForm}), we prove (Corollary \ref{Cor.SingleDeg}) that
\beq\label{15}
A^k(\overline{\mathcal{M}}_{0,n})\text{ is a permutation representation of }S_n\text{ for }k\le 3\text{ or }k\ge n-6.\eeq
Our method is completely different from known approaches even for $k=1, 2$. Furthermore, we prove (Theorem \ref{Cor.SubseqDeg}) that
\beq\label{16}
A^k( \overline{\mathcal{M}}_{0,n})\oplus  A^{k-1}( \overline{\mathcal{M}}_{0,n})\text{ is a permutation representation of }S_n\text{ for all }k\text{ and }n.\eeq

We prove the above by a new inductive construction of $\overline{\mathcal{M}}_{0,n}$ which arises from
the wall crossings of the moduli spaces of $\delta$-stable quasimaps developed in \cite{CK}.
Our method works well with other interesting moduli spaces like the stable map space $\overline{\mathcal{M}}_{0,n}(\PP^{m-1},1)$ by simply considering more sections for quasimaps (Theorem \ref{FMmain}).

\subsection{Strategy of the proofs}
Recall that Kapranov's construction of $\overline{\mathcal{M}}_{0,n+1}$ by applying a sequence of blowups to $\PP^{n-2}$ is $S_n$-equivariant. The blowup centers are also obtained by applying sequences of blowups to projective spaces of lower dimensions. By the blowup formula (Lemma \ref{Lem.BlowupFormula}), we can calculate the $S_n$-representation on $H^*(\overline{\mathcal{M}}_{0,n+1})=A^*(\overline{\mathcal{M}}_{0,n+1})$.
The combinatorics of the blowups is handled by \emph{weighted rooted trees} (Definition \ref{Def.RootedTree}) which are trees with certain decorations. Letting
\beq\label{10} Q_n=\sum_k\ch_n(A^k(\overline{\mathcal{M}}_{0,n+1}))t^k\eeq
where $\ch_n$ denotes the character of $S_n$, we will show (Proposition \ref{Prop.RepKap}) that
\beq\label{11}
Q_n=\sum_{k\ge 0} \sum_{T\in \sT_{n,k}/S_n}\ch_n(U_{T})t^k
\eeq
where $\sT_{n,k}$ denotes the set of weighted rooted trees with $n$ inputs and weight $k$ while
$$U_T=\mathrm{Ind}^{S_n}_{\mathrm{Stab}(T)}e$$ is the induced $S_n$-representation from the trivial representation of $\mathrm{Stab}(T)$.

Now our goal is to find
\beq\label{12} P_n=\sum_k\ch_n(A^k(\overline{\mathcal{M}}_{0,n}))t^k\eeq
from \eqref{11}. So we want to compare $A^*(\overline{\mathcal{M}}_{0,n})$ with $A^*(\overline{\mathcal{M}}_{0,n+1})$ as $S_n$-representation spaces.
For this, we use a new inductive construction of $\overline{\mathcal{M}}_{0,n}$ which arises from the wall crossings of moduli spaces of $\delta$-stable quasimaps in \cite{CK}

To build $\overline{\mathcal{M}}_{0,n+1}$ from $\overline{\mathcal{M}}_{0,n}$, we should add one extra point to a stable curve $(C,p_1,\cdots,p_n)$ with $n$ marked points. If we think of a point on $C$ as a pair $(L,s)$ of a line bundle $L$ of degree $1$ on $C$ and a nonzero section $s$ of $L$, we gain freedom to choose a stability condition for the pair $(L,s)$. Le Potier in \cite{LeP2} discovered that there is a sequence of stability conditions for such a pair over a smooth projective curve and Huybrechts-Lehn in \cite{HuLe} generalized Le Potier's stability conditions and moduli spaces to arbitrary projective schemes.
In \cite{CK}, the construction was further extended to the moduli spaces of triples $(C,L,s)$ where $C$ is now allowed to vary.  See \S\ref{Sec.qmap} for details.

As a consequence, we have a sequence of $S_n$-equivariant blowups (Theorem \ref{26})
\begin{equation}\label{wc}
  \overline{\mathcal{M}}_{0,n+1}\cong \fQ^{\d=\infty}\xrightarrow{~\fq_2~ } \cdots \xrightarrow{~\fq_\ell~} \fQ^{\d=0^+}
\end{equation}
whose blowup centers are disjoint unions of $\binom{n}{h}$ copies of $$\overline{\mathcal{M}}_{0,h+1}\times \overline{\mathcal{M}}_{0,n-h+1}, \quad 2\le h\le \ell=\lfloor\frac{n-1}{2}\rfloor.$$
Using this, by the blowup formula, we will show (Proposition \ref{Prop.SnRep}) that
\beq\label{13}
Q_n=P_{\fQ^{\d=0^+}}+t\sum^{\ell}_{h=2}Q_hQ_{n-h}
\eeq
where $P_{\fQ^{\d=0^+}}$ denotes the character polynomial of the moduli space $\fQ^{\d=0^+}$ of $\d=0^+$-stable quasimaps.
Together with \eqref{11}, \eqref{13} gives us a closed formula for $P_{\fQ^{\d=0^+}}$.

The last moduli space $\fQ^{\d=0^+}$ in \eqref{wc} is a $\PP^1$-bundle over $\overline{\mathcal{M}}_{0,n}$ when $n$ is odd.
When $n$ is even, $\fQ^{\d=0^+}$ is the blowup of a $\PP^1$-bundle over $\overline{\mathcal{M}}_{0,n}$ along $\frac12\binom{n}{\ell+1}$ disjoint copies of $\overline{\mathcal{M}}_{0,\ell+2}\times \overline{\mathcal{M}}_{0,\ell+2}$.
We will show (Corollary \ref{CMindel}) that
\beq\label{14}
P_{\fQ^{\d=0^+}}=
(1+t)P_n+ts_{(1,1)}\circ Q_{\frac{n}{2}}
\eeq
where $Q_{\frac{n}{2}}$ is defined to be zero if $n$ is odd.
Here $s_{(1,1)}\circ Q_{\frac{n}{2}}$ denotes the antisymmetric part in $ Q_{\frac{n}{2}}. Q_{\frac{n}{2}}$. See \S\ref{sec:prelim} for the notations in the $S_n$-representation theory.

From \eqref{11}, \eqref{13} and \eqref{14}, we obtain a closed formula for $P_n$ (Theorem \ref{Thm.ClosedForm}).
Then a direct computation using our closed formula for $P_n$ proves \eqref{15} (cf. Corollary \ref{Cor.SingleDeg}).
By showing that all the blowup contributions in \eqref{13} cancel with factors of $Q_n$, we obtain \eqref{16}. Moreover, by taking the invariant parts, we can calculate the cohomology of the moduli space $\mzn/S_n$ of stable curves of genus 0 with unordered marked points (Corollary \ref{63}).

Although it is ideal to find a permutation basis for $A^k( \overline{\mathcal{M}}_{0,n})$, it is not easy from our computation to find an explicit permutation basis even for $A^k( \overline{\mathcal{M}}_{0,n})\oplus  A^{k-1}( \overline{\mathcal{M}}_{0,n})$. The main reason is that the cancellation of the blowup contributions in \eqref{13} is proved in a purely combinatorial manner. Nevertheless, when $k$ is small, the combinatorial complexity is manageable and the computation for $A^k( \overline{\mathcal{M}}_{0,n})$ can be done by hand for all $n$. See Corollary \ref{Cor.SingleDeg}.

The notion of a quasimap is quite flexible and it can be adapted for many other problems.
By simply increasing the number $m$ of sections of a line bundle $L$ of degree 1, our method applies to the computation of the $S_n$-representation on the cohomology of the moduli space
$\overline{\mathcal{M}}_{0,n}(\PP^{m-1},1)$
of stable maps of degree 1 to $\PP^{m-1}$ and gives a closed formula (Theorem \ref{FMmain}). In particular,
we can calculate the $S_n$-representation on the cohomology of the Fulton-MacPherson space $\PP^1[n]$ constructed in \cite{FM}.

The $\d$-wall crossing was originally developed to provide a moduli theoretic framework for the Landau-Ginzburg/Calabi-Yau correspondence (cf. \cite{CK}). The wall crossings in \eqref{wc} are for the Calabi-Yau side of the correspondence. On the Landau-Ginzburg side, we have the moduli spaces of quasimaps on twisted curves. An analogous investigation of the representation theory on the latter side will be also an interesting line of research for the future.

\subsection{Layout of the paper}

This paper is organized as follows.
In \S\ref{Sec.qmap}, we review the wall crossings of the moduli spaces of quasimaps.
In \S\ref{sec:prelim}, we review basic facts on representations of the symmetric group $S_n$.
In \S\ref{Sec.13}, we prove \eqref{13} and \eqref{14} which enable us to calculate $P_n$ from $Q_n$.
In \S\ref{Sec.11}, we prove \eqref{11} which gives us $Q_n$.
In \S\ref{Sec.CF}, we obtain closed formulas and prove Theorem \ref{Thm.ClosedForm} as well as Theorem \ref{FMmain}.
In \S\ref{Sec.PermRep}, we prove \eqref{16} and complete our proof of \eqref{15}.
In \S\ref{sec:cuspidalblock}, we discuss about the K-theoretic cuspidal block of $\overline{\mathcal{M}}_{0,n}$ as an application.
In Appendix~\ref{App.CGK}, we compare \eqref{wc} with the blowup construction of $\overline{\mathcal{M}}_{0,n+1}$ from $\overline{\mathcal{M}}_{0,n}$ studied in \cite{CGK}.
In Appendices~\ref{App.A} and \ref{App.B}, we record the $S_n$-characters of the cohomology of $\overline{\mathcal{M}}_{0,n}$ and $\PP^1[n]$ for $n\le 11$.

\bigskip

\noindent\textbf{Acknowledgement}.
We thank Han-Bom Moon and Hyeonjun Park for useful discussions and comments.

\bigskip

\section{Wall crossings of the moduli spaces of quasimaps}\label{Sec.qmap}
In this section, we review necessary facts about stable quasimaps and their moduli defined and studied in \cite{CK}. The study of wall crossing behavior of these moduli spaces as the stability condition varies results in a new factorization of the morphism forgetting the first marked point
$$\mzno\lra \mzn$$
into the sequence \eqref{wc} of smooth blowups and a projection $\fQ^{\d=0^+}\to \mzn$ which are $S_n$-equivariant.
Moreover, we will see that the wall crossings of quasimap moduli spaces give us a factorization of the forgetful morphism
$$\mzn(\PP^{m-1},1)\lra \mzn$$
from the moduli space of stable maps of degree 1 to $\mzn$ for $m\ge 2$.

\subsection{$\d$-stability of pairs} \label{Subsec.Delta.Pairs}

In this subsection, we review the notion of $\d$-stability for a
pair $(E,\alpha)$ of a coherent sheaf $E$ and a multi-section $\a$ from \cite{LeP2, HuLe}.

Let $m\in \mathbb{N}$. Let $C$ be a connected projective curve with at worst nodal singularities. Let us call such a curve \emph{prestable}.
Fix an ample line bundle $\sO_C(1)$.
Let $E$ be a coherent sheaf on $C$. A \emph{multi-section} of $E$ refers to a homomorphism \[\a:\sO_C^{\oplus m}\longrightarrow E\]
of $\sO_C$-modules.
When $E$ is a line bundle and $\alpha$ is surjective, the pair $(E,\a)$ defines a morphism $|\a|:C\to \PP^{m-1}$.
For $\a\ne 0$, we call a pair $(E,\a)$ a \emph{quasimap} from $C$ to $\PP^{m-1}$ when $E$ is a line bundle.

For a subsheaf $E'\subset E$, we let $\theta(E',\a)=1$ if $\a$ factors through $E'$, and $\theta(E',\a)=0$ otherwise. For $\d\in \RR_{>0}$, we write
\[P^\d_{E',\a}(t):=P_{E'}(t)+\theta(E',\a)\d,\]
where $P_{E'}(t)=r(E')t+\chi(E')$ is the Hilbert polynomial of $E'$ with respect to $\sO_C(1)$. If $r(E')\neq 0$, we define the \emph{reduced Hilbert polynomial} of $(E',\a)$ by
\[p^\d_{E',\a}(t):=\frac{P^\d_{E',\a}(t)}{r(E')}=t+\frac{\chi(E')}{r(E')}+\theta(E',\a)\frac{\d}{r(E')}.\]

Recall that a coherent sheaf $E$ is said to be \emph{pure} if for every nonzero subsheaf $E'$ of $E$, the support of $E'$ has the same dimension as the support of $E$. Let $E$ be a 1-dimensional sheaf on $C$ and let $\a$ be a nonzero multi-section of $E$.

\begin{definition}\label{dstdef}
For $\d> 0$, a pair $(E, \a)$ on $C$ is called \emph{$\d$-(semi)stable} with respect to $\sO_C(1)$ if $E$ is pure and for every nonzero subsheaf $E'\neq E$,
\beq\label{dsteq} p^\d_{E',\a}<p^\d_{E,\a} \quad \text{ (resp. } p^\d_{E',\a}\leq
p^\d_{E,\a}).\eeq
\end{definition}

Note that when $m=1$, the notion of $\d$-stability of pairs agrees with the stability defined by Le Potier in \cite{LeP2}.

A morphism $\varphi: (E_1, \a_1) \xrightarrow{}(E_2,\a_2)$ of pairs refers to a homomorphism $\varphi: E_1 \xrightarrow{}E_2$ of $\sO_C$-modules satisfying $\varphi \circ \a_1=\a_2$. We say $\varphi$ is an \emph{isomorphism} if it is an isomorphism of $\sO_C$-modules.

Nowadays it is standard to construct the moduli space of $\d$-stable pairs by GIT.
\begin{theorem} \cite{HuLe}
For $\d>0$, there is a projective coarse moduli scheme of (S-equivalence classes of) $\delta$-semistable pairs on $C$ if we fix the Hilbert polynomial of $E$ with respect to $\sO_C(1)$.
\end{theorem}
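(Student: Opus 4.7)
The plan is to realize the moduli space as a GIT quotient of a locally closed subscheme of an appropriate Quot-Hom parameter space, following the strategy of Huybrechts--Lehn \cite{HuLe} (which itself extends Le Potier's construction for $m=1$).

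First I would establish \emph{boundedness}: the family of $\delta$-semistable pairs $(E,\alpha)$ with fixed Hilbert polynomial $P_E$ is bounded. Purity of $E$ together with the inequality \eqref{dsteq} applied to all nonzero subsheaves $E'\subsetneq E$ bounds the slopes of subsheaves of $E$ in terms of $P_E$, $\delta$ and $\sO_C(1)$, so Grothendieck's boundedness criterion (or Simpson's version for pure sheaves on projective schemes) applies. Consequently there exists $N\gg 0$ such that for every $\delta$-semistable pair $(E,\alpha)$ the twisted sheaf $E(N)$ is globally generated with $H^1(E(N))=0$, and $P:=P_E(N)=h^0(E(N))$ is independent of the pair.

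Next I would construct the \emph{parameter space}. Let $V=\CC^{P}$. The choice of an isomorphism $V\cong H^0(E(N))$ identifies $E$ with a quotient $V\otimes \sO_C(-N)\twoheadrightarrow E$, giving a point of the Quot scheme $\mathrm{Quot}(V\otimes\sO_C(-N),P_E)$. The multi-section $\alpha:\sO_C^{\oplus m}\to E$ then corresponds to a point of the relative Hom scheme over Quot. I would let $Z$ denote the locally closed subscheme of this product parameterizing triples $(V\otimes\sO_C(-N)\twoheadrightarrow E,\alpha)$ for which $E$ is pure, $H^1(E(N))=0$, and $V\to H^0(E(N))$ is an isomorphism. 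The group $G=\mathrm{GL}(V)$ acts on $Z$, and the orbits are precisely the isomorphism classes of pairs.

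The heart of the argument is the \emph{GIT setup and stability matching}. One takes an equivariant polarization on $Z$ of the form $\sL_{a,b}=\sL_1^{\otimes a}\otimes \sL_2^{\otimes b}$, where $\sL_1$ is the Grothendieck polarization on Quot (pulled back) and $\sL_2$ comes from the Hom factor (encoding the contribution of $\alpha$), and chooses the ratio $a:b$ so that it matches the parameter $\delta$. The main obstacle, and the step requiring the most care, is to verify via the Hilbert--Mumford numerical criterion that $z\in Z$ is GIT (semi)stable with respect to $\sL_{a,b}$ if and only if the corresponding pair is $\delta$-(semi)stable in the sense of Definition \ref{dstdef}. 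This is done by considering one-parameter subgroups of $G$, which correspond to weighted filtrations $0=E_0\subset E_1\subset\cdots\subset E_r=E$ of $E$; computing the Mumford weight of $\sL_{a,b}$ along such a $1$-PS produces a linear combination of the quantities $P_{E_i}$ and $\theta(E_i,\alpha)$, and choosing $a:b$ proportional to $1:\delta$ converts positivity of this weight into the inequality $p^\delta_{E',\alpha}\le p^\delta_{E,\alpha}$ for every nonzero proper subsheaf $E'$. I expect this computation, together with the comparison of $S$-equivalence with the notion of semistability for pairs, to be the bulk of the technical work.

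Finally I would invoke Mumford's GIT theorem: the open locus $Z^{ss}\subset Z$ of semistable points admits a good quotient $Z^{ss}/\!/G$ which is projective (since $Z$ is cut out by closed conditions inside a product of a Quot scheme, which is projective over a point once the Hilbert polynomial is fixed, and a Hom scheme which is of finite type), and on the stable locus it restricts to a geometric quotient. By the stability matching above, this quotient corepresents the moduli functor of $S$-equivalence classes of $\delta$-semistable pairs with the prescribed Hilbert polynomial, yielding the desired projective coarse moduli scheme.
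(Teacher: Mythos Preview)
Your outline is a faithful sketch of the standard GIT construction due to Huybrechts--Lehn, and there is no substantive gap in the strategy. Note, however, that the paper does not give its own proof of this statement: it is simply quoted from \cite{HuLe} as a known result, so there is nothing to compare against beyond observing that your proposal indeed follows the approach of that reference.
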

So far, we considered sheaves and sections on a fixed curve $C$.
In the next subsection, we will also vary the curve $C$.

\subsection{$\delta$-stable quasimaps}
Let $d, m\in \mathbb{N}$ and $g,n\in \ZZ_{\ge 0}$. Recall that the degree of a line bundle on a prestable curve is defined to be the sum of the degrees of its restrictions to the irreducible components.

For a prestable curve $C$ with $n$ smooth points $p_1,\cdots,p_n\in C$,
the log canonical line bundle is defined as
$$\omega_{C}^{\log}:=\omega_{C}(\sum_{i=1}^n p_i).$$
A prestable curve $C$ with {distinct} smooth marked points $p_1,\cdots,p_n$ is \emph{stable} if and only if $\omega_{C}^{\log}$ is ample.

In this paper, a \emph{quasimap} to $\PP^{m-1}$ of degree $d$ and genus $g$ with $n$ marked points refers to a triple $(C, L, s)$ of an $n$-pointed prestable curve $C$ of arithmetic genus $g$, a line bundle $L\in \Pic^d(C)$ of degree $d$ on $C$ and a nonzero multi-section $s\in H^0(C,L)^{\oplus m}$.
We recall the following definition from \cite{CK}.

\begin{definition} \label{Def.Delta.Stability} \cite[Definition 5.2]{CK}
    For $\delta{> 0}$, a quasimap $(C, L, s)$ is called \emph{$\delta$-(semi)stable} if
        \begin{enumerate}
            \item[(i)] $\omega_C^{\log}\otimes L^a$ is ample for any $a\in \QQ_{>0}$, and
            \item[(ii)] for every line bundle $A\in \Pic(C)$ such that $\omega_C^{\log}\otimes A^\epsilon$ is ample for any $\epsilon >0$, a pair $(L,s)$ is $\delta$-(semi)stable with respect to $\omega_C^{\log}\otimes A^\e$ in the sense of Definition \ref{dstdef} for arbitrarily small $\e>0$.
        \end{enumerate}

        We say that a quasimap is $\emph{strictly $\d$-semistable}$ if it is $\d$-semistable but not $\d$-stable.
\end{definition}

A pointed prestable curve is called \emph{semistable} if every rational irreducible component has at least two special points (i.e. nodes or marked points). A rational component of a semistable curve is called a \emph{rational bridge} if it has precisely two nodes and no marked points, or is called a \emph{rational tail} if it has only one node and one marked point. A pointed semistable curve is called \emph{quasi-stable} if it has no rational tails and any chain of rational bridges is of length at most 1. We remark the following two simple facts (see \cite[p.176]{CK}).
\begin{enumerate}
    \item[1.] If a quasimap $(C,L,s)$ is $\delta$-semistable for some $\d>0$, then $C$ is quasi-stable. If $E\subset C$ is a rational bridge, then $L|_E\simeq \sO_E(1)$.
    \item[2.] The condition (ii) in Definition~\ref{Def.Delta.Stability} can be replaced by
    \begin{itemize}
        \item[(ii$'$)] the pair $(\rho_*L, \rho_*s)$ is $\delta$-(semi)stable on $\bar{C}$ with respect to $\omega_{\bar{C}}^{\log}$,
    \end{itemize}
    where $\rho:C\xrightarrow{} \bar{C}$ is the stabilization morphism and $\omega_{\bar{C}}^{\log}:=\omega_{\bar{C}}(\sum \rho(p_i))$ for the marked points $p_i$ of $C$.
\end{enumerate}
The stabilization morphism $\rho$ contracts rational bridges and for all $t$, we have $\chi(L\otimes (\w_C^\log)^t)=\chi(\bar{L}\otimes(\w_{\bar{C}}^\log)^t)$ where $\bar{L}=\rho_*L$.

\subsection{Moduli of $\d$-stable quasimaps} \label{18}
In this subsection, we define the moduli stack $\fQ^{\d}$ of $\d$-semistable quasimaps for each $\d>0$ and define the notion of a wall. When $\d$ is not a wall, $\fQ^\d$ is a proper \DM stack of finite type over $\C$.

Let $\fQ=\fQ_{g,n,d,m}$ denote the stack of quasimaps to $\PP^{m-1}$ of degree $d$ and genus $g$ with $n$ marked points, whose section over a scheme $S$ consists of
\begin{enumerate}
\item a flat proper morphism $\cC\xrightarrow{} S$ of finite type with $n$ disjoint smooth sections whose geometric fibers are $n$-pointed prestable curves of genus $g$,
\item a line bundle $\cL$ on $\cC$ whose restriction to each fiber over $S$ is of degree $d$,
\item a homomorphism $\sO_{\cC}^{\oplus m}\xrightarrow{} \cL$ which is nonzero on each fiber over $S$.
\end{enumerate}
By \cite[Theorem 2.5]{CK}, $\fQ$ is an algebraic stack.
In fact $\fQ$ is a huge stack and by fixing a stability condition, one can cut out
many interesting open substacks.
For instance, the moduli stack
$\mgn(\PP^{m-1},d)$ of stable maps is an open substack of $\fQ$ consisting of quasimaps $(C,L,s)$ such that $\omega_{C}^{\log}\otimes L^3$ is ample and $s:\sO_C^{\oplus m}\to L$ is surjective, by \cite[Lemma 2.4]{CK}.

For each $\delta>0$, $\fQ^\delta=\fQ^\d_{g,n,d,m}$ is now defined as the open substack consisting of $\delta$-semistable quasimaps \cite[Remark 5.7]{CK}.
For our purpose, it is important to see how $\fQ^\delta$ varies as $\d$ changes.

\begin{definition} \cite[Definition 5.8]{CK} We say that $\d$ is \emph{general} with respect to a polynomial $P(t)=rt+\chi\in\mathbb{Z}[t]$ if there are no strictly $\d$-semistable quasimaps $(C,L,s)$ such that $P_{\bar{L}}(t)=P(t)$ where $\rho:C\xrightarrow{ }\bar{C}$ is the stabilization morphism and $\bar{L}=\rho_*L$. We say $\d$ is a \emph{wall} if it is not general.
\end{definition}
By \cite[Lemma 5.9]{CK}, for a fixed quadruple $(g,n,d,m)$, there are only finitely many walls and the $\d$-stability of quasimaps remains constant between two consecutive walls.

For every quasimap $(C,L,s)\in \fQ^\d_{g,n,d,m}$, the Hilbert polynomial of $\bar{L}$ is
$$P_{g,n,d}:=P_{\bar{L}}(t)=(2g-2+n)t+d+1-g.$$
\begin{theorem} \cite[Theorem 5.10]{CK} \label{Thm.DM}
    Suppose that $2g-2+n>0$ and $d+\delta \geq g-1$. For $\d>0$ general with respect to $P_{g,n,d}$, $\fQ^\d=\fQ^\d_{g,n,d,m}$ is a proper \DM stack of finite type over $\C$.
\end{theorem}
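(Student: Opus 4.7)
The plan is to verify the four defining properties of a proper Deligne-Mumford stack of finite type separately: boundedness (hence finite type), finiteness of automorphism groups (hence DM), separatedness, and universal closedness. Since only finitely many walls exist for the fixed Hilbert polynomial $P_{g,n,d}$, I may fix $\d$ in the interior of a chamber and treat $\d$-semistability as coinciding with $\d$-stability throughout.

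For finite type and boundedness, I would pass to the stable model, replacing $(C,L,s)$ with $(\bar C,\bar L,\bar s)$ under the stabilization $\rho:C\to \bar C$. Condition (ii$'$) of Definition~\ref{Def.Delta.Stability} turns this into a $\d$-stable pair on a stable pointed curve polarized by $\w_{\bar C}^\log$, with fixed Hilbert polynomial $P_{g,n,d}$. Boundedness of such pairs on a fixed family of nodal curves (parameterized by $\mgn$) is due to \cite{HuLe}; inverting $\rho$ inserts at most one rational bridge over each node of $\bar C$ and, by fact (1) after Definition~\ref{Def.Delta.Stability}, forces $L|_E\cong \sO_E(1)$ on every such bridge, so only finitely many discrete choices are added. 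Combined with the algebraicity of the ambient stack $\fQ$, this yields finite type.

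For the Deligne-Mumford property, an automorphism of $(C,L,s)\in \fQ^\d$ covers an automorphism of the pointed prestable curve $C$. Ampleness of $\w_C^\log\otimes L^a$ for every $a\in\QQ_{>0}$ (condition (i)) already bounds $\Aut(C)$ and constrains $\Aut(C,L)$ to a finite group scheme. Simplicity of $\d$-stable pairs (valid because $\d$ is general, so there are no strictly semistable objects) further restricts the remaining ambiguity to scalars in $\End(L)\simeq \CC$ that fix the nonzero multi-section $s$, which leaves only the identity. Hence $\Aut(C,L,s)$ is finite and reduced.

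Separatedness and universal closedness will be verified via the valuative criterion. For separatedness, two extensions over a DVR agree generically and produce two $\d$-stable central objects; pulling both back to a common semistable model and invoking the uniqueness of $\d$-stable representatives in an $S$-equivalence class (together with the non-existence of strictly semistable objects for general $\d$) forces the two extensions to coincide. For universal closedness, I would start with a quasimap over the generic point of a DVR, apply semistable reduction of curves after a ramified base change, extend $L$ by a Langton-type argument as a pure rank one sheaf, and then correct the central fiber by a finite sequence of insertions or contractions of rational bridges, paired with elementary modifications of $L$, until the resulting central pair satisfies $p^\d_{E',\a}<p^\d_{E,\a}$ for every proper subsheaf $E'$. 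The hypothesis $d+\d\ge g-1$ is exactly what is needed to rule out pathological degenerations in which too much degree concentrates on one component, and to guarantee that such a $\d$-stable limit exists.

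The main obstacle is the existence half of the valuative criterion. One must match the combinatorial modifications of the central curve (contracting rational tails, tuning chains of rational bridges) with elementary transformations of $L$ so that every potentially destabilizing subsheaf of $\bar L$ is suppressed. As in the analogous results for pairs on a fixed curve in \cite{LeP2,HuLe}, this requires a case analysis on the types of subsheaves that can appear, and it is the combinatorial heart of the theorem; all other steps are essentially standard once boundedness and the absence of strictly semistable objects have been established.
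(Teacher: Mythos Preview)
The paper does not give a proof of this theorem; it is simply quoted from \cite[Theorem 5.10]{CK} and used as a black box. So there is no ``paper's own proof'' to compare against.

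Your sketch follows the standard architecture one would expect for such a result (boundedness via stabilization and \cite{HuLe}, finiteness of automorphisms from simplicity of stable pairs, valuative criteria for separatedness and properness with a Langton-type argument for the limit), and it correctly identifies the existence part of the valuative criterion as the nontrivial step. Whether the details match those in \cite{CK} cannot be judged from this paper alone; if you want to verify your outline you would need to consult the original reference directly.
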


In this paper, we are only interested in the case of $g=0$.
\begin{lemma} Let $g=0$ and $\d$ be general with respect to $P_{0,n,d}$. Then the moduli stack $\fQ^\d=\fQ^\d_{0,n,d,m}$ is a smooth projective variety of dimension $n+m(d+1)-4$. \end{lemma}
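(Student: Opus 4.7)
The statement packages four claims---properness, the Deligne--Mumford property, smoothness with the stated dimension, and the upgrade from a DM stack to a scheme---and my plan is to dispatch them in this order.

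Properness and the Deligne--Mumford property are immediate from Theorem~\ref{Thm.DM}: with $g=0$ the inequality $2g-2+n>0$ becomes $n\ge 3$ (implicit in the statement, since otherwise $\fQ^\d$ is empty in genus $0$), while $d+\d\ge g-1=-1$ holds automatically for $d\ge 0$ and $\d>0$.

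For smoothness I would study the forgetful morphism $f:\fQ^\d\to\fM_{0,n}$ to the smooth Artin stack of $n$-pointed prestable genus~$0$ curves, which has dimension $n-3$. Over a geometric point $(C,p_1,\ldots,p_n)$ the fiber parametrizes pairs $(L,s)$ with $L\in\Pic^d(C)$ and $s:\sO_C^{\oplus m}\to L$, and the relative deformation-obstruction theory is governed by the hypercohomology of the two-term complex $[\sO_C^{\oplus m}\xrightarrow{s}L]$ together with $H^\bullet(C,\sO_C)$ controlling deformations of $L$ itself. In genus $0$ all $H^i(C,\sO_C)$ for $i\ge 1$ vanish, so the only possible relative obstruction lies in $H^1(C,L)$. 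I would verify $H^1(C,L)=0$ by a short induction along the leaves of the dual tree of $C$ via Mayer--Vietoris, after first establishing the component-wise bound $\deg(L|_E)\ge 0$ for every irreducible component $E\cong\PP^1$: on a rational bridge $E$ we have $L|_E\simeq\sO_E(1)$ by the first remark after Definition~\ref{Def.Delta.Stability}, and on any other component the ampleness of $\omega_C^{\log}\otimes L^a$ for arbitrarily large $a>0$ required by condition (i) of that definition forces $\deg(L|_E)\ge 0$ (otherwise $\deg(\omega_C^{\log}\otimes L^a|_E)\to -\infty$ as $a\to\infty$). Hence $f$ is smooth and therefore so is $\fQ^\d$.

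The dimension count I would carry out at the generic point $C=\PP^1$, $L=\sO(d)$: there are $n-3$ moduli of marked $\PP^1$'s, $m(d+1)$ parameters for a section of $\sO(d)^{\oplus m}$, and the $\GG_m=\Aut(L)$-action rescaling $s$ removes one dimension, yielding $\dim\fQ^\d=(n-3)+m(d+1)-1=n+m(d+1)-4$. Finally, to upgrade the DM stack to a scheme I would check that every $\d$-stable quasimap has trivial automorphism group: an automorphism $(\varphi,\lambda)$ descends to an automorphism of the stabilized pair $(\bar C,\rho_*L,\rho_*s)$ on $\bar C$, where the $\d$-stability of that pair (criterion (ii$'$) in the paragraph following Definition~\ref{Def.Delta.Stability}) forces $\varphi=\id$ on $\bar C$, and the compatibility $\lambda\circ s=s$ then rigidifies $\lambda$ and $\varphi$ on each remaining rational bridge (where $s|_E$ is the nonzero section of $\sO_E(1)$ coming from $\d$-stability). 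The main obstacle I anticipate is the uniform component-wise vanishing $H^1(C,L)=0$ for \emph{every} $\d$-stable quasimap---not merely the generic one---where the full strength of the ampleness condition in Definition~\ref{Def.Delta.Stability}(i) is decisive.
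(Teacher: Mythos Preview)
Your proposal is correct and follows essentially the same route as the paper: invoke Theorem~\ref{Thm.DM} for properness and the Deligne--Mumford property, identify the obstruction space with $H^1(C,L)^{\oplus m}$ via the deformation theory of \cite[Theorem~2.5]{CK}, and use triviality of automorphisms of stable objects to pass from a DM stack to a variety. The paper is terser---it simply asserts $H^1(L)=0$ for $g=0$, $d>0$ and cites \cite[Corollary~2.6]{CK} for the dimension---whereas you correctly recognize that this vanishing is \emph{not} automatic on a reducible curve and supply the missing step (the ampleness condition in Definition~\ref{Def.Delta.Stability}(i) forces $\deg L|_E\ge 0$ on every component, whence $H^1$ vanishes by peeling off leaves); your version is in this respect more complete than the paper's own proof.
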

\begin{proof}
As $\d$ is general, there are no strictly $\d$-semistable quasimaps and hence no nontrivial automorphisms on $\fQ^\d$.

By \cite[Theorem 2.5]{CK}, the obstruction sheaf for $\fQ^\d$ is
$R^1\pi_*\cL^{\oplus m}$ where $\cL$ is the universal line bundle over the universal curve $\pi:\cC\to \fQ^\d$.
Since $H^1(L)^{\oplus m}$ is zero whenever $g=0$ and $d>0$, $R^1\pi_*\cL^{\oplus m}=0$ by the base change property. Hence $\fQ^\d$ is smooth.

The dimension count comes from \cite[Corollary 2.6]{CK}.
\end{proof}

\subsection{Wall crossings for  $m=d=1$} \label{Sec.Deltawc.Genus0}

Our computation of $A^k(\overline{\mathcal{M}}_{0,n})$ utilizes the wall crossings of the moduli spaces of $\d$-stable quasimaps when $g=0$ and $m=d=1$.

We write $\d=\infty$ when $\d$ is sufficiently large so that there are no walls larger than $\d$ and $\d=0^+$ when $\d$ is a sufficiently small positive number.

\begin{lemma} \label{Lem.delta.infty}
    When $m=d=1$, $\fQ^{\d=\infty}_{g,n,1,1}\simeq \overline{\mathcal{M}}_{g,n+1}$.
\end{lemma}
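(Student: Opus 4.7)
The plan is to construct mutually inverse morphisms between $\overline{\cM}_{g,n+1}$ and $\fQ^{\d=\infty}_{g,n,1,1}$, with the correspondence on geometric points matching the extra marked point $p_{n+1}$ of a stable curve with the unique zero of the canonical section of $\O_C(p_{n+1})$. To define the forward morphism $\Phi$, I would send an $(n+1)$-pointed stable curve $(C,p_1,\dots,p_{n+1})$ to the triple $(C,p_1,\dots,p_n,L,s)$ with $L:=\O_C(p_{n+1})$ and $s$ the canonical section vanishing at $p_{n+1}$; the only case requiring care is when $p_{n+1}$ lies on a component $E$ with exactly two other special points, where either (a) both are nodes and $E$ becomes a rational bridge in the $n$-pointed sense with $L|_E\cong\O_E(1)$, which is permitted by quasi-stability, or (b) one is a node and the other is a marked point $p_i$, in which case $E$ is rigid and I contract it and replace $L$ by $\O_{C/E}(p_i)$, so that the new section vanishes at $p_i$.

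To define the inverse $\Psi$, I would first prove a combinatorial reformulation of $\d=\infty$-stability: comparing the leading $\d$-coefficients in $p^\d_{E',\bar s}<p^\d_{\bar L,\bar s}$ shows that a proper subsheaf $E'\subsetneq\bar L$ with $\theta(E',\bar s)=1$ destabilizes unless $r(E')\geq r(\bar L)$, and the subsheaves of strictly smaller rank for which $\theta=1$ are exactly those arising from components of $\bar C$ on which $\bar s$ vanishes identically. Hence $\d=\infty$-stability is equivalent to $\bar s$ being nonzero on every component of $\bar C$. Because $\chi(\mathrm{coker}(s))=\chi(L)-\chi(\O_C)=1$ by Riemann--Roch, and a short local computation rules out vanishing at a node (which would force cokernel length at least two), the section $s$ must vanish at a single smooth point $q_s\in C$ with multiplicity one. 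The inverse is then defined by setting $p_{n+1}:=q_s$ when $q_s\notin\{p_1,\dots,p_n\}$ --- here $q_s$ is allowed to lie on a rational bridge, which then becomes a stable three-special-point component of the $(n+1)$-pointed curve --- while if $q_s=p_i$ I attach a new $\PP^1$ at $p_i$ carrying both the relabeled $p_i$ and the new marked point $p_{n+1}$.

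The main obstacle I anticipate is globalizing these case-by-case constructions to families so as to lift the bijection on geometric points to actual morphisms of \DM stacks. For $\Phi$, the ``contract $E$ and replace $L$ by $\O(p_i)$'' operation in case (b) occurs precisely along the smooth boundary divisor of $\overline{\cM}_{g,n+1}$ where $p_{n+1}$ collides with some $p_i$ and needs to be performed uniformly there; a convenient approach is to exhibit the quasimap directly as a family over the universal curve of $\overline{\cM}_{g,n+1}$ together with a small modification along that boundary divisor, and then invoke representability (Theorem~\ref{Thm.DM}) to obtain a morphism to $\fQ^{\d=\infty}$. Symmetrically for $\Psi$, the insertion of a new $\PP^1$ at $p_i$ must be described in families over the locus where $s$ vanishes at a marked point. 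Once both $\Phi$ and $\Psi$ are constructed, they are inverse to each other on geometric points by the explicit recipes above, and since both stacks are smooth, proper, and of the same dimension, the resulting pair of morphisms is an isomorphism.
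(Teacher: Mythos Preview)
Your approach is correct and shares the paper's core idea: match the unique zero of $s$ with the extra marked point. The paper's argument is terser. For your $\Psi$, the paper avoids the explicit case split by invoking the identification of $\overline{\cM}_{g,n+1}$ with the universal curve over $\overline{\cM}_{g,n}$: the vanishing locus of $s$ is a section of the quasimap family $\cC\to S$, and the bubbling you perform when $q_s=p_i$ (or when $q_s$ lies on a rational bridge) is already built into that identification. For your $\Phi$, the paper simply takes $(\cC,\O_\cC(p_1),s)$ on the $(n{+}1)$-pointed stable curve $\cC$ itself, without your case~(b) contraction; your extra care here is warranted, since forgetting the extra marking can leave a rational tail $E$ with $L|_E\cong\O_E(1)$, and then the subsheaf of $L$ supported on $E$ (with $\theta=0$ but $r(L')=\epsilon\cdot\deg A|_E\to 0$) violates condition~(ii) for small $\epsilon$. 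So you are filling in a detail the paper elides.

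Your globalization concern is the substantive work in either version; the paper handles one direction via the universal-curve identification and the other by observing that the extra section is a relative Cartier divisor. One small point: do not rely on smoothness of $\fQ^{\delta=\infty}$ in your last step (this is only established for $g=0$); instead verify that $\Phi$ and $\Psi$ are inverse as morphisms of stacks, which follows from your pointwise recipes once they are made functorial.
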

\begin{proof}
Let $(C,L,s)\in \fQ^{\d=\infty}$. If $C$ is stable, then $(L,s)$ can be an arbitrary pair except that $s$ does not vanish at the nodes of $C$. If $C$ is not stable, then $C$ is quasi-stable with a unique rational bridge $E\simeq \PP^1$, and $s$ is uniquely determined by a section of $L|_E\simeq \sO_E(1)$ which does not vanish at the nodes on $E$.

Relativizing this, to each family $(\cC,\cL,s)$ of $\d=\infty$-stable  quasimaps parametrized by a scheme $S$, one can associate an extra section of $\cC \xrightarrow{}S$, which parametrizes the vanishing locus of the section $s$. Indeed, the vanishing locus is flat over $S$ by \cite[Lemma 31.18.9]{Stacks} and fiberwisely a smooth point, so the structure morphism to $S$ is an isomorphism.

As the universal curve over $\overline{\mathcal{M}}_{g,n}$ is $\overline{\mathcal{M}}_{g,n+1}$, we
thus have a morphism
\beq\label{17}\fQ^{\d=\infty}\lra \overline{\mathcal{M}}_{g,n+1}.\eeq

To get the inverse, consider a family $\cC\xrightarrow{}S$ of stable curves of genus $g$ with $n+1$ sections. Then the image of the first section is a relative Cartier divisor of degree 1, so corresponds to a pair $(\cL,s)$ of a line bundle $\cL\in \Pic(\cC)$ of fiberwise degree 1 and a nonzero section $s$. This gives us a family $(\cC,\cL,s)$ of $\d=\infty$-stable quasimaps and hence an inverse to \eqref{17}. \end{proof}

If furthermore $g=0$, the wall crossings of the moduli spaces of $\d$-stable quasimaps, which we call \emph{$\d$-wall crossings}, result in a new construction of $\overline{\mathcal{M}}_{0,n+1}$ from $\overline{\mathcal{M}}_{0,n}$.
Let $$\ell=\lfloor\frac{n-1}{2}\rfloor.$$ By \cite[Lemma 7.9]{CK}, there exist $\ell-1$ walls
$$\d_h=\frac{n-2}{h-1}-2, \quad \text{for }2\leq h\leq \ell.$$

Let $\d_h^+ \in (\d_{h-1}, \d_h)$, $\d_2^+:=\infty$ and $\d_{\ell+1}^+:=0^+$. Let $\pi:\overline{\mathcal{M}}_{0,n+1}\xrightarrow{} \overline{\mathcal{M}}_{0,n}$ denote the forgetful morphism which forgets the first marking.
\begin{theorem} \label{26} Suppose $g=0$ and $m=d=1$. Let $\fQ^{\d}=\fQ^{\d}_{0,n,1,1}$ and $\ell=\lfloor\frac{n-1}{2}\rfloor$. Then the $\d$-wall crossings factorize the morphism $\pi$ as
\beq\label{22}
\overline{\mathcal{M}}_{0,n+1}\cong \fQ^{\d=\infty}\xrightarrow{~\fq_2~ } \fQ^{\d_3^+}\xrightarrow{~\fq_3~ }\cdots \xrightarrow{~\fq_{\ell-1}~ }\fQ^{\d_\ell^+} \xrightarrow{~\fq_\ell~} \fQ^{\d=0^+}\xrightarrow{~\fp~}\overline{\mathcal{M}}_{0,n}~\eeq
where
\begin{enumerate}
    \item for $2\leq h \leq \ell$, the morphism $\fq_h$ at the wall $\d_h$ is the blowup along a disjoint union of $\binom{n}{h}$ copies of $\overline{\mathcal{M}}_{0,h+1}\times \overline{\mathcal{M}}_{n-h+1}$,
    \item the last morphism $\fp$ is the forgetful morphism. Moreover,
    \begin{enumerate}
        \item if $n=2\ell+1$, then $\fp$ is a $\PP^1$-bundle;
        \item if $n=2\ell+2$, then $\fp$ is
             the blowup of a $\PP^1$-bundle over $\overline{\mathcal{M}}_{0,n}$ along a disjoint union of $\frac12\binom{n}{\ell+1}$ copies of $\overline{\mathcal{M}}_{0,\ell+2}\times \overline{\mathcal{M}}_{0,\ell+2}$.
    \end{enumerate}
\end{enumerate}
All $\fq_h$ and $\fp$ are $S_n$-equivariant.
\end{theorem}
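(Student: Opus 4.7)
The plan is to analyze each wall crossing $\fq_h$ and the final morphism $\fp$ separately, using the $\d$-stability calculus of \S\ref{Subsec.Delta.Pairs} together with deformation theory of the pair $(L,s)$. The wall value $\d_h = \frac{n-2}{h-1}-2$ from \cite[Lemma 7.9]{CK} arises from equating reduced Hilbert polynomials for a subsheaf $L'\subset L$ supported on a subcurve carrying $h$ of the $n$ marked points. The first step is to check that the strictly $\d_h$-semistable locus consists precisely of quasimaps whose stabilization decomposes as $C_1 \cup_p C_2$ with $C_1\in \overline{\mathcal{M}}_{0,h+1}$ carrying some $h$-subset of the marked points and $C_2\in \overline{\mathcal{M}}_{0,n-h+1}$ carrying the complement, and which acquire a rational bridge $E$ at $p$ supporting the section $s$ (with $L|_E\simeq \sO_E(1)$). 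Running over the $\binom{n}{h}$ choices of the $h$-subset produces the disjoint union $Z_h$ of $\binom{n}{h}$ copies of $\overline{\mathcal{M}}_{0,h+1}\times \overline{\mathcal{M}}_{0,n-h+1}$ as the candidate blowup center.

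Second, I would verify that $\fq_h:\fQ^{\d_h^+}\to \fQ^{\d_{h+1}^+}$ is the smooth blowup along $Z_h\subset \fQ^{\d_{h+1}^+}$. On the $\d_h^+$ side, the rational bridge $E$ persists and the vanishing locus of the section is an honest point of $E\setminus\{p,p'\}$; on the $\d_{h+1}^+$ side the bridge collapses and this point merges into the node separating $C_1$ and $C_2$. Hence $\fq_h$ is birational with $\PP^1$-fibers over $Z_h$. To promote this to a smooth blowup, I would compute the normal bundle $\sN_{Z_h/\fQ^{\d_{h+1}^+}}$: at a point of $Z_h$, the two normal directions correspond to (i) smoothing the node $p$ and (ii) sliding the vanishing locus of the section off $p$ into $C_1$ (equivalently into $C_2$). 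A Thaddeus-style wall-crossing argument for the pair moduli, parallel to \cite{HuLe, LeP2} but relative over the parameter space of the underlying curve, then identifies $\fq_h$ with $\PP(\sN_{Z_h/\fQ^{\d_{h+1}^+}})\to Z_h$, i.e.\ the smooth blowup.

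For the final morphism $\fp:\fQ^{\d=0^+}\to \overline{\mathcal{M}}_{0,n}$, when $n=2\ell+1$ the formula gives $\d_{\ell+1}<0$, so no further interior wall intervenes and $\fp$ is a $\PP^1$-bundle parametrizing the section on a distinguished degree-$1$ line bundle over the universal curve. When $n=2\ell+2$, one has $\d_{\ell+1}=0$, a boundary wall corresponding to the balanced split $h=\ell+1=n-h$; the same blowup analysis applies, but the $\ZZ/2$-symmetry swapping $C_1$ and $C_2$ halves the component count to $\tfrac12\binom{n}{\ell+1}$ copies of $\overline{\mathcal{M}}_{0,\ell+2}\times \overline{\mathcal{M}}_{0,\ell+2}$. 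For $S_n$-equivariance, note that the moduli functor $\fQ^\d_{0,n,1,1}$ only permutes the $n$ marked points while treating the section data intrinsically, and every $\d$-stability condition depends solely on numerical invariants; hence every $\fq_h$ and $\fp$ in \eqref{22} is automatically $S_n$-equivariant, with $S_n$ acting on $Z_h$ by permuting $h$-subsets of $\{1,\dots,n\}$.

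The principal obstacle is the upgrade from ``birational with $\PP^1$-fibers'' to ``genuine smooth blowup'' for $\fq_h$. This reduces to the explicit computation of $\sN_{Z_h/\fQ^{\d_{h+1}^+}}$ and a variation-of-GIT argument showing that the wall crossing realizes the standard projectivization of a rank-$2$ normal bundle rather than a small modification or a flip. The computation is local at a point of $Z_h$ and can be carried out using the deformation theory of a single quasimap with one rational bridge, but the careful bookkeeping of the weights on either side of the wall, together with the verification that no further semistable strata intervene, is the delicate point.
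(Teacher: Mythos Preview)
Your outline is a reasonable sketch of a direct proof, but you should know that the paper does not argue this way at all. The paper's proof is two sentences: the identification $\fQ^{\d=\infty}\cong\overline{\mathcal{M}}_{0,n+1}$ is Lemma~\ref{Lem.delta.infty}, and the entire blowup structure of the wall crossings, including the description of the centers and the structure of $\fp$, is quoted as the special case $m=1$ of \cite[Theorem~7.11]{CK}. No deformation-theoretic or VGIT computation is carried out in the present paper.

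What you have written is essentially an outline of how one would reprove \cite[Theorem~7.11]{CK} in this special case. The ingredients you isolate (identification of the strictly $\d_h$-semistable locus with curves having a rational bridge at the node separating an $h$-subset, the two normal directions coming from smoothing the node and sliding the zero of $s$, the boundary-wall phenomenon $\d_{\ell+1}=0$ when $n$ is even, and the automatic $S_n$-equivariance) are the right ones, and your identification of the ``principal obstacle'' is accurate: turning the set-theoretic fiber description into an honest smooth blowup is exactly the content of the Thaddeus/VGIT argument in \cite{CK}. So your proposal is not wrong, but it is doing far more than the paper does here; if you want to match the paper, you should simply invoke Lemma~\ref{Lem.delta.infty} and \cite[Theorem~7.11]{CK}. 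If instead your goal is a self-contained argument, then the normal-bundle computation you flag is the step that actually requires work, and you would need to carry it out rather than leave it as a sketch.
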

\begin{proof} The identification of the forgetful morphism $\fQ^{\d=\infty}\xrightarrow{} \overline{\mathcal{M}}_{0,n}$ with $\pi$ follows from Lemma~\ref{Lem.delta.infty}. Now the assertion is a special case of \cite[Theorem 7.11]{CK} for $m=1$.
\end{proof}
\begin{remark} \label{rmk:delta0}
For each $I\subset [n]:=\{1,2,\cdots, n\}$ with $2\leq |I|\leq n-2$, we denote by $D_{I}=D_{I^c}$ the boundary divisor of $\overline{\mathcal{M}}_{0,n}$ which generically parametrizes nodal curves with two components, one with markings by $I$ and the other with markings by $I^c$.

Let $C\in \overline{\mathcal{M}}_{0,n}$. Unless $n$ is even and $C\in D_{I}$ for some $I\subset [n]$ with $|I|=\frac{n}{2}$, $C$ has a unique irreducible component whose complement has no connected subcurves with $\ge\frac{n}{2}$ markings. We call this component the \emph{balanced component} of $C$.

When $n=2\ell+2$ is even and $C\in D_{I}$ for some $I\subset [n]$ with $|I|=|I^c|$, $C$ has the unique node separating markings by $I$ and $I^c$. We also call the two irreducible components containing this node the \emph{balanced components} of $C$.

By the stability condition, for a quasimap $(C, L, s)\in \fQ^{\d=0^+}$, the line bundle $L$ can have degree one only on balanced components. So, the moduli $\fQ^{\d=0^+}$ parametrizes the balanced components of the universal curve over $\overline{\mathcal{M}}_{0,n}$.

When $n$ is odd, the morphism $\fp$ is a $\PP^1$-bundle. When $n$ is even, the morphism $\fp$ then can be described as the blowup of a $\PP^1$-bundle over $\overline{\mathcal{M}}_{0,n}$ which parametrizes a consistent choice of a balanced component of the universal curve over the boundary divisor $\bigsqcup_{I\subset [n], |I|=n/2}D_{I}$.
\end{remark}

\subsection{Wall crossings for $d=1$ and $m\ge 2$} \label{Sec.Epsilonwc}
In this subsection, we generalize Theorem \ref{26} to the case of $m\ge 2$. As we saw in \S\ref{18}, the moduli space $\mzn(\PP^{m-1},1)$ of stable maps to $\PP^{m-1}$ is an open substack of $\fQ=\fQ_{0,n,1,m}$.
By wall crossings in $\fQ$, the forgetful morphism
\beq\label{19} \mzn(\PP^{m-1},1)\lra \mzn\eeq
is now factorized as follows.

\begin{theorem} \cite[Theorem 7.11]{CK} \label{24}
Suppose $m\geq2$, $d=1$ and $g=0$. Let $\fQ^{\d}=\fQ^{\d}_{0,n,1,m}$ and $\ell=\lfloor\frac{n-1}{2}\rfloor\geq 1$.
Then \eqref{19} is the composition of morphisms
\beq\label{20} \mzn(\PP^{m-1},1)\xrightarrow{~\fc~}\fQ^{\e=0^+}\xrightarrow{~\fc_0~} \fQ^{\d=\infty}\xrightarrow{~\fq_2~ } \cdots \xrightarrow{~\fq_\ell~} \fQ^{\d=0^+}\xrightarrow{~\fp~}\overline{\mathcal{M}}_{0,n}~\eeq
where
\begin{enumerate}
    \item if $m=2$, $\fc$ is an isomorphism;\\
    if $m>2$, $\fc$ is the blowup along a $\PP^{m-1}$-bundle over $\overline{\mathcal{M}}_{0,n+1}$,
    \item $\fc_0$ is the blowup along $n$ copies of  a $\PP^{m-1}$-bundle over $\overline{\mathcal{M}}_{0,n}$,
    \item for $2\leq h \leq \ell$, the morphism $\fq_h$ is the blowup along a disjoint union of $\binom{n}{h}$ copies of a $\PP^{m-1}$-bundle over $\overline{\mathcal{M}}_{0,h+1}\times \overline{\mathcal{M}}_{n-h+1}$,
    \item
    \begin{enumerate}
    \item if $n=2\ell+1$, $\fp$ is a $\PP^{2m-1}$-bundle;
       \item if $n=2\ell+2$, $\fp$ is the blowup of a $\PP^{2m-1}$-bundle over $\overline{\mathcal{M}}_{0,n}$ along a disjoint union of $\frac{1}{2}\binom{n}{\ell+1}$ copies of a $\PP^{m-1}$-bundle over $\overline{\mathcal{M}}_{0,\ell+2}\times \overline{\mathcal{M}}_{0,\ell+2}$.
    \end{enumerate}
\end{enumerate}
All morphisms $\fc$, $\fc_0$, $\fq_h$ and $\fp$ are equivariant with respect to the action of $S_n$ permuting the marked points.
\end{theorem}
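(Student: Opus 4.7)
The plan is to reduce to the general wall-crossing result \cite[Theorem 7.11]{CK} applied to the quadruple $(g,n,d,m)=(0,n,1,m)$. Compared with the $m=1$ case (Theorem \ref{26}), the new features for $m\ge 2$ are the two extra arrows $\fc$ and $\fc_0$ at the beginning of \eqref{20}, which record the wall crossings in an additional stability parameter $\epsilon$ that measures base points of the multi-section $s$.

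I would first identify $\overline{\mathcal{M}}_{0,n}(\PP^{m-1},1)$ with the open substack of $\fQ = \fQ_{0,n,1,m}$ where $\omega_C^{\log}\otimes L^3$ is ample and $s:\sO_C^{\oplus m}\to L$ is surjective, as already noted in \S\ref{18}. Running the $\epsilon$-wall crossing from $\epsilon=\infty$ down to $\epsilon = 0^+$ allows base points of $s$ on the main component to degenerate into rational tails on which $L$ has degree 1 and $s$ takes values in a single direction of $\PP^{m-1}$. When $m=2$ the underlying pointed curve already carries enough structure for the tail to be absorbed by existing stability, so $\fc$ is an isomorphism. For $m>2$ the base-point locus forms a $\PP^{m-1}$-bundle over the universal curve $\overline{\mathcal{M}}_{0,n+1}\to \overline{\mathcal{M}}_{0,n}$ (the fiber projectivizing the space of multi-sections on the new bubble), and $\fc$ is the smooth blowup along this locus. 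Analogously, $\fc_0$ is a simultaneous blowup at the $n$ loci where each marked point collides with such a bubble, each of which is a $\PP^{m-1}$-bundle over $\overline{\mathcal{M}}_{0,n}$.

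From $\fQ^{\d=\infty}$ onward, the wall structure is the same as in Theorem \ref{26}: the walls $\delta_h=(n-2)/(h-1)-2$ for $2\le h\le \ell$ partition the $\delta$-line, and crossing $\delta_h$ corresponds to splitting the balanced component into two pieces of sizes $h+1$ and $n-h+1$. The only difference with the $m=1$ case is that each of the $\binom{n}{h}$ blowup centers now acquires an additional $\PP^{m-1}$-fiber coming from the projectivized multi-section on the bubble, giving a $\PP^{m-1}$-bundle over $\overline{\mathcal{M}}_{0,h+1}\times\overline{\mathcal{M}}_{0,n-h+1}$. The final morphism $\fp$ forgets the pair $(L,s)$ on the balanced component(s); its generic fiber is the projectivization of $H^0(\sO_{\PP^1}(1))^{\oplus m}\simeq \C^{2m}$, giving $\PP^{2m-1}$, and the even-$n$ case receives the same corrective blowup as in Theorem \ref{26} with a $\PP^{m-1}$-enhancement of the center.

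The $S_n$-equivariance of every morphism is automatic because all stability conditions depend only on the isomorphism class of the pointed quasimap, not on any labeling of the marked points. The main technical obstacle is verifying that each wall crossing is a smooth blowup along the asserted smooth center, which requires a deformation-theoretic analysis of the normal cones to the strictly semistable loci at each wall; this is carried out in full generality in \cite[\S7]{CK}, and our statement is simply the specialization of \cite[Theorem 7.11]{CK} to $d=1$.
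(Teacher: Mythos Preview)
Your proposal is correct and matches the paper's treatment exactly: the paper does not give an independent proof of this theorem at all, but simply states it as a direct citation of \cite[Theorem 7.11]{CK} specialized to $(g,n,d,m)=(0,n,1,m)$, which is precisely what you conclude in your final sentence. Your added heuristic discussion of the $\epsilon$- and $\delta$-wall crossings is informative but not strictly needed, since the paper treats the result as a black box; the one minor imprecision is your explanation for why $\fc$ is an isomorphism when $m=2$ (the cleaner reason is that the blowup center then has codimension $m-1=1$, so the blowup is trivial), but this does not affect the validity of the reduction.
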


\begin{remark}
(1) It is well-known that $\Mbar_{0,n}(\PP^1,1)$ is isomorphic to the Fulton-MacPherson compactified configuration space $\PP^1[n]$ of $n$ points on $\PP^1$ (see \cite{FM} for definition). Therefore when $m=2$, Theorem \ref{24} gives a new construction of $\PP^1[n]$.

(2) $\fQ^{\e=0^+}$ in \eqref{20} is the moduli space of $\e=0^+$-stable quasimaps. See \cite{MOP, Tod, CK} for the $\e$-stability.
\end{remark}

\bigskip

\section{Preliminaries on representations of the symmetric groups}\label{sec:prelim}

In this section, we collect necessary facts on the representation theory of  $S_n$.
The main reference is \cite{Mac}.

\subsection{Characteristic map}
Let $R$ be the ring of representations of the symmetric groups and let $\Lambda:=\varprojlim\Z[x_1,\cdots,x_n]^{S_n}$ be the ring of symmetric functions. It is well known that $\Lambda \otimes \QQ =\QQ[p_1, p_2, \cdots]$, where $p_n= \sum x_i^n$ is the $n$-th power sum. For a representation $V$ of $S_n$, we define its character by
\[\ch_n(V)= \frac{1}{n!}\sum_{\sigma\in S_n} \mathrm{Tr}_V(\sigma) p_{\sigma}, \]
where $\mathrm{Tr}_V(\sigma)$ is the trace of the action of $\sigma$ on $V$ and $p_{\sigma}=p_{\lambda_1}\cdots p_{\lambda_r}$ if $\sigma$ has the cycle type $\lambda=(\lambda_1, \cdots, \lambda_r)$. For example if $\sigma=(1,3,5)(2,4)$, then $p_\sigma= p_3 p_2$.

Recall that irreducible representations of $S_n$ bijectively correspond to partitions of $n$. Let $V_\lambda$ be the irreducible representation corresponding to a partition $\lambda$. Then we have $\ch_n(V_\lambda)= s_\lambda\in \Lambda$, the Schur function.

The characteristic map $\ch:R\xrightarrow{}\Lambda$ is a ring isomorphism, that is,
\begin{equation}
    \label{eq:charmult}
    \ch_i(V)\ch_j(W)=\ch_{i+j}\big(\Ind^{S_{i+j}}_{S_i\times S_j}V\otimes W\big),
\end{equation}
where $\Ind$ denotes the induced representation. Hence, we sometimes do not distinguish a representation $V$ of $S_n$ and its character $\ch_n(V)$. For example, $V.W$ denotes $\Ind^{S_{i+j}}_{S_i\times S_j}V\otimes W$ for $S_i$-representation $V$ and $S_j$-representation $W$.

\subsection{Plethysm}
In $\Lambda$, there is another multiplication, called \emph{plethysm}. For two symmetric functions $f$ and $g$, the plethysm is denoted by $f \circ g$ and it is uniquely defined by the following properties:
\begin{enumerate}
    \item $f\circ p_n=p_n\circ f= f(x_1^n, x_2^n, \cdots )$,
    \item $(f+g)\circ h= f\circ h + g\circ h$,
    \item $(fg)\circ h= (f\circ h)(g\circ h)$
\end{enumerate}
for symmetric functions $f, g$ and $h$. Note that $p_1=s_{(1)}$ acts as a two sided identity. The plethysm is not commutative in general, but it is associative.
For more details, see \cite[I. 8]{Mac}.

On the representation ring $R$, we can define the corresponding product. Namely, given an $S_n$-representation $V$ and $S_m$-representation $W$, the plethysm is defined by
\[W\circ V = \Ind^{S_{mn}}_{(S_n)^m\rtimes S_m} \big(W\otimes T^m(V)\big),\]
where $T^m(V)$ is the $m$-th tensor power of $V$ and $S_m$ acts on $T^m(V)$ by permuting its factors. Then it is well known that
\begin{equation}
    \label{eq:charple}
    \ch(W\circ V) = \ch(W)\circ \ch(V).
\end{equation}
In this paper, we will use plethysm when $W$ is the trivial representation ($\ch(W)=s_{(m)}$) or the alternating representation ($\ch(W)=s_{(1^m)}$). We also do not distinguish a representation and its character when we use plethysm.

\begin{example} \label{Ex.Plethysm}
Let $V$ be a representation of $S_n$. Consider a natural action of $S_2$ on $V.V$ switching two copies of $V$. Then $s_{(2)}\circ V$ and $s_{(1,1)}\circ V$ are the symmetric and the antisymmetric parts of $V.V$ respectively. From the defining properties of plethysm, one can easily check the following:
\begin{enumerate}
    \item $s_{(2)}\circ V= \Ind^{S_{2n}}_{(S_n\times S_n)\rtimes S_2}V\otimes V=(V.V)^{S_2}$ and
    \item $s_{(1,1)}\circ V= V.V-s_{(2)}\circ V$.
\end{enumerate}
Furthermore, we have an identity of symmetric functions (\cite[I, (8.8)]{Mac}) $$s_{(1,1)}\circ (f+g) = fg + s_{(1,1)}\circ f + s_{(1,1)}\circ g.$$
Hence, if $V$ has a decomposition $V=\bigoplus^r_{i=1} V_i$ into $S_n$-representations $V_i$, then we have
\begin{equation} \label{Eq.AntiSymm}
    s_{(1,1)}\circ \left(\bigoplus_{i=1}^r V_i\right)=\bigoplus_{i<j}V_i.V_j\oplus \bigoplus_{i=1}^r \left(s_{(1,1)}\circ V_i\right).
\end{equation}
\end{example}

\medskip

\subsection{Permutation representation}
\begin{definition}
    A representation $V$ of $S_n$ is a \emph{permutation representation} if there exists an $S_n$-invariant basis of $V$. If further such a basis is one orbit of $S_n$, then $V$ is said to be \emph{transitive}.
\end{definition}
Every permutation representation decomposes into a direct sum of transitive ones. While such a decomposition is not necessarily unique, every decomposition of a given permutation representation has the same number of transitive components because it is equal to the dimension of the invariant subspace.

Every transitive permutation representation of $S_n$ is isomorphic to the induced representation of the trivial representation of some subgroup $H\subset S_n$, which we denote by
$$U_H:=\Ind^{S_n}_{H}e$$
where $e$ is the trivial $H$-representation. Equivalently, $U_H$ is isomorphic to the permutation representation of $S_n$ generated by the cosets in $S_n/H$ with the $S_n$-action given by left multiplication. This is also isomorphic to the $H$-invariant subspace of the regular representation of $S_n$ with respect to the action of $H$ induced by the right multiplication.

We introduce the following notations.
\begin{definition} \label{Def.TransPermRep}
\begin{enumerate}
    \item When $n=\sum a_i$, from $\prod S_{a_i}\subset S_n$, we write $$U_{a_1,\cdots,a_k}:=U_{\prod S_{a_i}}.$$
    \item Let $n=ma+\sum a_i$. Consider $((S_a)^m\rtimes S_m)\times \prod S_{a_i}\subset S_n$, where $S_m$ permutes factors of $(S_a)^m$. We write \[U_{a^m,a_1,\cdots,a_k}^{S_m}:=U_{((S_a)^m\rtimes S_m)\times \prod S_{a_i}},\]
    where $a^m$ denotes the sequence of $m$ copies of $a$.
\end{enumerate}
\end{definition}
The representation $U_{a^m,a_1,\cdots,a_k}^{S_m}$ is the invariant subspace of $U_{a^m,a_1,\cdots,a_k}$ under an action of $S_m$, which commutes with the action of $S_n$. Note that by \eqref{eq:charmult} and \eqref{eq:charple}, we have
$$\ch(U_{a_1,\cdots,a_k})=\prod_{i=1}^k s_{(a_i)}$$ and $$\ch(U_{a^m,a_1,\cdots,a_k}^{S_m})=(s_{(m)}\circ s_{(a)}).\prod_{i=1}^k s_{(a_i)}.$$

\bigskip

\section{Representations on the cohomology of $\fQ^\delta$} \label{Sec.13}

In this paper, we will only deal with varieties or stacks $X$ whose cycle class map is an isomorphism  $A^k(X)\cong H^{2k}(X)$ for each $k$.
Recall that the cohomology $H^*([Y/G])$ of a quotient stack is the equivariant cohomology $H^*_G(Y)=H^*(EG\times_G Y)$.

We define the \emph{$S_n$-equivariant Poincar\'{e} polynomial} by
\beq\label{21}
P^{S_n}_X(t)=\sum_{k}\ch_n(A^k(X))t^k\in \Lambda[t].\eeq
Sometimes we will simply write $P^{S_n}_X(t)$ as $P_X(t)$ when the group $S_n$ is obvious from the context.
Let
\beq\label{64} P_n:=P^{S_n}_{\overline{\mathcal{M}}_{0,n}},\quad Q_n:=P^{S_n}_{\overline{\mathcal{M}}_{0,n+1}}\quad \in \Lambda[t]\eeq
be the $S_n$-equivariant Poincar\'e polynomials of $\overline{\mathcal{M}}_{0,n}$ and
$\overline{\mathcal{M}}_{0,n+1}$ respectively.
The $S_n$-action on $\overline{\mathcal{M}}_{0,n+1}$
in the rest of this paper consists of the permutations of the last $n$ markings with the first marking fixed.

\subsection{$\d$-wall crossings and the blowup formula}
By Theorem~\ref{26}, the blowup formula (\cite[p.605]{GrHa} or \cite[Theorem 6.7]{Ful}) yields a proof of \eqref{13}.

\begin{proposition} \label{Prop.SnRep} For $n\geq 3$, let $\ell=\lfloor\frac{n-1}{2}\rfloor.$
Then we have    \beq\label{45}
    Q_n=P_{\fQ_{0,n,1,1}^{\d=0^+}}+t\sum^{\ell}_{h=2}Q_hQ_{n-h}.\eeq
For $m\ge 2$, we have
\beq\label{51}
P_{\mzn(\PP^{m-1},1)}=P_{\fQ_{0,n,1,m}^{\d=0^+}}+
\eeq \[
\frac{1-t^m}{1-t}\left(\frac{t-t^{m-1}}{1-t} Q_n+\frac{t-t^m}{1-t} s_{(1)}.Q_{n-1}+\frac{t-t^{m+1}}{1-t} \sum_{h=2}^{\ell}Q_hQ_{n-h}\right).
\]
\end{proposition}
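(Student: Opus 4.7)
Both identities will follow by iterating the equivariant blowup formula
\[ A^*(\widetilde{X}) \cong A^*(X) \oplus A^*(Z)\otimes(t+t^2+\cdots+t^{c-1}) \]
(\cite[p.605]{GrHa}, \cite[Theorem 6.7]{Ful}) along the chains \eqref{22} and \eqref{20}. This is legitimate because Theorems \ref{26} and \ref{24} already assert that every morphism in those chains is $S_n$-equivariant and that every blowup center is smooth. The plan therefore splits into two routine subtasks: compute the codimension $c$ of each center by subtracting dimensions, and identify the $S_n$-equivariant Poincar\'e polynomial of each center in terms of the $Q_h$'s.

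For \eqref{45}, every $\fQ^{\d_h^+}$ in \eqref{22} has the same dimension $n-2$ as $\overline{\mathcal{M}}_{0,n+1}$, while the center $\overline{\mathcal{M}}_{0,h+1}\times \overline{\mathcal{M}}_{0,n-h+1}$ has dimension $(h-2)+(n-h-2)=n-4$, so each wall crossing $\fq_h$ contributes a single factor of $t$. The $\binom{n}{h}$ disjoint copies of the center are indexed by $h$-subsets $I\subset [n]$, on which $S_n$ acts by permutation with stabilizer $S_h\times S_{n-h}$; on a fixed copy this stabilizer permutes the $h$ markings of $\overline{\mathcal{M}}_{0,h+1}$ (the extra node marking being fixed) and the $n-h$ markings of $\overline{\mathcal{M}}_{0,n-h+1}$. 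Hence the equivariant Poincar\'e polynomial of the union of centers equals $\Ind^{S_n}_{S_h\times S_{n-h}}(Q_h\boxtimes Q_{n-h}) = Q_h\cdot Q_{n-h}$, and summing over $h=2,\ldots,\ell$ gives \eqref{45}.

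For \eqref{51}, all spaces in \eqref{20} have dimension $n+2m-4$, and any $\PP^{m-1}$-bundle multiplies the base's equivariant Poincar\'e polynomial by $\frac{1-t^m}{1-t}$ (the hyperplane class of the universal tautological bundle being $S_n$-invariant). The centers of $\fc$, $\fc_0$ and $\fq_h$ have dimensions $(m-1)+(n-2)$, $(m-1)+(n-3)$, $(m-1)+(h-2)+(n-h-2)$, giving codimensions $m-1$, $m$, $m+1$ and blowup factors $\frac{t-t^{m-1}}{1-t}$, $\frac{t-t^m}{1-t}$, $\frac{t-t^{m+1}}{1-t}$; the first vanishes when $m=2$, consistent with $\fc$ being an isomorphism in that case. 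The corresponding $S_n$-representations on the bases are $Q_n$ for $\fc$; for the $n$ copies of $\fc_0$ indexed by markings, $\Ind^{S_n}_{S_{n-1}}Q_{n-1} = s_{(1)}.Q_{n-1}$; and $Q_h\cdot Q_{n-h}$ for $\fq_h$ as above. Assembling and factoring out the common $\frac{1-t^m}{1-t}$ then yields \eqref{51}.

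The only delicate point, and the step I expect to be the main obstacle, is the bookkeeping of the $S_n$-action on the disjoint unions of centers. One must verify that, under the natural labeling of copies by $h$-subsets (respectively by single markings for $\fc_0$), the stabilizer acts on each tensor factor precisely by the marking permutation defining $Q_h$, with the node marking (respectively the universal section) fixed. Once this is settled, the whole computation reduces to a mechanical application of the equivariant blowup and projective bundle formulas.
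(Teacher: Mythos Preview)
Your proposal is correct and follows exactly the same route as the paper: both identities are obtained by iterating the equivariant blowup formula along the factorizations of Theorems~\ref{26} and \ref{24}, after identifying the $S_n$-structure on the disjoint unions of centers via the indexing by $h$-subsets (for $\fq_h$) and single markings (for $\fc_0$). The paper's own proof is a two-line sketch of precisely this argument, and your dimension counts for the codimensions ($2$, $m-1$, $m$, $m+1$) and the resulting polynomial factors are all accurate.
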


\begin{proof} The blowup centers for $\fq_h$ in \eqref{22} are $D_{I}\cong \overline{\mathcal{M}}_{0,\{\bullet\}\cup I}\times \overline{\mathcal{M}}_{0,\{\bullet\}\cup I^c}$ for each $I\subset [n]=\{1,2,\cdots,n\}$ with $|I|=h$.
Now \eqref{45} (resp. \eqref{51}) follows directly from the blowup formula and Theorem \ref{26} (resp. Theorem \ref{24}) for $m=1$ (resp. $m\ge 2$). \end{proof}

By \eqref{45}, we can calculate $P_{\fQ^{\d=0^+}_{0,n,1,1}}$, once we know $Q_n$ which will be calculated in \S\ref{Sec.11}.
The remainder of this section is devoted to a proof of \eqref{14} which enables us to calculate $P_n$ from $P_{\fQ^{\d=0^+}}$.

\subsection{GIT for $\mzn$}
In this subsection, we give a geometric description of $\fQ^{\delta=0^+}$ by GIT.

Let $G=\PGL_2(\CC)$ and $$\PP^1[n]=\mzn(\PP^1,1)$$ denote the Fulton-MacPherson compactification of the configuration space of $n$ points on $\PP^1$ (cf. \cite{FM}).
The obvious action of $G$ on $\PP^1$ induces an action of $G$ on $\PP^1[n]$ which commutes with that of $S_n$.
We first recall the following important fact.
\begin{theorem}\label{23} \cite[Theorem 4.1]{KM}
There is a suitable linearization of the $G$-action on $\PP^1[n]$ such that the GIT quotient
of $\PP^1[n]$ by $G$ and its partial desingularization are isomorphic to $\mzn$, i.e.
\beq\label{37} \PP^1[n]/\!/G\cong \widetilde{\PP^1[n]}/\!/G \cong\mzn.\eeq
\end{theorem}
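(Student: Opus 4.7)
The plan is to choose a $G$-equivariant ample linearization on $\PP^1[n]$ whose semistable locus coincides with the stable locus, so that $\PP^1[n]/\!/G$ and the Kirwan partial desingularization $\widetilde{\PP^1[n]}/\!/G$ automatically agree, and then to identify this common quotient with $\mzn$ via its universal property.

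First I would produce the linearization by pulling back the symmetric ample line bundle $\sO(1,\ldots,1)$ on $(\PP^1)^n$ along the $G$-equivariant blowdown $\pi:\PP^1[n]\to(\PP^1)^n$ and twisting by a small symmetric $\QQ$-combination of the exceptional divisors of $\pi$. By Mumford's numerical criterion on $(\PP^1)^n$, a configuration is semistable exactly when no point of $\PP^1$ carries more than $n/2$ coordinates, and strict semistability (occurring only when $n$ is even) is supported on orbits where some point of $\PP^1$ carries exactly $n/2$ coordinates. Choosing the twist inside the relative ample cone of $\pi$, one can arrange that every such strictly semistable orbit becomes unstable on $\PP^1[n]$ while the overall linearization remains ample. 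With such a choice, $(\PP^1[n])^{ss}=(\PP^1[n])^{s}$ and Kirwan's partial desingularization is trivial, yielding the first isomorphism in \eqref{37}.

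Next I would construct a canonical morphism $\Phi:(\PP^1[n])^s/G\to\mzn$ from the moduli interpretation $\PP^1[n]=\mzn(\PP^1,1)$. A closed point is a stable map $(C,p_1,\ldots,p_n,f:C\to\PP^1)$ of degree one from an $n$-pointed nodal rational curve. Forgetting $f$ and contracting components that thereby become Deligne--Mumford unstable (those with fewer than three special points once $f$ is dropped) produces a stable $n$-pointed rational curve, and this procedure is $G$-invariant and thus descends to $\Phi$. Conversely, starting from $(C,p_1,\ldots,p_n)\in\mzn$, a preimage is obtained by choosing a distinguished component, mapping it isomorphically to $\PP^1$ via the degree-one map determined by two of its special points, and retaining the remaining components as contracted bubbles; different choices of distinguished component and isomorphism differ precisely by an element of $G$.

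Since both source and target of $\Phi$ are smooth, irreducible, projective of dimension $n-3$, and the inverse construction exhibits bijectivity on closed points, $\Phi$ is an isomorphism. The main obstacle will be the explicit choice of the twisting parameters on the exceptional divisors of $\pi$: they must be $S_n$-symmetric, lie in the relative ample cone to preserve ampleness of the linearization, and simultaneously destabilize every strictly semistable orbit pulled back from $(\PP^1)^n$. This bookkeeping is most naturally handled by stratifying $\PP^1[n]$ by the combinatorial screen type of Fulton--MacPherson and matching each open stratum to the locally closed stratum of $\mzn$ indexed by the corresponding stable dual tree, with the boundary divisors $D_I$ of $\mzn$ arising as images of proper transforms of diagonals in $(\PP^1)^n$.
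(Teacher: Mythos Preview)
There is a genuine gap in your approach for even $n$. You propose an $S_n$-symmetric linearization with $(\PP^1[n])^{ss}=(\PP^1[n])^s$, but this is impossible if the quotient is to be $\mzn$. For $|I|=n/2$ the boundary divisors $\cD_I$ and $\cD_{I^c}$ of $\PP^1[n]$ are \emph{distinct} (they record which side carries the degree-one component) and are exchanged by the $S_n$-action, so any symmetric linearization treats them identically. If both were stable, their generic $G$-orbits would give distinct points of the geometric quotient; but under the forgetful map they both land on the \emph{same} boundary divisor $D_I=D_{I^c}\subset\mzn$, so your $\Phi$ could not be injective. If both were unstable, the quotient would miss $D_I$ entirely. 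The only symmetric option is that both are strictly semistable, which is exactly the situation the paper (following \cite{KM}) works with: $Y-Y^s=\bigcup_{|I|=n/2}\cD_I$, and the locus of infinite stabilizers is $Z=\bigsqcup_{|I|=n/2}(\cD_I\cap\cD_{I^c})$.

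The same issue undermines your inverse construction: for a curve in $D_I\subset\mzn$ with $|I|=n/2$, the two choices of ``distinguished component'' yield points of $\cD_I$ and $\cD_{I^c}$ respectively, and these lie in \emph{different} $G$-orbits, not the same one. So ``different choices \ldots\ differ precisely by an element of $G$'' is false. The actual content of the theorem (which the paper does not prove but cites from \cite{KM}) is that in the GIT quotient these strictly semistable orbits are identified via S-equivalence to produce $\mzn$, and that Kirwan's nontrivial blowup of $Z$ followed by removal of the proper transforms $\widetilde{\cD}_I$ again yields $\mzn$ because $\mzn$ is already smooth.
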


Kirwan's partial desingularization $\widetilde{\PP^1[n]}/\!/G$ of $\PP^1[n]/\!/G$ is defined as follows (cf. \cite{Kir2}).
Let \beq\label{27} Y=(\PP^1[n])^{ss}\supset (\PP^1[n])^{s}=Y^s\eeq
denote the semistable and stable parts of $\PP^1[n]$.
For $n$ odd, we have $Y^{ss}=Y^s$ and there is nothing to do.
Now let $n$ be even.

For $I\subset [n]=\{1,2,\cdots,n\}$, let $\cD_I\subset \PP^1[n]$ denote the boundary divisor which generically parametrizes prestable curves $C=C_1\cup C_2$ with two irreducible components glued at a node $\bullet$ such that
$$C_1\in \overline{\cM}_{0,I\cup \{\bullet\}} \and C_2\in \PP^1[I^c\cup\{\bullet\}].$$
Here $\overline{\cM}_{0,I\cup \{\bullet\}} \cong \overline{\cM}_{0,|I|+1}$ is the moduli space of $|I|+1$ pointed stable curves of genus 0 with markings by $I\cup \{\bullet\}$ and
$\PP^1[I^c\cup\{\bullet\}]$ is likewise.
Then it is straightforward to see from \cite{KM} that
$$Y=\PP^1[n]-\bigcup_{|I|>n/2} \cD_I \supset \PP^1[n]-\bigcup_{|I|\ge n/2} \cD_I=Y^s.$$

The locus of infinite stabilizers in $Y$ is
$$Z:=\bigsqcup_{|I|=\frac{n}{2}}(\cD_I\cap \cD_{I^c})$$
and the blowup $\widetilde{Y}=\mathrm{bl}_ZY$ of $Y$ along $Z$ has no strictly semistable point with respect to a suitable linearization (cf. \cite[\S2.2]{KM}) by \cite{Kir2}.
The partial desingularization of $\PP^1[n]/\!/G$ is the quotient
\beq\label{30}\widetilde{\PP^1[n]}/\!/G =\widetilde{Y}^s/G\eeq
of the stable part $\widetilde{Y}^s$ by $G$ and
\beq\label{35}
\widetilde{Y}-\widetilde{Y}^s=\bigsqcup_{|I|=\frac{n}{2}}\widetilde{\cD}_I\eeq
is the disjoint union of the proper transforms $\widetilde{\cD}_I$ of $\cD_I$.
Since the codimension of $Z$ is two and $\cD_I$ are divisors, $\widetilde{\cD}_I\cong \cD_I$.

Using the above geometry, we can compute the $S_n$-equivariant Poincar\'e polynomial of the stack $[Y/G]$.
\begin{proposition}\label{28}
For $n\ge 4$ even, the $S_n$-equivariant Poincar\'e polynomial $P_{[Y/G]}$ of the stack $[Y/G]$ is
\beq\label{29}
P_{[Y/G]}=P_n+\frac{t}{1+t}s_{(1,1)}\circ Q_{\frac{n}{2}}+\frac{t^2}{1-t^2}Q^2_{\frac{n}{2}}.
\eeq
\end{proposition}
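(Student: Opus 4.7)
The plan is to combine the equivariant blowup formula for $\widetilde{Y}\to Y$ with a Gysin-type decomposition of $\widetilde{Y}$ coming from $\widetilde{Y} = \widetilde{Y}^s\sqcup \bigsqcup_{|I|=n/2}\widetilde{\cD}_I$. First I analyze $Z$ geometrically: each of its $\frac{1}{2}\binom{n}{n/2}$ components $\cD_I\cap \cD_{I^c}$ (indexed by unordered pairs $\{I,I^c\}$ with $|I|=n/2$) parametrizes a three-component rational curve whose central $\PP^1$ carries the degree-one map. Normalizing the two nodes on the central component to $0$ and $\infty$ exhibits the central-component datum as an element of $G/T$, where $T\cong \CC^*$ is the maximal torus acting by rescaling, and $G$ acts on this factor by left translation. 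Hence $[(\cD_I\cap \cD_{I^c})/G]\cong \overline{\cM}_{0,n/2+1}(I)\times \overline{\cM}_{0,n/2+1}(I^c)\times BT$, with Poincar\'e series $Q_{n/2}(I)\cdot Q_{n/2}(I^c)\cdot (1-t)^{-1}$. Summing over unordered pairs then recognizes the symmetric-square plethysm and yields
\[
P_{[Z/G]} \;=\; \frac{s_{(2)}\circ Q_{n/2}}{1-t}.
\]

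Next the equivariant blowup formula for the smooth codimension-two blowup $\widetilde{Y}\to Y$ gives (using even-degree cohomology concentration) $P_{[\widetilde{Y}/G]} = P_{[Y/G]} + t\cdot P_{[Z/G]}$. Independently, the identification $[\widetilde{Y}^s/G]\cong \mzn$ together with the Gysin sequence for the disjoint smooth codimension-one divisors $\widetilde{\cD}_I\subset \widetilde{Y}$ yields
\[
P_{[\widetilde{Y}/G]} \;=\; P_n + t\sum_{|I|=n/2} P_{[\widetilde{\cD}_I/G]}.
\]

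The main technical step is computing $\sum_{|I|=n/2} P_{[\widetilde{\cD}_I/G]}$. Since the blowup center has codimension one inside $\cD_I$, the excerpt gives $\widetilde{\cD}_I\cong \cD_I\cong \overline{\cM}_{0,n/2+1}(I)\times \PP^1[n/2+1](I^c)$ with $G$ acting only on the Fulton-MacPherson factor; this reduces the problem to computing $P_{[\PP^1[n/2+1]/G]}$. A direct analysis of its $T$-fixed stratification together with the induced Weyl group $W=\ZZ/2$-action shows that the equivariant cohomology of $[\PP^1[n/2+1]/G]$ splits into $W$-trivial and $W$-sign isotypic components whose Poincar\'e series carry the denominators $1-t^2$ and $1+t$ respectively; these are precisely the factors appearing in the claimed formula.

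Finally, solving the two expressions for $P_{[\widetilde{Y}/G]}$ for $P_{[Y/G]}$, substituting the formula for $P_{[Z/G]}$, and simplifying with the identities $Q_{n/2}^2 = s_{(2)}\circ Q_{n/2} + s_{(1,1)}\circ Q_{n/2}$ and $\frac{t}{1+t}+\frac{t^2}{1-t^2} = \frac{t}{1-t^2}$ recovers the claim. The principal obstacle will be the third step: tracking the $B\CC^*$-gerbe structure on $[\PP^1[n/2+1]/G]$ and carefully matching how the Weyl sign character interacts with the induction from $S_{n/2}\times S_{n/2}$ to $S_n$ is delicate, and must produce exactly the right combination of $s_{(2)}\circ Q_{n/2}$ and $s_{(1,1)}\circ Q_{n/2}$ to match the target formula.
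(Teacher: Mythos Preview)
Your overall architecture matches the paper's---blowup formula for $\widetilde Y\to Y$ combined with a Kirwan-type decomposition of $\widetilde Y$---but the first step contains a genuine error. Your claim $P_{[Z/G]}=\dfrac{s_{(2)}\circ Q_{n/2}}{1-t}$ is incorrect. The identification $[(\cD_I\cap\cD_{I^c})/G]\simeq \overline{\cM}_{0,n/2+1}(I)\times\overline{\cM}_{0,n/2+1}(I^c)\times BT$ is fine, but when you pass to unordered pairs $\{I,I^c\}$ and induce from $(S_{n/2}\times S_{n/2})\rtimes\ZZ/2$, the $\ZZ/2$ swaps not only the two moduli factors but also the two nodes on the central $\PP^1$; this is exactly the nontrivial Weyl element, so it acts on $H^{2j}(BT)=\QQ\cdot u^j$ by $(-1)^j$. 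Hence the even powers of $u$ pair with $s_{(2)}\circ Q_{n/2}$ and the odd powers with $s_{(1,1)}\circ Q_{n/2}$, giving
\[
P_{[Z/G]}=\frac{1}{1-t^2}\,s_{(2)}\circ Q_{n/2}+\frac{t}{1-t^2}\,s_{(1,1)}\circ Q_{n/2},
\]
which is what the paper obtains. It is a bit ironic that you invoke the Weyl action carefully in your ``main technical step'' for $\PP^1[n/2+1]$ but omit it here. The error is not cosmetic: if you feed your value of $P_{[Z/G]}$ into $P_{[Y/G]}=P_n+t\sum_I P_{[\widetilde\cD_I/G]}-t\,P_{[Z/G]}$ together with the correct $\sum_I P_{[\widetilde\cD_I/G]}=\frac{1}{1-t}Q_{n/2}^2$, you get $P_n+\frac{t}{1-t}\,s_{(1,1)}\circ Q_{n/2}$, not the target.

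The paper sidesteps this trap by never computing $P_{[Z/G]}$ directly. Instead it works with the exceptional divisor $E$, whose Kirwan stratification $E=E^s\sqcup(E-E^s)$ has both pieces indexed without any wreath-product: $E^s/G$ is a genuine variety (giving $s_{(2)}\circ Q_{n/2}$), while the components of $E-E^s$ are labelled by \emph{ordered} $I$ (one for each normal direction into $\cD_I$), so their contribution is the honest product $\frac{1}{1-t}Q_{n/2}^2$ with no plethysm subtlety. Then $P_{[Z/G]}=(1+t)^{-1}P_{[E/G]}$. The same ordered indexing applies to $\bigsqcup_I\widetilde\cD_I$, which makes your ``main technical step'' unnecessary: equivariant perfection gives $P_{[\widetilde Y/G]}=P_n+\frac{t}{1-t}Q_{n/2}^2$ directly.
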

\begin{proof}
By the blowup formula, we find that
\beq\label{33}
P_{[\widetilde{Y}/G]}=P_{[Y/G]}+tP_{[Z/G]}.\eeq
Let $E$ be the exceptional divisor in $\widetilde{Y}$. Then its GIT quotient
$$E/\!/G=E^s/G=\bigsqcup_{|I|=\frac{n}{2}, 1\in I} \overline{\cM}_{0,I\cup\{\bullet\}}\times \overline{\cM}_{0,I^c\cup\{\bullet\}}$$
and hence
\beq\label{31}P_{E^s/G}=s_{(2)}\circ Q_{\frac{n}{2}}.\eeq
On each fiber $\PP^1$ of $E\to Z$, there are two unstable points and we find
$$E-E^s=\bigsqcup_{|I|=\frac{n}{2}} \overline{\cM}_{0,I\cup\{\bullet\}}\times \overline{\cM}_{0,I^c\cup\{\bullet\}}$$
 so that
 \beq\label{32} P_{[(E-E^s)/G]}=\frac{1}{1-t}Q^2_{\frac{n}{2}}.\eeq
Since the stratification $E=E^s\sqcup(E-E^s)$ is equivariantly perfect by \cite{Kir2} and the codimension of $E-E^s$ is one,
we have
$$P_{[E/G]}=s_{(2)}\circ Q_{\frac{n}{2}}+\frac{t}{1-t}Q^2_{\frac{n}{2}}.$$
Since $E\to Z$ is a $\PP^1$-bundle, we thus have
$$P_{[Z/G]}=\frac{1}{1+t}\left( s_{(2)}\circ Q_{\frac{n}{2}}+\frac{t}{1-t}Q^2_{\frac{n}{2}} \right).$$ and by \eqref{33} this gives us
\beq\label{34} P_{[\widetilde{Y}/G]}=P_{[Y/G]}+\frac{t}{1+t}\left( s_{(2)}\circ Q_{\frac{n}{2}}+\frac{t}{1-t}Q^2_{\frac{n}{2}} \right).\eeq

By \eqref{35}, \eqref{37} and \eqref{30}, we have
\beq\label{36}
P_{[\widetilde{Y}/G]}=P_{[\widetilde{Y}^s/G]}+\frac{t}{1-t}Q^2_{\frac{n}{2}}
=P_n+\frac{t}{1-t}Q^2_{\frac{n}{2}}.
\eeq
Combining \eqref{34} and \eqref{36}, we have
$$ P_{[Y/G]}=P_n+\frac{t}{1-t}Q^2_{\frac{n}{2}}-\frac{t}{1+t}\left( s_{(2)}\circ Q_{\frac{n}{2}}+\frac{t}{1-t}Q^2_{\frac{n}{2}} \right)$$
which gives us \eqref{29} by Example \ref{Ex.Plethysm} (2).
\end{proof}

\subsection{Geometry of $\fQ^{\delta=0^+}$}
The goal of this subsection is to prove the following, by a GIT description of $\fQ_{0,n,1,m}^{\delta=0^+}$ for $m\ge 1$.
\begin{proposition} \label{25}
For $n\ge 3$,
\beq\label{38}
P_{\fQ_{0,n,1,m}^{\d=0^+}}=\frac{1-t^{2m}}{1-t}P_{[Y/G]}-t^{m+1}\frac{1-t^m}{(1-t)^2}Q^2_{\frac{n}{2}}\eeq
where $Q_{\frac{n}{2}}$ is set to be 0 for $n$ odd.
\end{proposition}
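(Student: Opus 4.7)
My plan is to realize $\fQ^{\d=0^+}_{0,n,1,m}$ as (the partial desingularization of) a $G=\PGL_2(\C)$-equivariant GIT quotient of a projective bundle over $\PP^1[n]$, and then run the argument of Proposition~\ref{28} one floor up on this projective bundle. Let $\pi\colon\cC\to\PP^1[n]$ be the universal curve, $f\colon\cC\to\PP^1$ the universal degree-$1$ map, and set $\cW:=\pi_*(f^*\O_{\PP^1}(1))$, a $G$-equivariant rank-$2$ bundle on $\PP^1[n]$. Sending a quasimap $(C,L,s_1,\dots,s_m)$ to the pair $((C,f)\in\PP^1[n],\,[s_1\!:\!\cdots\!:\!s_m]\in\PP(H^0(f^*\O(1))^{\oplus m}))$---the choice of isomorphism $L\simeq f^*\O(1)$ being absorbed by the $G$-action and the overall rescaling by the projectivization---identifies $\fQ^{\d=0^+}_{0,n,1,m}$ with the partial desingularization of $\PP(\cW^{\oplus m})|_Y/\!/G$, where $Y=\PP^1[n]^{ss}$ as in Proposition~\ref{28}.

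Since $\cW^{\oplus m}$ is $G$-equivariant of rank $2m$, the projective bundle formula for the quotient stack yields $P_{[\PP(\cW^{\oplus m})|_Y/G]}=\frac{1-t^{2m}}{1-t}P_{[Y/G]}$, the first term in~\eqref{38}. For $n$ odd we have $Y=Y^s$ and $Q_{\frac{n}{2}}=0$ by convention, so this alone already establishes~\eqref{38}.

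For $n$ even I carry out the Kirwan partial desingularization of $\PP(\cW^{\oplus m})|_Y$, mirroring the computation of $P_{[Y/G]}$ in Proposition~\ref{28}. The locus of non-finite $G$-stabilizers lies over $Z$: for $z\in Z$ the torus $T\subset G$ stabilizing $z$ acts on $\cW_z\cong\C^2$ with weights $\pm1$, hence on $\cW_z^{\oplus m}$ each weight appears with multiplicity $m$, and the $T$-fixed locus in $\PP(\cW_z^{\oplus m})\cong\PP^{2m-1}$ is a disjoint union $\PP^{m-1}\sqcup\PP^{m-1}$, globalizing to two $\PP^{m-1}$-subbundles $\cZ_\pm$ over $Z$. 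Blowing up $\cZ_+\sqcup\cZ_-$ and then stratifying the result by $G$-stability exactly as in the proof of Proposition~\ref{28} produces analogues of \eqref{31}--\eqref{36}: the $\PP^1$-bundle $E\to Z$ is replaced by a $\PP^m$-bundle (the fiber codimension of $\cZ_\pm$ inside $\PP(\cW^{\oplus m})|_Y$ is $m+1$), and the divisorial contribution over $\sqcup_I\cD_I$ picks up an extra $\PP^{2m-1}$-factor coming from the projectivization. The residual $S_2$-swap between $\cZ_+$ and $\cZ_-$ supplies the plethystic splitting $Q_{\frac{n}{2}}^2=s_{(2)}\circ Q_{\frac{n}{2}}+s_{(1,1)}\circ Q_{\frac{n}{2}}$.

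The main obstacle will be this last bookkeeping step: matching each of the $s_{(2)}\circ Q_{\frac{n}{2}}$, $s_{(1,1)}\circ Q_{\frac{n}{2}}$ and $Q_{\frac{n}{2}}^2$ contributions against its projective-bundle twist by the appropriate factor from $\frac{1-t^m}{1-t}$ or $\frac{1-t^{2m}}{1-t}$, then substituting~\eqref{29} for $P_{[Y/G]}$ and simplifying rational functions in $t$. After all cancellations, the net correction to $\frac{1-t^{2m}}{1-t}P_{[Y/G]}$ is the single closed-form term $-\frac{t^{m+1}(1-t^m)}{(1-t)^2}Q_{\frac{n}{2}}^2$ appearing in~\eqref{38}.
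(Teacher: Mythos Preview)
Your setup is essentially the paper's, but your computational route is more circuitous than necessary. The paper observes that your bundle $\cW$ is in fact \emph{trivial}: since $f_*\O_\cC=\O$ one has $\pi_*f^*\O_{\PP^1}(1)\cong H^0(\PP^1,\O(1))\otimes\O$, so $\PP(\cW^{\oplus m})|_Y$ is simply $X:=Y\times\PP^{2m-1}$. The paper then shows (Proposition~\ref{40}) that $\fQ^{\d=0^+}_{0,n,1,m}\cong X^s/G$ \emph{directly}, with the unstable locus identified via Hilbert--Mumford as $X\setminus X^s=\bigsqcup_{|I|=n/2}(\cD_I\times\PP^{m-1})$, of codimension $m+1$. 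A single application of equivariant perfection of the Kirwan stratification \cite{Kir,Kir2} then gives
\[
P_{X^s/G}=P_{[X/G]}-t^{m+1}P_{[(X\setminus X^s)/G]}=\frac{1-t^{2m}}{1-t}P_{[Y/G]}-t^{m+1}\cdot\frac{1-t^m}{1-t}\cdot\frac{1}{1-t}Q^2_{n/2},
\]
using $P_{[\sqcup_I\cD_I/G]}=\frac{1}{1-t}Q^2_{n/2}$ from the proof of Proposition~\ref{28}. No blowup, no plethystic bookkeeping.

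Your alternative---performing the full partial desingularization by blowing up $\cZ_\pm$ and rerunning the argument of Proposition~\ref{28}---should in principle also reach \eqref{38}, but two points need care. First, the identification of $\fQ^{\d=0^+}$ with the \emph{partial desingularization} of $X/\!/G$ (rather than with $X^s/G$) is not what the paper establishes and would require its own argument. Second, your codimension count is off: $\cZ_\pm$ has codimension $m+2$ in $X$ (codimension $2$ from $Z\subset Y$ plus codimension $m$ from $\PP^{m-1}\subset\PP^{2m-1}$), so the exceptional divisor is a $\PP^{m+1}$-bundle, not a $\PP^m$-bundle. The factor $t^{m+1}$ in \eqref{38} arises in the paper from the codimension of the divisorial stratum $\cD_I\times\PP^{m-1}$, not from $\cZ_\pm$.
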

The case of odd $n$ is a direct consequence of Theorem \ref{26} (2a) and Theorem \ref{24} (4a).

Now let $n=2\ell+2$ be even. Let
$$X=Y\times \PP^{2m-1}=(\PP^1[n])^{ss} \times \PP H^0(\PP^1,\sO_{\PP^1}(1)^{\oplus m})$$
on which $G$ acts diagonally. The linearization $\sO_Y(1)$ of $Y$ induces a linearization $\sO_Y(1)\boxtimes \sO_{\PP^{2m-1}}(\e)$ for $\e>0$ sufficiently small.
The Hilbert-Mumford criterion then tells us that
\beq\label{39}
X^s=X-\bigsqcup_{|I|=\frac{n}{2}}(\cD_I\times \PP^{m-1})\eeq
and we have a geometric quotient $X^s/G$.

For $(f,\a)\in X^s$ with $(f:C\to \PP^1)\in Y\subset \mzn(\PP^1,1)$, it is straightforward to check that
$$(C, L=f^*\sO_{\PP^1}(1),s=f^*\a)$$
is $\delta=0^+$-stable. Conversely, given $(C,L,s)\in\fQ_{0,n,1,m}^{\d=0^+}$, the linear system $|L|$ gives us a morphism $f:C\to \PP^1$ and $s$ induces $\a\in H^0(\sO_{\PP^1}(1)^{\oplus m})$, such that $(f,s)\in X^s$.
Upon relativizing these, we obtain a proof of the following.
\begin{proposition}\label{40} For $n$ even,
$$\fQ^{\d=0^+}_{0,n,1,m}\cong X^s/G.$$
\end{proposition}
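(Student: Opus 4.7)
I would prove the isomorphism by constructing mutually inverse morphisms between $\fQ^{\d=0^+}_{0,n,1,m}$ and the geometric quotient $X^s/G$. In one direction, given an $S$-point $(f:\cC\to\PP^1_S,[\alpha])$ of $X^s$, the line bundle $\cL:=f^*\sO(1)$ together with the multisection $s:=f^*\alpha$ yields a family of $\d=0^+$-stable quasimaps over $S$; this direction is essentially automatic, since $f^*$ provides a canonical isomorphism $H^0(\PP^1,\sO(1))^{\oplus m}\xrightarrow{\sim}H^0(C,L)^{\oplus m}$ on each geometric fiber. The substantive content lies in the reverse construction and in matching the two stability conditions.

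\textbf{Forward construction.} Let $(\pi:\cC\to S,\cL,s)$ be a family of $\d=0^+$-stable quasimaps. On each geometric fiber, since $g=0$ and $\deg L=1$, we have $h^1(L)=0$, $h^0(L)=2$, and $L$ is base-point-free: $L$ restricts to $\sO(1)$ on the balanced component(s) and to the trivial bundle on each other component, with the constant value on each trivial-degree component pinned down by the attachment node. Cohomology and base change then makes $\cV:=\pi_*\cL$ locally free of rank $2$, and the evaluation $\pi^*\cV\twoheadrightarrow\cL$ yields a morphism $\tilde f:\cC\to\PP(\cV)$ satisfying $\tilde f^*\sO_{\PP(\cV)}(1)=\cL$. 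Under $(\pi^*,\pi_*)$-adjunction, $s$ corresponds to a fiberwise nonzero section $\tilde\alpha\in H^0(S,\cV^{\oplus m})$, which projectivizes to a section of $\PP(\cV^{\oplus m})\to S$. A local trivialization $\cV|_U\cong\sO_U^2$ converts $(\tilde f,[\tilde\alpha])$ into a morphism $U\to\PP^1[n]\times\PP^{2m-1}$, and the change of trivialization is precisely the $G=\PGL_2$-action on the target, so these local morphisms patch to a well-defined map $S\to X/G$ provided the image lies in $X^s$.

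\textbf{Stability matching -- the main obstacle.} The crux is to verify that the forward morphism factors through $X^s=X\setminus\bigsqcup_{|I|=n/2}(\cD_I\times\PP^{m-1})$, and conversely that every $(f,[\alpha])\in X^s$ produces a genuinely $\d=0^+$-stable quasimap. If $\tilde f$ sent a geometric fiber into $\cD_I$ with $|I|>n/2$, then $\tilde f$ would contract the $I$-side, forcing $\cL$ to carry its entire degree on the smaller (non-balanced) $I^c$-side, contradicting the description of balanced components in Remark~\ref{rmk:delta0}; hence the image automatically lies in $Y$. For the critical case $|I|=n/2$, both sides are balanced and $\cL$ has degree $1$ on exactly one of them, say $C_I$; applying the $\d=0^+$-stability of the pair $(\bar L,\bar s)$ to the subsheaf of $\bar L$ supported on the non-carrying component $C_{I^c}$ then forces $\bar s$ to be nonzero on $C_I$, which translates precisely to $[\tilde\alpha]\notin\PP^{m-1}\subset\PP^{2m-1}$ (the locus of $m$-tuples of sections of $\sO_{\PP^1}(1)$ sharing a common zero at the image point of $C_{I^c}$). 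The reverse implications follow from the same small-$\e$ Hilbert--Mumford analysis underlying (\ref{39}), together with the reformulation (ii$'$) of $\d$-stability in \S\ref{18}. The two constructions are tautologically inverse to one another via $f^*$, which completes the proof.
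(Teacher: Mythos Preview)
Your proposal is correct and follows essentially the same approach as the paper: construct mutually inverse morphisms via $f^*$ in one direction and the linear system $|L|$ (equivalently, the projectivization of $\pi_*\cL$) in the other, then check that the two stability conditions match. The paper's own proof is a two-sentence sketch of exactly this, so your version simply fills in the details on cohomology and base change, local trivializations for the descent to $X^s/G$, and the Hilbert--Mumford/subsheaf analysis for the stability matching.
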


Now we can prove Proposition \ref{25} for $n$ even.
\begin{proof}[Proof of \eqref{38}]
Since the stratification $X=X^s\sqcup (X-X^s)$ is equivariantly perfect by \cite{Kir, Kir2}, we have
\beq\label{41}
P_{\fQ_{0,n,1,m}^{\d=0^+}}=P_{X^s/G}=\frac{1-t^{2m}}{1-t} P_{[Y/G]} - t^{m+1}\frac{1}{1-t} \frac{1-t^m}{1-t}Q^2_{\frac{n}{2}}\eeq
since the codimension of $\cD_I\times \PP^{m-1}$ in $X$ is $m+1$ for each $I$.
\end{proof}

\subsection{$S_n$-representations on $A^*(\fQ^{\d=0^+})$}

Propositions \ref{28} and \ref{25} prove the following.
\begin{proposition}\label{PMindel} For $n\ge 3$ and $m\ge 1$,
    \beq\label{43}
    P_{\fQ^{\d=0^+}_{0,n,1,m}}=\frac{1-t^{2m}}{1-t}P_{n}+ts_{(1,1)}\circ \frac{1-t^m}{1-t}Q_\frac{n}{2},\eeq
    where $Q_{\frac{n}{2}}$ is set to be zero for $n$ odd.
\end{proposition}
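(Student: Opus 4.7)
The plan is to substitute the expression for $P_{[Y/G]}$ from Proposition \ref{28} directly into the formula \eqref{38} of Proposition \ref{25}, and then recognize the resulting combination as the plethystic expansion of \eqref{43} via the identity $s_{(1,1)}\circ(\sum_i V_i)=\sum_i s_{(1,1)}\circ V_i+\sum_{i<j}V_iV_j$ from Example \ref{Ex.Plethysm}. The case of odd $n$ is immediate: with the convention $Q_{\frac{n}{2}}:=0$, Proposition \ref{25} already reads $P_{\fQ^{\d=0^+}}=\frac{1-t^{2m}}{1-t}P_n$, which is \eqref{43} in this case (consistent with Theorem \ref{26}(2a) and Theorem \ref{24}(4a), which describe $\fQ^{\d=0^+}$ as a $\PP^{2m-1}$-bundle over $\overline{\mathcal{M}}_{0,n}$). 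So only $n=2\ell+2$ even requires real work.

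For such $n$, writing $Q:=Q_{\frac{n}{2}}$, the substitution yields
$$P_{\fQ^{\d=0^+}}=\frac{1-t^{2m}}{1-t}P_n+\frac{t(1-t^{2m})}{1-t^2}\,s_{(1,1)}{\circ}Q+\left(\frac{t^2(1-t^{2m})}{(1-t)(1-t^2)}-\frac{t^{m+1}(1-t^m)}{(1-t)^2}\right)Q^2.$$
The coefficient of $s_{(1,1)}\circ Q$ simplifies at once to $t\sum_{i=0}^{m-1}t^{2i}$. For the $Q^2$ coefficient, I would put both summands over the common denominator $(1-t)^2(1+t)$ and factor $t^2(1-t^m)$ out of the numerator, using the identity $(1+t^m)-t^{m-1}(1+t)=1-t^{m-1}$, to obtain $\frac{t^2(1-t^m)(1-t^{m-1})}{(1-t)^2(1+t)}$. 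A short direct summation then gives $\sum_{0\le i<j\le m-1}t^{i+j}=\frac{t(1-t^{m-1})(1-t^m)}{(1-t)^2(1+t)}$, so the $Q^2$ coefficient equals $t\sum_{0\le i<j\le m-1}t^{i+j}$.

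To finish, I would write $\frac{1-t^m}{1-t}Q=\sum_{i=0}^{m-1}t^iQ$ as a graded sum of $S_{n/2}$-representations and apply the plethysm identity together with the grading rule $s_{(1,1)}\circ(t^iV)=t^{2i}(s_{(1,1)}\circ V)$ (since $s_{(1,1)}\circ V$ sits in doubled $t$-degree). This gives
$$t\,s_{(1,1)}\circ\tfrac{1-t^m}{1-t}Q=t\sum_{i=0}^{m-1}t^{2i}\,s_{(1,1)}{\circ}Q+t\sum_{0\le i<j\le m-1}t^{i+j}Q^2,$$
which exactly reproduces the last two terms computed above, completing the identification with \eqref{43}.

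The main obstacle is the algebraic collapse of the $Q^2$ coefficient. The two contributions to it come from a priori unrelated geometric sources: the $\PP^1$-bundle piece of the exceptional divisor $E$ in Proposition \ref{28}, and the excision of the unstable locus $\bigsqcup \cD_I\times\PP^{m-1}$ in Proposition \ref{25}. That they telescope into precisely the $\binom{\,\cdot\,}{2}$-type polynomial predicted by the plethysm $s_{(1,1)}\circ\frac{1-t^m}{1-t}Q$ is the arithmetic heart of the statement; once it is verified, the rest is bookkeeping.
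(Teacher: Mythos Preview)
Your proposal is correct and follows essentially the same route as the paper: substitute \eqref{29} into \eqref{38}, simplify the coefficients of $s_{(1,1)}\circ Q_{n/2}$ and $Q_{n/2}^2$ to obtain exactly the intermediate expression the paper writes, and then invoke \eqref{Eq.AntiSymm} applied to $(1+t+\cdots+t^{m-1})Q_{n/2}$ to collapse both terms into $t\,s_{(1,1)}\circ\frac{1-t^m}{1-t}Q_{n/2}$. Your write-up merely spells out the algebra (factoring $t^2(1-t^m)$, the closed form for $\sum_{i<j}t^{i+j}$, and the grading rule $s_{(1,1)}\circ(t^iV)=t^{2i}s_{(1,1)}\circ V$) that the paper leaves implicit in the single sentence ``By applying \eqref{Eq.AntiSymm} to $(1+t+\cdots+t^{m-1})Q_{\frac{n}{2}}$, the proposition follows.''
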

\begin{proof}
For an odd $n$, the proposition follows from \eqref{38} since $Y/G=\mzn$.
For $n$ even, by \eqref{29} and \eqref{38}, we have
\begin{equation*}
        \begin{split}
            P_{\fQ^{\d=0^+}}=\frac{1-t^{2m}}{1-t}P_{n}+ \frac{t-t^{2m+1}}{1-t^2}s_{(1,1)}\circ Q_\frac{n}{2}+\frac{t^2(1-t^{m-1})(1-t^m)}{(1-t)(1-t^2)}Q_{\frac{n}{2}}^2.
        \end{split}
    \end{equation*}
    By applying \eqref{Eq.AntiSymm} to $(1+t+\cdots+t^{m-1})Q_\frac{n}{2}$, the proposition follows.
\end{proof}

When $m=1$, \eqref{43} is \eqref{14}.
\begin{corollary} \label{CMindel} For $n\geq3$, we have
\beq\label{44}
P_{\fQ_{0,n,1,1}^{\d=0^+}}=
(1+t)P_n+ts_{(1,1)}\circ Q_{\frac{n}{2}}
\eeq
where $ Q_{\frac{n}{2}}$ is set to be 0 for $n$ odd.
\end{corollary}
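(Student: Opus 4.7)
The plan is to simply specialize Proposition \ref{PMindel} to the case $m=1$; no additional geometric input is needed since all the wall-crossing and GIT work has already been packaged into \eqref{43}.

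First I would observe that when $m=1$, the rational functions in $t$ appearing in \eqref{43} collapse: the prefactor of $P_n$ becomes
\[
\frac{1-t^{2m}}{1-t}\bigg|_{m=1}=\frac{1-t^2}{1-t}=1+t,
\]
and the prefactor inside the plethysm becomes
\[
\frac{1-t^m}{1-t}\bigg|_{m=1}=1.
\]
Substituting these into the right-hand side of \eqref{43} immediately yields
\[
P_{\fQ_{0,n,1,1}^{\d=0^+}}=(1+t)P_n+t\,s_{(1,1)}\circ Q_{\frac{n}{2}},
\]
which is exactly \eqref{44}.

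The convention that $Q_{n/2}=0$ for odd $n$ is inherited from Proposition \ref{PMindel}, so the odd case reduces to the statement $P_{\fQ^{\d=0^+}_{0,n,1,1}}=(1+t)P_n$, which is consistent with Theorem \ref{26}(2a) saying that $\fp:\fQ^{\d=0^+}\to\mzn$ is a $\PP^1$-bundle in that case. Since the corollary is a direct algebraic consequence of Proposition \ref{PMindel}, there is no genuine obstacle; the only thing to double-check is the bookkeeping of the $S_n$-action when taking $s_{(1,1)}\circ Q_{n/2}$, but this was already handled in the derivation of \eqref{29} via Example \ref{Ex.Plethysm} and the identity \eqref{Eq.AntiSymm}.
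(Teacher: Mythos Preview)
Your proposal is correct and takes essentially the same approach as the paper: the corollary is stated immediately after Proposition~\ref{PMindel} with only the remark ``When $m=1$, \eqref{43} is \eqref{14},'' which is precisely the specialization you carry out.
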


By \eqref{45} and \eqref{44}, we can calculate $P_n$ from $Q_n$.
\begin{theorem}\label{46} For $n\ge 3$,
\beq\label{47}
P_n=\frac{1}{1+t} Q_n-\frac{t}{1+t}\left(\sum_{h=2}^\ell Q_hQ_{n-h}+s_{(1,1)}\circ Q_{\frac{n}{2}}
\right)\eeq
where $\ell=\lfloor\frac{n-1}{2}\rfloor$ and we set $Q_{\frac{n}{2}}=0$ if $n$ is odd.
\end{theorem}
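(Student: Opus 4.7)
The plan is purely algebraic: Theorem \ref{46} is the immediate consequence of combining the two identities already established in this section, namely the wall-crossing identity of Proposition \ref{Prop.SnRep} and the GIT-based formula of Corollary \ref{CMindel}. No geometric input beyond these two identities is required; the proof consists in substituting one into the other and isolating $P_n$.

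More precisely, I would first recall equation \eqref{45} from Proposition \ref{Prop.SnRep}, which reads
\[
Q_n \;=\; P_{\fQ^{\delta=0^+}_{0,n,1,1}} \;+\; t\sum_{h=2}^{\ell} Q_h Q_{n-h},
\]
and then substitute into it the expression for $P_{\fQ^{\delta=0^+}_{0,n,1,1}}$ provided by Corollary \ref{CMindel}, equation \eqref{44}, namely
\[
P_{\fQ^{\delta=0^+}_{0,n,1,1}} \;=\; (1+t)\,P_n \;+\; t\,\bigl(s_{(1,1)}\circ Q_{\frac{n}{2}}\bigr),
\]
with the convention $Q_{n/2}=0$ when $n$ is odd. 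This yields
\[
Q_n \;=\; (1+t)P_n \;+\; t\bigl(s_{(1,1)}\circ Q_{\frac{n}{2}}\bigr) \;+\; t\sum_{h=2}^{\ell} Q_h Q_{n-h}.
\]
Dividing by $1+t$ and moving the blow-up contributions to the right-hand side gives the claimed formula \eqref{47}. The manipulation is valid in $\Lambda[t]$ since $(1+t)$ is a non-zero-divisor, and the preceding identity shows $(1+t)$ divides the right-hand side of the displayed equation in $\Lambda[t]$.

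There is essentially no obstacle in this step itself: all the work has already been carried out in the proofs of Proposition \ref{Prop.SnRep} (via the blowup formula applied to the wall-crossing factorization of Theorem \ref{26}) and of Corollary \ref{CMindel} (via the GIT description of $\fQ^{\delta=0^+}_{0,n,1,1}$ through Propositions \ref{28} and \ref{25}). The only thing worth double-checking during the write-up is the boundary case $n$ even versus $n$ odd, which is handled uniformly by the convention $Q_{n/2}=0$ for odd $n$ and by the consistent use of $\ell = \lfloor (n-1)/2 \rfloor$ in both \eqref{45} and \eqref{44}. The resulting recursion expresses $P_n$ inductively in terms of $Q_h$ for $h \le n$; coupled with the independent computation of $Q_n$ carried out in \S\ref{Sec.11} via Kapranov's $S_n$-equivariant blowup construction, this will give the desired closed formula for $P_n$.
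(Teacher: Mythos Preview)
Your proposal is correct and matches the paper's approach exactly: the paper states Theorem~\ref{46} immediately after Corollary~\ref{CMindel} with the one-line justification ``By \eqref{45} and \eqref{44}, we can calculate $P_n$ from $Q_n$,'' which is precisely the substitution you describe.
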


In the next section, we will calculate $Q_n$.

\bigskip

\section{$S_n$-representations on the cohomology of $\overline{\mathcal{M}}_{0,n+1}$} \label{Sec.11}

By Theorem \ref{46}, in order to compute $P_n$, it suffices to compute $Q_n$.
In this section, we calculate $Q_n$ in terms of a purely combinatorial gadget, called \emph{weighted rooted trees}  (Proposition \ref{Prop.RepKap}). The idea is to apply the blowup formula (Lemma \ref{Lem.BlowupFormula}) for transversal subvarieties to the Kapranov construction.

\subsection{Blowup formula for transversal subvarieties}
Note that the blowup centers for Kapranov's blowups are usually not smooth and hence we cannot directly apply the usual blowup  formula \cite[p.605]{GrHa} or \cite[Theorem 6.7]{Ful}.
In this section, we recall the blowup formula for transversal subvarieties.

Let $X$ be a smooth variety and $$Y=\bigcup_{a \in A}Y_a$$ be a transversal union of smooth irreducible subvarieties $Y_a$ of $X$, that is, every nonempty intersection
$$Y_{A'}:=\bigcap_{a \in A'}Y_a, \quad \text{for }A'\subset A$$
is transversal. For each subset $A'\subset A$, we let
    \[V_{A'}=\bigotimes_{a\in A'}V_{a}\]
where $V_a:=A^+(\PP^{\mathrm{codim}(Y_a)-1})$ is the cohomology of $\PP^{\mathrm{codim}(Y_a)-1}$ of positive degrees.

\begin{lemma} \cite[Proposition 6.1]{BM} \label{Lem.BlowupFormula}  Let $\tX$ be the blowup of $X$ along $Y=\bigcup_{a\in A}Y_a$. Then,
    \begin{equation}\label{Eq.BlowupFormula}
        A^*(\tX)=A^*(X)\oplus \bigoplus_{\emptyset \neq A'\subset A} A^*(Y_{A'})\otimes V_{A'},
    \end{equation}
    where $A'$ runs over the nonempty subsets of $A$ with $Y_{A'}\neq \emptyset$.
\end{lemma}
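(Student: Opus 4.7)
The plan is to proceed by induction on $|A|$. For $|A|=1$, the subvariety $Y=Y_{a_0}$ is a single smooth center, and the claim reduces to the classical smooth blowup formula \cite[p.~605]{GrHa}, \cite[Theorem~6.7]{Ful}:
$$A^*(\tX) = A^*(X) \oplus A^*(Y_{a_0})\otimes V_{a_0}.$$
Note that this is consistent with the statement once we interpret $V_{\emptyset}$ as the base ring, so the $A^*(X)$ term corresponds to $A' = \emptyset$.

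For the inductive step, fix an index $a_0 \in A$ and factorize the blowup as a two-stage process. First form $X_1 := \Bl_{Y_{a_0}}X$ and let $\tY_a$ denote the proper transform of $Y_a$ in $X_1$ for $a \neq a_0$. Working in local coordinates adapted to the transversal configuration $\{Y_a\}_{a\in A}$, one verifies that each $\tY_a$ is smooth of the same codimension as $Y_a$, that the family $\{\tY_a\}_{a \neq a_0}$ is again transversal in $X_1$, and that for every nonempty $A'' \subset A\setminus\{a_0\}$ one has
$$\tY_{A''} \;=\; \Bl_{\,Y_{A''\cup\{a_0\}}}\, Y_{A''}$$
(with the convention that no blowup occurs when $Y_{A''\cup\{a_0\}} = \emptyset$). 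By the universal property of blowups, $\tX$ is then the blowup of $X_1$ along $\bigcup_{a \neq a_0}\tY_a$, so the inductive hypothesis applies to $\tX \to X_1$.

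Combining the three applications of the smooth blowup formula completes the proof. Namely, $A^*(X_1) = A^*(X)\oplus A^*(Y_{a_0})\otimes V_{a_0}$ from the base case; the inductive hypothesis gives
$$A^*(\tX) \;=\; A^*(X_1)\;\oplus\;\bigoplus_{\emptyset\neq A''\subset A\setminus\{a_0\}} A^*(\tY_{A''})\otimes V_{A''};$$
and applying the smooth blowup formula once more to each $\tY_{A''} = \Bl_{Y_{A''\cup\{a_0\}}}Y_{A''}$ yields
$$A^*(\tY_{A''}) \;=\; A^*(Y_{A''})\;\oplus\; A^*(Y_{A''\cup\{a_0\}})\otimes V_{a_0},$$
where the second summand vanishes whenever $Y_{A''\cup\{a_0\}} = \emptyset$. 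Since $\mathrm{codim}_{X_1}\tY_a = \mathrm{codim}_X Y_a$ for $a\neq a_0$, the projective bundle factor $V_{A''}$ is unchanged in passing from $X$ to $X_1$, so sorting the resulting summands according to whether $a_0 \in A'$ or $a_0 \notin A'$ assembles them into the claimed decomposition \eqref{Eq.BlowupFormula}.

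The main obstacle is the local verification that a single blowup along a transversal member $Y_{a_0}$ preserves smoothness and transversality of all other proper transforms, together with the identification $\tY_{A''} = \Bl_{Y_{A''\cup\{a_0\}}} Y_{A''}$. This is a standard normal-coordinate computation, but the combinatorial bookkeeping is the nontrivial part: one must track which intersections $Y_{A'}$ are empty at each stage so that the final sum ranges only over nonempty $Y_{A'}$, and confirm that the iterated exceptional contributions conspire to give precisely the tensor factors $V_{A'}=\bigotimes_{a\in A'}V_a$.
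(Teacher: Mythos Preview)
Your proof is correct and follows the same approach as the paper: the paper's proof simply states that $\tX$ is the iterated smooth blowup along the proper transforms of the $Y_a$ and appeals to repeated application of the smooth blowup formula, which is precisely the induction you carry out in detail. Your version fills in the bookkeeping (identifying $\tY_{A''}=\Bl_{Y_{A''\cup\{a_0\}}}Y_{A''}$ and matching the $V_{A'}$ factors) that the paper leaves implicit.
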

\begin{proof}
    $\tX$ is isomorphic to the iterated smooth blowup of $X$ along (the proper transforms of) $Y_a$. The assertion follows from a repeated application of the blowup formula for smooth irreducible centers.
\end{proof}

Note that if a finite group $G$ acts on $X$ and $Y$ is $G$-invariant, then there are natural actions of $G$ on $\tX$ and $A$ as $G$ permutes $Y_a$. In this case, \eqref{Eq.BlowupFormula} is $G$-equivariant.

The Kapranov map $\overline{\mathcal{M}}_{0,n+1}\xrightarrow{} \PP^{n-2}$ breaks into a sequence of blowup morphisms. The intermediate spaces and blowup centers are suitable Hassett moduli spaces of weighted pointed stable curves \cite{Has}. The weight of the first marking is fixed to be one. We call this marking the \emph{0-th marking}. The remaining $n$ markings are indexed by the set $[n]=\{1,2,\cdots,n\}$ and the symmetric group $S_n$ acts on the moduli spaces by permuting them. All the blowup morphisms are $S_n$-equivariant by construction, and so we can apply the blowup formula to compute the $S_n$-equivariant Poincar\'e polynomial. We will introduce the notion of weighted rooted trees to encode the combinatorial data of the blowup centers.

\subsection{Hassett moduli of weighted stable curves} \label{Subsubsec.Hassett}
In \cite{Has}, Hassett introduced the notion of \emph{weighted} stable curves which generalizes that of stable curves.
Recall that for each weight
\[\mathcal{A} = (\mathcal{A}(1), \cdots, \mathcal{A}(n)) \in (\Q\cap (0,1])^{\oplus n}, \]
a pair $(C,p_1, \cdots, p_n)$ of a prestable curve $C$ with (not necessarily distinct) smooth marked points $p_i\in C$ is called $\cA$-stable if
\begin{enumerate}
    \item $\omega_C(\sum_{i=1}^n \cA(i) p_i) $ is ample, and
    \item for any $p\in C$, $\sum_{p_i=p} \cA(i) \le 1$.
\end{enumerate}
Note that when $\cA(i)=1$ for all $i$, we recover the notion of the usual stable curve. Hassett proved that for each weight $\mathcal{A}\in (\Q\cap (0,1])^{\oplus n}$ with $\sum_{i=1}^n\mathcal{A}(i)+2g-2>0$ there exists a smooth projective \DM stack $\overline{\mathcal{M}}_{g,\mathcal{A}}$ parametrizing $\mathcal{A}$-weighted stable curves of genus $g$. If $g=0$, every such Hassett moduli space is a smooth projective variety.

When $\mathcal{A},\mathcal{B}\in (\Q\cap (0,1])^{\oplus n}$ are weights with $\mathcal{A}(i)\geq\mathcal{B}(i)$ for all $i$, we write $\mathcal{A}\geq \mathcal{B}$. For weights $\mathcal{A}\geq \mathcal{B}$, there exists the \emph{reduction morphism} $\rho_{\mathcal{A},\mathcal{B}}:\overline{\mathcal{M}}_{g,\mathcal{A}}\xrightarrow{}\overline{\mathcal{M}}_{g,\mathcal{B}}$ which stabilizes $\mathcal{A}$-stable curves with respect to the weight $\mathcal{B}$.
\begin{proposition} \cite{Has} \label{prop:reduction}
Reduction morphisms have the following properties.
\begin{enumerate}
    \item For weights $\mathcal{A}\geq \mathcal{B} \geq \mathcal{C}$, we have $\rho_{\mathcal{A}, \mathcal{C}}=\rho_{\mathcal{B},\mathcal{C}}\circ \rho_{\mathcal{A},\mathcal{B}}$.
    \item For weights $\mathcal{A}\geq \mathcal{B}$, there are only finitely many $t\in (0,1]$ such that
    \begin{equation} \label{Ineq.Hassett}
    \sum_{i\in I}\mathcal{A}(i)>\sum_{i\in I}(1-t)\mathcal{A}(i)+t\mathcal{B}(i)=1\geq \sum_{i\in I} \mathcal{B}(i)
    \end{equation}
    for some $I\subset [n] $ with $|I|\geq 3$.
    \item If there exists only one such $t$ in (2), then $\rho_{\mathcal{A}, \mathcal{B}}$ is the blowup along the transversal union of the loci $Z_I$ along which the $I$-markings collide, where $I$ runs over the subsets of $[n]$ satisfying \eqref{Ineq.Hassett} and $|I|\geq 3$. Moreover, $Z_I\simeq \overline{\mathcal{M}}_{g,\mathcal{B}_I}$ where $\mathcal{B}_I\in (\Q\cap (0,1])^{\oplus (n-|I|+1)}$ is the weight obtained from $\mathcal{B}$ by adding up $\{\mathcal{B}(i)\}_{i\in I}$ as one entry.
    \item The intersection of some of the loci $Z_{I_1},\cdots,Z_{I_r}$ in (3) is nonempty if and only if $I_j$ are mutually disjoint. Furthermore, in this case $\bigcap_j Z_{I_j}\simeq \overline{\mathcal{M}}_{g,\mathcal{B}_{I_1,\cdots,I_r}}$ where $\mathcal{B}_{I_1,\cdots,I_r}$ denotes the weight obtained from $\mathcal{B}$ by adding up $\{\mathcal{B}(i)\}_{i\in I_j}$ as one entry for each $j=1,\cdots,r$.
\end{enumerate}
\end{proposition}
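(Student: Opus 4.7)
The plan is to prove (1)--(4) in order, with most of the work devoted to (3). For (1), the reduction morphism $\rho_{\mathcal{A},\mathcal{B}}$ is characterized by the universal property that it takes any $\mathcal{A}$-stable family over a base $S$ to its $\mathcal{B}$-stabilization, a contraction uniquely determined on geometric fibers and compatible with base change. Both $\rho_{\mathcal{A},\mathcal{C}}$ and $\rho_{\mathcal{B},\mathcal{C}}\circ\rho_{\mathcal{A},\mathcal{B}}$ produce the $\mathcal{C}$-stabilization of a given $\mathcal{A}$-stable family, so uniqueness forces their equality. For (2), the equation $\sum_{i\in I}\bigl((1-t)\mathcal{A}(i)+t\mathcal{B}(i)\bigr)=1$ is affine-linear in $t$, hence has at most one solution in $(0,1]$ for each $I$; finiteness then follows from finiteness of the index set of $I\subset[n]$ with $|I|\geq 3$.

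The main content is (3). First, I would identify the exceptional locus of $\rho_{\mathcal{A},\mathcal{B}}$ set-theoretically: a $\mathcal{B}$-stable curve $C$ fails to be $\mathcal{A}$-stable exactly when $\sum_{p_i=p}\mathcal{A}(i)>1$ at some point $p\in C$, so the non-$\mathcal{A}$-stable locus is the union of $Z_I$ over $I$ satisfying \eqref{Ineq.Hassett}. The identification $Z_I\cong\overline{\mathcal{M}}_{g,\mathcal{B}_I}$ is then immediate by merging the $I$-markings into one marking of weight $\sum_{i\in I}\mathcal{B}(i)$. For the blowup description, I would analyze deformation theory at a generic point of $Z_I$: a transverse slice in $\overline{\mathcal{M}}_{g,\mathcal{B}}$ is parametrized by the relative positions of the $|I|$ markings near the collision point modulo the translations and scalings of a formal disc, giving a local model on an affine space of dimension $|I|-1$. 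In $\overline{\mathcal{M}}_{g,\mathcal{A}}$, the collided markings have bubbled off onto a new $\PP^1$ whose moduli modulo affine automorphisms is $\PP^{|I|-2}$, precisely the exceptional divisor of the blowup of the slice at the origin. Transversality of $Z_I$ and $Z_J$ for disjoint $I,J$ holds because their collisions occur at distinct points of $C$ and deform independently, and one concludes by globalizing the local model to a genuine smooth blowup.

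For (4), the ``if'' direction is immediate: if the $I_j$ are pairwise disjoint, the curve with each $I_j$ collided at its own distinct point is $\mathcal{B}$-stable (using the bound $\sum_{i\in I_j}\mathcal{B}(i)\leq 1$) and lies in $\bigcap_j Z_{I_j}$; the moduli of such configurations is $\overline{\mathcal{M}}_{g,\mathcal{B}_{I_1,\ldots,I_r}}$ by the same merging as in (3). Conversely, if $I_j\cap I_k\neq\emptyset$, then a point of $Z_{I_j}\cap Z_{I_k}$ would have all of $I_j\cup I_k$ at one point, but then $I_j\cup I_k$ itself satisfies \eqref{Ineq.Hassett} at a critical value strictly less than $t$, contradicting the single-wall assumption.

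The main obstacle is the local deformation analysis in (3): one must set up the transverse slice to $Z_I$ precisely, verify that the $\mathcal{A}$-stabilization of a nearby $\mathcal{B}$-stable curve produces the blowup coordinates, and globalize the local picture to conclude that $\rho_{\mathcal{A},\mathcal{B}}$ is a smooth blowup (not merely a set-theoretic modification). This amounts to a clean identification of the normal bundle of $Z_I$ with the universal moduli sheaf of the colliding markings, which I expect is the most technical step.
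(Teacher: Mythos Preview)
The paper does not give its own proof of this proposition: it is stated with the citation \cite{Has} and used as a black box from Hassett's work on weighted pointed stable curves. So there is no ``paper's proof'' to compare your proposal against; the authors simply import the result.

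That said, your outline is a reasonable reconstruction of how one proves such a statement, and (1), (2), and the identification of $Z_I$ in (3) are straightforward as you say. One small slip in your argument for the ``only if'' direction of (4): the critical parameter for $I_j\cup I_k$ is strictly \emph{greater} than $t$, not less. Indeed, the interpolated sum $f_I(s)=\sum_{i\in I}\bigl((1-s)\mathcal{A}(i)+s\mathcal{B}(i)\bigr)$ is decreasing in $s$ and increasing in $I$, so $f_{I_j\cup I_k}(t)>f_{I_j}(t)=1$ whenever $I_k\not\subset I_j$, forcing the new wall to occur at some $t'>t$. You should also separate out the case $\sum_{i\in I_j\cup I_k}\mathcal{B}(i)>1$, where the collided configuration is not $\mathcal{B}$-stable at all and $Z_{I_j}\cap Z_{I_k}=\emptyset$ directly; only in the complementary case does the single-wall contradiction apply. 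Neither point affects the overall logic, and your identification of the normal-bundle computation in (3) as the technical crux is accurate.
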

By (1) and (2), every reduction morphism is the composition of the blowups in (3) where the intersections of the irreducible components of the blowup centers are again isomorphic to some Hassett moduli spaces.

In the rest of this section, we let $g=0$. We consider Hassett moduli spaces of weighted stable curves with $n+1$ marked points with the weight of the 0-th marking fixed to be one.

For a weight $\mathcal{A}\in (\Q\cap (0,1])^n$, we write
\begin{enumerate}
    \item[(i)] $a^r\cdot\mathcal{A}=\left(a,\cdots,a,\mathcal{A}(1),\cdots, \mathcal{A}(n)\right)\in (\Q\cap (0,1])^{n+r}$ for $r\geq 1$,
    \item[(ii)] $\mathcal{A}=a^n~$ if $\mathcal{A}=(a,\cdots,a)$,
    \item[(iii)] $\frac{1}{k}\mathcal{A}=\left(\frac{\mathcal{A}(1)}{k},\cdots,\frac{\mathcal{A}(n)}{k}\right)$ for $k\geq \max_i \mathcal{A}(i)$.
\end{enumerate}
Especially when $r=1$ in (i) we simply write as $a\cdot \mathcal{A}$ and consider $a$ as its 0-th entry for convenience.

\begin{example} \label{Ex.Reduction}
    Let $\mathcal{A}=(2,2,3,3,3,5)$. Then the reduction morphism \[\overline{\mathcal{M}}_{0,1\cdot\frac{1}{8}\mathcal{A}}\longrightarrow\overline{\mathcal{M}}_{0,1\cdot \frac{1}{9}\mathcal{A}}\]
    is the blowup along a transversal union of two subvarieties. One is the locus in $\overline{\mathcal{M}}_{0,1\cdot \frac{1}{9}\mathcal{A}}$ where the 3 marked points of weights $\frac{2}{9}, \frac{2}{9}, \frac{5}{9}$ collide and hence isomorphic to $\overline{\mathcal{M}}_{0,1^2\cdot(\frac{1}{3})^3}$. The other is the locus where the 3 marked points of weight $\frac{1}{3}$ collide and thus isomorphic to $\overline{\mathcal{M}}_{0,1^2\cdot (\frac{2}{9})^2\cdot (\frac{5}{9})}$. The intersection of these two is the locus $\overline{\mathcal{M}}_{0,3}=\mathrm{pt}$ where each triple collides.
\end{example}

We also recall the following two general facts.
\begin{remark}\label{Ex.Hassett} Let $\mathcal{A}, \mathcal{B}\in (\Q\cap (0,1])^n$ be weights.
\begin{enumerate}
    \item If $\mathcal{A}\geq \mathcal{B}$ and $|I|=2$ for every $I\subset [n]$ satisfying \eqref{Ineq.Hassett}, then the reduction morphism $\rho_{\mathcal{A}, \mathcal{B}}$ is an isomorphism \cite[Corollary 4.7]{Has}.
    \item Suppose that $\sum_i \mathcal{A}(i)>1\geq \sum_{i\neq j}\mathcal{A}(i)$ for all $1\leq j\leq n$. Then $\overline{\mathcal{M}}_{0,1\cdot \mathcal{A}}\simeq \PP^{n-2}$ \cite[\S6.2]{Has}.
\end{enumerate}
\end{remark}

\subsection{Kapranov's blowups}

By Kapranov's construction, we have a morphism
\beq\label{48}
\overline{\mathcal{M}}_{0,n+1}\lra \PP^{n-2}\eeq
which factors as the composition
\begin{equation} \label{Map.Kap}
    \overline{\mathcal{M}}_{0,n+1}\xrightarrow{\,\cong\,} \overline{\mathcal{M}}_{0,1\cdot(\frac{1}{2})^n}\longrightarrow \cdots \longrightarrow \overline{\mathcal{M}}_{0,1\cdot(\frac{1}{n-1})^n}\cong \PP^{n-2}
\end{equation}
of the reduction morphisms among the Hassett moduli spaces $\overline{\mathcal{M}}_{0,1\cdot(\frac{1}{k})^n}$.
To apply the blowup formula to each reduction morphism $$\overline{\mathcal{M}}_{0,1\cdot (\frac{1}{k-1})^n}\xrightarrow{}\overline{\mathcal{M}}_{0,1\cdot (\frac{1}{k})^n},$$
we have to consider closed subvarieties in $\overline{\mathcal{M}}_{0,1\cdot(\frac{1}{k})^n}$ which naturally correspond to partitions of $[n]$.

\begin{definition} \label{Def.Partition} Let $S$ be a finite set and let $k$ be an integer.
\begin{enumerate}
    \item A set $P:=\{I_1, \cdots ,  I_r\}$ of mutually disjoint nonempty subsets of $S$ is called a \emph{partition} of $S$ if $S=\sqcup_jI_j$. We call each $I_j$ a \emph{part} of $P$. When $r=1$, i.e. $P=\{S\}$, we call $P$ the \emph{trivial} partition of $S$.
    \item Let
    \[\hspace*{2em} P=\{I_{1,1}, \cdots , I_{1,m_1},I_{2,1}, \cdots , I_{2,m_2}, \cdots,I_{r,1}, \cdots , I_{r, m_r},J_{1}, \cdots J_s\}\] be a partition of $S$. Let
    \[P'=\{I_{1}, \cdots , I_{r},J_{1}, \cdots J_s\}\]
    be a partition of $S$ such that \[I_j=I_{j,1}\sqcup\cdots\sqcup I_{j,m_j}\] is a nontrivial partition of $I_j$ for each $j$. Then we say $P$ is a \emph{refinement} of $P'$ \emph{of type $\{m_1,\cdots, m_r\}$}. If further $r=1$, we say it is \emph{of a simple type}.
    \item Let $P,P'$ be as in (2). We write $P>_k P'$ if $m_j\geq 3$ and $|I_j|=k$ for all $j$. We write $P\geq_k P'$ if $P>_kP'$ or $P=P'$. We write $P>P'$ if $P>_kP'$ for some $k$.
\end{enumerate}
Note that when $P>_k P'$ is of type $\{m_1,\cdots,m_r\}$, there exist $r$ distinct partitions $P'_1,\cdots, P'_r \geq_k P'$ such that $P>_k P'_j$ is of a simple type $\{m_j\}$ for each $j$. These $P'_j$ are precisely the maximal ones among the partitions $P''$ satisfying $P>_k P''\geq_k P'$. They uniquely determine $P'$, and vice versa.
\end{definition}

Let $1\leq k<n$ be an integer and let $P=\{I_1,\cdots,I_r\}$ be a partition of $[n]$ with $\max_{j}|I_j|\leq k$. Consider the locus $\overline{\mathcal{M}}^k_{P}\subset \overline{\mathcal{M}}_{0,1\cdot(\frac{1}{k})^n}$ where the marked points indexed by $I_j$ collide for each $j$. Then
\[\overline{\mathcal{M}}^k_{P}\cong\overline{\mathcal{M}}_{0,1\cdot \frac{1}{k}(|I_1|,\cdots,|I_r|)}.\]
Hence when $\max_j|I_j|< k$, we have the reduction morphism $\rho^k_P$
\begin{equation} \label{Diag.Kap}
    \xymatrix{\overline{\mathcal{M}}^{k-1}_{P}\ar[r]^-{\rho^k_P} \ar[d] & \overline{\mathcal{M}}^k_{P}\ar[d]\\
\overline{\mathcal{M}}_{0,1\cdot(\frac{1}{k-1})^n}\ar[r]  & \overline{\mathcal{M}}_{0,1\cdot(\frac{1}{k})^n}
}
\end{equation}
which commutes with the reduction morphism in the bottom row where the vertical morphisms are the inclusions. Furthermore, when $P$ is a refinement of another partition $P'$ of type $\{m_1,\cdots,m_r\}$, we have $\overline{\mathcal{M}}^k_{P'}\subset \overline{\mathcal{M}}^k_P$ which is of codimension $\sum_j(m_j-1)$ and $\rho^k_{P'}$ is compatible with \eqref{Diag.Kap}.

From Proposition \ref{prop:reduction} (3) and (4), it follows that
\begin{enumerate}
\item if $\{P_a\,|\, a \in A\}$ is the collection of partitions with $P>_kP_a$ of simple type, then $\rho^k_P$ is the blowup along the transversal union $\bigcup_{a\in A}\overline{\mathcal{M}}^k_{P_a}$
\item for a nonempty $A'\subset A$, we have
\[\bigcap_{a\in A'}\overline{\mathcal{M}}^k_{P_{a}}=\overline{\mathcal{M}}^k_{P_{A'}}, \]
where $P_{A'}$ is the maximal partition satisfying $P_a >_k P_{A'}$ for all $a\in A'$.
The intersection is empty if there is no such partition $P_{A'}$. Hence nonempty intersections among $\{\overline{\mathcal{M}}^k_{P_a}~|~ a\in A\}$ correspond bijectively to the partitions $P'$ with $P >_k P'$.
\end{enumerate}

\begin{example}
    Let $P=\{I_1,\cdots,I_6\}$ be a partition of $[18]=\{1, \cdots, 18\}$ having $6$ parts
    \begin{equation*}
        \begin{split}
             I_1=\{&1,2\},~ I_2=\{3,4\},~ I_3=\{5,6,7\},~ I_4=\{8,9,10\},\\
            & I_5=\{11,12,13\},~ I_6=\{14,15,16,17,18\}
        \end{split}
    \end{equation*}
    of cardinalities $2,2,3,3,3,5$ respectively. Then we have $\overline{\mathcal{M}}^k_P\simeq \overline{\mathcal{M}}_{0,1\cdot \frac{1}{k}\mathcal{A}}$ for $\mathcal{A}=(2,2,3,3,3,5)$ and $k\geq 5$, and the reduction morphism
    \[\rho^9_P: \overline{\mathcal{M}}^8_P\longrightarrow \overline{\mathcal{M}}^9_P\]
    is a blowup as we saw in Example~\ref{Ex.Reduction}. The blowup center is the transversal union of two closed subvarieties $\overline{\mathcal{M}}^9_{P'_{\{1,2,6\}}},~ \overline{\mathcal{M}}^9_{P'_{\{3,4,5\}}}\subset \overline{\mathcal{M}}^9_P$, where
    \[P'_{\{i,j,k\}}:=\{I_{i}\sqcup I_{j}\sqcup I_{k}\}\cup \{ I_l ~|~l\neq i,j,k\}.\]
    There are three partitions $P'$ satisfying $P >_9 P'$. Two of them are of simple types, namely $P'=P'_{\{1,2,6\}},~P'_{\{3,4,5\}}$. The other is $P'=P'_{\{1,2,6\},\{3,4,5\}}:=\{I_1\sqcup I_2 \sqcup I_6,~ I_3\sqcup I_4 \sqcup I_5\}$, where the corresponding $\overline{\mathcal{M}}^9_{P'}$ is the nonempty intersection of the previous two, as we saw
    \[\overline{\mathcal{M}}^9_{P'_{\{1,2,6\}}}\cap~ \overline{\mathcal{M}}^9_{P'_{\{3,4,5\}}}=~\overline{\mathcal{M}}^9_{P'_{\{1,2,6\},\{3,4,5\}}}=\mathrm{pt}\]
    in Example~\ref{Ex.Reduction}.
\end{example}
\begin{remark}
    (1) In the above example, $P'_{\{2,3,4\}}$ satisfies $P >_8 P'_{\{2,3,4\}}$ while $P \not>_9 P'_{\{2,3,4\}}$ since $|I_1\sqcup I_3\sqcup I_4|=8$. Hence the corresponding subvariety $\overline{\mathcal{M}}^8_{P'_{\{2,3,4\}}}\subset \overline{\mathcal{M}}^8_P$ is an irreducible component of the blowup center of $\rho^8_P$, not of $\rho^9_P$.

    (2) Furthermore, $P'_{\{3,6\}}:=\{I_3\sqcup I_6\}\cup \{I_j~|~j\neq 3,6\}$ does not contribute to the blowup $\rho^9_P$ while $|I_3\sqcup I_6|=9$, since $\overline{\mathcal{M}}^9_{P'_{\{3,6\}}}\subset \overline{\mathcal{M}}^9_{P}$ is of codimension 1. This is the reason why we exclude the case $m_j=2$ for some $j$ when we consider the partial orders on partitions.
\end{remark}

Now the blowup formula for $\rho^k_P$ can be written in terms of partitions. For $P> P'$ of type $\{m_1,\cdots, m_r\}$, we define a graded vector space $V_{P,P'}$ by
\begin{equation} \label{Eq.BlowupFactor}
    V_{P,P'}:=\bigotimes^r_{j=1} A^+(\PP^{m_j-2})
\end{equation}
where $A^+(\PP^{m_j-2})\subset A^*(\PP^{m_j-2})$ denotes the subspace of positive degrees. We let $V_{P,P}:=\Q$.

\begin{lemma} \label{66} Let $k$ be an integer and let $P$ be a partition of $[n]$ with $\max_{I\in P}|I|<k<n$. Then there is an isomorphism of graded vector spaces
\[ A^*(\overline{\mathcal{M}}^{k-1}_{P})=\bigoplus_{P \geq_k P'} A^*(\overline{\mathcal{M}}^k_{P'})\otimes V_{P,P'}\]
where $P'$ runs over the partitions of $[n]$ satisfying $P\geq_k P'$.
\end{lemma}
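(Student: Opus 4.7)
The plan is to directly apply the blowup formula for transversal subvarieties (Lemma \ref{Lem.BlowupFormula}) to the reduction morphism $\rho^k_P:\overline{\mathcal{M}}^{k-1}_P\to \overline{\mathcal{M}}^k_P$, and then repackage the resulting index set (subsets $A'\subset A$ of irreducible blowup components) in terms of the partitions $P'$ with $P\geq_k P'$.

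First I would recall the description, stated just before the lemma, of $\rho^k_P$ as the blowup of $\overline{\mathcal{M}}^k_P$ along the transversal union $\bigcup_{a\in A}\overline{\mathcal{M}}^k_{P_a}$, where $A$ indexes the partitions $P_a$ of $[n]$ with $P>_k P_a$ of simple type. Note that applicability of Lemma \ref{Lem.BlowupFormula} is justified here: each $\overline{\mathcal{M}}^k_{P_a}$ is smooth (a Hassett moduli of genus 0 weighted stable curves), and the stated observation tells us the union is transversal. The hypothesis $\max_{I\in P}|I|<k<n$ is exactly what ensures $\rho^k_P$ is a nontrivial blowup of this form rather than the trivial case covered by Remark \ref{Ex.Hassett}(1).

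Next I would set up the bijection
\[
\{A'\subset A\}\;\longleftrightarrow\;\{P'\mid P\geq_k P'\},
\qquad A'\longmapsto P_{A'},
\]
using the observation recalled in the excerpt that $\bigcap_{a\in A'}\overline{\mathcal{M}}^k_{P_a}=\overline{\mathcal{M}}^k_{P_{A'}}$, with $P_{A'}$ the maximal partition satisfying $P_a>_k P_{A'}$ for all $a\in A'$ (and $P_\emptyset:=P$ by convention). The inverse is the map recorded in Definition \ref{Def.Partition}: a partition $P'$ with $P>_k P'$ of type $\{m_1,\dots,m_r\}$ determines precisely $r$ simple-type refinements $P'_1,\dots,P'_r\geq_k P'$ with $P>_k P'_j$, and these are the elements of the corresponding $A'$. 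Checking that this is a bijection is the combinatorial heart of the proof, but it is already packaged in Definition \ref{Def.Partition} and the remarks following the lemma statement.

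Then I would match the blowup factors. For $a\in A'$ corresponding to a simple-type refinement $P>_k P'_j$ of type $\{m_j\}$, the locus $\overline{\mathcal{M}}^k_{P_a}\cong \overline{\mathcal{M}}_{0,1\cdot\frac{1}{k}(|I_{j,1}|,\dots,|I_{j,m_j}|,\dots)}$ has codimension $m_j-1$ in $\overline{\mathcal{M}}^k_P$ (combining $m_j$ marked points of $\overline{\mathcal{M}}^k_P$ into one reduces the number of markings by $m_j-1$). Hence $V_a=A^+(\PP^{m_j-2})$, and
\[
V_{A'}=\bigotimes_{a\in A'}V_a=\bigotimes_{j=1}^{r} A^+(\PP^{m_j-2})=V_{P,P'},
\]
matching the definition \eqref{Eq.BlowupFactor}. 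The empty subset $A'=\emptyset$ corresponds to $P'=P$ and contributes the summand $A^*(\overline{\mathcal{M}}^k_P)\otimes V_{P,P}=A^*(\overline{\mathcal{M}}^k_P)$.

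Finally, assembling these summands via Lemma \ref{Lem.BlowupFormula} gives the claimed isomorphism. The main obstacle is purely bookkeeping: verifying the bijection between subsets $A'\subset A$ and refinements $P\geq_k P'$, and checking that the simple-type decomposition of a given $P'$ correctly reconstructs $V_{P,P'}$ as a tensor product. The geometric content (smoothness of the centers, transversality of the union, codimension count) is already contained in Proposition \ref{prop:reduction} and the description of Kapranov-type reductions recalled above.
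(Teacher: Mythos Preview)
Your proposal is correct and follows essentially the same approach as the paper's own proof: apply the blowup formula of Lemma~\ref{Lem.BlowupFormula} to $\rho^k_P$, and translate the index set from subsets $A'\subset A$ to partitions $P'$ with $P\geq_k P'$ via $Y_{A'}=\overline{\mathcal{M}}^k_{P_{A'}}$ and $V_{A'}=V_{P,P_{A'}}$. Your write-up is more detailed than the paper's (which compresses the argument to a few lines), but the logical content is identical.
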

\begin{proof}
    The lemma is a direct consequence of the blowup formula (Lemma~\ref{Lem.BlowupFormula}) applied to $\rho^k_P$. Note that for each $a\in A$ and $A'\subset A$,
    \[Y_a=\overline{\mathcal{M}}^k_{P_a},\quad Y_{A'}=\overline{\mathcal{M}}^k_{P_{A'}} \quad \text{ and }\quad V_{A'}=V_{P,P_{A'}},\]
    where $A$ parametrizes $P'$ such that $P>_k P'$ are of simple types.
\end{proof}

\subsection{Weighted rooted trees}
Let us introduce combinatorial objects called rooted trees which bijectively correspond to the boundary strata of $\overline{\mathcal{M}}_{0,\{0\}\cup [n]}\simeq \overline{\mathcal{M}}_{0,n+1}$ that contribute in applying the blowup formula to \eqref{Map.Kap}.
We also impose weight functions on the vertices of rooted trees to encode the cohomology degrees. Explicitly, the notion of a weighted rooted tree is equivalent to a descending sequence of partitions  (Lemma~\ref{Lem.DescendingPartition}).

Recall that a \emph{graph} $G$ is data $(F(G),Ver(G),\iota)$ consisting of a set $F(G)$ of \emph{flags}, a set $Ver(G)$ of \emph{vertices} and an involution $\iota:F(G)\xrightarrow{}F(G)$, equipped with an \emph{attaching map} $F(G) \xrightarrow{} Ver(G)$. An unordered pair of flags which forms a 2-cycle under $\iota$ is called an \emph{edge}, and a flag fixed by $\iota$ is called a \emph{leg}. The \emph{valency} $val(v)$ of a vertex $v$ is the number of flags attached to $v$. A graph is called a \emph{tree} if it is connected and simply connected, that is, every pair of vertices is connected by edges and there are no loops or cycles.

\begin{definition} \label{Def.RootedTree}
\begin{enumerate}
    \item A \emph{rooted tree} is a tree in which one leg is designated as the \emph{output}. The unique vertex where the output is attached is called the \emph{root} and denoted by $v_0$. The other legs are called \emph{inputs}.

    We assume that every rooted tree satisfies a condition that is more strict than the usual stability assumption: $val(v_0)\geq 3$ and $val(v)\geq 4$ for all vertices $v\neq v_0$.
    \item A \emph{weighted rooted tree} is a rooted tree $T$ equipped with a map
    \[w:Ver(T) \longrightarrow \Z\]
    such that $0\leq w(v_0)\leq val(v_0)-3$ and $1\leq w(v)\leq val(v)-3$ for $v\neq v_0$. Let $w(T):=\sum_{v\in Ver(T)}w(v)$ be the \emph{weight} of a tree $T$.
    \item For each rooted tree $T$, there are two canonical maps
    \begin{equation*}
        \begin{split}
            D_T:& ~Ver(T)\setminus \{v_0\} \longrightarrow 2^{[n]}\\
            (\text{resp. }d_T:& ~Ver(T)\setminus \{v_0\} \longrightarrow \Z)
        \end{split}
    \end{equation*}
    which assign to each vertex $v\neq v_0$ the set (resp. the number) of legs whose paths to $v_0$ pass through $v$.
\end{enumerate}
\end{definition}

We remark that the root is allowed to have weight 0 since the weight of the root will encode the cohomology degree of the base in the blowup formula. The weights of the other vertices will encode the cohomology degrees of intersections of the irreducible components of the blowup centers.
\begin{figure}[h]
    \centering
    \begin{tikzpicture} [scale=0.8,auto=left,every node/.style={scale=0.8}]
      \tikzset{Bullet/.style={circle,draw,fill=black,scale=0.5}}
      \node[Bullet] (n0) at (2,0.5) {};
      \node[Bullet] (n1) at (2,-0.5) {};
      \node[] (n2) at (0.5,-1.5) {};
      \node[] (n3) at (1.5,-1.5) {};
      \node[] (n4) at (2.5,-1.5) {};
      \node[] (n5) at (3.5,-1.5) {};

      \draw[black] (n0) -- (2,1.5);
      \draw[black] (n0) -- (n1);
      \draw[black] (n1) -- (n2);
      \draw[black] (n1) -- (n3);
      \draw[black] (n1) -- (n4);
      \draw[black] (n1) -- (n5);
      \draw[black] (n0) -- (0.5,-0.5);
      \draw[black] (n0) -- (3.5,-0.5);
      \draw[] (2,1.5) node[right] {the output};
      \draw (2.1,0.5) node[right] {$v_0$};
      \draw (2.1,-0.5) node[right] {$v$};
      \draw[] (0.4,-0.5) node {$1$};
      \draw[] (3.6,-0.5) node {$2$};
      \draw[] (0.4,-1.6) node {$3$};
      \draw[] (1.4,-1.6) node {$4$};
      \draw[] (2.6,-1.6) node {$5$};
      \draw[] (3.6,-1.6) node {$6$};
    \end{tikzpicture}
    \caption{A rooted tree with 6 inputs}
    \label{fig:RootedTree}
\end{figure}
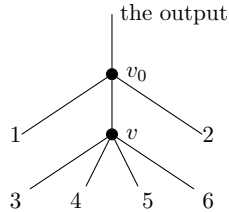

For example, the rooted tree in Figure \ref{fig:RootedTree} has 6 inputs and 2 vertices $v_0, v$ of valencies 4 and 5 respectively. We remark that our notion of a rooted tree is the same as the notion of a tree defined in \cite[\S1.1.1]{GK} except that we require a more restrictive condition on valencies of vertices.

\begin{definition}
Let $\sT^{o}_n$ (resp. $\sT_{n,p}$) denote the set of rooted trees with $n$ inputs (resp. weighted rooted trees with $n$ inputs and weight $p$).
\end{definition}

The notion of a rooted tree is in fact equivalent to a sequence of partitions of a set satisfying certain descending properties. To see this, let $\sP_n$ be the set of sequences
    \[P_2:=\{\{1\},\cdots,\{n\}\},~ P_3,~  \cdots ,~ P_{n-1}\]
of partitions of $[n]$ with $P_{k-1}\geq_k P_k$ for each $k$. Note that for such a sequence, we have $\max_{I\in P_k}|I|\leq k$ for each $k$.

\begin{lemma} \label{Lem.DescendingPartition}
    There is a canonical bijection
    \[\sT^{o}_n \longrightarrow \sP_n.\]
    Furthermore, for each $(P_2,\cdots,P_{n-1})\in \sP_n$ , $P_{k-1}\geq_k P_k$ is a refinement of type $\{val(v)-1\}_{v\in d_T^{-1}(k)}$ for all $k$ where $T$ is the corresponding rooted tree.
\end{lemma}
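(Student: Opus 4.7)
The strategy is to construct the bijection explicitly in both directions using the fact that the subsets $\{D_T(v)\}_{v \ne v_0}$ of a rooted tree $T$ form a laminar family on $[n]$, and then to verify compatibility with the descending refinement structure of $\sP_n$.

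\textbf{Forward map.} Given $T \in \sT^o_n$, for each $2 \leq k \leq n-1$ define $P_k$ as the partition of $[n]$ whose parts are the maximal elements of $\{D_T(v) : v \ne v_0,\ d_T(v) \leq k\}$ together with singletons $\{i\}$ for all inputs $i$ not belonging to any such $D_T(v)$. Since the family $\{D_T(v)\}_{v \ne v_0}$ is laminar (two such sets are either nested or disjoint, because the tree has no cycles), this is a well-defined partition. The assumption $val(v) \geq 4$ for $v \ne v_0$ forces $|D_T(v)| \geq 3$, so no $D_T(v)$ appears in $P_2$; hence $P_2$ is the trivial partition. Going from $P_{k-1}$ to $P_k$, the only new maximal sets appearing are the $D_T(v)$ with $d_T(v) = k$ exactly, and each such $D_T(v)$ is precisely the disjoint union of the $val(v) - 1$ subsets corresponding to its $val(v) - 1$ children (input children contribute singletons and internal children $v'$ contribute $D_T(v')$, which are parts of $P_{k-1}$ by the inductive description). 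This proves $P_{k-1} \geq_k P_k$, together with the type statement $\{val(v) - 1\}_{v \in d_T^{-1}(k)}$.

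\textbf{Inverse map.} Given $(P_2, \ldots, P_{n-1}) \in \sP_n$, collect all parts of size at least $3$ that appear in the sequence:
\[
\sF \;=\; \bigcup_{k=3}^{n-1}\{\,I \in P_k : |I| \geq 3\,\}.
\]
Because $P_{k-1} \geq_k P_k$, any part of $P_{k-1}$ either persists into $P_k$ or is absorbed into a strictly larger part of $P_k$ of size exactly $k$; so elements of $\sF$ form a laminar family on $[n]$, and each $I \in \sF$ has a canonical level $|I|$ at which it first appears. Build a graph $T$ whose vertex set is $\sF \sqcup \{v_0\}$, with the parent of $I \in \sF$ defined as the smallest $I' \in \sF$ strictly containing $I$ (or $v_0$ if there is none), and with each input $i \in [n]$ attached as a leg to the smallest $I \in \sF$ containing $i$ (or to $v_0$ otherwise); declare one additional leg at $v_0$ to be the output. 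Then $T$ is connected and acyclic, hence a rooted tree, with $D_T(I) = I$ for all $I \in \sF$. The valency $val(I)$ equals $1$ plus the number of children of $I$, which by construction is the number of parts of $P_{|I|-1}$ fusing into $I$; by the definition of $>_k$, this number is at least $3$, giving $val(I) \geq 4$. Finally $val(v_0) \geq 3$ because $P_{n-1}$ is a partition of $[n]$ with all parts of size at most $n-1$, hence contains at least two parts.

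\textbf{Mutual inverse and conclusion.} By construction, applying the forward map to $\Psi(P_2,\ldots,P_{n-1})$ recovers the original sequence of partitions, since the maximal elements of $\{D_T(v) : d_T(v) \leq k\}$ are exactly the non-singleton parts of $P_k$; conversely, applying $\Psi$ to $\Phi(T)$ reproduces $T$ because the laminar family $\{D_T(v)\}_{v \ne v_0}$ is reconstructed from the collection of parts of size $\geq 3$ appearing in the $P_k$, and the tree structure is determined by the nesting order. This establishes the bijection $\sT^o_n \leftrightarrow \sP_n$, and the refinement-type assertion was verified in the forward direction. The main (minor) obstacle is simply bookkeeping: matching the level $k = d_T(v)$ at which $D_T(v)$ first appears as a part of $P_k$, and ensuring the valency conditions on the tree correspond exactly to the strict refinement condition $P_{k-1} >_k P_k$ involving only parts of size $\geq 3$.
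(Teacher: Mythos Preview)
Your proof is correct and follows essentially the same approach as the paper: both construct the forward map via the subsets $D_T(v)$, with $P_k$ obtained by merging into $D_T(v)$ exactly when $d_T(v)=k$. The paper's proof is terser---it describes only the forward direction inductively and implicitly leaves the inverse to the reader---whereas you make the laminar structure of $\{D_T(v)\}_{v\ne v_0}$ explicit, construct the inverse from the family $\sF$ of non-singleton parts, and verify the valency conditions on both sides; this extra detail is sound and makes the bijection genuinely two-sided rather than asserted.
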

\begin{proof}
Let $T \in \sT^{o}_n$ be a rooted tree. Then one can assign a sequence of partitions $P_2=\{\{1\},\cdots, \{n\}\},~ P_3,~ \cdots ,~ P_{n-1}$ of $[n]$ by inductively obtaining $P_k$ from $P_{k-1}$ by uniting parts of $P_{k-1}$ to one whenever they unite to form a subset $D_T(v)\subset [n]$ for some $v\in d_T^{-1}(k)$ for each $k$. Together with the assumption on the valencies (Definition \ref{Def.RootedTree} (1)), this implies that either we have $P_{k-1}=P_k$ or $P_{k-1}>_k P_k$ is of type $\{val(v)-1\}_{v\in d^{-1}_T(k)}$.
\end{proof}

For example, the rooted tree in Figure \ref{fig:RootedTree} corresponds to the sequence of partitions
\[P_2=\{\{1\},\cdots,\{6\}\}=P_3>_4 P_4=\{\{1\},\{2\},\{3,4,5,6\}\}=P_5.\]
Note that the number of parts in $P_{n-1}$ is $val(v_0)-1\ge 2$.

\subsection{$\Res^{S_{n+1}}_{S_n}A^*(\overline{\mathcal{M}}_{0,n+1})$ as a sum over trees}
In this subsection, we compute $Q_n$ and prove \eqref{11}.

The symmetric group $S_n$ acts on the set $\sT_{n,p}$ by permuting the inputs. Let  $\sT_{n,p}/S_n$ denote the set of the $S_n$-orbits in $\sT_{n,p}$.
For $T\in \sT_{n,p}/S_n$, we let
$$U_T:=\Ind^{S_n}_{\Stab(T)}e$$
be the induced representation of $S_n$ from the trivial representation of $\Stab(T)$
where $\mathrm{Stab}(T)$ denotes the stabilizer group of $T$ as a weighted rooted tree.
Note that this is well-defined since $\Ind^{S_n}_H$ depends only on the conjugacy class of $H$.
For example, for any weighted rooted tree $T$ whose underlying rooted tree is  Figure~\ref{fig:RootedTree}, we have $U_T=U_{2,4}$.

\begin{proposition} \label{Prop.RepKap}
For $k\ge 0$, there is an isomorphism
\begin{equation}\label{eq:Mn+1}
    \mathrm{Res}^{S_{n+1}}_{S_n}A^k(\overline{\mathcal{M}}_{0,n+1})\cong \bigoplus_{T\in \sT_{n,k}/S_n}U_T
\end{equation}
of $S_n$-representations. In other words, the $S_n$-representation of $A^k(\overline{\mathcal{M}}_{0,n+1})$ is the permutation representation generated by the $S_n$-set $\sT_{n,k}$. Therefore, the $S_n$-equivariant Poincar\'e polynomial $Q_n=P^{S_n}_{\mzno}$ satisfies
\beq\label{52}
Q_n= \sum_{k\ge 0}\sum_{T\in \sT_{n,k}/S_n}\ch_n(U_T)t^k.\eeq
\end{proposition}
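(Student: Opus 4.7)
The plan is to iteratively apply the blowup formula (Lemma~\ref{66}) along Kapranov's tower \eqref{Map.Kap} and then identify the resulting combinatorial index set with weighted rooted trees.

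Starting from $A^*(\overline{\mathcal{M}}_{0,n+1})=A^*(\overline{\mathcal{M}}^2_{P_0})$ where $P_0=\{\{1\},\ldots,\{n\}\}$, I apply Lemma~\ref{66} successively to $\rho^k_{P_{k-1}}$ for $k=3,4,\ldots,n-1$. The hypothesis $\max_{I\in P_{k-1}}|I|<k$ is preserved throughout the iteration because level-$k$ refinements introduce blocks of size exactly $k$. Unwinding yields
\[
A^*(\overline{\mathcal{M}}_{0,n+1})=\bigoplus_{(P_2,\ldots,P_{n-1})\in\sP_n} A^*(\overline{\mathcal{M}}^{n-1}_{P_{n-1}})\otimes\bigotimes_{k=3}^{n-1}V_{P_{k-1},P_k}.
\]

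Next I make each summand explicit. By Remark~\ref{Ex.Hassett}~(2) applied to the weight $\frac{1}{n-1}(|I_1|,\ldots,|I_r|)$, each $\overline{\mathcal{M}}^{n-1}_{P_{n-1}}$ is isomorphic to $\PP^{|P_{n-1}|-2}$, so its cohomology contributes a $\QQ$-basis indexed by a single integer $w_0\in\{0,1,\ldots,|P_{n-1}|-2\}$. The factor $V_{P_{k-1},P_k}=\bigotimes_j A^+(\PP^{m_j-2})$ contributes independent integer labels, one in $\{1,\ldots,m_j-2\}$ per block $I_j$ merged at level $k$. By Lemma~\ref{Lem.DescendingPartition}, sequences in $\sP_n$ biject with rooted trees $T\in\sT^o_n$ under a correspondence sending each non-root vertex $v$ with $d_T(v)=k$ to a block merged at level $k$ of multiplicity $m=val(v)-1$, and the root $v_0$ to the final partition $P_{n-1}$ with $val(v_0)=|P_{n-1}|+1$. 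Under this dictionary the collected labels correspond precisely to functions $w:Ver(T)\to\ZZ$ with $0\le w(v_0)\le val(v_0)-3$ and $1\le w(v)\le val(v)-3$ for $v\ne v_0$, i.e., to elements of $\sT_{n,k}$ with $k=\sum_v w(v)$; this gives a $\QQ$-basis of $A^k(\overline{\mathcal{M}}_{0,n+1})$ indexed by $\sT_{n,k}$.

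Finally, I upgrade to $S_n$-representations. The Kapranov tower, the loci $\overline{\mathcal{M}}^k_P$, and Lemma~\ref{66} are all $S_n$-equivariant with $S_n$ permuting the markings in $[n]$, and $S_n$ acts on $\sT^o_n$ compatibly by permuting inputs. Grouping the $\QQ$-basis above into $S_n$-orbits on $\sT_{n,k}$, each orbit contributes the transitive permutation representation generated by its elements, which equals $U_T=\mathrm{Ind}^{S_n}_{\mathrm{Stab}(T)}e$ for any representative $T$. Summing over orbits yields \eqref{eq:Mn+1}, and passing to characters gives \eqref{52}. The main technical hurdle is the bookkeeping at the interface between the blowup formula and the tree combinatorics: one must check that the $\PP^{|P_{n-1}|-2}$ identification at the top of the iteration and the positive-degree factors $A^+(\PP^{m_j-2})$ at each intermediate step together reproduce the exact weight bounds $0\le w(v_0)\le val(v_0)-3$ and $1\le w(v)\le val(v)-3$; once this matching is established, the representation-theoretic statement follows automatically from $S_n$-equivariance.
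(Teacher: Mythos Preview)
Your proposal is correct and follows essentially the same approach as the paper: iterate Lemma~\ref{66} along the Kapranov tower \eqref{Map.Kap}, identify the base $\overline{\mathcal{M}}^{n-1}_{P_{n-1}}$ with a projective space via Remark~\ref{Ex.Hassett}~(2), translate the resulting index set to rooted trees via Lemma~\ref{Lem.DescendingPartition}, match the degree labels with vertex weights, and regroup into $S_n$-orbits using the equivariance of \eqref{Diag.Kap}. The only minor discrepancy is notational (you write $P_0$ for the most refined partition where the paper writes $P_2$), but the argument is otherwise identical.
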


\begin{proof}
 We apply Lemma~\ref{Lem.BlowupFormula} to \eqref{Map.Kap}. Note that when $P$ is the most refined partition $P_2=\{\{1\}, \cdots, \{n\}\}$, we have $\overline{\mathcal{M}}^2_{P_2}\simeq\overline{\mathcal{M}}_{0,n+1}$ by Remark~\ref{Ex.Hassett} (1). Furthermore, for each $(P_2,\cdots,P_{n-1})\in \sP_n$, we have $\overline{\mathcal{M}}^{n-1}_{P_{n-1}}\simeq \PP^{val(v_0)-3}$ by Remark~\ref{Ex.Hassett} (2) where $v_0$ denotes the root of the rooted tree corresponding to $(P_i)$ under the bijection in Lemma~\ref{Lem.DescendingPartition}. A repeated application of Lemma \ref{66} results in the graded isomorphisms
\begin{equation} \label{Eq.SumOverTrees}
    \begin{split}
        A^*(\overline{\mathcal{M}}_{0,n+1})\cong &~\bigoplus_{(P_2,\cdots, P_{n-1})\in \sP_n}\Big(A^*(\overline{\mathcal{M}}^{n-1}_{P_{n-1}})\otimes \bigotimes_{i=3}^{n-1} V_{P_{i-1},P_{i}}\Big)\\
        \cong &~\bigoplus_{T\in \sT^{o}_n}\Big(A^*(\PP^{val(v_0)-3})\otimes \bigotimes_{v\in Ver(T)\setminus \{v_0\}}A^+(\PP^{val(v)-3})\Big),
    \end{split}
\end{equation}
where the second isomorphism follows from Lemma \ref{Lem.DescendingPartition} and \eqref{Eq.BlowupFactor}.

The choice of cohomology degrees of  $A^+(\PP^{val(v)-3})$ is equivalent to assigning weights in Definition \ref{Def.RootedTree} to the vertices of each rooted tree except the root. On the other hand, for each $T\in \sT_n^o$, the possible weights of the root can be bijectively associated to the degrees of $A^*(\PP^{val(v_0)-3})$, namely $0, 1,\cdots, val(v_0)-3$. Taking the degree $k$ parts, we find that \eqref{Eq.SumOverTrees} is equal to a sum
\begin{equation*}
    A^k(\overline{\mathcal{M}}_{0,n+1})=\bigoplus_{T\in \sT_{n,k}}A^{w(v_0)}(\PP^{val(v_0)-3})
\end{equation*}
over weighted rooted trees in $\sT_{n,k}$.
This is $S_n$-equivariant due to \eqref{Diag.Kap}. Now regrouping the $S_n$-orbits, we have
\begin{equation*}
    \mathrm{Res}^{S_{n+1}}_{S_n}A^k(\overline{\mathcal{M}}_{0,n+1})=\bigoplus_{T\in \sT_{n,k}/S_n}\mathrm{Ind}^{S_n}_{\mathrm{Stab}(T)}A^{w(v_0)}(\PP^{val(v_0)-3})=\bigoplus_{T\in \sT_{n,k}/S_n}U_T
\end{equation*}
as $A^{w(v_0)}(\PP^{val(v_0)-3})$ is the trivial representation of $\Stab(T)$.
\end{proof}

\begin{remark}
The isomorphism
\[\bigoplus_{T\in \sT_{n,k}/S_n}U_T \simeq \bigoplus_{T\in \sT_{n,n-2-k}/S_n}U_T\]
induced by the Poincar\'e duality actually holds on the level of weighted rooted trees. Consider a map
\[\sT_{n,k}\longrightarrow \sT_{n,n-2-k}\]
which associates to a weighted rooted tree $T\in \sT_{n,k}$ a weighted rooted tree obtained from $T$ by changing the weight $w(v)$ of $v\in Ver(T)$ to $val(v)-3-w(v)$ if $v$ is the root, and to $val(v)-2-w(v)$ otherwise. This is a well-defined bijection due to the valency condition
\[0 \leq w(v_0) \leq val(v_0)-3 \quad \text{ and } \quad 1\leq w(v)\leq val(v)-3 \text{ for } v\neq v_0\]
and since by counting the number of the inputs and output we have
\[n+1=\sum_{v\in Ver(T)}val(v)-2 \left|Ver(T)\setminus \{v_0\}\right|.\]
\end{remark}

\begin{example} \label{ex:lowp}
Let us calculate \eqref{eq:Mn+1} for $k=1$ and $k=2$.

Let $k=1$. If we forget all the inputs, there are two types of weighted rooted trees as follows due to the conditions on weights:
\[\begin{tikzpicture} [scale=.5,auto=left,every node/.style={scale=1}]
      \tikzset{Bullet/.style={circle,draw,fill=black,scale=0.5}}
      \node[] (n) at (0,0) {};
      \node[] at (0,-1) {};
      \draw[black] (n) node[] {$\sT_{n,1}/S_n:$};
    \end{tikzpicture}
\begin{tikzpicture} [scale=.5,auto=left,every node/.style={scale=0.8}]
      \tikzset{Bullet/.style={circle,draw,fill=black,scale=0.5}}
      \node[Bullet] (n1) at (0,0) {};
      \node[] at (0,-1) {};
      \node[] at (.5,-1) {};
      \draw[black] (n1) -- (0,1);
      \draw[black] (n1) node[left] {$(1)$};
\end{tikzpicture}
\begin{tikzpicture} [scale=.5,auto=left,every node/.style={scale=1}]
      \tikzset{Bullet/.style={circle,draw,fill=black,scale=0.5}}
       \node[] (n) at (0,0) {};
      \node[] at (0,-0.5) {};
      \draw[black] (-.3,0) node[] {$,~$};
    \end{tikzpicture}
\begin{tikzpicture} [scale=.5,auto=left,every node/.style={scale=0.8}]
      \tikzset{Bullet/.style={circle,draw,fill=black,scale=0.5}}
      \node[Bullet] (n1) at (1,-1) {};
      \node[Bullet] (n2) at (1,-2) {};

      \draw[black] (n1) -- (1,0);
      \draw[black] (n1) -- (n2);
      \draw[black] (n1) node[right] {$ $};
      \draw[black] (n1) node[left] {$(0)$};
      \draw[black] (n2) node[right] {$ $};
      \draw[black] (n2) node[left] {$(1)$};
    \end{tikzpicture}
    \]
The numbers in parentheses denote the weights of the corresponding vertices. There is one weighted rooted tree of the first type, which generates the trivial representation. Those of the second type generate $U_{a,n-a}$ where $a$ denotes the number of inputs attached to the vertex of weight 1 and hence ranges over $3\leq a\leq n-1$ by the valency conditions. By Proposition~\ref{Prop.RepKap}, we have
    \[\Res^{S_{n+1}}_{S_n}A^1(\overline{\mathcal{M}}_{0,n+1})=U_n\oplus\bigoplus_{a=3}^{n-1} U_{a,n-a}=\bigoplus^n_{a=3} U_{a,n-a}.\]

Now let $k=2$. There are five types of weighted rooted trees. All five cases with the corresponding representations are shown in the following table.

\begin{equation} \label{List.Baretrees.Deg2}
\begin{tabular}{|c|c|c|}
\hline
$\sT_{n,2}/S_n$ & tree & representation\\
\hline
(1) &
\parbox[c]{1cm}{
\begin{tikzpicture} [scale=.5,auto=left,every node/.style={scale=0.8}]
      \tikzset{Bullet/.style={circle,draw,fill=black,scale=0.5}}
      \node[Bullet] (n1) at (0,0) {};
      \draw[black] (n1) -- (0,1);
      \draw[black] (n1) node[left] {$(2)$};
      \draw (0,1) node { };
    \end{tikzpicture}
    }
    & $U_n$ \\
\hline
(2) &
\parbox[c]{1cm}{
\begin{tikzpicture} [scale=.5,auto=left,every node/.style={scale=0.8}]
      \tikzset{Bullet/.style={circle,draw,fill=black,scale=0.5}}
      \node[Bullet] (n1) at (1,-1) {};
      \node[Bullet] (n2) at (1,-2) {};
      \draw[black] (n1) -- (1,0);
      \draw[black] (n1) -- (n2);
      \draw[black] (n1) node[left] {$(1)$};
      \draw[black] (n2) node[left] {$(1)$};
      \draw[] (n1) -- (1.7, -1) node[near end, right] {$b$};
      \draw[] (n2) -- (1.7, -2) node[near end, right] {$a$};
    \draw (1,0) node { };
    \end{tikzpicture}
    }
    & $\displaystyle \bigoplus^{n-2}_{a=3} U_{a,n-a}$ \\
\hline
(3) &
\parbox[c]{1cm}{
\begin{tikzpicture} [scale=.5,auto=left,every node/.style={scale=0.8}]
      \tikzset{Bullet/.style={circle,draw,fill=black,scale=0.5}}
      \node[Bullet] (n1) at (1,-1) {};
      \node[Bullet] (n2) at (1,-2) {};
      \draw[black] (n1) -- (1,0);
      \draw[black] (n1) -- (n2);
      \draw[black] (n1) node[left] {$(0)$};
      \draw[black] (n2) node[left] {$(2)$};
            \draw (1,0) node { };
      \draw[] (n1) -- (1.7, -1) node[near end, right] {$b$};
      \draw[] (n2) -- (1.7, -2) node[near end, right] {$a$};
    \end{tikzpicture}
    }
    & $\displaystyle\bigoplus^{n-1}_{a=4} U_{a,n-a}$ \\
\hline
(4) &
\parbox[c]{1cm}{
\begin{tikzpicture} [scale=.5,auto=left,every node/.style={scale=0.8}]
      \tikzset{Bullet/.style={circle,draw,fill=black,scale=0.5}}
      \node[Bullet] (n1) at (1,-1) {};
      \node[Bullet] (n2) at (1,-2) {};
      \node[Bullet] (n3) at (1,-3) {};

      \draw[black] (n1) -- (1,0);
      \draw[black] (n1) -- (n2);
      \draw[black] (n2) -- (n3);
      \draw[black] (n1) node[left] {$(0)$};
      \draw[black] (n2) node[left] {$(1)$};
      \draw[black] (n3) node[left] {$(1)$};
            \draw (1,0) node { };    \draw (1,-3.5) node { };
      \draw[] (n1) -- (1.7, -1) node[near end, right] {$c$};
      \draw[] (n2) -- (1.7, -2) node[near end, right] {$b$};
      \draw[] (n3) -- (1.7, -3) node[near end, right] {$a$};
    \end{tikzpicture}
    }
    & $\displaystyle\bigoplus_{\substack{a\geq 3, b\geq 2 \\ a+b \leq n-1}} U_{a,b,n-a-b}$\\
\hline
(5) &
\parbox[c]{2.1cm}{
\begin{tikzpicture} [scale=.5,auto=left,every node/.style={scale=0.8}]
      \tikzset{Bullet/.style={circle,draw,fill=black,scale=0.5}}
      \node[Bullet] (n1) at (1,-1) {};
      \node[Bullet] (n2) at (0,-2) {};
      \node[Bullet] (n3) at (2,-2) {};

      \draw[black] (n1) -- (1,0);
      \draw[black] (n1) -- (n2);
      \draw[black] (n1) -- (n3);
      \draw[black] (n1) node[right] {$ $};
      \draw[black] (n1) node[left] {$(0)$};
      \draw[black] (n2) node[right] {$ $};
      \draw[black] (n2) node[left] {$(1)$};
      \draw[black] (n3) node[right] {$ $};
      \draw[black] (n3) node[right] {$(1)$};
            \draw (1,0) node { };
      \draw[] (n2) -- (0, -3) node[near end, left] {$a$};
      \draw[] (n3) -- (2, -3) node[near end, right] {$b$};
    \end{tikzpicture}
    }
    & $\displaystyle\left(\bigoplus_{a \geq 3} U_{a,a,n-2a}^{S_2}\right) \oplus \left(\bigoplus_{3\leq a < b} U_{a,b,n-a-b}\right)$ \\
\hline
\end{tabular}
\end{equation}

The letters $a$, $b$ and $c$ in the trees indicate the number of inputs attached to the vertex, whose ranges are determined by the valency conditions on vertices. Note that if $a=b$ in case (5), switching the two vertices with weight 1 gives an isomorphic tree, and hence we take the symmetric part of the corresponding representation under this action.

Therefore by Proposition~\ref{Prop.RepKap}, we have
\begin{equation*}
    \begin{split}
    \Res^{S_{n+1}}_{S_n}A^2(\overline{\mathcal{M}}_{0,n+1})=&~U_n\oplus \bigoplus^{n-2}_{a=3} U_{a,n-a}\oplus \bigoplus^{n-1}_{a=4} U_{a,n-a}\oplus \bigoplus_{\substack{a\geq 3, b\geq 2 \\ a+b \leq n-1}} U_{a,b,n-a-b} \\
    &\oplus \bigoplus_{a \geq 3} U_{a,a,n-2a}^{S_2}\oplus \bigoplus_{3\leq a < b} U_{a,b,n-a-b}.
    \end{split}
\end{equation*}
\end{example}

\bigskip

\section{Representations on the cohomology of $\overline{\mathcal{M}}_{0,n}$} \label{Sec.CF}
As a consequence of Proposition \ref{Prop.RepKap} and Theorem \ref{46}, we obtain a closed formula for the $S_n$-representation on $A^k(\overline{\mathcal{M}}_{0,n})$.

\begin{theorem} \label{Thm.ClosedForm} For $k\ge 0$,  $A^k(\overline{\mathcal{M}}_{0,n})$ as a virtual permutation representation of $S_n$ is
\begin{equation*}
    \begin{split}
        \sum_{\substack{i\geq 0 \\T\in \sT_{n,i}/S_n}}(-1)^{k-i} &U_T+\frac{1}{2}\sum_{\substack{2\leq h \leq n-2\\ i+j\leq k-1}}\sum_{\substack{T_1\in \sT_{h,i}/S_h\\ T_2\in \sT_{n-h,j}/S_{n-h}\\T_1\neq T_2}}(-1)^{k-i-j}U_{T_1}.U_{T_2}\\
            &+(-1)^k\sum_{\substack{2i\leq k-1\\T\in \sT_{\frac{n}{2},i}/S_{\frac{n}{2}}}}s_{(1,1)}\circ U_T,
    \end{split}
\end{equation*}
where $\sT_{\frac{n}{2},i}$ is set to be empty for $n$ odd.
Equivalently, we have
\begin{equation}\label{50}
    \begin{split}
        P_n= & \frac{1}{1+t}\sum_{\substack{0\leq k\leq n-2\\T\in \sT_{n,k}/S_n}}U_T t^k-\frac{1}{2(1+t)}\sum_{\substack{2\leq h \leq n-2 \\ i,j\geq 0\\i+j\leq n-3}}\sum_{\substack{T_1\in \sT_{h,i}/S_h\\ T_2\in \sT_{n-h,j}/S_{n-h}\\T_1\neq T_2}}U_{T_1}.U_{T_2}t^{i+j+1}\\
        &-\frac{1}{1+t}\sum_{\substack{0\leq i\leq \frac{n}{2}-2\\T\in \sT_{\frac{n}{2},i}/S_{\frac{n}{2}}
        }}\left(s_{(1,1)}\circ U_T\right)t^{2i+1}.
    \end{split}
\end{equation}
\end{theorem}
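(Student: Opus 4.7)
The plan is to substitute the character formula for $Q_n$ from Proposition \ref{Prop.RepKap} into the recursion of Theorem \ref{46} and carry out a combinatorial simplification to reach \eqref{50}; the virtual permutation representation form of $A^k(\overline{\mathcal{M}}_{0,n})$ then follows by extracting the coefficient of $t^k$ from the geometric expansion $1/(1+t) = \sum_{s\geq 0}(-t)^s$.

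Writing $Q_n = \sum_{T\in \sT_n/S_n} U_T\, t^{w(T)}$ with $\sT_n/S_n = \bigsqcup_k \sT_{n,k}/S_n$, the first task is to handle the product sum. By the symmetry $(h, n-h)\leftrightarrow (n-h, h)$, each unordered pair $\{h, n-h\}$ with $h\neq n-h$ appears twice in $\sum_{h=2}^{n-2}Q_hQ_{n-h}$ while the middle index $h = n/2$ (only for $n$ even) appears once, so
$$\sum_{h=2}^{\ell} Q_h Q_{n-h} = \frac{1}{2}\sum_{h=2}^{n-2} Q_h Q_{n-h} - \frac{1}{2} Q_{\frac{n}{2}}^2$$
with the convention $Q_{\frac{n}{2}} = 0$ for $n$ odd. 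Expanding $Q_h Q_{n-h} = \sum_{T_1,T_2} U_{T_1}.U_{T_2}\, t^{w(T_1)+w(T_2)}$ and separating the diagonal pairs $T_1 = T_2$ (which can arise only when $h = n/2$) gives
$$\frac{1}{2}\sum_{h=2}^{n-2} Q_h Q_{n-h} = \frac{1}{2}\sum_{h,\, T_1\neq T_2} U_{T_1}.U_{T_2}\, t^{w(T_1)+w(T_2)} + \frac{1}{2}\sum_{T\in \sT_{\frac{n}{2}}/S_{\frac{n}{2}}} U_T^2\, t^{2w(T)}.$$
Meanwhile, applying \eqref{Eq.AntiSymm} to $Q_{\frac{n}{2}}$ together with $s_{(1,1)}\circ (U_T t^{w(T)}) = (s_{(1,1)}\circ U_T)\, t^{2w(T)}$ yields
$$s_{(1,1)}\circ Q_{\frac{n}{2}} = \sum_{T_1<T_2} U_{T_1}.U_{T_2}\, t^{w(T_1)+w(T_2)} + \sum_{T\in \sT_{\frac{n}{2}}/S_{\frac{n}{2}}} (s_{(1,1)}\circ U_T)\, t^{2w(T)}.$$

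Combining these three displays and invoking the decomposition $U_T.U_T = (s_{(2)} + s_{(1,1)})\circ U_T$ of the tensor square, the ``$T_1<T_2$'' pieces from $-\tfrac12 Q_{\frac{n}{2}}^2$ cancel those of $s_{(1,1)}\circ Q_{\frac{n}{2}}$, while the diagonal contribution $-\tfrac12 U_T^2 + s_{(1,1)}\circ U_T$ collapses to $\tfrac12(s_{(1,1)} - s_{(2)})\circ U_T$, leaving after reassembly the key identity
$$\sum_{h=2}^{\ell} Q_h Q_{n-h} + s_{(1,1)}\circ Q_{\frac{n}{2}} = \frac{1}{2}\sum_{h,\, T_1\neq T_2} U_{T_1}.U_{T_2}\, t^{w(T_1)+w(T_2)} + \sum_{T\in \sT_{\frac{n}{2}}/S_{\frac{n}{2}}} (s_{(1,1)}\circ U_T)\, t^{2w(T)}.$$
Substituting this and the tree expansion of $Q_n$ into the formula of Theorem \ref{46} produces \eqref{50} directly, and reading off the coefficient of $t^k$ gives the virtual permutation representation formula for $A^k(\overline{\mathcal{M}}_{0,n})$. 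The main obstacle is the diagonal bookkeeping at $h = n/2$ for even $n$: the Clebsch--Gordan identity $U_T.U_T = (s_{(2)} + s_{(1,1)})\circ U_T$ is precisely what is needed for the $\tfrac12 U_T^2$ produced by $-\tfrac12 Q_{\frac{n}{2}}^2$ and the self-plethysm piece of $s_{(1,1)}\circ Q_{\frac{n}{2}}$ to combine into only the antisymmetric square $s_{(1,1)}\circ U_T$ in the final answer.
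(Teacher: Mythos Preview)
Your proposal is correct and follows essentially the same route as the paper: substitute the tree expansion of $Q_n$ from Proposition~\ref{Prop.RepKap} into Theorem~\ref{46}, simplify the cross-term sum, and read off coefficients via $\frac{1}{1+t}=\sum_{s\ge 0}(-t)^s$. The only cosmetic difference is in the diagonal bookkeeping at $h=n/2$: the paper rewrites $\sum_{h=2}^{\ell}Q_hQ_{n-h}+s_{(1,1)}\circ Q_{n/2}$ as $\tfrac12\big(\sum_{h\neq n/2}Q_hQ_{n-h}+2\,s_{(1,1)}\circ Q_{n/2}\big)$ and then expands, whereas you include all $h$ in the symmetrized sum, subtract $\tfrac12 Q_{n/2}^2$, and recombine using $U_T.U_T=(s_{(2)}+s_{(1,1)})\circ U_T$; both arrive at the same key identity and at \eqref{50}.
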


\begin{proof}
    By Theorem \ref{46} and Proposition~\ref{Prop.RepKap}, we have
    \begin{equation*}
        P_n=\frac{1}{1+t}Q_n-\frac{t}{2(1+t)}\left(\sum_{2\leq h\leq n-2,~h\neq\frac{n}{2}}Q_hQ_{n-h}+2s_{(1,1)}\circ Q_{\frac{n}{2}}\right),
    \end{equation*}
    \[  Q_h=\sum_{k\geq 0}\sum_{T\in \sT_{h,k}/S_h}U_{T}t^k. \]
    By the latter equality, we have
    \begin{equation}
        \begin{split}
            Q_hQ_{n-h} & = \sum_{i,j\geq0}\sum_{\substack{T_1\in \sT_{h,i}/S_h\\T_2 \in \sT_{n-h,j}/S_{n-h}}}U_{T_1}.U_{T_2}t^{i+j} \quad \text{ and}\\
            s_{(1,1)}\circ Q_{\frac{n}{2}} & =\frac{1}{2}\sum_{i,j\geq 0}\sum_{\substack{T_1 \in \sT_{\frac{n}{2},i}/S_\frac{n}{2}\\T_2 \in \sT_{\frac{n}{2},j}/S_\frac{n}{2}\\T_1\neq T_2}}U_{T_1}.U_{T_2}t^{i+j}+\sum_{\substack{T\in \sT_{\frac{n}{2},i}/S_{\frac{n}{2}}\\ i\geq0}}\left(s_{(1,1)}\circ U_T\right)t^{2i},
        \end{split}
    \end{equation}
    where the second equality holds by \eqref{Eq.AntiSymm} in Example~\ref{Ex.Plethysm}. Now the assertion follows from $\frac{1}{1+t}=\sum_{i\geq 0} (-1)^it^i$.
\end{proof}

Our results explicitly compute the representations on $A^k(\overline{\mathcal{M}}_{0,n})$ for $k\le 3$ and for all $n$.

\begin{corollary} \label{Cor.SingleDeg}
Let $n\geq 4$ be an integer.
\begin{enumerate}
    \item $\displaystyle A^{1}(\overline{\mathcal{M}}_{0,n})=\bigoplus_{a\geq 4 \text{ even}}U_{a,n-a}.$
    \item $\displaystyle A^{2}(\overline{\mathcal{M}}_{0,n})=\bigoplus_{a\geq 5 \text{ odd}}U_{a,n-a}\oplus \bigoplus_{3\leq a < b}U_{a,b,n-a-b}\oplus \bigoplus_{a\geq 3}U_{a,a,n-2a}^{S_2}.$
    \item $A^3(\overline{\mathcal{M}}_{0,n})$ is the signed sum of representations over weighted rooted trees as in Table \ref{tab:p3}.
\end{enumerate}
Moreover, $A^k(\overline{\mathcal{M}}_{0,n})$ is a permutation representation for $k\leq 3$ and for $k\ge n-6$.
\end{corollary}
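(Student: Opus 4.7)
The proof is a direct evaluation of the closed formula of Theorem~\ref{Thm.ClosedForm} at each of the three low degrees, supplemented by Poincar\'e duality for the dual range.

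For $k=1$, the plan is to substitute the weight-$0$ and weight-$1$ tree enumerations from Example~\ref{ex:lowp} into \eqref{50} and compute the three pieces: the alternating sum over $\sT_{n,\cdot}/S_n$, the cross-term contribution from $Q_hQ_{n-h}$ truncated to total weight $0$, and the antisymmetric piece $s_{(1,1)}\circ U_{n/2}$ (present when $n$ is even). The resulting virtual expression is then reorganized using the identity $U_{a,n-a}=U_{n-a,a}$ together with the plethystic decomposition $U_{n/2,n/2}=U^{S_2}_{(n/2)^2}\oplus (s_{(1,1)}\circ U_{n/2})$, until it matches the non-virtual form $\bigoplus_{a\ge 4\text{ even}}U_{a,n-a}$ stated in~(1). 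For $k=2$, I would carry out the analogous procedure using Example~\ref{ex:lowp}'s weight-$2$ list and the relevant cross-term contributions at total weight~$\le 1$; after the cancellations, the three families $U_{a,n-a}$ (odd $a\ge 5$), $U_{a,b,n-a-b}$ ($3\le a<b$), and $U^{S_2}_{a,a,n-2a}$ ($a\ge 3$) should appear.

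For $k=3$, the plan is to enumerate $\sT_{n,3}/S_n$ by hand, extending the tabulation in Example~\ref{ex:lowp}, list the weight-$\le 2$ cross-term pairs, and substitute into \eqref{50}. The outcome is the signed sum displayed in Table~\ref{tab:p3}, establishing~(3). To upgrade this signed sum to the assertion that $A^3(\overline{\mathcal{M}}_{0,n})$ is a genuine permutation representation, I would group terms by the unordered partition type of $[n]$ underlying each $U_\cdot$-symbol and show, case by case, that the alternating contributions within each type sum to a non-negative integer combination of transitive permutation representations. Finally, for $k\ge n-6$, I would invoke Poincar\'e duality $A^k(\overline{\mathcal{M}}_{0,n})\cong A^{n-3-k}(\overline{\mathcal{M}}_{0,n})$, which is $S_n$-equivariant because $S_n$ acts by holomorphic automorphisms of the smooth projective variety $\overline{\mathcal{M}}_{0,n}$; the hypothesis $k\ge n-6$ is equivalent to $n-3-k\le 3$, and every $S_n$-representation is self-dual, so the conclusion transfers directly from the $k\le 3$ case.

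The main obstacle is the $k=3$ verification that the signed sum is non-virtual: while \eqref{50} produces an alternating expression automatically, the grouping needed to reveal the permutation-representation structure requires careful partition-by-partition bookkeeping, since the number of tree shapes and cross-term pairs proliferates. A secondary subtlety, visible already at $k=1$, is that the natural tree basis produced by \eqref{50} does not coincide with the permutation basis appearing in the statement (for instance $U_{5,1}\oplus U^{S_2}_{(3,3)}\cong U_{4,2}\oplus U_6$ at $n=6$), so extra representation-ring identifications are needed to pass between the two presentations.
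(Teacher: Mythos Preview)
Your plan is correct in outline and would succeed, but it differs from the route the paper actually takes in \S\ref{Subsec.Proof.SingleDeg}. You propose to evaluate the alternating closed formula \eqref{50} directly at $k=1,2,3$, feed in the tree enumerations, and then massage the resulting virtual sum into non-virtual form. The paper instead works through the machinery built for Theorem~\ref{Cor.SubseqDeg}: it uses the injections $\Phi_i:\sP_i\to\sT_{n,k}/S_n$ to cancel the blowup contributions $W_{\neq}\oplus W_{=}$ against specific subsets $\bar\sT_i$ of trees, obtaining $A^{k-1}(\overline{\mathcal M}_{0,n})\oplus A^k(\overline{\mathcal M}_{0,n})$ as a genuine permutation representation (Remark~\ref{Rem.Subseq}), and only then subtracts $A^{k-1}$ inductively. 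The advantage of the paper's route is that most of the sign cancellation is done structurally by the $\Phi_i$'s, so at each step one manipulates honest representations rather than virtual ones; your direct approach is conceptually simpler but carries more alternating terms through the computation before the final simplification.

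One point where your plan is slightly too optimistic: for the $k=3$ permutation claim when $3\mid n$, ``grouping by the unordered partition type'' will not finish the job by itself, because the single negative term $U^{S_2}_{n/3,n/3,n/3}$ in Table~\ref{tab:p3} must be absorbed by terms of a \emph{different} type. The paper does this via the induced version of the identity~\eqref{Identity.OddEven},
\[
U_{k,k,k}^{S_2}\oplus \bigoplus_{a<k \text{ odd}}U_{a,k,2k-a}=\bigoplus_{a\leq k \text{ even}}U_{a,k,2k-a},
\]
and checks that the right-hand side is already present among the $V_{\mathrm{VI}'}$ and $V_{\mathrm{IX}}$ summands. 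You correctly flag in your last paragraph that such cross-type identities are needed; just be aware that this particular one is the crux. Your use of Poincar\'e duality for $k\ge n-6$ matches the paper exactly.
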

\begin{proof}
The formulas (1), (2), (3) are direct consequences of Theorem \ref{Thm.ClosedForm}.
See \S\ref{Subsec.Proof.SingleDeg} for the detailed computation.
It is obvious from (1) and (2) that $A^k(\mzn)$ is a permutation representation for $k\le 2$ or $k\ge n-5$. From Table \ref{tab:p3}, we see that when $n$ is not a multiple of 3, there is no negative term in $A^3(\mzn)$ and hence $A^3(\mzn)$ is a permutation representation.
The case of $A^3(\mzn)$ when $n$ is a multiple of 3 will be treated in \S\ref{Subsec.Proof.SingleDeg} below.
\end{proof}

\begin{table}[hbt!]
    \centering
\begin{tabular}[c]{|c|c|c|c|   }
\hline
sign & tree & range & representation\\
\hline
$+$ &
    &
    & $\displaystyle\bigoplus_{a\geq 6 \text{ even}}U_{a,n-a}$ \\
\hline
$+$ &
\parbox[c][5em][c]{1cm}{
\begin{tikzpicture} [scale=.4,auto=left,every node/.style={scale=0.7}]
      \tikzset{Bullet/.style={circle,draw,fill=black,scale=0.5}}
      \node[Bullet] (n1) at (1,0) {};
      \node[Bullet] (n2) at (1,-1) {};
      \node[Bullet] (n3) at (1,-2) {};

      \draw[black] (n1) -- (1,1);
      \draw[black] (n1) -- (n2);
      \draw[black] (n2) -- (n3);
      \draw[black] (n1) node[right] {$~c$};
      \draw[black] (n1) node[left] {$(0)$};
      \draw[black] (n2) node[right] {$~b$};
      \draw[black] (n2) node[left] {$(2)$};
      \draw[black] (n3) node[right] {$~a$};
      \draw[black] (n3) node[left] {$(1)$};
    \end{tikzpicture}
    }
    &
    \parbox[c][5em][c]{110pt}{
    {\small $3\leq a\leq b$, $c\geq 2$ and \\ $(a,b,c)\neq (\frac{n-2}{2},\frac{n-2}{2},2)$ }}

    &
    \parbox[c][6.5em][c]{90pt}{
        $\displaystyle \bigoplus_{3\leq a <\frac{n-2}{2}} U_{a,n-a-2,2}$
$\displaystyle \oplus \bigoplus_{\substack{3\leq a\leq b\\ a+b \leq n-3}}U_{a,b,c}$
}
    \\
\hline
$+$ &
\parbox[c][][c]{1.8cm}{
\begin{tikzpicture} [scale=.4,auto=left,every node/.style={scale=0.7}]
      \tikzset{Bullet/.style={circle,draw,fill=black,scale=0.5}}
      \node[Bullet] (n1) at (1,-1) {};
      \node[Bullet] (n2) at (0,-2) {};
      \node[Bullet] (n3) at (2,-2) {};

      \draw[black] (n1) -- (1,0);
      \draw[black] (n1) -- (n2);
      \draw[black] (n1) -- (n3);
      \draw[black] (n1) node[right] {$~c$};
      \draw[black] (n1) node[left] {$(0)$};
      \draw[black] (n2) node[left] {$(2)$};
      \draw[black] (0,-2.2) node[below] {$a$};
      \draw[black] (n3) node[right] {$(1)$};
      \draw[black] (2,-2.2) node[below] {$b$};
    \end{tikzpicture}
    }
    &
    {\small $a\geq 4$, $b\geq 3$ and $c\geq 0$ }
    &
\parbox[c][3.3em][c]{80pt}{$\displaystyle\bigoplus_{a\geq 4, b\geq 3} U_{a,b,n-a-b}$}
    \\
\hline
$+$ &
\parbox[c][][c]{1.8cm}{
    \begin{tikzpicture} [scale=.4,auto=left,every node/.style={scale=0.7}]
      \tikzset{Bullet/.style={circle,draw,fill=black,scale=0.5}}
      \node[Bullet] (n0) at (1,0) {};
      \node[Bullet] (n1) at (1,-1) {};
      \node[Bullet] (n2) at (0,-2) {};
      \node[Bullet] (n3) at (2,-2) {};

      \draw[black] (n0) -- (1,1);
      \draw[black] (n0) -- (n1);
      \draw[black] (n1) -- (n2);
      \draw[black] (n1) -- (n3);
      \draw[black] (n0) node[right] {$~d$};
      \draw[black] (n0) node[left] {$(0)$};
      \draw[black] (n1) node[right] {$~c$};
      \draw[black] (n1) node[left] {$(1)$};
      \draw[black] (0,-2.2) node[below] {$a$};
      \draw[black] (n2) node[left] {$(1)$};
      \draw[black] (2,-2.2) node[below] {$b$};
      \draw[black] (n3) node[right] {$(1)$};
    \end{tikzpicture}
    }
    &
\parbox[c][5em][c]{120pt}{\centering\small $3\leq a\leq b\leq \frac{n-2}{2}$, $c\geq0$,\\  $d=1$ and \\ $(a,b,c,d)\neq (\frac{n-2}{2},\frac{n-2}{2},1,1)$}
    &
\parbox[c][6.5em][c]{120pt}{$\displaystyle\bigoplus_{3\leq a<b\leq \frac{n-2}{2}} U_{a,b,n-a-b-1,1}$
    $\displaystyle\oplus \bigoplus_{3\leq a<\frac{n-2}{2}} U_{a,a,n-2a-1,1}^{S_2}$
    }\\
\hline
$+$ &
\parbox[c][][c]{2cm}{
\begin{tikzpicture} [scale=.4,auto=left,every node/.style={scale=0.7}]
      \tikzset{Bullet/.style={circle,draw,fill=black,scale=0.5}}
      \node[Bullet] (n1) at (1,-1) {};
      \node[Bullet] (n2) at (0,-2) {};
      \node[Bullet] (n3) at (2,-2) {};
      \node[Bullet] (n4) at (2,-3) {};

      \draw[black] (n1) -- (1,0);
      \draw[black] (n1) -- (n2);
      \draw[black] (n1) -- (n3);
      \draw[black] (n3) -- (n4);
      \draw[black] (n1) node[right] {$~d$};
      \draw[black] (n1) node[left] {$(0)$};
      \draw[black] (0,-2.2) node[below] {$c$};
      \draw[black] (n2) node[left] {$(1)$};
      \draw[black] (n3) node[right] {$\quad \,\,\, b$};
      \draw[black] (n3) node[right] {$(1)$};
      \draw[black] (n4) node[right] {$\quad \,\,\, a$};
      \draw[black] (n4) node[right] {$(1)$};
    \end{tikzpicture}
    }
    &
    \parbox[c][5em][c]{120pt}{\small $a,c\geq 3$, $b\geq 2$, $d\geq 1$ and\\
    either \\
    \hspace*{1ex}(i) $a+b<\frac{n}{2}$ or
    \\
    \hspace*{1ex}(ii) $a+b=\frac{n}{2}$ and $b<d$}
    &
    \parbox[c][8em][c]{110pt}{$\displaystyle\bigoplus_{\substack{a,c \geq 3, b\geq2 \\ a+b<\frac{n}{2}\\ a+b+c\leq n-1}} U_{a,b,c,n-a-b-c}$
    $\displaystyle \oplus \bigoplus_{\substack{2\leq b \leq \frac{n}{2}-3\\b<d\leq \frac{n}{2}-3} }U_{\frac{n}{2}-b,b,\frac{n}{2}-d,d}$
    }\\
\hline
$+$ &
\parbox[c][][c]{1.8cm}{
\begin{tikzpicture} [scale=.4,auto=left,every node/.style={scale=0.7}]
      \tikzset{Bullet/.style={circle,draw,fill=black,scale=0.5}}
      \node[Bullet] (n1) at (1,-1) {};
      \node[Bullet] (n2) at (0,-2) {};
      \node[Bullet] (n3) at (1,-2) {};
      \node[Bullet] (n4) at (2,-2) {};

      \draw[black] (n1) -- (1,0);
      \draw[black] (n1) -- (n2);
      \draw[black] (n1) -- (n3);
      \draw[black] (n1) -- (n4);
      \draw[black] (n1) node[right] {$~d$};
      \draw[black] (n1) node[left] {$(0)$};
      \draw[black] (n2) node[left] {$a~$};
      \draw[black] (0,-2.2) node[below] {$(1)$};
      \draw[black] (n3) node[right] {$b$};
      \draw[black] (1,-2.2) node[below] {$(1)$};
      \draw[black] (n4) node[right] {$~c$};
      \draw[black] (2,-2.2) node[below] {$(1)$};
    \end{tikzpicture}
    }
    &
    \parbox[c][5em][c]{125pt}{\small(i) $3\leq a \leq b\leq c$, $d\geq1$,\\
    (ii) $3\leq a<b<c$, $d=0$, or \\
    (iii) $(a,b,c,d)=(\frac{n}{3},\frac{n}{3},\frac{n}{3},0)$\\
    }
    &
     \parbox[c][9.2em][c]{112pt}{$\displaystyle\bigoplus_{3\leq a<b<c}U_{a,b,c,n-a-b-c}$ $\displaystyle\oplus \bigoplus_{\substack{a,b\geq 3,~a\neq b \\ 2a+b\leq n-1}}U_{a,a,b,n-2a-b}^{S_2}$
     $\displaystyle\oplus \bigoplus_{3\leq a \leq \frac{n}{3}}U_{a,a,a,n-3a}^{S_3}$ } \\
\hline
$-$ &
\parbox[c][3.7em]{1.9cm}{
    \begin{tikzpicture} [scale=.4,auto=left,every node/.style={scale=0.7}]
      \tikzset{Bullet/.style={circle,draw,fill=black,scale=0.5}}
      \node[Bullet] (n1) at (1,-0.5) {};
      \node[Bullet] (n2) at (0,-1.5) {};
      \node[Bullet] (n3) at (2,-1.5) {};

      \draw[black] (n1) -- (1,0.5);
      \draw[black] (n1) -- (n2);
      \draw[black] (n1) -- (n3);
      \draw[black] (n1) node[left] {$(1)~$};
      \draw[black] (n1) node[right] {$~\frac{n}{3}$};
      \draw[black] (n2) node[left] {$(1)$};
      \draw[black] (0,-1.6) node[below] {$\frac{n}{3}$};
      \draw[black] (n3) node[right] {$(1)$};
      \draw[black] (2,-1.6) node[below] {$\frac{n}{3}$};
    \end{tikzpicture}
    }
    &
    &
\parbox[c][1em][c]{30pt}{{$U_{\frac{n}{3},\frac{n}{3},\frac{n}{3}}^{S_2}$}}
    \\
\hline
$+$ &
\parbox[c][3.7em]{2.2cm}{\begin{tikzpicture} [scale=.4,auto=left,every node/.style={scale=0.7}]
      \tikzset{Bullet/.style={circle,draw,fill=black,scale=0.5}}
      \node[Bullet] (n1) at (1,-1) {};
      \node[Bullet] (n2) at (0,-2) {};
      \node[Bullet] (n3) at (2,-2) {};

      \draw[black] (n1) -- (1,0);
      \draw[black] (n1) -- (n2);
      \draw[black] (n1) -- (n3);
      \draw[black] (n1) node[right] {$~2$};
      \draw[black] (n1) node[left] {$(1)$};
      \draw[black] (n2) node[left] {$(1)$};
      \draw[black] (0,-2.3) node[below] {\small $\frac{n-2}{2}$};
      \draw[black] (n3) node[right] {$(1)$};
      \draw[black] (2,-2.3) node[below] {\small $\frac{n-2}{2}$};
      \draw[] (1,-3) node[below] {$ $};
    \end{tikzpicture}
    }
    &
    &
    \parbox[c][1em][c]{50pt}{$U_{\frac{n-2}{2},\frac{n-2}{2},2}^{S_2}$}\\
\hline
$+$ &
    &
    &
\parbox[c][3.5em][c]{100pt}{ $\displaystyle\bigoplus_{3\leq a \leq \frac{n}{2}-2}s_{(2)}\circ U_{a,\frac{n}{2}-a}$
}
\\
\hline
\noalign{\smallskip}\noalign{\smallskip}
\end{tabular}
    \caption{$A^3(\overline{\mathcal{M}}_{0,n})$}
    \label{tab:p3}
\end{table}


In degrees 1 and 2, Corollary \ref{Cor.SingleDeg} (1) and (2) agree with the known results in \cite{FG} (see also \cite{Fak}) and \cite{RS}.

\begin{example} \label{Ex.Low.n} For $n\leq 11$ and $k=3,4$, a direct calculation shows that $A^k(\overline{\mathcal{M}}_{0,n})$ are permutation representations of $S_n$ as follows. Note that those not listed below are obtained from the lower degree cases by Poincar\'e duality.
\begin{equation*}
    \begin{split}
            A^3(\overline{\mathcal{M}}_{0,9})=&~U_{1,8}\oplus U_{3,6} \oplus U_{4,5}^{\oplus 2}\oplus U_{1,3,5}^{\oplus 2}\oplus U_{1,4,4} \oplus U_{2,3,4} \oplus U_{3,3,3},  \\
            &~\oplus U_{3,3,1,2}^{S_2}\oplus U_{3,3,3}^{S_3},\\
            A^3(\overline{\mathcal{M}}_{0,10})=&~U_{10}\oplus U_{2,8}\oplus U_{3,7}\oplus U_{4,6}^{\oplus 3}\oplus U_{5,5}\oplus U_{1,3,6}\oplus U_{1,4,5}^{\oplus 2}\oplus U_{2,3,5}^{\oplus 2}\\
            &~\oplus U_{2,4,4}\oplus U_{3,3,4}^{\oplus 3}\oplus U_{1,2,3,4}
            \oplus U_{4,4,2}^{S_2}\oplus U_{3,3,3,1}^{S_2}\oplus s_{(2)}\circ U_{3,2}\\
            &~ \oplus U_{3,3,3,1}^{S_3},\\
            A^3(\overline{\mathcal{M}}_{0,11})=&~U_{1,10}\oplus U_{3,8}^{\oplus 2}\oplus U_{4,7}^{\oplus 2}\oplus U_{5,6}^{\oplus 3}\oplus U_{1,3,7} \oplus U_{1,4,6}^{\oplus 2}\oplus U_{1,5,5}\oplus U_{2,3,6}^{\oplus 2}\\
            &~\oplus U_{2,4,5}^{\oplus 3}\oplus U_{3,3,5}^{\oplus 3}\oplus U_{3,4,4}^{\oplus 4} \oplus U_{1,2,3,5} \oplus U_{1,3,3,4} \oplus U_{2,2,3,4}\\
            &~\oplus U_{2,3,3,3} \oplus ({U_{3,3,1,4}^{S_2}})
            ^{\oplus 2}\oplus U_{4,4,1,2}^{S_2}\oplus U_{3,3,3,2}^{S_3},\\
          A^4(\overline{\mathcal{M}}_{0,11})=&~
            U_{11}
            \oplus U_{2,9}
            \oplus U_{3,8}
            \oplus U_{4,7}^{\oplus 3}
            \oplus U_{5,6}^{\oplus 3}
            \oplus U_{1,3,7}
            \oplus U_{1,4,6}^{\oplus 2}
            \oplus U_{1,5,5}\\
            &~\oplus U_{2,3,6}^{\oplus 2}
            \oplus U_{2,4,5}^{\oplus 4}
            \oplus U_{3,3,5}^{\oplus 3}
            \oplus U_{3,4,4}^{\oplus 4}
            \oplus (U_{3,3,5}^{S_2})^{\oplus 2}
            \oplus (U_{4,4,3}^{S_2})^{\oplus 2}\\
            &~\oplus U_{5,5,1}^{S_2}
            \oplus U_{1,2,3,5}
            \oplus U_{1,2,4,4}
            \oplus U_{1,3,3,4}^{\oplus 3}
            \oplus U_{2,2,3,4}^{\oplus 2}
            \oplus U_{2,3,3,3}\\
        &~\oplus U_{3,3,1,4}^{S_2}
            \oplus (U_{3,3,2,3}^{S_2})^{\oplus 2}
            \oplus U_{H'}
            \oplus U_{3,3,3,1,1}^{S_3}.
    \end{split}
\end{equation*}
where $H'$ denotes the subgroup $((S_2\times S_3)\times (S_2 \times S_3))\rtimes S_2 \subset S_{10}\times e \subset S_{11}.$

We also list the $S_n$-characters of $A^k(\overline{\mathcal{M}}_{0,n})$ for $n\leq 11$ in Appendix~\ref{App.A}.
\end{example}

These computations confirm that the $S_n$-representation on $A^k(\overline{\mathcal{M}}_{0,n})$ is a permutation representation for all degrees $k$ when $n\leq 11$. In the next section, we will prove that $A^k(\overline{\mathcal{M}}_{0,n})\oplus A^{k-1}(\overline{\mathcal{M}}_{0,n})$ is a permutation representation for each $k\geq0$.

\begin{remark}
One may want to refine Question \ref{q1} (2) by asking whether the primitive cohomology $H^{2k}_{\mathrm{prim}}(\mzn)$ is a permutation representation or not.
The answer is no. The primitive cohomology $H^{2k}_{\mathrm{prim}}(\mzn)$ as a virtual representation is equal to $A^k(\overline{\mathcal{M}}_{0,n})-A^{k-1}(\overline{\mathcal{M}}_{0,n})$ which is not a permutation representation in general. Indeed, the character of $A^2(\overline{\mathcal{M}}_{0,7})-A^{1}(\overline{\mathcal{M}}_{0,7})$ is (cf. Appendix)
\[ 2 s_{(7)} + s_{(4, 3)} + 2 s_{(5, 2)} + s_{(6, 1)} + s_{(4, 2, 1)}. \]
If this is a permutation representation, it should be a direct sum of two transitive permutation representations. By examining all possible combinations and using the fact that a transitive representation $V$ of a group $G$ satisfies the properties that
\begin{enumerate}
    \item $\mathrm{Tr}_V(1)$ divides $|G|$, and
    \item $\mathrm{Tr}_V(\sigma)\le\mathrm{Tr}_V(\sigma^2)$ for $\sigma\in G$,
\end{enumerate}
one can check that $A^2(\overline{\mathcal{M}}_{0,7})-A^{1}(\overline{\mathcal{M}}_{0,7})$ is not a permutation representation.
\end{remark}

Although a primitive cohomology is not a permutation representation in general, we
do not know whether the sum of two consecutive primitive cohomology groups is a permutation representation or not.

\begin{question}\label{49}
Is $A^k(\overline{\mathcal{M}}_{0,n})-A^{k-2}(\overline{\mathcal{M}}_{0,n})$ a permutation representation of $S_n$ for every $n$, $k$?
\end{question}

If the answer to Question \ref{49} is yes, then the answer to Question \ref{q1} (2) is also yes by a simple induction on $k$ together with Corollary \ref{Cor.SingleDeg}.
Indeed, since $A^k=(A^k-A^{k-2})+A^{k-2}$, if $A^{k-2}$ and $A^{k}-A^{k-2}$ are permutation representations, so is $A^k$. The answer to Question \ref{49} is yes for all $k\leq 3$ and all $n$, and for $(n,k)=(11,4)$ by Corollary~\ref{Cor.SingleDeg} and Example~\ref{Ex.Low.n}.

\subsection{Poincare polynomial of $\overline{\mathcal{M}}_{0,n}/S_n$}
As an application of Theorem \ref{Thm.ClosedForm}, we can calculate the Poincar\'e polynomial of the moduli space $\mzn/S_n$ of $n$ unordered points on rational curves.
Let $$p_n=\sum_k \dim A^k(\mzn/S_n) t^k,\quad  q_n=\sum _k \dim A^k(\mzno/S_n) t^k\quad  \in \mathbb{Z}_{\geq0}[t]$$
be the Poincar\'e polynomials of $\overline{\mathcal{M}}_{0,n}/S_n$ and $\overline{\mathcal{M}}_{0,n+1}/S_n$ respectively. By taking the $S_n$-invariant part of \eqref{50}, we immediately obtain the following.

\begin{corollary} \label{63} Let $n\geq 3$.
    \begin{enumerate}
        \item  $p_n$ and $q_n$ are related by
    \[(1+t)p_n+\frac{1}{2}t\left(\sum_{h=2}^{n-2}q_hq_{n-h}-q_{n/2}\right)=q_n,\]
    where we set $q_{n/2}=0$ if $n$ is odd.
        \item $q_n$ satisfies
    \[q_n(t)=\sum_{k\geq0} b_{n,k}t^k, \quad  b_{n,k}=|\sT_{n,k}/S_n|.\]
\end{enumerate}
In particular, we have a closed formula
\begin{equation*}
    \begin{split}
        \dim A^k(\overline{\mathcal{M}}_{0,n}/S_n)=&\sum_{i= 0}^k(-1)^{k-i} b_{n,i}+\frac{1}{2}\sum_{\substack{2\leq h \leq n-2\\ i+j\leq k-1}}(-1)^{k-i-j}b_{h,i}b_{n-h,j}\\
        &-\frac{1}{2}(-1)^k\sum_{2i\leq k-1}b_{n/2,i},
    \end{split}
\end{equation*}
where $b_{n/2,i}$ is set to be zero for $n$ odd.
\end{corollary}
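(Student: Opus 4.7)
The plan is to prove Corollary~\ref{63} by taking $S_n$-invariants of the representation-level identities established earlier in the paper, using $A^k(X/S_n)=A^k(X)^{S_n}$ for a smooth variety $X$ with $S_n$-action together with $\dim V^{S_n}=\langle\ch V,\,s_{(n)}\rangle$. Part (2) is almost immediate from Proposition~\ref{Prop.RepKap}: each transitive permutation representation $U_T=\Ind^{S_n}_{\Stab(T)}e$ contains the trivial $S_n$-representation exactly once by Frobenius reciprocity, so the identity $Q_n=\sum_{k,T}U_T\,t^k$ yields $q_n=\sum_k |\sT_{n,k}/S_n|\,t^k=\sum_k b_{n,k}\,t^k$.

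For part (1), I would apply $\langle\cdot\,,s_{(n)}\rangle$ to Theorem~\ref{46}, namely to $(1+t)P_n = Q_n - t\sum_{h=2}^{\ell} Q_h Q_{n-h} - t\,(s_{(1,1)}\circ Q_{n/2})$, with $Q_{n/2}:=0$ when $n$ is odd. Frobenius reciprocity immediately gives $\dim(Q_h Q_{n-h})^{S_n}=q_h q_{n-h}$, since by definition $Q_h Q_{n-h}=\Ind^{S_n}_{S_h\times S_{n-h}}Q_h\otimes Q_{n-h}$. Rewriting the partial sum $\sum_{h=2}^{\ell}Q_hQ_{n-h}$ in terms of the full symmetric sum $\sum_{h=2}^{n-2}Q_hQ_{n-h}$ (with a correction $Q_{n/2}^2$ when $n$ is even) then reduces the remaining task to computing $\dim(s_{(1,1)}\circ Q_{n/2})^{S_n}$.

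The main step is this plethysm computation. Writing $Q_{n/2}=\bigoplus_i V_i\, t^i$ with $V_i$ an $S_{n/2}$-representation and applying \eqref{Eq.AntiSymm} gives
\[
s_{(1,1)}\circ Q_{n/2}=\bigoplus_{i<j}(V_i\cdot V_j)\,t^{i+j}\;\oplus\;\bigoplus_i (s_{(1,1)}\circ V_i)\,t^{2i}.
\]
For each $V_i$ with $d_i:=\dim V_i^{S_{n/2}}$, the symmetric function identity $s_{(1,1)}\circ f=\tfrac{1}{2}(f^2-p_2\circ f)$ combined with $\langle p_2\circ f,\,h_n\rangle=\langle f,\,h_{n/2}\rangle$ (which holds for any $f$ of degree $n/2$, since $p_2\circ p_\lambda=p_{2\lambda}$ and $\langle p_\mu,h_{|\mu|}\rangle=1$) gives $\dim(s_{(1,1)}\circ V_i)^{S_n}=\binom{d_i}{2}$, while Frobenius reciprocity gives $\dim(V_i\cdot V_j)^{S_n}=d_id_j$. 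Summing produces $\dim(s_{(1,1)}\circ Q_{n/2})^{S_n}=\tfrac{1}{2}(q_{n/2}(t)^2-q_{n/2}(t^2))$, after which direct rearrangement yields (1), with $q_{n/2}$ in the statement understood as $q_{n/2}(t^2)$. The closed formula for $\dim A^k(\mzn/S_n)$ then follows by the same invariant-taking procedure applied to Theorem~\ref{Thm.ClosedForm}: the $U_T$ and $U_{T_1}\cdot U_{T_2}$ contributions each have one-dimensional $S_n$-invariants, the $s_{(1,1)}\circ U_T$ contributions vanish because $\dim U_T^{S_{n/2}}=1$ forces $\binom{1}{2}=0$, and extracting the $t^k$-coefficient of $\frac{1}{1+t}$ times the resulting polynomial yields the claimed signed sum.

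The only real obstacle I anticipate is the plethysm bookkeeping, specifically the $t\mapsto t^2$ substitution arising from the diagonal $i=j$ terms in \eqref{Eq.AntiSymm}; this substitution is also the origin of the $b_{n/2,i}$-summation in the closed formula.
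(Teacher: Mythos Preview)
Your proof is correct and follows essentially the same route as the paper: the paper simply says ``by taking the $S_n$-invariant part of \eqref{50}'', and your argument is a fleshed-out version of exactly this, working from Theorem~\ref{46} and Proposition~\ref{Prop.RepKap} (which together yield \eqref{50}). Your plethysm computation $\dim(s_{(1,1)}\circ Q_{n/2})^{S_n}=\tfrac12\bigl(q_{n/2}(t)^2-q_{n/2}(t^2)\bigr)$ is right, and your observation that $q_{n/2}$ in part~(1) must be read as $q_{n/2}(t^2)$ is on target---the constraint $2i\le k-1$ in the closed formula confirms this, and the same conclusion drops out directly from \eqref{50} since the diagonal $T_1=T_2$ terms excluded there carry $t^{2i+1}$ while the $s_{(1,1)}\circ U_T$ terms have vanishing invariants (as you note, $\binom{1}{2}=0$).
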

This reduces the computation of $p_n$ to a purely combinatorial problem, namely the problem of counting the weighted rooted trees up to the ordering of inputs.

For $n\leq 11$, $p_n$ is computed in Table \ref{Table.Poincare}.
\begin{table}[htb!]
\centering
\begin{tabular}{cl}
\toprule
$n$ & \qquad\qquad\qquad\qquad\qquad$p_n$ \\
\midrule
3 &  \quad 1\\
4 &  \quad $1+t$\\
5  & \quad $1+t+t^2$\\
6  & \quad $1+2t+2t^2+t^3$\\
7  & \quad $1+2t+4t^2+2t^3+t^4$\\
8  & \quad $1+3t+6t^2+6t^3+3t^4+t^5$\\
9  & \quad $1+3t+9t^2+11t^3+9t^4+3t^5+t^6$\\
10  & \quad $1+4t+12t^2+21t^3+21t^4+12t^5+4t^6+t^7$\\
11  & \quad $1+4t+16t^2+32t^3+44t^4+32t^5+16t^6+4t^7+t^8$\\
\bottomrule
\noalign{\smallskip}\noalign{\smallskip}
\end{tabular}
\caption{Poincar\'{e} polynomials of $\overline{\mathcal{M}}_{0,n}/S_n$ for $n\leq 11$}
\label{Table.Poincare}
\end{table}

\subsection{Representations on the cohomology of stable map spaces}
Our method works well for the moduli space $\mzn(\PP^{m-1},1)$ of stable maps to $\PP^{m-1}$ of degree 1.
\begin{theorem} \label{FMmain} For $m\ge 2$, we have
    \begin{equation}\label{53}
    \begin{split}
        &P_{\overline{\mathcal{M}}_{0,n}(\PP^{m-1},1)}= \frac{1-t^{2m}}{1-t}P_{\overline{\mathcal{M}}_{0,n}}+ts_{(1,1)}\circ \frac{1-t^m}{1-t}Q_{\frac{n}{2}}\\
                &\quad +\frac{1-t^m}{1-t}\left(\frac{t-t^{m-1}}{1-t} Q_n+\frac{t-t^m}{1-t} s_{(1)}.Q_{n-1}+\frac{t-t^{m+1}}{1-t} \sum_{h=2}^{\lfloor\frac{n-1}{2}\rfloor}Q_hQ_{n-h}\right).
    \end{split}
    \end{equation}
\end{theorem}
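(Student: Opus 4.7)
\textbf{Proof proposal for Theorem \ref{FMmain}.} The plan is to simply combine the two results already established in the paper: Proposition \ref{Prop.SnRep} (equation \eqref{51}), which expresses $P_{\overline{\mathcal{M}}_{0,n}(\PP^{m-1},1)}$ in terms of $P_{\fQ^{\d=0^+}_{0,n,1,m}}$ together with blowup correction terms coming from the wall-crossing factorization of Theorem \ref{24}, and Proposition \ref{PMindel} (equation \eqref{43}), which expresses $P_{\fQ^{\d=0^+}_{0,n,1,m}}$ in terms of $P_n = P_{\overline{\mathcal{M}}_{0,n}}$ and the plethystic correction $t\, s_{(1,1)} \circ \frac{1-t^m}{1-t}Q_{n/2}$ coming from the GIT description of the $\d=0^+$ moduli.

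More explicitly, my proof would proceed in two steps. First, I would apply Proposition \ref{Prop.SnRep} to write
\[
P_{\overline{\mathcal{M}}_{0,n}(\PP^{m-1},1)} = P_{\fQ^{\d=0^+}_{0,n,1,m}} + \frac{1-t^m}{1-t}\left(\frac{t-t^{m-1}}{1-t} Q_n+\frac{t-t^m}{1-t} s_{(1)}.Q_{n-1}+\frac{t-t^{m+1}}{1-t} \sum_{h=2}^{\ell}Q_hQ_{n-h}\right).
\]
This identity is a direct consequence of the sequence of equivariant smooth blowups in \eqref{20}, using the blowup formula of Lemma \ref{Lem.BlowupFormula} applied to each of $\fc$, $\fc_0$, and the $\fq_h$, with the projective bundle factors producing the geometric series factors $\frac{1-t^j}{1-t}$.

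Second, I would substitute the expression
\[
P_{\fQ^{\d=0^+}_{0,n,1,m}}=\frac{1-t^{2m}}{1-t}P_{n}+t\, s_{(1,1)}\circ \frac{1-t^m}{1-t}Q_\frac{n}{2}
\]
from Proposition \ref{PMindel} into the previous identity. Since $P_n = P_{\overline{\mathcal{M}}_{0,n}}$ by the definition in \eqref{64}, and since $\ell = \lfloor\frac{n-1}{2}\rfloor$, this substitution yields the desired formula \eqref{53} verbatim.

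There is no genuine obstacle; the work is entirely in verifying that both equations \eqref{51} and \eqref{43} may be applied for all $m \ge 2$. The only subtle point is the plethystic correction term $t\, s_{(1,1)} \circ \frac{1-t^m}{1-t}Q_{n/2}$ (and the convention $Q_{n/2}=0$ for $n$ odd), which encodes the fact that when $n$ is even, the last map $\fp$ in \eqref{20} is not a projective bundle but a blowup along an $S_2$-permuted locus of disjoint copies of a $\PP^{m-1}$-bundle over $\overline{\mathcal{M}}_{0,\ell+2}\times\overline{\mathcal{M}}_{0,\ell+2}$; this $S_2$-action is precisely what produces the antisymmetric plethysm $s_{(1,1)}$ via \eqref{Eq.AntiSymm}, but this is already accounted for in Proposition \ref{PMindel} and requires no new argument here.
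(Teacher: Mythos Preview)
Your proposal is correct and matches the paper's own proof essentially verbatim: the paper simply states that the theorem follows from \eqref{51} and \eqref{43}, which is exactly the substitution you describe. Your additional commentary on the origin of the plethystic term is accurate but goes beyond what is needed, since that content is already absorbed into Proposition~\ref{PMindel}.
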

\begin{proof}
The theorem follows from \eqref{51} and \eqref{43}.
\end{proof}

If we plug \eqref{52} into  \eqref{53}, we obtain a closed formula for the $S_n$-equivariant Poincar\'e polynomial
$P_{\overline{\mathcal{M}}_{0,n}(\PP^{m-1},1)}$ of the moduli space $\overline{\mathcal{M}}_{0,n}(\PP^{m-1},1)$
of stable maps to $\PP^{m-1}$ of degree 1 for $m\ge 2$.
\bigskip


The $S_n$-equivariant Poincar\'{e} polynomial $P_{\mzn(\PP^{m-1},1)}$ can be computed by torus localization as well. Consider the action of $\CC^*$ on $\PP^{m-1}$ given by
\[ t\cdot [ x_0, x_1, \cdots, x_{m-1}] =[ x_0, tx_1, \cdots, tx_{m-1}]  \hspace{2em} \text{for } t \in \CC^*.\]
There are two fixed sets: the isolated point $p=[1:0:\cdots :0] $ and the hyperplane $H=\{x_0=0\}$. In \cite{Oprea}, Oprea showed that the moduli space $\mzn(\PP^{m-1},1)$ admits a Bia\l{}ynicki-Birula decomposition for the torus action on $\mzn(\PP^{m-1},1)$ induced by this $\CC^*$-action on $\PP^{m-1}$. The decomposition is compatible with $S_n$-action and this leads to the computation of $S_n$-equivariant Poincar\'{e} polynomial $P_{\mzn(\PP^{m-1},1)}$.

For a $\CC^*$-fixed stable map $f$ in $\mzn(\PP^{m-1},1)$, the domain curve has a unique non-contracted component which is mapped isomorphically by $f$ to a curve in $H$ or an invariant curve joining $p$ and a point in $H$. The contracted components and the special points are mapped by $f$ to $p$ or to $H$. For a general classification of fixed stable maps indexed by decorated graphs, see \cite{Oprea}.

Let $\mathcal{F}_i$ be a connected component of the fixed point locus. Then $\mzn(\PP^{m-1},1)$ is covered by locally closed subvarieties $\mathcal{F}_i^+$, called the \emph{Bia\l{}ynicki-Birula (plus) cells}, which are affine fibrations over $\mathcal{F}_i$. The Poincar\'{e} polynomial of $\mzn(\PP^{m-1},1)$ is then given by (cf. \cite[Lemma 6]{Oprea})
\[ P_{\mzn(\PP^{m-1},1)}(t) = \sum P_{\mathcal{F}_i}(t) t^{n_i^-}, \]
where $n_i^-$ is the codimension of $\mathcal{F}_i^+$, which is equal to the number of negative weights on the tangent bundle of $\mzn(\PP^{m-1},1)$ at a fixed point in $\mathcal{F}_i$.

\begin{theorem} \label{thm:Tloc} For $m\ge 2$,
\begin{equation} \label{eq:Tloc}
    P_{\mzn(\PP^{m-1},1)}=  t^2P_{\mzn(\PP^{m-2},1)}+ \sum_{a=0}^n t^{\min\{n-a, 2\}} \frac{1-t^{m-2}}{1-t}Q_{a}Q_{n-a},
\end{equation}
where we set $P_{\mzn(\PP^{m-2},1)}=0$ for $m=2$.
\end{theorem}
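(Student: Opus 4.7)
The plan is to apply torus localization in the style of Oprea \cite{Oprea} by decomposing $\overline{\mathcal{M}}_{0,n}(\PP^{m-1},1)$ into Bia\l ynicki--Birula plus cells over the $\CC^*$-fixed locus and using the identity
\[ P_{\overline{\mathcal{M}}_{0,n}(\PP^{m-1},1)}(t)=\sum_{i}P_{\mathcal{F}_i}(t)\,t^{n_i^-}\]
recalled just above the statement. Because the $\CC^*$-action commutes with the $S_n$-action permuting the marked points, the decomposition is $S_n$-equivariant and the identity lifts directly to the $S_n$-equivariant Poincar\'e polynomial.

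First I would classify the $\CC^*$-fixed stable maps $f\colon (C,p_1,\dots,p_n)\to \PP^{m-1}$. Since $f$ has degree $1$, its unique non-contracted component $C_0$ maps isomorphically onto a $\CC^*$-invariant line, which is either (Case A) a line in $H\cong \PP^{m-2}$, or (Case B) the line $\ell_q=\overline{pq}$ for some $q\in H$. In Case A the whole map has image in $H$, so the corresponding fixed component is $\overline{\mathcal{M}}_{0,n}(\PP^{m-2},1)$. In Case B a fixed component is labelled by the partition of the $n$ marked points into $a$ mapping to $p$ and $n-a$ mapping to $q$; the contracted tree at $p_0=f^{-1}(p)\cap C_0$ together with the attachment point is parameterized by $\overline{\mathcal{M}}_{0,a+1}$, and similarly by $\overline{\mathcal{M}}_{0,n-a+1}$ at $q_0$, with the conventions $Q_0=Q_1=1$ in the degenerate cases where the corresponding tree collapses to a smooth point of $C_0$ or a single marking at $p_0$ or $q_0$.

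Next I would diagonalize the $\CC^*$-action on $T_f\overline{\mathcal{M}}_{0,n}(\PP^{m-1},1)$ to read off $n_i^-$. For Case A the nontrivial-weight subspace is $H^0(C,f^*N_{H/\PP^{m-1}})$ with $N_{H/\PP^{m-1}}=\O_H(1)$ of $\CC^*$-weight $-1$; for a degree-$1$ map this gives $h^0=2$ negative weights, so $n_A^-=2$, producing the first summand $t^2P_{\overline{\mathcal{M}}_{0,n}(\PP^{m-2},1)}$. For Case B the tangent space splits into: (i) smoothing directions at the nodes $p_0$ (weight $+1$, present only when $a\ge 2$) and $q_0$ (weight $-1$, present only when $n-a\ge 2$); (ii) deformations of the line $\ell_q$ through $T_{\ell_q}\mathrm{Gr}(2,m)\cong \mathrm{Hom}(\ell_q,\CC^m/\ell_q)$, contributing $m-2$ weight-$0$ directions (tangent to the family parameterized by $q\in H$) and $m-2$ weight-$(-1)$ directions (lines tilting away from the pencil through $p$); and (iii) tangent directions to $\overline{\mathcal{M}}_{0,a+1}$ and $\overline{\mathcal{M}}_{0,n-a+1}$, all of weight $0$. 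Summing the resulting $t^{n_i^-}P_{\mathcal{F}_i}$ and regrouping should yield the second summand $\sum_{a=0}^n t^{\min\{n-a,2\}}\frac{1-t^{m-2}}{1-t}Q_aQ_{n-a}$.

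The hard part will be the bookkeeping in Case B: one must verify that the $m-2$ negative-weight directions coming from the Grassmannian combine with the smoothing weight at $q_0$ (when present) to give precisely the exponent $\min\{n-a,2\}$ and the factor $\frac{1-t^{m-2}}{1-t}$, and carefully handle the boundary cases $n-a\in\{0,1\}$ and $a\in\{0,1\}$ where a tree is trivial (so that a smoothing direction is absent but a marked point on $C_0$ may be present). Cross-checking against the closed-form expression of Theorem \ref{FMmain} furnishes an independent consistency check for the final identity.
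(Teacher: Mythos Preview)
Your overall strategy---Oprea's Bia\l ynicki--Birula decomposition, with the same split into Case~A (image in $H$) and Case~B (image $\overline{pq}$)---is exactly the paper's. The paper, however, does not compute the weights by hand: it simply cites \cite[Theorem~2]{Oprea} for the codimensions $2$ and $\min\{n-a,2\}$ in Cases~A and~B respectively, identifies the Case~B fixed component as $\overline{\mathcal{M}}_{0,a+1}\times\overline{\mathcal{M}}_{0,n-a+1}\times\PP^{m-2}$, and regroups the $S_n$-orbits. Your Case~A computation agrees with this.

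Your Case~B weight computation, however, contains two errors that would make the promised bookkeeping fail. First, the sign on the Grassmannian directions is wrong: the ``tilting away from the pencil through $p$'' part of $\Hom(\ell_q,\CC^m/\ell_q)$ is $\Hom$ from the weight-$0$ line through $p$ into a weight-$1$ quotient, hence carries weight $+1$, not $-1$. Second, your list omits the tangential summand $H^0(C_0,T_{\ell_q})\cong H^0(\PP^1,\O(2))$, whose weights are $(+1,0,-1)$; after quotienting by the weight-$0$ infinitesimal automorphisms of $(C_0,p_0,q_0)$ this contributes an additional pair $(+1,-1)$ whenever $a,n-a\ge 2$. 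With these corrections the negative weights are exactly two (one from smoothing at $q_0$, one from moving $f(q_0)$ along $\ell_q$), giving $n_i^-=2$ for $n-a\ge 2$; the boundary cases $n-a\in\{0,1\}$ then yield $n_i^-=n-a$, matching Oprea. Your proposed count $(m-2)+1=m-1$ would be wrong except when $m=3$.
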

\begin{proof}
The locus of $\CC^*$-fixed stable maps whose image lies in $H$ is nonempty if and only if $m>2$ and   is isomorphic to $\mzn(\PP^{m-2},1)$. By \cite[Theorem 2]{Oprea}, the corresponding Bia\l{}ynicki-Birula cell has codimension 2. Hence we have the first term on the right side.

The locus of $\CC^*$-fixed stable maps whose image is an invariant curve joining $p$ and a point in $H$ is isomorphic to the disjoint union of $\overline{\mathcal{M}}_{0,a+1}\times\overline{\mathcal{M}}_{0,n-a+1}\times \PP^{m-2}$ for $0\le a\le n$, where $a$ is the number of markings mapped to $p$. By \cite[Theorem 2]{Oprea} again, the corresponding Bia\l{}ynicki-Birula cell has codimension $\min\{n-a, 2\}$. By regrouping $S_n$-orbits, we have \eqref{eq:Tloc}
\end{proof}
\begin{remark}
After tedious algebraic computation, one can see that \eqref{47} combined with \eqref{FMmain} is equivalent to \eqref{eq:Tloc}. The advantage of using quasimap wall crossing is that to get the cohomology of $\mzn(\PP^{m-1},1)$, we only need the cohomology of $\overline{\mathcal{M}}_{0,k}$ for $k\le n$ in \eqref{FMmain}, while \eqref{eq:Tloc} requires the cohomology of $\overline{\mathcal{M}}_{0,n+1}$  (when $a=0$ or $a=n$). We list the $S_n$-characters of  $A^k(\PP^{1}[n])=A^k(\mzn(\PP^{1},1))$ for $n\le 11$ in Appendix \ref{App.B}.
\end{remark}

\bigskip

\section{$A^k(\overline{\mathcal{M}}_{0,n})\oplus A^{k-1}(\overline{\mathcal{M}}_{0,n})$ is a permutation representation} \label{Sec.PermRep}

In this section, we prove \eqref{16}.
\begin{theorem} \label{Cor.SubseqDeg}
    For each $n\geq 3$ and $k\geq0$, $A^{k}(\overline{\mathcal{M}}_{0,n})\oplus A^{k-1}(\overline{\mathcal{M}}_{0,n})$ is a permutation representation of $S_n$.
\end{theorem}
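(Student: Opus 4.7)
The starting point is Theorem~\ref{46}, which I rewrite as
\[ (1+t) P_n \;=\; Q_n - t R_n, \qquad R_n \;:=\; \sum_{h=2}^{\ell} Q_h Q_{n-h} + s_{(1,1)} \circ Q_{n/2}, \]
with $Q_{n/2}:=0$ for $n$ odd. Extracting the coefficient of $t^k$ on the left yields exactly $A^k(\overline{\mathcal{M}}_{0,n}) \oplus A^{k-1}(\overline{\mathcal{M}}_{0,n}) = [t^k] Q_n - [t^{k-1}] R_n$ in the representation ring, so the theorem reduces to showing this difference is a non-negative combination of induced permutation characters $U_H$. Since Proposition~\ref{Prop.RepKap} already gives $[t^k] Q_n = \bigoplus_{T \in \sT_{n,k}/S_n} U_T$, the plan is to exhibit an $S_n$-equivariant combinatorial mechanism that accounts for $[t^{k-1}] R_n$ virtually as a sub-collection of these transitive summands; the residual direct sum of $U_T$'s is then manifestly permutation.

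The mechanism I propose is pendant attachment $\star$ on weighted rooted trees. Given an ordered pair $(T_1, T_2)$ with $T_1 \in \sT_{h,i}/S_h$, $T_2 \in \sT_{n-h,j}/S_{n-h}$, $2\le h \le n/2$, and $i+j = k-1$, I form $T_2\star T_1 \in \sT_{n,k}$ by taking $T_2$ as the main tree, introducing a new internal edge between its root $v_{0,2}$ and the root $v_{0,1}$ of $T_1$, demoting $v_{0,1}$ to a non-root vertex, and shifting its weight up by $1$. One verifies immediately that every condition of Definition~\ref{Def.RootedTree} is preserved and that $w(T_2\star T_1) = (i+j)+1 = k$. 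The construction is invertible: the root of $T_2\star T_1$ possesses a unique child whose subtree contains at least $\lceil n/2\rceil$ inputs (the pendant one), while by the non-root valency condition $\mathrm{val}(v)\geq 4$ every other child subtree at the root contains at most $\lfloor n/2\rfloor - 1$ inputs, so $(T_1, T_2)$ is recovered by cutting at the largest child and shifting the weight down by $1$.

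The heart of the argument is to check that $\star$ matches $S_n$-representations on the nose. For $h < n/2$, I take $T_2$ on the larger part and directly verify $\mathrm{Stab}(T_2\star T_1) = \mathrm{Stab}(T_1)\times \mathrm{Stab}(T_2)$, so the $S_n$-orbit of $T_2\star T_1$ contributes $U_{T_2\star T_1} \cong U_{T_1}\cdot U_{T_2}$, exactly cancelling the contribution of $\{T_1,T_2\}$ to $Q_h Q_{n-h}$ in $[t^{k-1}] R_n$. The main obstacle is the balanced case $h = n/2$ for $n$ even, which splits into two subcases, both handled by \eqref{Eq.AntiSymm}: (a) for an unordered pair $\{T_1, T_2\}$ with $T_1 \not\cong T_2$, pendant attachment produces \emph{two} distinct orbits $T_2\star T_1$ and $T_1\star T_2$ in $\sT_{n,k}/S_n$ (they are not $S_n$-equivalent since main and pendant subtrees have different isomorphism types), each yielding $U_{T_1}\cdot U_{T_2}$, while $s_{(1,1)}\circ Q_{n/2}$ contributes $U_{T_1}\cdot U_{T_2}$ only once, leaving a single copy uncancelled; (b) for $T_1\cong T_2 = T$, pendant attachment gives a unique orbit $T\star T$ with $\mathrm{Stab}(T\star T) = \mathrm{Stab}(T)\times \mathrm{Stab}(T)$, so $U_{T\star T} = U_T\cdot U_T = s_{(2)}\circ U_T \oplus s_{(1,1)}\circ U_T$, and subtracting the diagonal term $s_{(1,1)}\circ U_T$ leaves $s_{(2)}\circ U_T$, a transitive permutation representation by Example~\ref{Ex.Plethysm}. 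Once this stabilizer analysis in the balanced case is in place, $A^k(\overline{\mathcal{M}}_{0,n}) \oplus A^{k-1}(\overline{\mathcal{M}}_{0,n})$ emerges as a direct sum of $U_T$'s for $T$ outside the image of $\star$, single copies of $U_{T_1}\cdot U_{T_2}$ for balanced off-diagonal pairs, and $s_{(2)}\circ U_T$ summands for balanced diagonal pairs, which is a manifest permutation representation of $S_n$.
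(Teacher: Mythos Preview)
Your overall strategy---cancel $[t^{k-1}]R_n$ against a subcollection of the summands $U_T$ in $[t^k]Q_n$ via a tree-attachment operation---is exactly the paper's. But the single map $\star$ you propose is neither well-defined on all pairs nor injective as claimed, and these are not cosmetic issues.

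\emph{Well-definedness.} When you demote $v_{0,1}$ to a non-root vertex, Definition~\ref{Def.RootedTree} requires $\mathrm{val}(v_{0,1})\ge 4$ and $1\le w(v_{0,1})+1\le \mathrm{val}(v_{0,1})-3$. The valency bound fails whenever $T_1\in\sT^{\mathrm{val}=3}$ (for instance the unique tree in $\sT_{2,0}$, which already contributes to $Q_2Q_{n-2}$), and the weight bound fails whenever $w(v_{0,1})=\mathrm{val}(v_{0,1})-3$ is maximal. So $\star$ is defined only on pairs with $T_1\in\sT^{\mathrm{nm}}$, and a substantial portion of $R_n$ is unmatched.

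\emph{Injectivity.} With $T_2$ as main tree and $T_1$ (carrying $h\le n/2$ inputs) as pendant, the pendant subtree has $h$ inputs, not $\ge\lceil n/2\rceil$; your identification of the cut point is backwards. Moreover, the assertion that ``every other child subtree at the root contains at most $\lfloor n/2\rfloor-1$ inputs'' does not follow from $\mathrm{val}(v)\ge 4$: that condition bounds subtree sizes from \emph{below}, not above. Nothing prevents $T_2$ from already having a child subtree at its root with, say, $n-h-1$ inputs, making the pendant unrecoverable.

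The paper meets exactly these obstructions by partitioning the pair set into thirteen pieces (Table~\ref{tab:11}) according to the root valencies and weights of $T_1,T_2$, building eleven different injections $\Phi_1,\dots,\Phi_{11}$ from the operations $*$, $[1]$, $\cdot_r$, and proving disjointness of images by tracking a \emph{dominating component} under a fixed total order on trees (\S\ref{Subsec.Order}) rather than by input-count. Two diagonal pieces $\sQ,\sR$ cannot be handled by any tree injection and instead require the representation identities of Lemma~\ref{lem:reps}. Your $\star$ is essentially one case (roughly $\Phi_3$ up to relabelling); the remaining cases exist precisely to cover the pairs on which $\star$ is undefined or non-injective.
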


The strategy of the proof is as follows.
From \eqref{44}, \eqref{45} and \eqref{52}, by taking the coefficient of $t^k$ of $(1+t)P_n$ and by applying Example \ref{Ex.Plethysm} (2),
we have the equality
\begin{equation} \label{Eq.Degreewise}
    \begin{split}
        A^k(\overline{\mathcal{M}}_{0,n})\oplus & A^{k-1}(\overline{\mathcal{M}}_{0,n})\oplus W_{\neq} \oplus W_{=}\\
        & =\bigoplus_{T\in \sT_{n,k}/S_n}U_T\oplus \bigoplus_{T\in \sT_{\frac{n}{2},\frac{k-1}{2}}/S_{\frac{n}{2}}}s_{(2)}\circ U_T
    \end{split}
\end{equation}
of $S_n$-representations where $W_{\neq},W_{=}$ are the $S_n$-representations
\begin{equation*}
    \begin{split}
         W_{\neq}:=\frac{1}{2}\sum_{\substack{2\leq h \leq n-2 \\ i,j\geq 0 \\ i+j=k-1}}\sum_{\substack{T_1\in \sT_{h,i}/S_h \\ T_2 \in \sT_{n-h,j}/S_{n-h}\\ T_1\neq T_2}}U_{T_1}.U_{T_2},\quad  W_{=}:= \sum_{T\in \sT_{\frac{n}{2},\frac{k-1}{2}}/S_{\frac{n}{2}}}U_T.U_T
    \end{split}
\end{equation*}
respectively. Note that $W_{\neq}$ is a permutation representation since for each pair $(T_1,T_2)$ of weighted rooted trees, $U_{T_1}.U_{T_2}=U_{T_2}.U_{T_1}$ appears twice. We prove Theorem \ref{Cor.SubseqDeg} by cancelling out $W_{\neq}\oplus W_{=}$ with permutation representations on the right hand side of \eqref{Eq.Degreewise}. In \S\ref{Subsec.Proof.SingleDeg}, we complete our proof of Corollary~\ref{Cor.SingleDeg}.

\subsection{Operations on weighted rooted trees}
For all integers $n\geq 2$ and $w,p \geq 0$, let $\sT^{w}_{n,p}$ (resp. $\sT^{>w}_{n,p}$, $\sT^{\mathrm{nm}}_{n,p}$) denote the subsets of $\sT_{n,p}$ consisting of trees with \[w(v_0)=w ~~\text{ (resp. }~w(v_0)>w,~w(v_0)<val(v_0)-3).\]
For each integer $k\geq 3$, we denote by $\sT^{val\geq k}_{n,p}$ (resp. $\sT^{val=k}_{n,p}$) the subsets consisting of trees with $val(v_0)\geq k$ (resp. $val(v_0)=k$). We write \[\sT^{w,val\geq k}_{n,p}:=\sT^{w}_{n,p} \cap \sT^{val\geq k}_{n,p},\quad \sT^{>w,val\geq k}_{n,p}:=\sT^{>w}_{n,p} \cap \sT^{val\geq k}_{n,p}.\]
In particular, $\sT_{n,p}=\sT_{n,p}^{>0}\sqcup \sT_{n,p}^{0,val\geq4}\sqcup \sT_{n,p}^{val=3}$. We will sometimes omit the subscript when there is no risk of confusion. We introduce the following three operations on weighted rooted trees.
\begin{itemize}
    \item[(1)] $[1]:~\sT^{\mathrm{nm}}_{n,p}\longrightarrow \sT_{n,p+1}^{>0}$
    \[
    \begin{tikzpicture} [scale=.5,auto=left,every node/.style={scale=0.8}]
      \tikzset{Bullet/.style={circle,draw,fill=black,scale=0.5}}
      \node[Bullet] (T) at (-4,0) {};
      \node[Bullet] (n) at (0,0) {};

      \draw[black] (T) -- (-4,1);
      \draw[black] (T) node[below] {$T$};
      \draw[black] (-2.5,0.5) node[right] {$\longmapsto$};
      \draw[black] (n) -- (0,1);
      \draw[black] (n) node[below] {$T$};
      \draw[black] (n) node[right] {$(+1)$};
    \end{tikzpicture}
    \]
     assigns to each $T$ a weighted rooted tree $T[1]$. Here $T[1]$ denotes the weighted rooted tree having the same underlying rooted tree as $T$ with the same weight except $w(v_0(T[1]))=w(v_0)+1$.

    Here we draw only the root and the output for each rooted tree and $(+1)$ means that the weight of the root is increased by 1.
    \item[(2)] $*:~\sT_{n_1,p_1}\times \sT^{>0}_{n_2,p_2}\longrightarrow \sT_{n_1+n_2,p_1+p_2}^{\mathrm{nm}}$
    \[\begin{tikzpicture} [scale=.5,auto=left,every node/.style={scale=0.8}]
      \tikzset{Bullet/.style={circle,draw,fill=black,scale=0.5}}
      \node[Bullet] (T1) at (-6,-1) {};
      \node[Bullet] (T2) at (-4,-1) {};
      \node[Bullet] (n1) at (1,-0.5) {};
      \node[Bullet] (n2) at (1,-1.5) {};

      \draw[black] (T1) -- (-6,0);
      \draw[black] (T2) -- (-4,0);
      \draw[black] (T1) node[below] {$T_1$};
      \draw[black] (T2) node[below] {$T_2$};
      \draw[black] (-2.5,-1) node[right] {$\longmapsto$};
      \draw[black] (n1) -- (1,0.5);
      \draw[black] (n1) -- (n2);
      \draw[black] (n1) node[left] {$T_1$};
      \draw[black] (n2) node[left] {$T_2$};
    \end{tikzpicture}
    \]
    assigns to a pair $(T_1,T_2)$ a weighted rooted tree $T_1*T_2$ obtained by attaching the output of $T_2$ to the root of $T_1$.

    \item[(3)] $ \cdot_r:~ \prod_{i=1}^N\sT^{>0}_{n_i,p_i}\longrightarrow \sT_{\sum n_i+r,\sum p_i}^{0,val=N+r}$
    \[ \qquad
    \begin{tikzpicture} [scale=.5,auto=left,every node/.style={scale=0.8}]
      \tikzset{Bullet/.style={circle,draw,fill=black,scale=0.5}}
      \node[Bullet] (T1) at (-7,-1) {};
      \node[Bullet] (T2) at (-4,-1) {};
      \node[Bullet] (n0) at (1,-0.5) {};
      \node[Bullet] (n1) at (0,-1.5) {};
      \node[Bullet] (n2) at (2,-1.5) {};

      \draw[black] (-5.5,-1) node[] {$\cdots$};
      \draw[black] (T1) -- (-7,0);
      \draw[black] (T2) -- (-4,0);
      \draw[black] (T1) node[below] {$T_1$};
      \draw[black] (T2) node[below] {$T_N$};
      \draw[black] (-2.5,-1) node[right] {$\longmapsto$};
      \draw[black] (n0) -- (1,0.5);
      \draw[black] (n0) -- (n1);
      \draw[black] (n0) -- (n2);
      \draw[black] (1,-1.5) node[] {$\cdots$};
      \draw[black] (n1) node[below] {$T_1$};
      \draw[black] (n0) -- (1.7,-0.5) node[near end, right] {$r$};
      \draw[black] (n0) node[left] {$(0)~$};
      \draw[black] (n2) node[below] {$T_N$};
    \end{tikzpicture}
    \]
    assigns to an $N$-tuple $(T_1,\cdots,T_N)$ a new weighted rooted tree obtained by attaching the outputs of $T_1,\cdots,T_N$ and extra $r$ inputs to a new vertex with weight 0, which is to be the root. This is well-defined when $N+r\geq 2$ while $N$ or $r$ is possibly zero. We denote this new weighted rooted tree by $T_1\cdot \cdots \cdot T_N\cdot 1^r$.

    When we write $T=T_1\cdot T_2$ for $T\in \sT^{val=3}$, we allow $T_i$ to denote an input and in this case we write $T_i=1$ for convenience.
    \[\begin{tikzpicture} [scale=.5,auto=left,every node/.style={scale=0.8}]
      \tikzset{Bullet/.style={circle,draw,fill=black,scale=0.5}}
      \node[Bullet] (T1) at (-6,-1) {};
      \node[Bullet] (T2) at (0,-0.5) {};
      \node[Bullet] (T2') at (0,-1.5) {};

      \draw[black] (T1) -- (-6,0);
      \draw[black] (T2) -- (0,0.5);
      \draw[black] (T2) -- (T2');
      \draw[black] (T1) node[below] {$T$};
      \draw[] (-6.5,-1) node[left] {$T=$};
      \draw[] (-3.5,-1) node[left] {$\longmapsto$};
      \draw[black] (T2') node[below] {$T$};
      \draw[] (-1,-1) node[left] {$T\cdot1=$};
      \draw[black] (T2) node[left] {$(0)$};
      \draw[] (T2) -- (0.7, -0.5) node[near end, right] {$1$};
    \end{tikzpicture}\]
\end{itemize}

Note that every weighted rooted tree $T$ can be uniquely written as $T=(T_1\cdot\cdots\cdot T_N \cdot 1^r)[a]$ for some $N,r,a\geq 0$ where $[a]$ denotes the composition of $a$ times of $[1]$.

\subsection{An order on weighted rooted trees} \label{Subsec.Order}
In this subsection, we consider a total order on weighted rooted trees, so that we can write the representation $W_{\neq}$ in \eqref{Eq.Degreewise}  as a sum over (pairs of) weighted rooted trees with positive integral coefficients. Then we apply the operations on pairs of weighted rooted trees to identify $W_{\neq}$ with a permutation representation which is a sum over weighted rooted trees.

A \emph{weighted rooted bare tree} is a weighted rooted tree after forgetting all the inputs.
By definition, weighted rooted bare trees of weight $k$ are pairs $(T,w)$ of trees $T$ with one leg called the output and a weight function $w:Ver(T)\xrightarrow{}\Z$ which satisfies $w(v_0)\geq 0, w(v)\geq 1$ for $v\neq v_0$ and $\sum_{v\in Ver(T)} w(v)=k$ where $v_0$ is the vertex to which the output is attached, called the root of $T$.
Consider the set $\sK_k$ of weighted rooted bare trees of weight $k$.
For each $n$ and $k$, there is a natural forgetful map
\[F:\sT_{n,k}/S_n\lra\sK_k\]
which forgets the inputs of weighted rooted trees.
For $T\in \sT_{n,k}$, we call $F(T)$ the weighted rooted bare tree \emph{associated to} $T$.

We fix any total order on the set $\sK=\sqcup_{k\ge 0}\sK_k$ of weighted rooted bare trees, which respects the weights, that is, for $K_1\in \sK_{k_1}$ and $K_2\in \sK_{k_2}$, $K_1< K_2$ if $k_1< k_2$. Consider any total order on the set \[\bigsqcup_{n\geq 2,k\geq 0}\sT_{n,k}/S_n\]
of all the equivalence classes of weighted rooted trees under the permutations of inputs such that for $T_1\in \sT_{n_1,k_1}$, $T_2\in \sT_{n_2,k_2}$,
\begin{itemize}
    \item[(i)]
    $T_1 < T_2$ if $F(T_1) < F(T_2)$,
    \item[(ii)]
    $T_1<T_2$ if $F(T_1)=F(T_2)$ and $n_1 < n_2$.
\end{itemize}

By definition, $T_1 = T_2$ if $T_1, T_2$ are identical as weighted rooted trees up to permutations of inputs. Furthermore, we assume $1<T$ for all weighted rooted trees $T$, for convenience.

\begin{definition} Let $T=(T_1\cdot\cdots\cdot T_N\cdot 1^r)[a]$ be a weighted rooted tree. We call each $T_i$ \emph{a component} of $T$. If there exists $i$ such that $T_j<T_i$ for all $j\neq i$, then we say $T_i$ is \emph{the dominating component} of $T$.
\end{definition}

Note that the operations on weighted rooted trees naturally descend to the operations on weighted rooted trees modulo permutations of inputs.

\subsection{Proof of Theorem \ref{Cor.SubseqDeg}}
Consider the set $\sP$
\[\sP=\Big\{(T_1,T_2)\in \bigsqcup_{\substack{2\leq h\leq n-2\\i+j=k-1}}\sT_{h,i}/S_h\times \sT_{n-h,j}/S_{n-h} ~\Big|~ T_1\leq T_2\Big\}.\]
of pairs of weighted rooted trees. Then we have
\[W_{\sP}:=\bigoplus_{(T_1,T_2)\in \sP}U_{T_1}.U_{T_2}=W_{\neq}\oplus W_{=}.\]
Therefore the desired cancellation of $W_{\neq}\oplus W_{=}$ in \eqref{Eq.Degreewise} happens if
we have the following.
\begin{proposition}\label{58}
There exists an $S_n$-equivariant embedding
\begin{equation} \label{Embedding}
    W_{\sP} \longrightarrow \bigoplus_{T\in \sT_{n,k}/S_n}U_T\oplus \bigoplus_{T\in \sT_{\frac{n}{2}, \frac{k-1}{2}}/S_{\frac{n}{2}}} s_{(2)}\circ U_T
\end{equation}
whose cokernel is a permutation representation.
\end{proposition}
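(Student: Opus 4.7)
The plan is to construct the embedding \eqref{Embedding} by defining an explicit map $\Phi$ that assigns to each orbit of pairs $(T_1, T_2) \in \sP$ (indexing a summand $U_{T_1}.U_{T_2}$ of $W_\sP$) either a weighted rooted tree in $\sT_{n,k}/S_n$ (hitting a summand $U_T$ on the right) or, in the diagonal case $T_1 = T_2$, an element of the indexing set for the $s_{(2)} \circ U_T$ summands. Concretely, $\Phi(T_1, T_2)$ will be built by combining $T_1$ and $T_2$ via the operations $*$, $\cdot_r$, and $[1]$ of the previous subsection, with the combination chosen so that the inputs add to $n$ and the weights add to $k$ (the single extra unit of weight is introduced by an application of $[1]$, accounting for the mismatch $i + j = k-1$ in the indexing of $W_\sP$). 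The natural prototype is $\Phi(T_1, T_2) = (T_1 * T_2)[1]$ when the operations are defined, with alternative formulas such as $T_1 \cdot T_2[1]$ or $(T_1 \cdot T_2 \cdot 1^r)[a]$ handling the remaining structural types of $(T_1, T_2)$.

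The key property the construction must guarantee is the stabilizer identity
\[
\Stab(\Phi(T_1, T_2)) = \Stab(T_1) \times \Stab(T_2) \quad \text{inside } S_n,
\]
together with the recoverability of the unordered pair $(T_1, T_2)$ from $\Phi(T_1, T_2)$. The total order on weighted rooted trees from Subsection~7.2 plays the crucial role here: it distinguishes the two input subtrees as the smaller and larger components of $\Phi(T_1, T_2)$, thereby breaking any a priori symmetry between them and ensuring that the assignment is injective on $\sP/S_n$. Once these properties are verified, the standard inclusion of induced trivial representations for nested subgroups supplies an $S_n$-equivariant injection $U_{T_1}.U_{T_2} = U_{\Stab(T_1) \times \Stab(T_2)} \hookrightarrow U_{\Phi(T_1, T_2)}$. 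For the diagonal part of $\sP$, the decomposition $U_T.U_T = (s_{(2)} \circ U_T) \oplus (s_{(1,1)} \circ U_T)$ routes the symmetric half directly into the $s_{(2)} \circ U_T$ summand on the right of \eqref{Embedding}, while the antisymmetric half is embedded via a separate application of $\Phi$ (reading $T.T = U_T.U_T$ as a pair indexed by $\sP$ in the off-diagonal sense after an ordering tie-break).

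Assembling these injections over $(T_1, T_2) \in \sP/S_n$ yields the embedding \eqref{Embedding}. Its cokernel is the direct sum of $U_T$ for $T \in \sT_{n,k}/S_n$ not lying in the image of $\Phi$, together with the residual $s_{(2)} \circ U_T$ summands not absorbed by the diagonal part of $\sP$. Each $U_T$ is a permutation representation by definition, and $s_{(2)} \circ U_T$ equals $\Ind_{(\Stab(T) \times \Stab(T)) \rtimes S_2}^{S_n} e$, the induced trivial representation of an explicit subgroup, hence itself a permutation representation; their direct sum is therefore a permutation representation of $S_n$, which completes the proof of Proposition~\ref{58}, and hence of Theorem~\ref{Cor.SubseqDeg}.

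The main obstacle will be the detailed combinatorial case analysis required to define $\Phi$ uniformly across the different structural types of $(T_1, T_2)$ (for instance when $T_1 = 1$ is a bare input, when $T_1 \in \sT^{val = 3}$ so that $[1]$ is unavailable, or when the root weights force a particular operation). The delicate aspect is arranging the definition so that the stabilizer equality above holds on the nose and so that no two orbits of pairs land on the same orbit of trees; once this bookkeeping is set up, the cokernel's permutation-representation structure follows immediately from the general fact that induced trivials and their $S_2$-plethystic squares are permutation representations.
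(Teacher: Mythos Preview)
Your overall strategy matches the paper's: construct an injective assignment $\Phi$ from pairs $(T_1,T_2)\in\sP$ to trees in $\sT_{n,k}/S_n$ such that $U_{\Phi(T_1,T_2)}\cong U_{T_1}.U_{T_2}$ (equivalently, $\Stab(\Phi(T_1,T_2))$ is conjugate to $\Stab(T_1)\times\Stab(T_2)$), and you correctly anticipate that this requires a case analysis governed by the total order on trees. For the off-diagonal part of $\sP$ this is exactly what the paper does (its eleven maps $\Phi_1,\ldots,\Phi_{11}$), and your description of the cokernel as a sum of leftover $U_T$'s and $s_{(2)}\circ U_T$'s is then accurate.

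The genuine gap is in your handling of the diagonal. You propose to split every $U_T.U_T$ as $(s_{(2)}\circ U_T)\oplus(s_{(1,1)}\circ U_T)$, send the symmetric half to the matching summand on the right, and embed the antisymmetric half ``via a separate application of $\Phi$.'' But $s_{(1,1)}\circ U_T$ is not of the form $U_{T'}$ for any tree $T'$, so to make this work you would need a tree $T'$ with $U_{T'}\cong U_T.U_T$ (so that the cokernel of $s_{(1,1)}\circ U_T\hookrightarrow U_{T'}$ is the permutation representation $s_{(2)}\circ U_T$). For most diagonal $T$ such a $T'$ exists---indeed the paper's $\Phi_1,\Phi_3,\Phi_6$ already produce it, and the paper simply routes the whole $U_T.U_T$ there without splitting. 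However, when $T=T'\cdot T'$ or $T=T'\cdot 1$ (the paper's sets $\sQ$ and $\sR$), \emph{no} single tree in $\sT_{n,k}$ has stabilizer conjugate to $\Stab(T)\times\Stab(T)$: any natural candidate acquires an extra $S_2$- or $S_3$-wreath symmetry among the repeated copies of $T'$. The paper resolves this not by a tree-level assignment but by the nontrivial representation identities of Lemma~\ref{lem:reps}, which embed $U_T.U_T$ into a combination of \emph{two or three} target summands (including the $s_{(2)}\circ U_T$ term) with a permutation cokernel that is itself a specific $U_{T''}$. Your plan does not account for this, and ``a separate application of $\Phi$'' cannot produce such multi-term identities; this is the missing idea, not mere bookkeeping.
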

This proposition proves Theorem \ref{Cor.SubseqDeg}.
The remainder of this subsection is devoted to a proof of Proposition \ref{58}.

To construct an embedding \eqref{Embedding}, we break $\sP$ into thirteen pieces as in Table \ref{tab:11}. The complexity is essentially due to the conditions on the valencies and weights on the weighted rooted tree.

\begin{table}[h!]
    \centering
       \begin{tabular}{|l|c|l|}
    \hline
    \multicolumn{3}{|l|}{When $T_2\in \sT^{val\geq4}$}\\
    \hline
    & $\sP_1$ & $w(v_0(T_2))>0$  \\
    & $\sP_2$  & $w(v_0(T_1))>0$ and $w(v_0(T_2))=0$ \\
&     $\sP_3$  & $w(v_0(T_1))=0$ and $w(v_0(T_2))=0$  \\
    \hline
    \multicolumn{3}{|l|}{When $T_2=T_2'\cdot T_2''$ and $T_2'< T_2''$}\\
    \hline
 &    $\sP_4$  & $T_1\ne T_2$ and $T_2'=1$ \\
   &  $\sR$  &  $T_1=T_2=T\cdot1$\\
   &  $\sP_5$  & $w(v_0(T_1))>0$ and $T_2'\neq1$ \\
    & $\sP_6$  & $w(v_0(T_1))=0$ and $T_2'\neq1$ \\
    \hline
    \multicolumn{3}{|l|}{When $T_1\in \sT^{val\geq4}$ and $T_2=T_2'\cdot T_2'$}\\
    \hline
&     $\sP_7$  & $w(v_0(T_1))>0$ and $T_1 \neq T_2'$ \\
  &   $\sP_8$  & $w(v_0(T_1))=0$ and $T_1[1]\neq T_2'$ \\
    & $\sP_9$  & Either $T_1=T_2'$ or ($w(v_0(T_1))=0$ and $T_1[1]=T_2'$) \\
     \hline
    \multicolumn{3}{|l|}{When $T_1=T_1'\cdot T_1''$ and $T_2=T_2'\cdot T_2'$.}\\
    \hline
&     $\sP_{10}$  & $T_1'\leq T_1''\neq T_2'$ \\
  &   $\sP_{11}$  & $T_1'<T_1''=T_2'$ \\
  &  $\sQ$  & $T_1'=T_1''=T_2'$ \\
\hline
\noalign{\smallskip}\noalign{\smallskip}
    \end{tabular}
    \caption{Partition of $\sP$}
    \label{tab:11}
\end{table}

We first examine $\sQ$ and $\sR$. These require us to use certain relations in the $S_n$-representations (Lemma \ref{lem:reps}), while on other pieces $\sP_i$, \eqref{Embedding} is given by operations defined purely on trees. Consider two subsets
\begin{equation*}
    \begin{split}
        \sQ:=&~\{(T_1,T_2)\in \sP~|~T_1=T_2=T\cdot T, \text{ for some }T\},\\
        \sR:=&~\{(T_1,T_2)\in \sP~|~T_1=T_2=T\cdot1, \text{ for some }T\},
    \end{split}
\end{equation*}
of $\sP$, which consist of the pairs of weighted rooted trees of the forms
\[
    \begin{tikzpicture} [scale=.5,auto=left,every node/.style={scale=0.8}]
      \tikzset{Bullet/.style={circle,draw,fill=black,scale=0.5}}
      \node[Bullet] (T0) at (-3,-0.5) {};
      \node[Bullet] (T1) at (-4,-1.5) {};
      \node[Bullet] (T2) at (-2,-1.5) {};
      \node[Bullet] (n0) at (1,-0.5) {};
      \node[Bullet] (n1) at (0,-1.5) {};
      \node[Bullet] (n2) at (2,-1.5) {};

      \draw[black] (T0) -- (-3,0.5);
      \draw[black] (T1) -- (T0);
      \draw[black] (T2) -- (T0);
      \draw[black] (T1) node[below] {$T$};
      \draw[black] (T2) node[below] {$T$};
      \draw[black] (T0) node[left] {$(0)~$};
      \draw[black] (n0) -- (1,0.5);
      \draw[black] (n0) -- (n1);
      \draw[black] (n0) -- (n2);
      \draw[black] (n1) node[below] {$T$};
      \draw[black] (n0) node[left] {$(0)~$};
      \draw[black] (n2) node[below] {$T$};
    \end{tikzpicture}\qquad
    \begin{tikzpicture} [scale=0.5,auto=left,every node/.style={scale=1}]
      \tikzset{Bullet/.style={circle,draw,fill=black,scale=0.5}}
      \draw[black] (0,0) node[] {and};
      \draw[black] (0,-1.5);
    \end{tikzpicture}\qquad
    \begin{tikzpicture} [scale=.5,auto=left,every node/.style={scale=0.8}]
      \tikzset{Bullet/.style={circle,draw,fill=black,scale=0.5}}
      \node[Bullet] (T1) at (-4,-0.5) {};
      \node[Bullet] (T1') at (-4,-1.5) {};
      \node[Bullet] (T2) at (0,-0.5) {};
      \node[Bullet] (T2') at (0,-1.5) {};

      \draw[black] (T1) -- (-4,0.5);
      \draw[black] (T1) -- (T1');
      \draw[black] (T2) -- (0,0.5);
      \draw[black] (T2) -- (T2');
      \draw[black] (T1) node[left] {$(0)$};
      \draw[black] (T1') node[below] {$T$};
      \draw[black] (T2') node[below] {$T$};
      \draw[black] (T2) node[left] {$(0)$};
      \draw[] (T1) -- (-3.3, -0.5) node[near end, right] {$1$};
      \draw[] (T2) -- (0.7, -0.5) node[near end, right] {$1$};
    \end{tikzpicture}
\]
respectively. Let
$$\sP':=\sP\setminus(\sQ\sqcup \sR).$$
We write $W_{\star}:=\bigoplus_{(T_1,T_2)\in \star}U_{T_1}.U_{T_2}$ for $\star=\sP',\sQ,\sR$ so that $W_{\sP}=W_{\sP'}\oplus W_{\sQ}\oplus W_{\sR}$. Let $U_{\star}:=\bigoplus_{T\in\star}U_T$ for each subset $\star \subset \sT_{n,p}/S_n$.

Let $$\Bar{\sT}_0\subset \sT_{n,k}/S_n$$ be the subset consisting of the weighted rooted trees of one of the three forms
\[
    \begin{tikzpicture} [scale=.5,auto=left,every node/.style={scale=0.8}]
      \tikzset{Bullet/.style={circle,draw,fill=black,scale=0.5}}
      \node[Bullet] (T0) at (-5,-0.5) {};
      \node[Bullet] (T1) at (-6,-1.5) {};
      \node[Bullet] (T2) at (-5,-1.5) {};
      \node[Bullet] (T3) at (-4,-1.5) {};
      \node[Bullet] (n0) at (1,0) {};
      \node[Bullet] (n1) at (1,-1) {};
      \node[Bullet] (n2) at (0,-2) {};
      \node[Bullet] (n3) at (2,-2) {};
      \node[Bullet] (m0) at (7,0) {};
      \node[Bullet] (m1) at (7,-1) {};
      \node[Bullet] (m2) at (7,-2) {};

      \draw[black] (T0) -- (-5,0.5);
      \draw[black] (T1) -- (T0);
      \draw[black] (T2) -- (T0);
      \draw[black] (T3) -- (T0);
      \draw[black] (T1) node[below] {$T$};
      \draw[black] (T2) node[below] {$T$};
      \draw[black] (T3) node[below] {$T$};
      \draw[black] (T0) node[right] {$~T$};
      \draw[black] (T0) node[left] {$(+1)~$};
      \draw[black] (n0) -- (1,1);
      \draw[black] (n0) -- (n1);
      \draw[black] (n1) -- (n2);
      \draw[black] (n1) -- (n3);
      \draw[black] (n0) node[left] {$(0)~$};
      \draw[black] (n1) node[left] {$(1)~$};
      \draw[black] (n2) node[below] {$T$};
      \draw[black] (n3) node[below] {$T$};
      \draw[] (n0) -- (1.7, 0) node[near end, right] {$1$};
      \draw[] (n1) -- (1.7, -1) node[near end, right] {$1$};
      \draw[black] (m0) -- (7,1);
      \draw[black] (m0) -- (m1);
      \draw[black] (m1) -- (m2);
      \draw[black] (m0) node[left] {$(0)$};
      \draw[black] (m1) node[left] {$(+1)$};
      \draw[black] (m1) node[right] {$~T$};
      \draw[black] (m2) node[right] {$~T$};
      \draw[] (m0) -- (7.7, 0) node[near end, right] {$2$};
      \draw[] (-5,-3.25) node[] {$(((T*T)*T)*T)[1]$};
      \draw[] (1,-3.25) node[] {$(T\cdot T\cdot 1)[1]\cdot1$};
      \draw[] (7,-3.25) node[] {$(T*T)[1]\cdot 1^2$};
    \end{tikzpicture}
\]
for some $T\in \sT^{>0}$, and let $\Bar{\sT}'\subset \sT_{\frac{n}{2},\frac{k-1}{2}}$ be the subset
\[\Bar{\sT}':=\left\{T'\in \sT_{\frac{n}{2},\frac{k-1}{2}}/S_{\frac{n}{2}}~|~T'=T\cdot T \text{ or } T\cdot 1, \text{ for some }T\in \sT^{>0}\right\}.
\]

We will show that there exist two $S_n$-equivariant embeddings
\beq\label{59} W_{\sQ}\oplus W_{\sR} \longrightarrow U_{\bar{\sT}_0}\oplus \bigoplus_{T\in \bar{\sT}'}s_{(2)}\circ U_T, \qquad W_{\sP'}\longrightarrow U_{(\sT_{n,k}/S_n)\setminus \bar{\sT}_0},\eeq
which induce the desired embedding \eqref{Embedding}.
It is obvious that Proposition \ref{58} follows from a construction of equivariant embeddings \eqref{59}
whose cokernels are permutation representations.
In \S\ref{60}, we will construct the first embedding in \eqref{59} whose cokernel is a permutation representation. In \S\ref{61}, we will construct the second whose cokernel is a permutation representation. These complete our proof of Theorem \ref{Cor.SubseqDeg}.

\subsubsection{An embedding of $W_{\sQ}\oplus W_{\sR}$}\label{60}
Consider the injections
\begin{equation*}
    \begin{split}
        &\varphi_1:\sQ \xrightarrow{} \sT_{n,k}/S_n, \quad (T\cdot T,T \cdot T) \mapsto (((T*T)*T)*T)[1]\\
        &\varphi_2:\sR \xrightarrow{} \sT_{n,k}/S_n, \quad (T\cdot1, T\cdot1) \mapsto (T\cdot T\cdot 1)[1]\cdot1\\
        &\varphi_3:\sR \xrightarrow{} \sT_{n,k}/S_n, \quad (T\cdot1, T\cdot1) \mapsto (T*T)[1]\cdot 1^2
    \end{split}
\end{equation*}
whose images are disjoint in $\sT_{n,k}/S_n$, so that
\[\Bar{\sT}_0=\mathrm{Im}(\varphi_1)\sqcup \mathrm{Im}(\varphi_2)\sqcup \mathrm{Im}(\varphi_3).\]

We introduce two identities of $S_n$-representations, which relate the weighted rooted trees in $\Bar{\sT}_0, \Bar{\sT}', \sQ$ and $\sR$.
\begin{lemma}\label{lem:reps}
\begin{enumerate}
    \item $U_{T\cdot T}.U_{T\cdot T}\oplus U_{(T\cdot T \cdot T\cdot T)[1]}=U_{\varphi_1(T\cdot T,T\cdot T)}\oplus (s_{(2)}\circ U_{T\cdot T})$.
    \item $U_{T\cdot1}.U_{T\cdot1}\oplus U_{T\cdot T\cdot1^2[1]}^
    {\oplus 2}= U_{\varphi_2(T\cdot1,T\cdot1)}\oplus U_{\varphi_3(T\cdot1,T\cdot1)}\oplus (s_{(2)}\circ U_{T\cdot1})$.
\end{enumerate}
\end{lemma}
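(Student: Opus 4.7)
The plan is to verify both identities as equalities of characters in the ring $\Lambda$ of symmetric functions. Setting $v := \ch(U_T)$, I will first compute the character of each weighted rooted tree appearing in the identities in terms of $v$ and the Schur functions $s_{(k)}$. The stabilizers of $T\cdot T$, $(T\cdot T\cdot T\cdot T)[1]$, $T\cdot 1$, and $(T\cdot T\cdot 1^2)[1]$ decompose straightforwardly as wreath products of $\Stab(T)$ with the appropriate symmetric groups (swapping equal subtrees or equal extra inputs), yielding the characters $s_{(2)}\circ v$, $s_{(4)}\circ v$, $v\cdot s_{(1)}$, and $(s_{(2)}\circ v)\cdot s_{(2)}$ respectively; the weight shift $[1]$ does not change the stabilizer. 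The key combinatorial input concerns the iterated chain $(((T*T)*T)*T)[1]$: the three copies of $T$ attached to the base $T$'s root by $*$ form a single $S_3$-orbit of swappable subtree children, but none of them can be confused with the existing children of the base's root, since any proper subtree of $T$ has strictly fewer vertices than $T$ itself. Hence the stabilizer factors as $\Stab(T)\times(\Stab(T)\wr S_3)$ and the character is $v\cdot(s_{(3)}\circ v)$. Similar reasoning gives $(s_{(2)}\circ v)\cdot s_{(1)}^2$ and $v^2\cdot s_{(2)}$ for the trees indexed by $\varphi_2$ and $\varphi_3$.

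For identity (1), after these substitutions the claim reduces to $(s_{(2)}\circ v)^2 + s_{(4)}\circ v = v\cdot(s_{(3)}\circ v) + s_{(2)}\circ(s_{(2)}\circ v)$. Using the multiplicativity of plethysm $(fg)\circ v = (f\circ v)(g\circ v)$, both sides collapse to $G\circ v$ for appropriate $G\in\Lambda$. Pieri's rule gives $s_{(2)}\cdot s_{(2)} = s_{(4)} + s_{(3,1)} + s_{(2,2)}$ and $s_{(1)}\cdot s_{(3)} = s_{(4)} + s_{(3,1)}$, and the classical plethystic identity $s_{(2)}\circ s_{(2)} = s_{(4)} + s_{(2,2)}$ (a short power-sum calculation) then shows both sides equal $(2s_{(4)} + s_{(3,1)} + s_{(2,2)})\circ v$.

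For identity (2), the reduction leaves one needing to evaluate $s_{(2)}\circ(vs_{(1)})$, which I will handle using $s_{(2)} = \tfrac{1}{2}(s_{(1)}^2 + p_2)$ together with the plethystic homomorphism property $p_n\circ(ab) = (p_n\circ a)(p_n\circ b)$. Substituting $p_2\circ v = 2(s_{(2)}\circ v) - v^2$, $p_2 = 2s_{(2)} - s_{(1)}^2$, and $s_{(1)}^2 - p_2 = 2s_{(1,1)}$ gives the closed form $s_{(2)}\circ(vs_{(1)}) = v^2 s_{(1,1)} + (s_{(2)}\circ v)\cdot p_2$, after which a direct rearrangement matches both sides. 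The main obstacle will be the combinatorial bookkeeping in the first step, namely pinning down exactly the $S_3$-swap symmetry of the nested $*$-chain and no more; once this stabilizer computation is nailed down, the rest is mechanical plethystic manipulation in $\Lambda$.
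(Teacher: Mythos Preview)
Your proposal is correct and follows essentially the same route as the paper: both arguments translate each $U_{T'}$ into a plethystic expression in $v=\ch(U_T)$ (the paper does this by observing directly that $U_{\varphi_1(T\cdot T,T\cdot T)}=U_{T\cdot T\cdot T}.U_T$, etc., which is exactly your stabilizer computation), and then check a degree-four symmetric-function identity. The only cosmetic differences are that for (1) the paper invokes $s_{(1,1)}\circ s_{(2)}=s_{(3,1)}$ while you use the equivalent $s_{(2)}\circ s_{(2)}=s_{(4)}+s_{(2,2)}$ together with Pieri, and for (2) the paper quotes Macdonald's identity $s_{(2)}\circ(V.s_{(1)})=(s_{(2)}\circ V).s_{(2)}+(s_{(1,1)}\circ V).s_{(1,1)}$ whereas you rederive the same thing via the power-sum expansion $s_{(2)}=\tfrac{1}{2}(p_1^2+p_2)$.
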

\begin{proof}
(1) Since $U_{T\cdot T}=s_{(2)}\circ U_T$, by Example~\ref{Ex.Plethysm} (2) and the associativity of the plethysm, we have \[U_{T\cdot T}.U_{T\cdot T}=(s_{(2)}+s_{(1,1)})\circ U_{T\cdot T}=((s_{(2)}+s_{(1,1)})\circ s_{(2)}) \circ U_T.\]
On the other hand, we have
\[U_{\varphi_1(T\cdot T,T\cdot T)}=U_{T\cdot T\cdot T}.U_T=(s_{(3)}\circ U_T).U_T=(s_{(3)}.s_{(1)})\circ U_T.\]
Since we have $U_{(T\cdot T\cdot T \cdot T)[1]}=s_{(4)}\circ U_T$ and $s_{(2)}\circ U_{T\cdot T}=(s_{(2)}\circ s_{(2)})\circ U_T$, the assertion follows from the linearity and $s_{(1,1)}\circ s_{(2)}=s_{(3,1)}$.\\
(2) Similarly, since we have $U_{T\cdot1}=U_T.s_{(1)}$, it follows that
\begin{equation*}
    \begin{split}
        &U_{T\cdot1}.U_{T\cdot1}= U_T.U_T.s_{(1)}.s_{(1)}=((s_{(2)}+s_{(1,1)})\circ U_T).(s_{(2)}+s_{(1,1)}),\\
        &U_{T\cdot T\cdot 1^2[1]}=U_{T\cdot T}.s_{(2)}=(s_{(2)}\circ U_T).s_{(2)},\\
        &U_{\varphi_2(T\cdot 1,T\cdot1)}=U_{T\cdot T}.s_{(1)}.s_{(1)}=(s_{(2)}\circ U_T).(s_{(2)}+s_{(1,1)}),\\ &U_{\varphi_3(T\cdot1,T\cdot1)}=U_T.U_T.s_{(2)}=((s_{(2)}+s_{(1,1)})\circ U_{T}).s_{(2)}.
    \end{split}
\end{equation*}
Now the assertion follows from the identity (cf. \cite[Example I.8.3]{Mac})
\[s_{(2)}\circ (V.s_{(1)})=(s_{(2)}\circ V).s_{(2)}+(s_{(1,1)}\circ V).s_{(1,1)},\]
for any $S_n$-representations $V$.
\end{proof}

By Lemma~\ref{lem:reps}, we have an $S_n$-equivariant embedding
\[W_{\sQ}\oplus W_{\sR}\longrightarrow U_{\bar{\sT}_0}\oplus \bigoplus_{T\in \bar{\sT}'}(s_{(2)}\circ U_T),\]
whose cokernel is a permutation representation.

\subsubsection{An embedding of $W_{\sP'}$} \label{61}
Now it remains to construct an injective map
\beq\label{55} \Phi:\sP' \longrightarrow \Big(\sT_{n,k}/S_n\Big)\setminus \bar{\sT}_0\eeq
which satisfies $U_{\Phi(T_1,T_2)}=U_{T_1}.U_{T_2}$. For this, we divide $\sP'$ into subsets $\sP_1,\cdots, \sP_{11}$ as in Table \ref{tab:11}.

We construct injective maps $\Phi_i:\sP_i\xrightarrow{}\sT_{n,k}/S_n$ for $1\leq i\leq 11$ whose images are mutually disjoint and are disjoint from $\bar{\sT}_0$. For concrete examples of the maps $\Phi_i$, see the corresponding definitions for $k=3$ in \S\ref{Subsec.Proof.SingleDeg}.

\emph{Case 1: pairs with $T_2\in \sT^{val\geq4}$.} These pairs are of three different types. We define $\Phi_i$ for $i=1,2,3$ as follows.
\begin{enumerate}
    \item[(i)] Let $\sP_1$ be the set of such pairs with $T_2\in \sT^{>0}$. Define $\Phi_1$ by $\Phi_1(T_1,T_2)=(T_1*T_2)[1]$.
    \[
    \begin{tikzpicture} [scale=.5,auto=left,every node/.style={scale=0.8}]
      \tikzset{Bullet/.style={circle,draw,fill=black,scale=0.5}}
      \node[Bullet] (T1) at (-6,-1) {};
      \node[Bullet] (T2) at (-4,-1) {};
      \node[Bullet] (n1) at (1,-0.5) {};
      \node[Bullet] (n2) at (1,-1.5) {};

      \draw[black] (T1) -- (-6,0);
      \draw[black] (T2) -- (-4,0);
      \draw[black] (T1) node[below] {$T_1$};
      \draw[black] (T2) node[below] {$T_2$};
      \draw[black] (-2.5,-1) node[right] {$\longmapsto$};
      \draw[black] (n1) -- (1,0.5);
      \draw[black] (n1) -- (n2);
      \draw[black] (n1) node[below right] {$T_1$};
      \draw[black] (n1) node[left] {$(+1)$};
      \draw[black] (n2) node[below right] {$T_2$};
    \end{tikzpicture}
    \]

    This is well-defined since $T_1*T_2\in \sT^{\mathrm{nm}}$ and $T_2\in \sT^{>0}$. The injectivity follows from the observation that $T_2$ is uniquely determined as the dominating component of $(T_1*T_2)[1]$. Indeed, for any components $T_1'$ of $T_1$, we have $T_1'<T_1\leq T_2$.

    Every tree in the image has the root of weight at least 1 and valency at least 4.

    \item[(ii)] Let $\sP_2$ be the set of such pairs with $T_1\in \sT^{>0}$ and $T_2\in \sT^{0}$. Define $\Phi_2$ by $\Phi_2(T_1,T_2)=T_1\cdot (T_2[1])$.
    \[\qquad \qquad
    \begin{tikzpicture} [scale=.5,auto=left,every node/.style={scale=0.8}]
      \tikzset{Bullet/.style={circle,draw,fill=black,scale=0.5}}
      \node[Bullet] (T1) at (-6,-1) {};
      \node[Bullet] (T2) at (-4,-1) {};
      \node[Bullet] (n0) at (1,-0.5) {};
      \node[Bullet] (n1) at (0,-1.5) {};
      \node[Bullet] (n2) at (2,-1.5) {};

      \draw[black] (T1) -- (-6,0);
      \draw[black] (T2) -- (-4,0);
      \draw[black] (T1) node[below] {$T_1$};
      \draw[black] (T2) node[below] {$T_2$};
      \draw[black] (T2) node[left] {$(0)$};
      \draw[black] (-2.5,-1) node[right] {$\longmapsto$};
      \draw[black] (n0) -- (1,0.5);
      \draw[black] (n0) -- (n1);
      \draw[black] (n0) -- (n2);
      \draw[black] (n1) node[below] {$T_1$};
      \draw[black] (n0) node[left] {$(0)~$};
      \draw[black] (n0) node[right] {$~\xleftarrow{} ~$no inputs};
      \draw[black] (n2) node[below] {$T_2$};
      \draw[black] (n2) node[right] {$(1)$};
    \end{tikzpicture}
    \]

    This is well-defined since $T_1, T_2[1]\in \sT^{>0}$ while $T_2[1]$ is well-defined since $T_2\in \sT^{\mathrm{nm}}$. The injectivity follows from that $T_2[1]$ is uniquely determined as it is the dominating component of $T_1\cdot(T_2[1])$. Indeed, we have $T_1\leq T_2 <T_2[1]$.

    Every tree in the image has the root of weight 0 and valency 3, having no inputs. Its dominating component has the root of weight 1.

    \item[(iii)] Let $\sP_3$ be the set of such pairs with $T_1, T_2\in \sT^0$. Define $\Phi_3$ by $\Phi_3(T_1,T_2)=T_1* (T_2[1])$.
    \[
    \begin{tikzpicture} [scale=.5,auto=left,every node/.style={scale=0.8}]
      \tikzset{Bullet/.style={circle,draw,fill=black,scale=0.5}}
      \node[Bullet] (T1) at (-6,-1) {};
      \node[Bullet] (T2) at (-4,-1) {};
      \node[Bullet] (n1) at (1,-0.5) {};
      \node[Bullet] (n2) at (1,-1.5) {};

      \draw[black] (T1) -- (-6,0);
      \draw[black] (T2) -- (-4,0);
      \draw[black] (T1) node[below] {$T_1$};
      \draw[black] (T1) node[left] {$(0)$};
      \draw[black] (T2) node[below] {$T_2$};
      \draw[black] (T2) node[left] {$(0)$};
      \draw[black] (-2.5,-1) node[right] {$\longmapsto$};
      \draw[black] (n1) -- (1,0.5);
      \draw[black] (n1) -- (n2);
      \draw[black] (n1) node[below right] {$T_1$};
      \draw[black] (n1) node[left] {$(0)$};
      \draw[black] (n2) node[below right] {$T_2$};
      \draw[black] (n2) node[left] {$(1)$};
    \end{tikzpicture}
    \]

    This is well-defined since $T_2[1]\in \sT^{>0}$ while $T_2[1]$ is well-defined since $T_2\in \sT^{\mathrm{nm}}$. The injectivity follows from the observation that $T_2[1]$ is uniquely determined as the dominating component of $T_1*(T_2[1])$. Indeed, for any components $T_1'$ of $T_1$ we have $T_1'<T_1\leq T_2$.

    Every tree in the image has the root of weight 0 and valency at least 4. Its dominating component has the root of weight 1.
\end{enumerate}

\emph{Case 2: pairs with $T_2=T_2'\cdot T_2''$ and $T_2'< T_2''$.} Recall that we allow $T_2'=1$, i.e. $T_2'$ may be an input. Other than pairs in $\sR$, there are three more different types. We define $\Phi_i$ for $i=4,5,6$ as follows.

\begin{enumerate}
    \item[(i)] Let $\sP_4$ be the set of such pairs with $T_2=T_2'\cdot 1$ and $T_1\neq T_2$. Define $\Phi_4$ by $\Phi_4(T_1,T_2'\cdot 1)=(T_1*T_2')[1]\cdot 1$.
        \[\qquad
    \begin{tikzpicture} [scale=.5,auto=left,every node/.style={scale=0.8}]
      \tikzset{Bullet/.style={circle,draw,fill=black,scale=0.5}}
      \node[Bullet] (T1) at (-8,-1) {};
      \node[Bullet] (T2) at (-6,-0.5) {};
      \node[Bullet] (T2') at (-6,-1.5) {};
      \node[Bullet] (n1) at (1,0) {};
      \node[Bullet] (n2) at (1,-1) {};
      \node[Bullet] (n3) at (1,-2) {};

      \draw[black] (T1) -- (-8,0);
      \draw[black] (T2) -- (-6,0.5);
      \draw[black] (T2) -- (-5.3,-.5) node[near end, right] {$1$};
      \draw[black] (T2) -- (T2');
      \draw[black] (T1) node[below] {$T_1$};
      \draw[black] (T2') node[below] {$T_2'$};
      \draw[black] (T2) node[left] {$(0)$};
      \draw[black] (-2.5,-1) node[right] {$\longmapsto$};
      \draw[black] (n1) -- (1,1);
      \draw[black] (n1) -- (n2);
      \draw[black] (n2) -- (n3);
      \draw[black] (n1) -- (1.7,0) node[near end, right] {$1$};
      \draw[black] (n1) node[left] {$(0)$};
      \draw[black] (n2) node[below right] {$T_1$};
      \draw[black] (n3) node[below right] {$T_2'$};
      \draw[black] (n2) node[left] {$(+1)$};
    \end{tikzpicture}
    \]

    This is well-defined since $(T_1*T_2')[1]\in \sT^{>0}$ while $(T_1*T_2')[1]$ is well-defined since $T_1*T_2'\in \sT^{\mathrm{nm}}$ and $T_2'\in \sT^{>0}$. The injectivity can be proved as follows. Let $T_1=(T_{1,1}\cdot \cdots \cdot T_{1,N}\cdot 1^r)[a]$. If $N>1$ or $a>0$, then we have $w(T_{1,i})<w(T_1)\leq w(T_2)=w(T_2')$ for all $i$, so $T_2'$ is uniquely determined as it is the dominating component of $(T_1*T_2')[1]$.

    Suppose $N=1$ and $a=0$, so that $T_1=T\cdot 1^r$ with $r\ge1$. Then
\begin{equation}\label{eq:p4}
      \Phi_4(T\cdot 1^r,T_2'\cdot1) =
    \parbox[c][][c]{2cm}{ \hspace*{.5em}
    \begin{tikzpicture} [scale=.5,auto=left,every node/.style={scale=0.8}]
      \tikzset{Bullet/.style={circle,draw,fill=black,scale=0.5}}
      \node[Bullet] (n1) at (1,0) {};
      \node[Bullet] (n2) at (1,-1) {};
      \node[Bullet] (n3) at (0,-2) {};
      \node[Bullet] (n4) at (2,-2) {};

      \draw[black] (n1) -- (1,1);
      \draw[black] (n1) -- (n2);
      \draw[black] (n2) -- (n3);
      \draw[black] (n2) -- (n4);
      \draw[black] (n1) -- (1.7,0) node[near end, right] {$1$};
      \draw[black] (n1) node[left] {$(0)$};
      \draw[black] (n2) -- (1.7,-1) node[near end, right] {$r$};
      \draw[black] (n3) node[below] {$T$};
      \draw[black] (n4) node[below] {$T_2'$};
      \draw[black] (n2) node[left] {$(1)$};
    \end{tikzpicture}
    }
\end{equation}

Another possible pair $(\widetilde{T}_1,\widetilde{T}_2)$ mapped to \eqref{eq:p4} by $\Phi_4$ is
\[\widetilde{T}_1:=T_2'\cdot 1^r \text{ and }\widetilde{T}_2:=T\cdot1.\] Hence, it suffices to show $\widetilde{T}_1> \widetilde{T}_2$ so that $(\widetilde{T}_1,\widetilde{T}_2)\notin \sP'$. Since $F(\widetilde{T}_2)=F(T_1)\leq F(T_2)=F(\widetilde{T}_1)$, we may assume that $F(T_1)=F(T_2)$ for otherwise $F(\widetilde{T}_1)>F(\widetilde{T}_2)$. Hence we have $F(T)=F(T_2')$ and
\[\hspace{3em} n(T)-n(T_2')= n(T_1)-n(T_2)-(r-1)\leq n(T_1)-n(T_2) \leq0,\] where $n(T)$ denotes the number of the inputs of a weighted rooted tree $T$. If $r>1$, then $n(T)< n(T_2')$ and $n(\widetilde{T}_1)>n(\widetilde{T}_2)$, which implies $\widetilde{T}_1>\widetilde{T}_2$ because  $F(\widetilde{T}_1)=F(\widetilde{T}_2)$. Therefore, we may also assume that $r=1$ and then we have $\widetilde{T}_1=T_2$ and $\widetilde{T}_2=T_1$. Hence $\widetilde{T}_1> \widetilde{T}_2$ as required.

    Every tree in the image has the root of weight 0 and valency 3, having precisely 1 input. Moreover it is not of the form $(T\cdot T\cdot 1)[1]\cdot 1$ as $\sP_4 \cap \sR=\emptyset$. The dominating component of such a tree has the root of weight greater than 1.

    \item[(ii)] Let $\sP_5$ be the set of pairs with $T_1 \in  \sT^{>0}$,  $T_2=T_2'\cdot T_2''$, $T_2'< T_2''$ and $T_2'\neq 1$. Define $\Phi_5$ by $\Phi_5(T_1,T_2)=T_1\cdot ((T_2'*T_2'')[1])$.
    \[\quad \qquad
    \begin{tikzpicture} [scale=.5,auto=left,every node/.style={scale=0.8}]
      \tikzset{Bullet/.style={circle,draw,fill=black,scale=0.5}}
      \node[Bullet] (T1) at (-8,-1) {};
      \node[Bullet] (T2) at (-5,-0.5) {};
      \node[Bullet] (T2') at (-6,-1.5) {};
      \node[Bullet] (T2'') at (-4,-1.5) {};
      \node[Bullet] (n1) at (2,0) {};
      \node[Bullet] (n2) at (1,-1) {};
      \node[Bullet] (n3) at (3,-1) {};
      \node[Bullet] (n4) at (3,-2) {};

      \draw[black] (T1) -- (-8,0);
      \draw[black] (T2) -- (-5,0.5);
      \draw[black] (T2) -- (T2');
      \draw[black] (T2) -- (T2'');
      \draw[black] (T1) node[below] {$T_1$};
      \draw[black] (T2') node[below] {$T_2'$};
      \draw[black] (T2'') node[below] {$T_2''$};
      \draw[black] (T2) node[left] {$(0)~$};
      \draw[black] (-5,-2) node[] {$<$};
      \draw[black] (-2.5,-1) node[right] {$\longmapsto$};
      \draw[black] (n1) -- (2,1);
      \draw[black] (n1) -- (n2);
      \draw[black] (n1) -- (n3);
      \draw[black] (n3) -- (n4);
      \draw[black] (n1) node[right] {$~\xleftarrow{} ~$no inputs};
      \draw[black] (n1) node[left] {$(0)~$};
      \draw[black] (n2) node[below] {$T_1$};
      \draw[black] (n3) node[below right] {$T_2'$};
      \draw[black] (n3) node[right] {\quad$(+1)$};
      \draw[black] (n4) node[below right] {$T_2''$};
    \end{tikzpicture}
    \]

    This is well-defined since $T_1,T_2''\in \sT^{>0}$ and $T_2'*T_2''\in \sT^{\mathrm{nm}}$. The injectivity follows from the observation that $(T_2'*T_2'')[1]$ is uniquely determined as the dominating component of $T_1\cdot((T_2'*T_2)[1])$ and $T_2''$ is uniquely determined as the dominating component of $(T_2'*T_2'')[1]$. Indeed, we have $w(T_1)\leq w(T_2'\cdot T_2'') < w(T_2'*T_2''[1])$ and any components other than $T_2''$ in $(T_2'*T_2'')[1]$ are components of $T_2'$, so they are dominated by $T_2'$.

    Every tree in the image has the root of weight 0 and valency 3, having no inputs. Its dominating component has the root of weight greater than 1.

    \item[(iii)] Let $\sP_6$ be the set of pairs with $T_1 \in  \sT^{0}$, $T_2=T_2'\cdot T_2''$, $T_2'< T_2''$ and $T_2'\neq 1$. Define $\Phi_6$ by $\Phi_6(T_1,T_2)=T_1*((T_2'*T_2'')[1])$.
    \[
    \begin{tikzpicture} [scale=.5,auto=left,every node/.style={scale=0.8}]
      \tikzset{Bullet/.style={circle,draw,fill=black,scale=0.5}}
      \node[Bullet] (T1) at (-8,-1) {};
      \node[Bullet] (T2) at (-5,-0.5) {};
      \node[Bullet] (T2') at (-6,-1.5) {};
      \node[Bullet] (T2'') at (-4,-1.5) {};
      \node[Bullet] (n1) at (2,0) {};
      \node[Bullet] (n2) at (2,-1) {};
      \node[Bullet] (n3) at (2,-2) {};

      \draw[black] (T1) -- (-8,0);
      \draw[black] (T2) -- (-5,0.5);
      \draw[black] (T2) -- (T2');
      \draw[black] (T2) -- (T2'');
      \draw[black] (T1) node[below] {$T_1$};
      \draw[black] (T2') node[below] {$T_2'$};
      \draw[black] (T2'') node[below] {$T_2''$};
      \draw[black] (T2) node[left] {$(0)~$};
      \draw[black] (-5,-2) node[] {$<$};
      \draw[black] (-2.5,-1) node[right] {$\longmapsto$};
      \draw[black] (n1) -- (2,1);
      \draw[black] (n1) -- (n2);
      \draw[black] (n2) -- (n3);
      \draw[black] (n1) node[left] {$(0)$};
      \draw[black] (n1) node[below right] {$T_1$};
      \draw[black] (n2) node[below right] {$T_2'$};
      \draw[black] (n3) node[below right] {$T_2''$};
      \draw[black] (n2) node[left] {$(+1)$};
    \end{tikzpicture}
    \]

    This is well-defined since $T_2''\in \sT^{>0}$ and $T_2'*T_2''\in \sT^{\mathrm{nm}}$. The injectivity follows from the observation that $(T_2'*T_2'')[1]$ is uniquely determined as the dominating component of $T_1*((T_2'*T_2)[1])$ and that $T_2''$ is uniquely determined as the dominating component of $(T_2'*T_2'')[1]$.

    Every tree in the image has the root of weight 0 and valency at least 4. Its dominating component has the root of weight greater than 1.
\end{enumerate}

\emph{Case 3: pairs with $T_1\in \sT^{val\geq4}$ and $T_2=T_2'\cdot T_2'$.} These pairs are of three different types. We define $\Phi_i$ for $i=7,8,9$ as follows.

\begin{enumerate}
    \item[(i)] Let $\sP_7$ be the set of such pairs with $T_1\in \sT^{>0}$ and $T_1\neq T_2'$. Define $\Phi_7$ by $\Phi_7(T_1,T_2)=(T_2*T_1)[1]$.
    \[\quad \qquad
    \begin{tikzpicture} [scale=.5,auto=left,every node/.style={scale=0.8}]
      \tikzset{Bullet/.style={circle,draw,fill=black,scale=0.5}}
      \node[Bullet] (T1) at (-8,-1) {};
      \node[Bullet] (T2) at (-5,-0.5) {};
      \node[Bullet] (T2') at (-6,-1.5) {};
      \node[Bullet] (T2'') at (-4,-1.5) {};
      \node[Bullet] (n1) at (2,-0.5) {};
      \node[Bullet] (n2) at (1,-1.5) {};
      \node[Bullet] (n3) at (2,-1.5) {};
      \node[Bullet] (n4) at (3,-1.5) {};

      \draw[black] (T1) -- (-8,0);
      \draw[black] (T2) -- (-5,0.5);
      \draw[black] (T2) -- (T2');
      \draw[black] (T2) -- (T2'');
      \draw[black] (T1) node[below] {$T_1$};
      \draw[black] (T2') node[below] {$T_2'$};
      \draw[black] (T2'') node[below] {$T_2'$};
      \draw[black] (T2) node[left] {$(0)~$};
      \draw[black] (-5,-2) node[] {$=$};
      \draw[black] (-2.5,-1) node[right] {$\longmapsto$};
      \draw[black] (n1) -- (2,0.5);
      \draw[black] (n1) -- (n2);
      \draw[black] (n1) -- (n3);
      \draw[black] (n1) -- (n4);
      \draw[black] (n1) node[left] {$(1)~$};
      \draw[black] (n1) node[right] {$~\xleftarrow{}~$no inputs};
      \draw[black] (1,-1.57) node[below left] {$T_1$};
      \draw[black] (n3) node[below] {$T_2'$};
      \draw[black] (n4) node[below right] {$T_2'$};
      \draw[black] (1.23,-2) node[] {$\neq$};
    \end{tikzpicture}
    \]

    This is well-defined since $T_1,T_2'\in \sT^{>0}$ and $T_2*T_1\in \sT^{\mathrm{nm}}$. The injectivity immediately follows as $T_1$ is uniquely determined. Note that the condition $T_1\neq T_2'$ ensures that $U_{T_1}.U_{T_2}=U_{\Phi_7(T_1,T_2)}$.

    Every tree in the image has the root of weight 1 and valency 4, having no inputs.

    \item[(ii)] Let $\sP_{8}$ be the set of such pairs with $T_1\in \sT^{0}$ and $T_1[1]\neq T_2'$. Define $\Phi_8$ by $\Phi_{8}(T_1,T_2)=T_2*(T_1[1])$.
    \[\quad \qquad
    \begin{tikzpicture} [scale=.5,auto=left,every node/.style={scale=0.8}]
      \tikzset{Bullet/.style={circle,draw,fill=black,scale=0.5}}
      \node[Bullet] (T1) at (-8,-1) {};
      \node[Bullet] (T2) at (-5,-0.5) {};
      \node[Bullet] (T2') at (-6,-1.5) {};
      \node[Bullet] (T2'') at (-4,-1.5) {};
      \node[Bullet] (n1) at (2,-0.5) {};
      \node[Bullet] (n2) at (1,-1.5) {};
      \node[Bullet] (n3) at (2,-1.5) {};
      \node[Bullet] (n4) at (3,-1.5) {};

      \draw[black] (T1) -- (-8,0);
      \draw[black] (T2) -- (-5,0.5);
      \draw[black] (T2) -- (T2');
      \draw[black] (T2) -- (T2'');
      \draw[black] (T1) node[below] {$T_1$};
      \draw[black] (T2') node[below] {$T_2'$};
      \draw[black] (T2'') node[below] {$T_2'$};
      \draw[black] (T2) node[left] {$(0)~$};
      \draw[black] (-5,-2) node[] {$=$};
      \draw[black] (-2.5,-1) node[right] {$\longmapsto$};
      \draw[black] (n1) -- (2,0.5);
      \draw[black] (n1) -- (n2);
      \draw[black] (n1) -- (n3);
      \draw[black] (n1) -- (n4);
      \draw[black] (n1) node[left] {$(0)~$};
      \draw[black] (n1) node[right] {$~\xleftarrow{}~$no inputs};
      \draw[black] (n3) node[below] {$T_1[1]\neq T_2'$\qquad\quad\quad};
      \draw[black] (n4) node[below right] {$T_2'$};
    \end{tikzpicture}
    \]

    This is well-defined since $T_1 \in \sT^{\mathrm{nm}}$ and $T_1[1]\in \sT^{>0}$. The injectivity immediately follows as $T_1[1]$ is uniquely determined. Note that the condition $T_1[1]\neq T_2'$ ensures that $U_{T_1}.U_{T_2}=U_{\Phi_8(T_1,T_2)}$.

    Every tree in the image has the root of weight 0 and valency 4, having no inputs. If it has the dominating component then its root has weight 1.

    \item[(iii)] Let $\sP_{9}$ be the set of such pairs with either $T_1=T_2'$ or $T_1\in \sT^{0}$ and $T_1[1]=T_2'$. Define $\Phi_9$ by $ \Phi_{9}(T_1,T_2)=((T_1*T_2')*T_2')[1]$.
    \[\quad
    \begin{tikzpicture} [scale=.5,auto=left,every node/.style={scale=0.8}]
      \tikzset{Bullet/.style={circle,draw,fill=black,scale=0.5}}
      \node[Bullet] (T1) at (-8,-1) {};
      \node[Bullet] (T2) at (-5,-0.5) {};
      \node[Bullet] (T2') at (-6,-1.5) {};
      \node[Bullet] (T2'') at (-4,-1.5) {};
      \node[Bullet] (n1) at (2,-0.5) {};
      \node[Bullet] (n2) at (1,-1.5) {};
      \node[Bullet] (n3) at (3,-1.5) {};

      \draw[black] (T1) -- (-8,0);
      \draw[black] (T2) -- (-5,0.5);
      \draw[black] (T2) -- (T2');
      \draw[black] (T2) -- (T2'');
      \draw[black] (T1) node[below] {$T_1$};
      \draw[black] (T2') node[below] {$T_2'$};
      \draw[black] (T2'') node[below] {$T_2'$};
      \draw[black] (T2) node[left] {$(0)~$};
      \draw[black] (-5,-2) node[] {$=$};
      \draw[black] (-2.5,-1) node[right] {$\longmapsto$};
      \draw[black] (n1) -- (2,0.5);
      \draw[black] (n1) -- (n2);
      \draw[black] (n1) -- (n3);
      \draw[black] (n1) -- (n4);
      \draw[black] (n1) node[left] {$~(+1)$};
      \draw[black] (2,-0.7) node[below] {$T_1$};
      \draw[black] (n2) node[below] {$T_2'$};
      \draw[black] (n3) node[below] {$T_2'$};
    \end{tikzpicture}
    \]

    This is well-defined since $(T_1*T_2')*T_2'\in \sT^{\mathrm{nm}}$. The injectivity follows from that $T_1=T_2'$ or $T_1[1]=T_2'$.

    Every tree in the image has the root of weight greater than 0 and valency at least 6.
\end{enumerate}

\emph{Case 4: pairs with $T_1=T_1'\cdot T_1''$ and $T_2=T_2'\cdot T_2'$.} Other than pairs in $\sQ$, there are two more different types. We define $\Phi_i$ for $i=10,11$ as follows.
\begin{enumerate}
    \item[(i)] Let $\sP_{10}$ be the set of such pairs with $T_1'\leq T_1''\neq T_2'$. Define $\Phi_{10}$ by $\Phi_{10}(T_1,T_2)=(T_1'\cdot T_1''\cdot T_2'\cdot T_2')[1]$.
    \[\quad \qquad
    \begin{tikzpicture} [scale=.5,auto=left,every node/.style={scale=0.8}]
      \tikzset{Bullet/.style={circle,draw,fill=black,scale=0.5}}
      \node[Bullet] (T1) at (-9,-0.5) {};
      \node[Bullet] (T1') at (-10,-1.5) {};
      \node[Bullet] (T1'') at (-8,-1.5) {};
      \node[Bullet] (T2) at (-5,-0.5) {};
      \node[Bullet] (T2') at (-6,-1.5) {};
      \node[Bullet] (T2'') at (-4,-1.5) {};
      \node[Bullet] (n1) at (2,-0.5) {};
      \node[Bullet] (n2) at (0.5,-1.5) {};
      \node[Bullet] (n3) at (1.5,-1.5) {};
      \node[Bullet] (n4) at (2.5,-1.5) {};
      \node[Bullet] (n5) at (3.5,-1.5) {};

      \draw[black] (T1) -- (-9,0.5);
      \draw[black] (T1) -- (T1');
      \draw[black] (T1) -- (T1'');
      \draw[black] (T2) -- (-5,0.5);
      \draw[black] (T2) -- (T2');
      \draw[black] (T2) -- (T2'');
      \draw[black] (T1) node[left] {$(0)~$};
      \draw[black] (T1') node[below] {$T_1'$};
      \draw[black] (T1'') node[below] {$T_1''$};
      \draw[black] (T2') node[below] {$T_2'$};
      \draw[black] (T2'') node[below] {$T_2'$};
      \draw[black] (T2) node[left] {$(0)~$};
      \draw[black] (-9,-2) node[] {$\leq$};
      \draw[black] (-7,-2) node[] {$\neq$};
      \draw[black] (-5,-2) node[] {$=$};
      \draw[black] (-2.5,-1) node[right] {$\longmapsto$};
      \draw[black] (n1) -- (2,0.5);
      \draw[black] (n1) -- (n2);
      \draw[black] (n1) -- (n3);
      \draw[black] (n1) -- (n4);
      \draw[black] (n1) -- (n5);
      \draw[black] (n1) node[left] {$(1)~$};
      \draw[black] (n1) node[right] {$~\xleftarrow{}~$no inputs};
      \draw[black] (n2) node[below] {$T_1'$};
      \draw[black] (n3) node[below] {$T_1''$};
      \draw[black] (n4) node[below] {$T_2'$};
      \draw[black] (n5) node[below] {$T_2'$};
   \end{tikzpicture}
    \]

    This is well-defined since $T_1',T_1'',T_2'\in \sT^{>0}$. The injectivity immediately follows as $T_2'$ is uniquely determined.

    Every tree in the image has the root of weight 1 and valency 5, having no inputs.

    \item[(ii)] Let $\sP_{11}$ be the set of such pairs with $T_1'< T_1''= T_2'$. Define $\Phi_{11}$ by $\Phi_{11}(T_1,T_2)=T_1'\cdot (((T_1''*T_2')*T_2')[1])$.
    \[\quad \qquad
    \begin{tikzpicture} [scale=.5,auto=left,every node/.style={scale=0.8}]
      \tikzset{Bullet/.style={circle,draw,fill=black,scale=0.5}}
      \node[Bullet] (T1) at (-9,-0.5) {};
      \node[Bullet] (T1') at (-10,-1.5) {};
      \node[Bullet] (T1'') at (-8,-1.5) {};
      \node[Bullet] (T2) at (-5,-0.5) {};
      \node[Bullet] (T2') at (-6,-1.5) {};
      \node[Bullet] (T2'') at (-4,-1.5) {};
      \node[Bullet] (n1) at (2,0) {};
      \node[Bullet] (n2) at (1,-1) {};
      \node[Bullet] (n3) at (3,-1) {};
      \node[Bullet] (n4) at (2,-2) {};
      \node[Bullet] (n5) at (4,-2) {};

      \draw[black] (T1) -- (-9,0.5);
      \draw[black] (T1) -- (T1');
      \draw[black] (T1) -- (T1'');
      \draw[black] (T2) -- (-5,0.5);
      \draw[black] (T2) -- (T2');
      \draw[black] (T2) -- (T2'');
      \draw[black] (T1) node[left] {$(0)~$};
      \draw[black] (T1') node[below] {$T_1'$};
      \draw[black] (T1'') node[below] {$T_1''$};
      \draw[black] (T2') node[below] {$T_2'$};
      \draw[black] (T2'') node[below] {$T_2'$};
      \draw[black] (T2) node[left] {$(0)~$};
      \draw[black] (-9,-2) node[] {$<$};
      \draw[black] (-7,-2) node[] {$=$};
      \draw[black] (-5,-2) node[] {$=$};
      \draw[black] (-2.5,-1) node[right] {$\longmapsto$};
      \draw[black] (n1) -- (2,1);
      \draw[black] (n1) -- (n2);
      \draw[black] (n1) -- (n3);
      \draw[black] (n3) -- (n4);
      \draw[black] (n3) -- (n5);
      \draw[black] (n1) node[right] {$~\xleftarrow{} ~$no inputs};
      \draw[black] (n1) node[left] {$(0)~$};
      \draw[black] (n2) node[below] {$T_1'$};
      \draw[black] (3,-1.2) node[below] {$T_1''$};
      \draw[black] (n3) node[right] {~$(+1)$};
      \draw[black] (n4) node[below] {$T_2'$};
      \draw[black] (n5) node[below] {$T_2'$};
    \end{tikzpicture}
    \]

    This is well-defined since $T_1',T_2'\in \sT^{>0}$ and $(T_1''*T_2')*T_2'\in \sT^{\mathrm{nm}}$. The injectivity immediately follows as $T_1'$ and $T_2'$ are uniquely determined.

    Every tree in the image has the root of weight 0 and valency 3. Its dominating component is of the form $((T*T)*T)[1]$ for some $T\in \sT^{>0}$.
\end{enumerate}

Define $\Phi:\sP'\xrightarrow{} \sT_{n,k}/S_n$ by $\Phi|_{\sP_i}=\Phi_i$ for $i=1,\cdots,11$. By construction, we see that $U_{\Phi(T_1,T_2)}=U_{T_1}.U_{T_2}$ for all $(T_1,T_2)\in\sP$. As each $\Phi_i$ is injective, it suffices to show that the images $\bar{\sT}_i:=\mathrm{Im}(\Phi_i)\subset \sT_{n,k}/S_n$ are mutually disjoint and disjoint from $\bar{\sT}_0=\mathrm{Im}(\varphi_1)\sqcup \mathrm{Im}(\varphi_2)\sqcup \mathrm{Im}(\varphi_3)$.

\begin{lemma}
Each of the following unions is disjoint.
\ben
    \item $\bar{\sT}_2 \cup \bar{\sT}_4 \cup \bar{\sT}_5 \cup \bar{\sT}_{11} \cup \mathrm{Im}(\varphi_2)\subset \sT^{val=3}_{n,k}/S_n$,
    \item $\bar{\sT}_1 \cup \bar{\sT}_7 \cup \bar{\sT}_9 \cup \bar{\sT}_{10}\cup \mathrm{Im}(\varphi_1) \subset \sT^{>0,val\geq 4}_{n,k}/S_n$,
    \item $\bar{\sT}_3 \cup \bar{\sT}_6 \cup \bar{\sT}_8 \cup \mathrm{Im}(\varphi_3) \subset \sT^{0,val\geq4}_{n,k}/S_n$.
\een
\end{lemma}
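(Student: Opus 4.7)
The plan is to verify the three containments first and then establish pairwise disjointness within each of the three subsets, using the combinatorial invariants that were already recorded during the construction of each $\Phi_i$ and $\varphi_j$.

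For the containment claims, I will simply read off from the construction the weight and valency of the root of each image tree. For (1), each of $\Phi_2$, $\Phi_4$, $\Phi_5$, $\Phi_{11}$ produces a tree whose root has weight $0$ and valency $3$, and $\varphi_2(T\cdot1,T\cdot1)=(T\cdot T\cdot1)[1]\cdot1$ is also of root weight $0$ and valency $3$. For (2), $\Phi_1$ increases the root weight by $1$ (so it is $\ge 1$) while preserving valency $\ge 4$; $\Phi_7$ builds a root of weight $1$ and valency $4$; $\Phi_9$ produces weight $\ge 1$ and valency $\ge 6$; $\Phi_{10}$ produces weight $1$ and valency $5$; $\varphi_1$ produces weight $\ge 1$ and valency $\ge 4$. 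For (3), $\Phi_3$, $\Phi_6$, $\Phi_8$ and $\varphi_3$ each produce a tree with root weight $0$ and valency $\ge 4$.

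For the pairwise disjointness in each group, I will separate images by the following invariants, all of which are either already stated or immediate from the construction. Let $T$ be an image tree and denote by $r$ the number of inputs attached to its root and by $D$ its dominating component (when one exists).

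In group (1), split by $r$: the images of $\Phi_4$ and $\varphi_2$ have $r=1$, while those of $\Phi_2,\Phi_5,\Phi_{11}$ have $r=0$. Within $r=1$, the image of $\Phi_4$ is explicitly excluded from the shape $(T\cdot T\cdot1)[1]\cdot1$ that defines $\mathrm{Im}(\varphi_2)$, so these are disjoint. Within $r=0$, the dominating component's root has weight exactly $1$ in $\mathrm{Im}(\Phi_2)$, weight $\ge 2$ in $\mathrm{Im}(\Phi_5)$, and weight $\ge 2$ with the special shape $((T*T)*T)[1]$ in $\mathrm{Im}(\Phi_{11})$; since a tree of shape $((T*T)*T)[1]$ has three children at its root (the two copies of $T$ and the attached $T$), while the dominating component $(T_2'*T_2'')[1]$ appearing in $\sP_5$ has only the children of $T_2'$ plus one extra child $T_2''$ and the inequality $T_2'<T_2''$ prevents both children from coinciding, one separates $\mathrm{Im}(\Phi_5)$ from $\mathrm{Im}(\Phi_{11})$.

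In group (2), every tree has root in $\sT^{>0}$ and root valency $\ge 4$. Split by root valency: $\mathrm{Im}(\Phi_7)$ and $\mathrm{Im}(\Phi_{10})$ have valency exactly $4$ and $5$ respectively (and can thus be separated from each other and from $\mathrm{Im}(\Phi_1),\mathrm{Im}(\Phi_9),\mathrm{Im}(\varphi_1)$ in those two valency classes by noting that $\Phi_1$ and $\varphi_1$ produce trees whose root has inputs only in prescribed numbers). Within valency $\ge 6$, the image of $\Phi_9$ is of the specific shape $((T_1*T_2')*T_2')[1]$ with two equal subtrees $T_2'$, while $\mathrm{Im}(\varphi_1)$ is of shape $(((T*T)*T)*T)[1]$ with three equal subtrees $T$, and $\mathrm{Im}(\Phi_1)$ has the distinguished feature that the root has only one subtree different from the inputs of $T_1$. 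In group (3), every root has weight $0$ and valency $\ge 4$. Split by root valency (exactly $4$ for $\Phi_8$ and $\varphi_3$, $\ge 4$ for $\Phi_3$ and $\Phi_6$) and by dominating-component weight ($1$ for $\Phi_3,\Phi_6,\Phi_8$ in the cases they have a dominating component, while $\varphi_3=(T*T)[1]\cdot 1^2$ has its two root children a copy of $(T*T)[1]$ together with the two extra inputs $1^2$, so $r=2$). The presence of the two extra inputs at the root distinguishes $\mathrm{Im}(\varphi_3)$ from $\mathrm{Im}(\Phi_3),\mathrm{Im}(\Phi_6),\mathrm{Im}(\Phi_8)$, which have $r=0$; among the latter three, $\mathrm{Im}(\Phi_8)$ has valency exactly $4$ with two children both of the form of a tree in $\sT^{val\ge 4}$, whereas $\mathrm{Im}(\Phi_3)$ and $\mathrm{Im}(\Phi_6)$ are $T_1*(T_2[1])$ with $T_1\in\sT^{val\ge 4}$ and the dominating component of the form $T_2[1]$ for some $T_2\in\sT^{\mathrm{nm}}$ with original weight $0$, versus $T_1\cdot((T_2'*T_2'')[1])$ in $\sP_6$ with dominating component of the form $(T_2'*T_2'')[1]$ where $T_2'<T_2''$; comparing structures of these dominating components separates them.

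The main obstacle is bookkeeping: each construction $\Phi_i$ was designed with a specific ``signature'' (root weight, root valency, number of root inputs, shape of dominating component) so that the signatures are pairwise distinct within each of the three subsets, and the proof amounts to listing these signatures and checking that the matchings cannot collide. I would present the argument as a table of signatures, with one row per $\Phi_i$ and $\varphi_j$, and then verify case-by-case that any two rows differ in at least one entry within the same subset. Once this combinatorial check is complete, the disjointness claims follow immediately.
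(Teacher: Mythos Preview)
Your overall plan—reading off root weight, root valency, number of root inputs, and properties of the dominating component—is exactly the paper's strategy, and your containment paragraph is fine. But the actual separation arguments contain real errors, and you are missing the one invariant that does the hard work in parts~(2) and~(3).

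\medskip

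\textbf{Concrete gaps.} In part~(3) you claim $r=0$ for $\mathrm{Im}(\Phi_3)$ and $\mathrm{Im}(\Phi_6)$; this is false. In $\sP_3$ and $\sP_6$ there is no valency restriction on $T_1$, so $T_1$ may well carry inputs at its root (e.g.\ $T_1=1^2$), and then $T_1*(\,\cdot\,)$ has $r>0$. The paper instead separates $\{\bar{\sT}_6,\mathrm{Im}(\varphi_3)\}$ from $\{\bar{\sT}_3,\bar{\sT}_8\}$ by the root weight of the \emph{dominating component}: it is $>1$ in the former pair and is $1$ (or there is no dominating component) in the latter. In part~(2) your valency split does not isolate $\bar{\sT}_1$: since $\Phi_1(T_1,T_2)=(T_1*T_2)[1]$ has root valency $\mathrm{val}(v_0(T_1))+1$, it hits every valency $\ge 4$, including $4$ and $5$, and the vague phrase ``inputs only in prescribed numbers'' does not distinguish $\bar{\sT}_1$ from $\bar{\sT}_7,\bar{\sT}_9,\bar{\sT}_{10}$. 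In part~(1) your argument that $((T*T)*T)[1]$ ``has three children at its root'' miscounts: the children are the components of the first $T$ together with two copies of $T$.

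\medskip

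\textbf{The missing idea.} The invariant that cleanly handles the delicate cases is the \emph{order comparison} between the dominating component $T'$ and its complement $T''$ when one writes the image tree as $(T''*T')[1]$. For $\bar{\sT}_1$, by construction $T''=T_1\le T_2=T'$. For $\bar{\sT}_7,\bar{\sT}_9,\bar{\sT}_{10}$ (when a dominating component exists) one checks $T'<T''$; e.g.\ for $\bar{\sT}_{10}$ the dominating component is $T'=T_1''$ and $T''=T_1'\cdot T_2'\cdot T_2'$ has strictly larger weight. The same trick separates $\bar{\sT}_3$ from $\bar{\sT}_8$. For $\bar{\sT}_{11}$ versus $\bar{\sT}_4\cup\bar{\sT}_5$, the correct observation is that the dominating component of a tree in $\bar{\sT}_{11}$ has the form $((T*T)*T)[1]$ and therefore \emph{itself} has no dominating component (two equal maximal children $T$), whereas the dominating component of a tree in $\bar{\sT}_4\cup\bar{\sT}_5$ is $(T_1*T_2')[1]$ or $(T_2'*T_2'')[1]$ and does have one. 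Once you incorporate this order-based invariant, your table approach will go through.
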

\begin{proof}(1) We first see that $\bar{\sT}_2$, $\bar{\sT}_4$, $\bar{\sT}_5$ and $\mathrm{Im}(\varphi_2)$ are mutually disjoint as follows. Since the trees in $\bar{\sT}_4$ and $\mathrm{Im}(\varphi_2)$ have inputs at the roots while the trees in $\bar{\sT}_2$ and $\bar{\sT}_5$ do not, $\bar{\sT}_2$ and $\bar{\sT}_5$ are disjoint from $\bar{\sT}_4$ and $\mathrm{Im}(\varphi_2)$. We have $\bar{\sT}_2\cap \bar{\sT}_5=\emptyset$ as the weight of the root of the dominating component is 1 for each tree in $\bar{\sT}_2$ and is greater than 1 for each tree in $\bar{\sT}_5$. The disjointness of $\bar{\sT}_4$ and $\mathrm{Im}(\varphi_2)$ comes from $\sP_4\cap \sR=\emptyset$.

On the other hand, $\bar{\sT}_{11}$ is disjoint from the others since the trees in $\bar{\sT}_{11}$ have the dominating component of the form $((T*T)*T)[1]$ with $T\in \sT^{>0}$, which has the root of weight greater than 1 and does not have the dominating component. Note that every tree in $\Bar{\sT}_2\cup \mathrm{Im}(\varphi_2)$ has a dominating component with the root of weight $1$, and the dominating component of each tree in $\Bar{\sT}_4\cup \Bar{\sT}_5$ thought of as a weighted rooted tree has a dominating component.

(2) Observe that the valency of the root of trees in $\bar{\sT}_9\cup \mathrm{Im}(\varphi_1)$ is at least 6, that of trees in $\bar{\sT}_7 $ is 4 and that of trees in $\bar{\sT}_{10}$ is 5. For $\bar{\sT}_9$ and $\mathrm{Im}(\varphi_1)$, each tree in $\bar{\sT}_9$ has at most 2 copies of the maximal component while each tree in $\mathrm{Im}(\varphi_1)$ has three maximal components. Thus we see that $\bar{\sT}_7 $, $\bar{\sT}_9$, $\bar{\sT}_{10}$ and $\mathrm{Im}(\varphi_1)$ are mutually disjoint.

To check that $\bar{\sT}_1$ is disjoint from the others, observe first that trees in $\bar{\sT}_1$ have  dominating components while those in $\mathrm{Im}(\varphi_1)$ do not. Lastly, we check that $\bar{\sT}_1$ is disjoint from $\bar{\sT}_7$, $\bar{\sT}_9$ and $\bar{\sT}_{10}$. A tree in $\bar{\sT}_1$ can be written as $T=(T''*T')[1]$, where $T'$ is the dominating component and $T''\le T'$.
However, if we write $T=(T''*T')[1]$ for a tree in $\bar{\sT}_7$, $\bar{\sT}_9$ and $\bar{\sT}_{10}$ so that $T'$ is the dominating component, we can check $T'<T''$ from the construction. For example, if $T\in \bar{\sT}_{10}$ has the dominating component $T'$, in the notation of the construction above, $T'=T_1''$ and $T''=T_1'\cdot T_2'\cdot T_2'$. Clearly, $T'<T''$ in this case. Other cases are similar.

(3) The sets $\bar{\sT}_6$ and $\mathrm{Im}(\varphi_3)$ are disjoint from $\bar{\sT}_3$ and $\bar{\sT}_8$ because the weight of the root of the dominating component is greater than 1 for a tree in the former, and is 1 or there is no dominating component for a tree in the latter.
Moreover, $\bar{\sT}_6$ and $\mathrm{Im}(\varphi_3)$ are disjoint as $(1\cdot 1,T \cdot T)\notin \sP_6$. Finally, to show $\bar{\sT}_3$ and $ \bar{\sT}_8$ are disjoint,
let $T \in \bar{\sT}_3$. Then $T$ can be written as $T=T''*(T'[1])$ and has the dominating component $T'[1]$. By construction, we have $T''\le T'$ in this case. However, if $T$ is also in $\sP_8$, since $T$ has a dominating component, we see that $T'=T_1<T_2=T''$ where $T=\Phi_8(T_1,T_2)$. The last inequality is indeed strict since $T_1\in \sT^{val\geq 4}$ and $T_2\in \sT^{val=3}$. \end{proof}

Consequently, $\Phi$ is injective and factors through $(\sT_{n,k}/S_n)\setminus \bar{\sT}_0$.
We thus have the second equivariant embedding in \eqref{59} whose cokernel is a permutation representation.

\begin{remark} \label{Rem.Subseq}
Let $\bar{\sT}:=\bigsqcup_{i=0}^{11}\bar{\sT}_i$. Let $\bar{\sT}^{(1)}, \bar{\sT}^{(2)}\subset \sT_{n,k}/S_n$ be the subsets consisting of all the weighted rooted trees of the form $(T\cdot T \cdot T\cdot T)[1]$ and $(T\cdot T\cdot 1^2)[1]$ respectively for some $T\in \sT^{>0}$ (cf. Lemma~\ref{lem:reps}). Then the proof of Theorem~\ref{Cor.SubseqDeg} in fact shows
\begin{equation} \label{Eq.SubseqDeg}
    A^{k-1}(\overline{\mathcal{M}}_{0,n})\oplus A^{k}(\overline{\mathcal{M}}_{0,n})=U_{\bar{\sT}^c}\oplus U_{\bar{\sT}^{(1)}}\oplus U_{\bar{\sT}^{(2)}}^{\oplus 2}\oplus \bigoplus_{T\in (\bar{\sT}')^c}\left(s_{(2)}\circ U_T\right).
\end{equation}
where $\bar{\sT}^c:=\left(\sT_{n,k}/S_n\right)\setminus \bar{\sT}$ and $(\bar{\sT}')^c:=\left(\sT_{\frac{n}{2},\frac{k-1}{2}}/S_{\frac{n}{2}}\right)\setminus \bar{\sT}'$. Note that the last three summands vanish unless both $n$ and $k-1$ are even.
\end{remark}

\subsection{Proof of Corollary~\ref{Cor.SingleDeg}} \label{Subsec.Proof.SingleDeg}
Applying the proof of Theorem \ref{Cor.SubseqDeg} above, we can calculate the $S_n$-representation $A^k(\overline{\mathcal{M}}_{0,n})$ for $k=1,2$ and $3$.
Although there are much simpler proofs in some of these cases, we use the systematic approach developed above and Remark \ref{Rem.Subseq}.

\subsubsection{}
When $k=1$, due to the restriction on weights, $\sP=\sP_3$ and it consists of pairs of weighted rooted trees with 1 vertex, where $a\leq n-a$ if the numbers of inputs are $a, n-a$ respectively. Hence under $\Phi$, we obtain weighted rooted trees whose weighted bare trees have two vertices and have weight 1 and at most $\frac{n}{2}$ inputs at the root.
\[\begin{tikzpicture} [scale=.5,auto=left,every node/.style={scale=0.8}]
      \tikzset{Bullet/.style={circle,draw,fill=black,scale=0.5}}
      \node[Bullet] (T1) at (-8,-1) {};
      \node[Bullet] (T2) at (-5,-1) {};
      \node[Bullet] (n1) at (1,-0.5) {};
      \node[Bullet] (n2) at (1,-1.5) {};

      \draw[black] (T1) -- (-8,0);
      \draw[black] (T2) -- (-5,0);
      \draw[black] (-8,-1.2) node[below] {$a$};
      \draw[black] (T1) node[left] {$(0)$};
      \draw[black] (-5,-1.2) node[below] {$n-a$};
      \draw[black] (T2) node[left] {$(0)$};
      \draw[black] (-6.8,-1) node[below] {$\leq$};
      \draw[black] (-2.5,-1) node[right] {$\longmapsto$};
      \draw[black] (n1) -- (1,0.5);
      \draw[black] (n1) -- (n2);
      \draw[black] (n1) node[right] {$~a~(2\leq a\leq \frac{n}{2})$};
      \draw[black] (n1) node[left] {$(0)$};
      \draw[black] (n2) node[right] {$~n-a$};
      \draw[black] (n2) node[left] {$(1)$};
    \end{tikzpicture}
    \]

On the other hand, $\sT_{n,1}$ consists of all the weighted rooted trees of weight 1, having at most two vertices (Example~\ref{ex:lowp}). Therefore we have
\[\begin{tikzpicture} [scale=.5,auto=left,every node/.style={scale=1}]
      \tikzset{Bullet/.style={circle,draw,fill=black,scale=0.5}}
      \node[] (n) at (0,0) {};
      \node[] at (0,-1) {};
      \draw[black] (n) node[] {$U_{(\sT_{n,1}/S_n)\setminus \bar{\sT}}=$};
    \end{tikzpicture}
\begin{tikzpicture} [scale=.5,auto=left,every node/.style={scale=0.8}]
      \tikzset{Bullet/.style={circle,draw,fill=black,scale=0.5}}
      \node[Bullet] (n1) at (0,0) {};
      \node[] at (0,-1) {};

      \draw[black] (n1) -- (0,1);
      \draw[black] (0,-0.2) node[below] {$n$};
      \draw[black] (n1) node[left] {$(1)$};
    \end{tikzpicture}
    \begin{tikzpicture} [scale=.5,auto=left,every node/.style={scale=1}]
      \tikzset{Bullet/.style={circle,draw,fill=black,scale=0.5}}
      \node[] (n) at (0,0) {};
      \node[] at (0,-1) {};
      \draw[black] (n) node[] {$+$};
    \end{tikzpicture}
\begin{tikzpicture} [scale=.5,auto=left,every node/.style={scale=0.8}]
      \tikzset{Bullet/.style={circle,draw,fill=black,scale=0.5}}
      \node[Bullet] (n1) at (1,-1) {};
      \node[Bullet] (n2) at (1,-2) {};

      \draw[black] (n1) -- (1,0);
      \draw[black] (n1) -- (n2);
      \draw[black] (n1) node[right] {$~1$};
      \draw[black] (n1) node[left] {$(0)$};
      \draw[black] (n2) node[right] {$~n-1$};
      \draw[black] (n2) node[left] {$(1)$};
    \end{tikzpicture}
    \begin{tikzpicture} [scale=.5,auto=left,every node/.style={scale=1}]
      \tikzset{Bullet/.style={circle,draw,fill=black,scale=0.5}}
      \node[] (n) at (0,0) {};
      \node[] at (0,-1) {};
      \draw[black] (n) node[] {$+$};
    \end{tikzpicture}
\begin{tikzpicture} [scale=.5,auto=left,every node/.style={scale=0.8}]
      \tikzset{Bullet/.style={circle,draw,fill=black,scale=0.5}}
      \node[Bullet] (n1) at (1,-1) {};
      \node[Bullet] (n2) at (1,-2) {};

      \draw[black] (n1) -- (1,0);
      \draw[black] (n1) -- (n2);
      \draw[black] (n1) node[right] {$~a~ (\frac{n}{2}<a\leq n-3)$};
      \draw[black] (n1) node[left] {$(0)$};
      \draw[black] (n2) node[right] {$~n-a$};
      \draw[black] (n2) node[left] {$(1)$};
    \end{tikzpicture}
    \begin{tikzpicture} [scale=.5,auto=left,every node/.style={scale=1}]
      \tikzset{Bullet/.style={circle,draw,fill=black,scale=0.5}}
      \node[] (n) at (0,0) {};
      \node[] at (0,-1) {};
    \end{tikzpicture}
    \]
\[\,\,\,\,=U_n \oplus U_{1,n-1}\oplus \bigoplus_{\frac{n}{2}<a \leq n-3}U_{a,n-a}.\]
Since $(\bar{\sT}')^c=\sT_{\frac{n}{2},0}/S_{\frac{n}{2}}$ consists of one element corresponding to the trivial representation $U_{\frac{n}{2}}$ for even $n$ and since $A^0(\overline{\mathcal{M}}_{0,n})=U_n$, \eqref{Eq.SubseqDeg} reads as
\[A^1(\overline{\mathcal{M}}_{0,n})=U_{1,n-1}\oplus \bigoplus_{\frac{n}{2}< a \leq n-3}U_{a,n-a}\oplus \left( s_{(2)}\circ U_{\frac{n}{2}}\right) =\bigoplus_{a\geq 4 \text{ even}}U_{a,n-a}\]
for all $n$, where we set $U_{\frac{n}{2}}=0$ for odd $n$. For even $n$, the last equality follows from the identity
\begin{equation} \label{Identity.OddEven}
    \bigoplus_{a\leq \frac{n}{2} \text{ even}}U_{a,n-a}=\left(s_{(2)}\circ U_{\frac{n}{2}}\right) \oplus \bigoplus_{a>\frac{n}{2} \text{ odd}}U_{a,n-a}.
\end{equation}
This proves Corollary~\ref{Cor.SingleDeg} (1).

\subsubsection{}
When $k=2$, the sets $\sQ, \sR, \bar{\sT}_0, \bar{\sT}^{(1)}, \bar{\sT}^{(2)}$ and $\bar{\sT}'$ are empty, and $\sP'=\sP_1\sqcup \sP_3 \sqcup \sP_4$ consists of pairs $(T_1,T_2)$ of weighted rooted trees where each $T_1$ has weight 0 and only one vertex and $T_2$ has weight 1 and at most two vertices. Choose an order on weighted rooted trees of weight 1 as
\[
    \begin{tikzpicture} [scale=.5,auto=left,every node/.style={scale=0.8}]
      \tikzset{Bullet/.style={circle,draw,fill=black,scale=0.5}}
      \node[Bullet] (n1) at (0,0) {};
      \node[] at (0,-1) {};

      \draw[black] (n1) -- (0,1);
      \draw[black] (n1) node[left] {$(1)$};
    \end{tikzpicture}
    \begin{tikzpicture} [scale=.5,auto=left,every node/.style={scale=0.8}]
      \tikzset{Bullet/.style={circle,draw,fill=black,scale=0.5}}
      \node[] (n) at (0,0) {};
      \node[] at (0,-1) {};
      \draw[black] (n) node[] {$~<$};
    \end{tikzpicture}
    \begin{tikzpicture} [scale=.5,auto=left,every node/.style={scale=0.8}]
      \tikzset{Bullet/.style={circle,draw,fill=black,scale=0.5}}
      \node[Bullet] (n1) at (1,-1) {};
      \node[Bullet] (n2) at (1,-2) {};

      \draw[black] (n1) -- (1,0);
      \draw[black] (n1) -- (n2);
      \draw[black] (n1) node[right] {$ $};
      \draw[black] (n1) node[left] {$(0)$};
      \draw[black] (n2) node[right] {$ $};
      \draw[black] (n2) node[left] {$(1)$};
    \end{tikzpicture}
    \begin{tikzpicture} [scale=.5,auto=left,every node/.style={scale=1}]
      \tikzset{Bullet/.style={circle,draw,fill=black,scale=0.5}}
      \node[] (n) at (0,0) {};
      \node[] at (0,-0.5) {};
      \draw[black] (n) node[above] {$.$};
    \end{tikzpicture}
\]
Then $\Phi_1, \Phi_3, \Phi_4$ generate all the weighted trees whose associated weighted bare trees are of type (2) and (4) in \eqref{List.Baretrees.Deg2} as follows.
\[
    \begin{tikzpicture} [scale=.5,auto=left,every node/.style={scale=0.8}]
      \tikzset{Bullet/.style={circle,draw,fill=black,scale=0.5}}
      \node[Bullet] (T1) at (-4.5,-1) {};
      \node[Bullet] (T2) at (-3,-1) {};
      \node[Bullet] (n1) at (0,-0.5) {};
      \node[Bullet] (n2) at (0,-1.5) {};
      \node[] at (0,-2.3) {};

      \draw[black] (T1) -- (-4.5,0);
      \draw[black] (T2) -- (-3,0);
      \draw[black] (T1) node[below] {$(0)$};
      \draw[black] (T2) node[below] {$(1)$};
      \draw[black] (-2.5,-1) node[right] {$\longmapsto$};
      \draw[black] (n1) -- (0,0.5);
      \draw[black] (n1) -- (n2);
      \draw[black] (n1) node[left] {$(1)$};
      \draw[black] (n2) node[left] {$(1)$};
    \end{tikzpicture}
    \begin{tikzpicture} [scale=.5,auto=left,every node/.style={scale=1}]
      \tikzset{Bullet/.style={circle,draw,fill=black,scale=0.5}}
      \node[] (n) at (0,0) {};
      \node[] at (0,-0.5) {};
      \draw[black] (n) node[above] {$,$};
    \end{tikzpicture}
    \quad
    \begin{tikzpicture} [scale=.5,auto=left,every node/.style={scale=0.8}]
      \tikzset{Bullet/.style={circle,draw,fill=black,scale=0.5}}
      \node[Bullet] (T1) at (-5,-1) {};
      \node[Bullet] (T2) at (-3,-0.5) {};
      \node[Bullet] (T2') at (-3,-1.5) {};
      \node[Bullet] (n1) at (1,0) {};
      \node[Bullet] (n2) at (1,-1) {};
      \node[Bullet] (n3) at (1,-2) {};

      \draw[black] (T1) -- (-5,0);
      \draw[black] (T2) -- (-3,0.5);
      \draw[black] (T2) -- (T2');
      \draw[black] (T1) node[below] {$(0)$};
      \draw[black] (T2) node[left] {$(0)$};
      \draw[black] (T2) node[right] {\small $~\geq 2$};
      \draw[black] (T2') node[left] {$(1)$};
      \draw[black] (-1.5,-1) node[right] {$\longmapsto$};
      \draw[black] (n1) -- (1,1);
      \draw[black] (n1) -- (n2);
      \draw[black] (n2) -- (n3);
      \draw[black] (n1) node[left] {$(0)$};
      \draw[black] (n1) node[right] {\small $~\geq2$};
      \draw[black] (n2) node[left] {$(1)$};
      \draw[black] (n3) node[left] {$(1)$};
    \end{tikzpicture}
    \begin{tikzpicture} [scale=.5,auto=left,every node/.style={scale=1}]
      \tikzset{Bullet/.style={circle,draw,fill=black,scale=0.5}}
      \node[] (n) at (0,0) {};
      \node[] at (0,-0.5) {};
      \draw[black] (n) node[above] {$,$};
    \end{tikzpicture}
    \quad
    \begin{tikzpicture} [scale=.5,auto=left,every node/.style={scale=0.8}]
      \tikzset{Bullet/.style={circle,draw,fill=black,scale=0.5}}
      \node[Bullet] (T1) at (-5,-1) {};
      \node[Bullet] (T2) at (-3,-0.5) {};
      \node[Bullet] (T2') at (-3,-1.5) {};
      \node[Bullet] (n1) at (0.5,0) {};
      \node[Bullet] (n2) at (0.5,-1) {};
      \node[Bullet] (n3) at (0.5,-2) {};

      \draw[black] (T1) -- (-5,0);
      \draw[black] (T2) -- (-3,0.5);
      \draw[black] (T2) -- (T2');
      \draw[black] (T1) node[below] {$(0)$};
      \draw[black] (T2) node[left] {$(0)$};
      \draw[black] (T2) node[right] {$~1$};
      \draw[black] (T2') node[left] {$(1)$};
      \draw[black] (-2,-1) node[right] {$\longmapsto$};
      \draw[black] (n1) -- (0.5,1);
      \draw[black] (n1) -- (n2);
      \draw[black] (n2) -- (n3);
      \draw[black] (n1) node[left] {$(0)$};
      \draw[black] (n1) node[right] {$~1$};
      \draw[black] (n2) node[left] {$(1)$};
      \draw[black] (n3) node[left] {$(1)$};
    \end{tikzpicture}
    \begin{tikzpicture} [scale=.5,auto=left,every node/.style={scale=1}]
      \tikzset{Bullet/.style={circle,draw,fill=black,scale=0.5}}
      \node[] (n) at (0,0) {};
      \node[] at (0,-1) {};
      \draw[black] (n) node[] {$.$};
    \end{tikzpicture}
    \]
The number without parentheses next to a vertex represents the number of inputs attached to that vertex. By \eqref{Eq.SubseqDeg}, $A^1(\overline{\mathcal{M}}_{0,n})\oplus A^2(\overline{\mathcal{M}}_{0,n})=U_{(\sT_{n,2}/S_n)\setminus \bar{\sT}}$ is generated by the weighted rooted tree of the first, the third and the last types in \eqref{List.Baretrees.Deg2};
    \begin{equation} \label{Eq.SubseqDeg2}
            \begin{tikzpicture} [scale=.5,auto=left,every node/.style={scale=1}]
      \tikzset{Bullet/.style={circle,draw,fill=black,scale=0.5}}
      \node[] (n) at (0,0) {};
      \node[] at (0,-1) {};
      \draw[black] (n) node[] {$A^1(\overline{\mathcal{M}}_{0,n})\oplus A^2(\overline{\mathcal{M}}_{0,n})=$};
    \end{tikzpicture}
    \begin{tikzpicture} [scale=.5,auto=left,every node/.style={scale=0.8}]
      \tikzset{Bullet/.style={circle,draw,fill=black,scale=0.5}}
      \node[Bullet] (n1) at (0,0) {};
      \node[] at (0,-1) {};

      \draw[black] (n1) -- (0,1);
      \draw[black] (0,-0.2) node[below] {};
      \draw[black] (n1) node[left] {$(2)$};
    \end{tikzpicture}
    \begin{tikzpicture} [scale=.5,auto=left,every node/.style={scale=1}]
      \tikzset{Bullet/.style={circle,draw,fill=black,scale=0.5}}
      \node[] (n) at (0,0) {};
      \node[] at (0,-1) {};

      \draw[black] (n) node[] {$+$};
    \end{tikzpicture}
\begin{tikzpicture} [scale=.5,auto=left,every node/.style={scale=0.8}]
      \tikzset{Bullet/.style={circle,draw,fill=black,scale=0.5}}
      \node[Bullet] (n1) at (1,-1) {};
      \node[Bullet] (n2) at (1,-2) {};

      \draw[black] (n1) -- (1,0);
      \draw[black] (n1) -- (n2);
      \draw[black] (n1) node[right] {$ $};
      \draw[black] (n1) node[left] {$(0)$};
      \draw[black] (n2) node[right] {$ $};
      \draw[black] (n2) node[left] {$(2)$};
    \end{tikzpicture}
    \begin{tikzpicture} [scale=.5,auto=left,every node/.style={scale=1}]
      \tikzset{Bullet/.style={circle,draw,fill=black,scale=0.5}}
      \node[] (n) at (0,0) {};
      \node[] at (0,-1) {};

      \draw[black] (n) node[] {$+$};
    \end{tikzpicture}
\begin{tikzpicture} [scale=.5,auto=left,every node/.style={scale=0.8}]
      \tikzset{Bullet/.style={circle,draw,fill=black,scale=0.5}}
      \node[Bullet] (n1) at (1,-1) {};
      \node[Bullet] (n2) at (0,-2) {};
      \node[Bullet] (n3) at (2,-2) {};

      \draw[black] (n1) -- (1,0);
      \draw[black] (n1) -- (n2);
      \draw[black] (n1) -- (n3);
      \draw[black] (n1) node[right] {$ $};
      \draw[black] (n1) node[left] {$(0)$};
      \draw[black] (n2) node[right] {$ $};
      \draw[black] (n2) node[left] {$(1)$};
      \draw[black] (n3) node[right] {$ $};
      \draw[black] (n3) node[right] {$(1)$};
    \end{tikzpicture} \qquad \qquad
    \end{equation}
    \[\qquad \qquad =U_n\oplus \bigoplus_{4\leq a \leq n-1}U_{a,n-a}\oplus \bigoplus_{3\leq a<b}U_{a,b,n-a-b}\oplus \bigoplus_{a\geq 3}U_{a,a,n-2a}^{S_2}\]
    where the range of $a$ or $b$ in each subscript of the direct sums reflects the valency conditions on the weighted rooted trees for a given associated weighted bare tree. Therefore we have
    \[A^2(\overline{\mathcal{M}}_{0,n})=\bigoplus_{a \geq 5 \text{ odd}}U_{a,n-a}\oplus \bigoplus_{3\leq a<b}U_{a,b,n-a-b}\oplus \bigoplus_{a\geq 3}U_{a,a,n-2a}^{S_2}.\]
This proves Corollary~\ref{Cor.SingleDeg} (2).

\subsubsection{}
We compute $A^2(\overline{\mathcal{M}}_{0,n})\oplus A^3(\overline{\mathcal{M}}_{0,n})$ and then subtract $A^2(\overline{\mathcal{M}}_{0,n})$ from it. For the former, we first identify the sets $\bar{\sT}_i$ and the complement of their union in $\sT_{n,3}/S_n$. Note that there are 13 types of weighted rooted trees in $\sT_{n,3}/S_3$ according to the associated weighted bare trees, which are listed as follows.
\begin{equation*}

\]
where for types II, VI, VIII, XI, XII, XIII the corresponding summands are sums over weighted rooted trees in $\sT_{n,p}/S_n$ of given types satisfying the following further conditions:
\begin{enumerate}
    \item[II$'$:] $a<b$, i.e. $a<\frac{n}{2}$,
    \item [VI$'$:] $a\leq b$, $c\geq 2$ and $(a,b,c)\neq(\frac{n-2}{2},\frac{n-2}{2},2)$,
    \item[VIII$'$:] $(a,b,c)\neq (\frac{n}{3},\frac{n}{3},\frac{n}{3})$,
    \item[XI$'$:] $a \leq b\leq \frac{n-2}{2}$, $d=1$ and $(a,b,c,d)\neq (\frac{n-2}{2},\frac{n-2}{2},1,1)$,
    \item[XII$'$:] Either (i) $a+b<\frac{n}{2}$ and $d\geq 1$ or (ii) $a+b=\frac{n}{2}$ and $b<d$,
    \item[XIII$'$:] (i) $d\geq 1$, (ii) $d=0$ and $a<b<c$, or (iii) $(a,b,c,d)=(\frac{n}{3},\frac{n}{3},\frac{n}{3},0)$.
\end{enumerate}
We denote by $V_\mathrm{I}, V_{\mathrm{II}'}$, etc. the summands indexed by types I, II$'$, etc. We subtract $A^2(\overline{\mathcal{M}}_{0,n})$ from $A^2(\overline{\mathcal{M}}_{0,n})\oplus A^3(\overline{\mathcal{M}}_{0,n})$. By \eqref{Eq.SubseqDeg2}, we compute
\begin{equation*}
    \begin{tikzpicture} [scale=.5,auto=left,every node/.style={scale=1}]
      \tikzset{Bullet/.style={circle,draw,fill=black,scale=0.5}}
      \node[] (n) at (0,0) {};
      \node[] at (0,-1) {};
      \draw[black] (n) node[] {$A^2(\overline{\mathcal{M}}_{0,n})=$};
    \end{tikzpicture}
    \begin{tikzpicture} [scale=.4,auto=left,every node/.style={scale=0.7}]
      \tikzset{Bullet/.style={circle,draw,fill=black,scale=0.5}}
      \node[Bullet] (n1) at (0,0) {};
      \node[] at (0,-1) {};

      \draw[black] (n1) -- (0,1);
      \draw[black] (0,-0.2) node[below] {$ $};
      \draw[black] (n1) node[left] {$(2)$};
    \end{tikzpicture}
    \begin{tikzpicture} [scale=.5,auto=left,every node/.style={scale=1}]
      \tikzset{Bullet/.style={circle,draw,fill=black,scale=0.5}}
      \node[] (n) at (0,0) {};
      \node[] at (0,-1) {};

      \draw[black] (n) node[] {$+$};
    \end{tikzpicture}
\begin{tikzpicture} [scale=.4,auto=left,every node/.style={scale=0.7}]
      \tikzset{Bullet/.style={circle,draw,fill=black,scale=0.5}}
      \node[Bullet] (n1) at (1,-1) {};
      \node[Bullet] (n2) at (1,-2) {};

      \draw[black] (n1) -- (1,0);
      \draw[black] (n1) -- (n2);
      \draw[black] (n1) node[right] {$ $};
      \draw[black] (n1) node[left] {$(0)$};
      \draw[black] (n2) node[right] {$ $};
      \draw[black] (n2) node[left] {$(2)$};
    \end{tikzpicture}
    \begin{tikzpicture} [scale=.5,auto=left,every node/.style={scale=1}]
      \tikzset{Bullet/.style={circle,draw,fill=black,scale=0.5}}
      \node[] (n) at (0,0) {};
      \node[] at (0,-1) {};

      \draw[black] (n) node[] {$+$};
    \end{tikzpicture}
\begin{tikzpicture} [scale=.4,auto=left,every node/.style={scale=0.7}]
      \tikzset{Bullet/.style={circle,draw,fill=black,scale=0.5}}
      \node[Bullet] (n1) at (1,-1) {};
      \node[Bullet] (n2) at (0,-2) {};
      \node[Bullet] (n3) at (2,-2) {};

      \draw[black] (n1) -- (1,0);
      \draw[black] (n1) -- (n2);
      \draw[black] (n1) -- (n3);
      \draw[black] (n1) node[right] {$ $};
      \draw[black] (n1) node[left] {$(0)$};
      \draw[black] (n2) node[right] {$ $};
      \draw[black] (n2) node[left] {$(1)$};
      \draw[black] (n3) node[right] {$ $};
      \draw[black] (n3) node[right] {$(1)$};
    \end{tikzpicture}
    \begin{tikzpicture} [scale=.5,auto=left,every node/.style={scale=1}]
      \tikzset{Bullet/.style={circle,draw,fill=black,scale=0.5}}
      \node[] (n) at (0,0) {};
      \node[] at (0,-1) {};
      \draw[black] (n) node[] {$-~A^1(\overline{\mathcal{M}}_{0,n})$};
    \end{tikzpicture}
    \qquad\qquad\qquad \qquad \quad
    \end{equation*}
    \begin{equation*}\quad \,\,\,
    \begin{tikzpicture} [scale=.5,auto=left,every node/.style={scale=1}]
      \tikzset{Bullet/.style={circle,draw,fill=black,scale=0.5}}
      \node[] (n) at (0,0) {};
      \node[] at (0,-1) {};
      \draw[black] (n) node[] {$=$};
    \end{tikzpicture}
    \begin{tikzpicture} [scale=.4,auto=left,every node/.style={scale=0.7}]
      \tikzset{Bullet/.style={circle,draw,fill=black,scale=0.5}}
      \node[Bullet] (n1) at (0,0) {};
      \node[] at (0,-1) {};

      \draw[black] (n1) -- (0,1);
      \draw[black] (0,-0.2) node[below] {$ $};
      \draw[black] (n1) node[left] {$(3)$};
    \end{tikzpicture}
    \begin{tikzpicture} [scale=.5,auto=left,every node/.style={scale=1}]
      \tikzset{Bullet/.style={circle,draw,fill=black,scale=0.5}}
      \node[] (n) at (0,0) {};
      \node[] at (0,-1) {};

      \draw[black] (n) node[] {$+$};
    \end{tikzpicture}
    \begin{tikzpicture} [scale=.4,auto=left,every node/.style={scale=0.7}]
      \tikzset{Bullet/.style={circle,draw,fill=black,scale=0.5}}
      \node[Bullet] (n1) at (1,-1) {};
      \node[Bullet] (n2) at (1,-2) {};

      \draw[black] (n1) -- (1,0);
      \draw[black] (n1) -- (n2);
      \draw[black] (n1) node[right] {$ $};
      \draw[black] (n1) node[left] {$(0)$};
      \draw[black] (n2) node[right] {$ $};
      \draw[black] (n2) node[left] {$(3)$};
    \end{tikzpicture}
    \begin{tikzpicture} [scale=.5,auto=left,every node/.style={scale=1}]
      \tikzset{Bullet/.style={circle,draw,fill=black,scale=0.5}}
      \node[] (n) at (0,0) {};
      \node[] at (0,-1) {};

      \draw[black] (n) node[] {$+$};
    \end{tikzpicture}
    \begin{tikzpicture} [scale=.4,auto=left,every node/.style={scale=0.7}]
      \tikzset{Bullet/.style={circle,draw,fill=black,scale=0.5}}
      \node[Bullet] (n1) at (1,-1) {};
      \node[Bullet] (n2) at (1,-2) {};

      \draw[black] (n1) -- (1,0);
      \draw[black] (n1) -- (n2);
      \draw[black] (n1) node[right] {$~n-4$};
      \draw[black] (n1) node[left] {$(0)$};
      \draw[black] (n2) node[right] {$~4$};
      \draw[black] (n2) node[left] {$(2)$};
    \end{tikzpicture}
    \begin{tikzpicture} [scale=.5,auto=left,every node/.style={scale=1}]
      \tikzset{Bullet/.style={circle,draw,fill=black,scale=0.5}}
      \node[] (n) at (0,0) {};
      \node[] at (0,-1) {};

      \draw[black] (n) node[] {$+$};
    \end{tikzpicture}
\begin{tikzpicture} [scale=.4,auto=left,every node/.style={scale=0.7}]
      \tikzset{Bullet/.style={circle,draw,fill=black,scale=0.5}}
      \node[Bullet] (n1) at (1,-1) {};
      \node[Bullet] (n2) at (0,-2) {};
      \node[Bullet] (n3) at (2,-2) {};

      \draw[black] (n1) -- (1,0);
      \draw[black] (n1) -- (n2);
      \draw[black] (n1) -- (n3);
      \draw[black] (n1) node[right] {$~$no inputs};
      \draw[black] (n1) node[left] {$(0)$};
      \draw[black] (n2) node[right] {$ $};
      \draw[black] (n2) node[left] {$(1)$};
      \draw[black] (n3) node[right] {$ $};
      \draw[black] (n3) node[right] {$(1)$};
    \end{tikzpicture}\begin{tikzpicture} [scale=.5,auto=left,every node/.style={scale=1}]
      \tikzset{Bullet/.style={circle,draw,fill=black,scale=0.5}}
      \node[] (n) at (0,0) {};
      \node[] at (0,-1) {};

      \draw[black] (n) node[] {$+$};
    \end{tikzpicture}
\begin{tikzpicture} [scale=.4,auto=left,every node/.style={scale=0.7}]
      \tikzset{Bullet/.style={circle,draw,fill=black,scale=0.5}}
      \node[Bullet] (n1) at (1,-1) {};
      \node[Bullet] (n2) at (0,-2) {};
      \node[Bullet] (n3) at (2,-2) {};

      \draw[black] (n1) -- (1,0);
      \draw[black] (n1) -- (n2);
      \draw[black] (n1) -- (n3);
      \draw[black] (n1) node[right] {$ $};
      \draw[black] (n1) node[left] {$(1)$};
      \draw[black] (n2) node[right] {$ $};
      \draw[black] (n2) node[left] {$(1)$};
      \draw[black] (n3) node[right] {$ $};
      \draw[black] (n3) node[right] {$(1)$};
    \end{tikzpicture}
    \begin{tikzpicture} [scale=.5,auto=left,every node/.style={scale=1}]
      \tikzset{Bullet/.style={circle,draw,fill=black,scale=0.5}}
      \node[] (n) at (0,0) {};
      \node[] at (0,-1) {};
      \draw[black] (n) node[] {$-~A^1(\overline{\mathcal{M}}_{0,n})$};
    \end{tikzpicture}
\end{equation*}
\[\quad=V_\mathrm{I}+V_\mathrm{IV}+U_{4,n-4}+\left(V_{\mathrm{II}'}+s_{(2)}\circ U_{\frac{n}{2}}\right)+\left(V_{\mathrm{VIII}'}+U_{\frac{n}{3},\frac{n}{3},\frac{n}{3}}^{S_2}\right)-A^1(\overline{\mathcal{M}}_{0,n}).\]
Hence we have
\begin{equation*}
    \begin{split}
        A^3(\overline{\mathcal{M}}_{0,n})=&~ A^1(\overline{\mathcal{M}}_{0,n})-U_{4,n-4}+V_{\mathrm{VI}'}+V_{\mathrm{IX}}+V_{\mathrm{XI}'}+V_{\mathrm{XII}'}+V_{\mathrm{XIII}'}\\
        & -U_{\frac{n}{3},\frac{n}{3},\frac{n}{3}}^{S_2}+U_{\frac{n-2}{2},\frac{n-2}{2},2}^{S_2}+\bigoplus_{3\leq a\leq \frac{n}{2}-2}s_{(2)}\circ U_{a,\frac{n}{2}-a}.
    \end{split}
\end{equation*}
Since $A^1(\overline{\mathcal{M}}_{0,n})-U_{4,n-4}=\bigoplus_{a\geq 6 \text{ even}}U_{a,n-a}$, this proves  Corollary \ref{Cor.SingleDeg} (3) as desired.

The claim that $A^3(\overline{\mathcal{M}}_{0,n})$ is a permutation representation for $n=3k\geq 9$
follows from the identity
\[U_{k,k,k}^{S_2}\oplus \bigoplus_{a<k \text{ odd}}U_{a,k,2k-a}=\bigoplus_{a\leq k \text{ even}}U_{a,k,2k-a},\]
which is obtained by taking the induced representation of \eqref{Identity.OddEven}. Indeed,
\[U_{2,k,2k-2} \quad \text{ and } \quad \bigoplus_{4\leq a\leq k, \text{ even}}U_{a,k,2k-a}\]
appear as the summands of $V_{\mathrm{VI}'}$ and $V_{\mathrm{IX}}$ respectively. This completes our proof of Corollary \ref{Cor.SingleDeg}.

\bigskip

\section{K-theoretic cuspidal block}\label{sec:cuspidalblock} The above investigation on $S_n$-representations on $A^*(\overline{\mathcal{M}}_{0,n})$ has an application on an $S_n$-equivariant decomposition of the Grothendieck group $K_0(\overline{\mathcal{M}}_{0,n})$ of coherent sheaves on $\overline{\mathcal{M}}_{0,n}$, which is geometrically meaningful.

We will define the K-theoretic cuspidal block $K_{0}(\overline{\mathcal{M}}_{0,n})_{cusp}$ of $\overline{\mathcal{M}}_{0,n}$, which is the K-theoretic analogue of the cuspidal block defined on the derived category by Castravet and Tevelev \cite{CT1}. We will see that $K_0(\overline{\mathcal{M}}_{0,n})$ admits an $S_n$-equivariant decomposition by $K_{0}(\overline{\mathcal{M}}_{0,k})_{cusp}$ with $k \leq n$. We will give a closed formula for the $S_n$-character of $K_{0}(\overline{\mathcal{M}}_{0,n})_{cusp}$ and check that it is a permutation representation for $n\leq 8$.

In this section, we only consider Grothendieck groups with rational coefficients.

\subsection{K-theoretic cuspidal block of $\overline{\mathcal{M}}_{0,n}$}
In \cite{CT1}, Castravet and Tevelev defined the \emph{cuspidal block}
\[\mathcal{D}^b_{cusp}(\overline{\mathcal{M}}_{0,n}) \subset \mathcal{D}^b(\overline{\mathcal{M}}_{0,n})\]
of $\overline{\mathcal{M}}_{0,n}$ as the full triangulated subcategory of the bounded derived category $\mathcal{D}^b(\overline{\mathcal{M}}_{0,n})$ of coherent sheaves on $\overline{\mathcal{M}}_{0,n}$ consisting of \emph{cuspidal} objects. By definition, a cuspidal object is the object $E$ satisfying
\[R\pi_{i*}E=0 \quad \text{~for~} i=1,\cdots, n\] where $\pi_i:\overline{\mathcal{M}}_{0,n}\xrightarrow{}\overline{\mathcal{M}}_{0,n-1}$ is the morphism which forgets the $i$-th marking.

They showed that $\mathcal{D}^b(\overline{\mathcal{M}}_{0,n})$ admits a semi-orthogonal decomposition by the pullbacks of $\mathcal{D}^b_{cusp}(\overline{\mathcal{M}}_{0,k})$ with $k\leq n$ in an $S_n$-equivariant way \cite[Proposition 1.6]{CT1}, and raised a question whether there exists an $S_n$-invariant full exceptional collection in $\mathcal{D}^b_{cusp}(\overline{\mathcal{M}}_{0,n})$ \cite[Question 1.5]{CT1}. An affirmative answer for this question will give a new $S_n$-equivariant decomposition of $\mathcal{D}^b(\overline{\mathcal{M}}_{0,n})$.

Let us consider K-theoretic analogues.
\begin{definition}  The \emph{K-theoretic cuspidal block} of $\mzn$ is defined as
\[K_{0}(\overline{\mathcal{M}}_{0,n})_{cusp}:=\{E\in K_0(\overline{\mathcal{M}}_{0,n})~|~(\pi_i)_!E=0 \text{ all }i\}.\]
\end{definition}

For a finite set $S$, we denote by $\overline{\mathcal{M}}_{0,S}$ the moduli space of stable rational curves with $|S|$ points marked by $S$. We identify $\overline{\mathcal{M}}_{0,n}$ with $\overline{\mathcal{M}}_{0,[n]}$. For each $J\subset [n]$ with $|J|\leq n-4$, we denote by $\pi_J:\overline{\mathcal{M}}_{0,n}\xrightarrow{} \overline{\mathcal{M}}_{0,[n]\setminus J}$ the forgetful morphism which forgets the markings by $J$. We let $\pi_\emptyset =\mathrm{id}$ for convenience. The following is essentially proved in \cite[\S3]{CT1}.
\begin{proposition} \label{Prop.Cusp}
There is an $S_n$-equivariant isomorphism
\begin{equation}
    K_0(\overline{\mathcal{M}}_{0,n})=\Q\oplus \bigoplus_{J\subset [n], |J|\leq n-4}\pi_J^* K_{0}(\overline{\mathcal{M}}_{0,[n]\setminus J})_{cusp}.
\end{equation}
where $\Q$ is generated by the structure sheaf $\mathcal{O}_{\overline{\mathcal{M}}_{0,n}}$.
\end{proposition}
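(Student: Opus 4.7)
The plan is to deduce Proposition~\ref{Prop.Cusp} from the $S_n$-equivariant semiorthogonal decomposition of $\mathcal{D}^b(\overline{\mathcal{M}}_{0,n})$ established by Castravet and Tevelev in \cite[\S 3]{CT1}, by passing to Grothendieck groups. The key geometric input is that each forgetful morphism $\pi_i:\overline{\mathcal{M}}_{0,n}\to \overline{\mathcal{M}}_{0,n-1}$ is flat with geometrically connected rational fibers, so $R\pi_{i*}\mathcal{O}_{\overline{\mathcal{M}}_{0,n}}=\mathcal{O}_{\overline{\mathcal{M}}_{0,n-1}}$; the projection formula then gives the K-theoretic identity $(\pi_i)_{!}\circ\pi_i^{*}=\mathrm{id}$. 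Iterating over $J\subset [n]$ with $|J|\le n-3$, each $\pi_J^{*}$ becomes a split injection with left inverse $(\pi_J)_{!}$, so $\pi_J^{*}K_0(\overline{\mathcal{M}}_{0,[n]\setminus J})_{cusp}$ embeds as a submodule of $K_0(\overline{\mathcal{M}}_{0,n})$.

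Next I would invoke the CT1 semiorthogonal decomposition
\[
\mathcal{D}^b(\overline{\mathcal{M}}_{0,n})=\Bigl\langle \mathcal{O}_{\overline{\mathcal{M}}_{0,n}},\ \bigl\{\pi_J^{*}\mathcal{D}^b_{cusp}(\overline{\mathcal{M}}_{0,[n]\setminus J})\bigr\}_{|J|\le n-4}\Bigr\rangle,
\]
in which each $\pi_J^{*}$ is fully faithful on the cuspidal subcategory and the $S_n$-action permutes the blocks via its action on the indexing subsets. Because taking $K_0$ of an SOD yields the direct sum of the K-groups of its components, I obtain
\[
K_0(\overline{\mathcal{M}}_{0,n})=\mathbb{Q}\cdot[\mathcal{O}_{\overline{\mathcal{M}}_{0,n}}]\oplus\bigoplus_{|J|\le n-4}\pi_J^{*}K_0\bigl(\mathcal{D}^b_{cusp}(\overline{\mathcal{M}}_{0,[n]\setminus J})\bigr)
\]
as $S_n$-representations; the equivariance of each summand is automatic, since $K_0(\overline{\mathcal{M}}_{0,[n]\setminus J})_{cusp}$ is preserved by the stabilizer $S_{[n]\setminus J}$ and $\sigma\cdot\pi_J^{*}F=\pi_{\sigma(J)}^{*}\sigma_{*}F$ under the natural equivariant lifts.

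The remaining and principal technical step is the identification $K_0(\mathcal{D}^b_{cusp}(Y))=K_0(Y)_{cusp}$ for $Y=\overline{\mathcal{M}}_{0,k}$, which I would prove by induction on $k$. One containment is immediate: if $E$ is cuspidal as a complex then $R\pi_{i*}E=0$ forces $[(\pi_i)_{!}E]=0$ in $K_0$. For the converse, given $[E]\in K_0(Y)_{cusp}$, I decompose $[E]$ using the SOD on $\mathcal{D}^b(Y)$ as a sum of a multiple of $[\mathcal{O}_Y]$, a cuspidal class, and terms $\pi_J^{*}[F_J]$ with $[F_J]\in K_0(\overline{\mathcal{M}}_{0,[k]\setminus J})_{cusp}$ by the inductive hypothesis; applying the various $(\pi_i)_{!}$ and using $(\pi_J)_{!}\pi_J^{*}=\mathrm{id}$ together with the vanishing $(\pi_i)_{!}\pi_J^{*}F_J=0$ whenever $i\in J$, a M\"obius-type inversion over the poset of subsets of $[k]$ forces every non-cuspidal component to vanish. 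The hardest part here is the bookkeeping for the commutation of $(\pi_i)_{!}$ with $\pi_J^{*}$ for $i\notin J$, since the relevant square of forgetful maps is not cartesian --- one must invoke flat base change together with the structure of the tautological sections of the universal curve --- but this is precisely the content worked out in \cite[\S 3]{CT1}, from which the K-theoretic statement follows.
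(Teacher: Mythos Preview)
Your argument is correct, but it takes a longer route than the paper's. The paper never invokes the derived SOD from \cite{CT1} at all; instead it works entirely in $K_0$. From $(\pi_i)_!\pi_i^*=\mathrm{id}$ one gets a single splitting
\[
K_0(\overline{\mathcal{M}}_{0,n})=\pi_i^*K_0(\overline{\mathcal{M}}_{0,n-1})\oplus \ker(\pi_i)_!,
\]
and the paper simply intersects these decompositions over $i=1,\dots,n$. The only thing to check is compatibility, namely that the projector $\pi_j^*(\pi_j)_!$ preserves $\pi_i^*K_0(\overline{\mathcal{M}}_{0,n-1})$, and this follows from the K-theoretic identity $(\pi_j)_!\pi_i^*=\pi_i^*(\pi_j)_!$ obtained from flat base change together with $(\pi_i)_!\mathcal{O}=\mathcal{O}$. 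Iterating yields the decomposition directly, with $K_0(\overline{\mathcal{M}}_{0,[n]\setminus J})_{cusp}$ appearing as the joint kernel of the relevant pushforwards by definition.

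What this buys: the paper's proof is self-contained and avoids your ``principal technical step'' of matching $K_0(\mathcal{D}^b_{cusp}(Y))$ with $K_0(Y)_{cusp}$, which in your sketch ends up reproving essentially the same commutation relations anyway. Your route has the conceptual advantage of making explicit that the K-theoretic decomposition is the shadow of a categorical one, but for the purposes of this proposition the direct idempotent argument is both shorter and more elementary. Incidentally, your remark that the forgetful square ``is not cartesian'' is the right worry, and the paper handles it exactly as you anticipated: flat base change plus $(\pi_i)_!\mathcal{O}=\mathcal{O}$, rather than an appeal to a cartesian diagram.
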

\begin{proof} First note that $(\pi_i)_!$ is a left inverse of $\pi_i^*$ for each $i$. Hence we have the decomposition
\beq\label{70}
K_0(\overline{\mathcal{M}}_{0,n})= \pi_i^*K_0(\overline{\mathcal{M}}_{0,n-1})\oplus \{E\in K_0(\overline{\mathcal{M}}_{0,n}) ~|~(\pi_i)_!E=0\}.\eeq
Now the assertion follows by taking intersections of such decompositions for $i=1,\cdots,n$, which make sense
because
\beq\label{71}
\pi_j^*(\pi_j)_!\pi_i^*K_0(\overline{\mathcal{M}}_{0,n-1}) \subset \pi_i^*K_0(\overline{\mathcal{M}}_{0,n-1}), \quad
\text{for all }1\leq i,j \leq n.\eeq
Note that \eqref{71} follows from the equality $(\pi_j)_!\pi_i^*=\pi_i^*(\pi_j)_!$ which holds by
 the flat base change theorem and the fact that $(\pi_i)_!\mathcal{O}_{\overline{\mathcal{M}}_{0,n}}=\mathcal{O}_{\overline{\mathcal{M}}_{0,n-1}}$.
\end{proof}

\subsection{$S_n$-character of the cuspidal block}
Using Proposition~\ref{Prop.Cusp}, we compute the $S_n$-character of $K_{0}(\overline{\mathcal{M}}_{0,n})_{cusp}$.
The following transitive permutation representations will be useful.

Let $\sX_n$ be the set of increasing sequences $x=(x_1,\cdots,x_r)$ of integers with $4\leq x_1<x_2<\cdots<x_r= n$.
\begin{definition}
For $x=(x_1,\cdots,x_r)\in \sX_n$, we define two $S_n$-representations
\begin{equation*}
    \begin{split}
    & U_x:=  \begin{cases}
    U_{x_1,x_2-x_1,\cdots,x_r-x_{r-1}} &\text{ if } r\geq2\\
    U_n &\text{ if } r=1,
    \end{cases} \\
    & M_x:=  \begin{cases}
    K_0(\overline{\mathcal{M}}_{0,x_1}).U_{x_2-x_1,\cdots,x_r-x_{r-1}} &\text{ if } r\geq2\\
    K_0(\overline{\mathcal{M}}_{0,n}) &\text{ if } r=1.
    \end{cases}
    \end{split}
\end{equation*}
\end{definition}
For each $x=(x_1,\cdots,x_r)\in \sX_n$, let $l(x):=r-1$ denote the \emph{length} of $x$. Then $K_{0}(\overline{\mathcal{M}}_{0,n})_{cusp}$ is written as a signed sum of $U_x$ and $M_x$ for various $x\in \sX_n$ as follows.
\begin{proposition} For $n\geq4$,
    \[K_0(\overline{\mathcal{M}}_{0,n})_{cusp}=\sum_{x\in \sX_n}(-1)^{l(x)}\Big(M_x-U_x\Big).\]
\end{proposition}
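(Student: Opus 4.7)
The plan is to invert the $S_n$-equivariant decomposition of Proposition~\ref{Prop.Cusp} by a Möbius-type identity in the ring of symmetric functions. First I would reinterpret Proposition~\ref{Prop.Cusp} at the level of $S_n$-characters: since $S_n$ permutes the subsets $J\subset[n]$ of fixed cardinality and the stabilizer of $J$ acts on $\pi_J^*K_0(\overline{\mathcal{M}}_{0,[n]\setminus J})_{cusp}$ through its action on the cuspidal block (and trivially on the complement $J$), the decomposition reads
\[
K_0(\overline{\mathcal{M}}_{0,n}) \;=\; s_{(n)} \;+\; \sum_{k=4}^{n} K_0(\overline{\mathcal{M}}_{0,k})_{cusp}\cdot s_{(n-k)} \qquad (\ast)
\]
as an identity of $S_n$-characters, where the products are the induction products of \eqref{eq:charmult}. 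The task then becomes to solve $(\ast)$ for $C_n := K_0(\overline{\mathcal{M}}_{0,n})_{cusp}$.

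Write $R_n := \sum_{x\in\sX_n}(-1)^{l(x)}(M_x-U_x)$ for the proposed right-hand side. For any $x=(x_1,\ldots,x_r)\in\sX_n$, substituting $(\ast)$ into the definition of $M_x$ yields
\[
M_x - U_x \;=\; \bigl(K_0(\overline{\mathcal{M}}_{0,x_1}) - s_{(x_1)}\bigr)\cdot s_{(x_2-x_1)}\cdots s_{(x_r-x_{r-1})} \;=\; \sum_{j=4}^{x_1} C_j\cdot s_{(x_1-j)}s_{(x_2-x_1)}\cdots s_{(x_r-x_{r-1})}.
\]
Interchanging the orders of summation gives $R_n = \sum_{j=4}^n C_j\cdot\alpha_{n,j}$, with
\[
\alpha_{n,j} \;:=\; \sum_{\substack{x\in\sX_n\\ x_1\ge j}}(-1)^{l(x)} s_{(x_1-j)}s_{(x_2-x_1)}\cdots s_{(x_r-x_{r-1})}.
\]
It therefore suffices to prove $\alpha_{n,j} = \delta_{n,j}$ for all $4\le j\le n$.

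The last step is a generating function calculation. Changing variables via $y_1 := x_1-j$ and $y_i := x_i-x_{i-1}$ for $i\ge 2$ (so $y_1\ge 0$, $y_i\ge 1$ for $i\ge 2$, and $\sum y_i = n-j$), the formal series $\sum_{m\ge 0}\alpha_{m+j,j}\,z^m\in\Lambda[[z]]$ factorises as
\[
\Bigl(\sum_{y\ge 0} s_{(y)}z^y\Bigr)\cdot\sum_{r\ge 1}(-1)^{r-1}\Bigl(\sum_{y\ge 1} s_{(y)}z^y\Bigr)^{r-1} \;=\; \frac{\sum_{y\ge 0} s_{(y)}z^y}{1+\bigl(\sum_{y\ge 0} s_{(y)}z^y-1\bigr)} \;=\; 1,
\]
the last equality being the classical identity $\bigl(\sum_{m\ge 0}s_{(m)}z^m\bigr)\bigl(\sum_{m\ge 0}(-1)^m s_{(1^m)}z^m\bigr)=1$ in $\Lambda[[z]]$. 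Extracting the coefficient of $z^{n-j}$ gives $\alpha_{n,j}=\delta_{n,j}$, whence $R_n=C_n$.

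I expect the only delicate point to be the passage from Proposition~\ref{Prop.Cusp} to $(\ast)$, since the former is stated subset by subset: one must verify that assembling the contributions of all $J$ of a given size really does produce the induction product $C_k\cdot s_{(n-k)}$, which amounts to identifying the stabilizer $S_k\times S_{n-k}\subset S_n$ and its action, and then appealing to \eqref{eq:charmult}. Once $(\ast)$ is in hand the remainder is routine symmetric function algebra.
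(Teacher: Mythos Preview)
Your proposal is correct and rests on the same recursion the paper uses: both derive from Proposition~\ref{Prop.Cusp} the identity
\[
K_0(\overline{\mathcal{M}}_{0,n})_{cusp}=K_0(\overline{\mathcal{M}}_{0,n})-U_n-\sum_{i=4}^{n-1}U_{n-i}.K_0(\overline{\mathcal{M}}_{0,i})_{cusp},
\]
which is exactly your $(\ast)$ rearranged. The paper then simply says ``a repeated application'' of this recursion yields the formula, i.e.\ it unfolds the recursion forward by induction on $n$. You instead substitute $(\ast)$ once into each $M_x-U_x$ and verify the resulting coefficient of $C_j$ is $\delta_{n,j}$ via a generating-function telescoping. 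This is a legitimate and clean alternative presentation of the same M\"obius-type inversion; your version makes the cancellation explicit, while the paper leaves it implicit in the inductive unfolding. One small remark: the appeal to the ``classical identity'' $\bigl(\sum s_{(m)}z^m\bigr)\bigl(\sum(-1)^m s_{(1^m)}z^m\bigr)=1$ is unnecessary, since $\frac{H(z)}{1+(H(z)-1)}=1$ is already tautological for any formal power series $H$ with constant term $1$.
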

\begin{proof} Since $K_0(\overline{\mathcal{M}}_{0,n})\cong A^*(\overline{\mathcal{M}}_{0,n})$ by Grothendieck-Riemann-Roch, it follows directly from Proposition~\ref{Prop.Cusp} that
\begin{equation} \label{Eq.Cusp}
    K_0(\overline{\mathcal{M}}_{0,n})_{cusp}=K_0(\overline{\mathcal{M}}_{0,n})-U_n-\sum_{i=4}^{n-1}U_{n-i}.K_0(\overline{\mathcal{M}}_{0,i})_{cusp}
\end{equation}
Now a repeated application of \eqref{Eq.Cusp} proves the assertion. \end{proof}

For $n\leq 8$, we compute the $S_n$-representations on $K_0(\overline{\mathcal{M}}_{0,n})_{cusp}$ in Table \ref{Table.Cusp} and find that they are permutation representations.

\begin{table}[h]
\centering
\label{Table.Cusp}
\begin{tabular}{clc}
\toprule
$n$ & \qquad \qquad \qquad $K_0(\overline{\mathcal{M}}_{0,n})_{cusp}$ \\
\midrule
4 &  \quad $U_4$\\
5  & \quad $U_5$\\
6  & \quad $U_6^{\oplus 2}\oplus U_{3,3}^{S_2}$\\
7  & \quad $U_7^{\oplus 2}\oplus U_{3,4}^{\oplus 2}$\\
8  & \quad $U_8^{\oplus 3}\oplus U_{3,5}^{\oplus 3} \oplus U_{4,4}\oplus (U_{4,4}^{S_2})^{\oplus 4}\oplus U_{3,3,2}^{S_2}$\\
\bottomrule
\noalign{\smallskip}\noalign{\smallskip}
\end{tabular}
\caption{$K_0(\overline{\mathcal{M}}_{0,n})_{cusp}$ for $n\leq 8$}
\end{table}

The following question is a K-theoretic analogue of \cite[Question 1.5]{CT1}.
\begin{question}
    Is $K_0(\overline{\mathcal{M}}_{0,n})_{cusp}$ a permutation representation of $S_n$ for every $n\geq4$?
\end{question}

An affirmative answer to this question would give supporting evidence for the existence of an $S_n$-invariant full exceptional collection on the cuspidal block $\mathcal{D}^b_{cusp}(\overline{\mathcal{M}}_{0,n})$, as well as a new $S_n$-equivariant decomposition of $K_0(\overline{\mathcal{M}}_{0,n})$.

\appendix

\section{Comparison with \cite{CGK}} \label{App.CGK}

In this section, we compare \eqref{22} with the blowup construction of $\overline{\mathcal{M}}_{0,n+1}$ from $\overline{\mathcal{M}}_{0,n}$ in \cite{CGK}. We explain in detail that the construction in \cite{CGK} is not $S_n$-equivariant and the blowup is different from \eqref{22}. See Example~\ref{CGK.ex} for a case of small $n$.

In \cite{CGK}, Chen, Gibney and Krashen constructed the moduli space $T_n:=T_{1,n}$ of $n$-pointed rooted trees of projective lines, and proved the following theorems.
\begin{theorem} \cite[Proposition 3.4.3]{CGK} \label{CGKthm}
	$T_{n}\cong \overline{\mathcal{M}}_{0,n+1}$ for $n\geq 2$.
\end{theorem}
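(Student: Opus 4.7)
The plan is to construct mutually inverse natural transformations between the moduli functor $T_n$ of rooted pointed trees of projective lines and the functor represented by $\overline{\mathcal{M}}_{0,n+1}$, each of which amounts to nothing more than promoting (or demoting) the root to a distinguished marked point.

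First, I would carefully compare the two moduli problems. By the moduli description in \cite{CGK}, $T_n$ parametrizes flat families of connected nodal genus-zero curves $(\mathcal{C} \to S, r, p_1, \ldots, p_n)$ with $n$ disjoint smooth sections $p_i$ together with one additional smooth section $r$ designated as the \emph{root}, subject to the stability condition that every irreducible geometric fiber component carry at least three special points, with the root counted as a marking. This is precisely the data of an $(n+1)$-pointed stable curve of genus zero over $S$, the only additional piece of information being the label ``root'' attached to one of the $n+1$ sections.

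Next, I would define the morphism $\Phi : T_n \to \overline{\mathcal{M}}_{0,n+1}$ on $S$-points by $(\mathcal{C} \to S, r, p_1, \ldots, p_n) \mapsto (\mathcal{C} \to S, r, p_1, \ldots, p_n)$, now viewing $r$ as the $0$-th marking. Conversely, I would define $\Psi : \overline{\mathcal{M}}_{0,n+1} \to T_n$ on $S$-points by $(\mathcal{C} \to S, p_0, p_1, \ldots, p_n) \mapsto (\mathcal{C} \to S, p_0, p_1, \ldots, p_n)$ with the $0$-th section declared to be the root. Both assignments are visibly natural in $S$, and the two stability conditions match on the nose. The compositions $\Psi \circ \Phi$ and $\Phi \circ \Psi$ are tautologically the identity on families, so $\Phi$ and $\Psi$ are inverse isomorphisms of moduli functors and hence induce the desired isomorphism of smooth projective varieties $T_n \cong \overline{\mathcal{M}}_{0,n+1}$.

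The main expected obstacle is unpacking the precise definition of $T_n$ from \cite{CGK} to confirm that no extra combinatorial structure beyond the choice of root is actually imposed. In particular, if the CGK rooted-tree data implicitly includes an orientation of each edge or an ordering of the branches at each internal vertex, one must verify that this structure is canonically determined by the underlying $(n+1)$-pointed stable curve together with the designation of the root --- for example, the orientation of each edge is the unique one pointing away from the root, and any ordering of branches must be invariant under the corresponding automorphisms. Once this compatibility is established (which is also implicit in the universal family produced by the blowup construction of $T_n$ in \cite[Theorem~3.3.1]{CGK}), the theorem is immediate from the dictionary above.
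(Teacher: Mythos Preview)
Your approach differs from the paper's in a substantive way, and under the definition the paper uses there is a real gap. In the paper (following \cite[Definition~3.3.2]{CGK}), $T_n$ is \emph{defined} as the sublocus of the Fulton--MacPherson space $\PP^1[n]\cong\overline{\mathcal{M}}_{0,n}(\PP^1,1)$ where all $n$ marked points evaluate to a fixed $x\in\PP^1$, i.e.\ $T_n=\bigcap_{i=1}^n ev_i^{-1}(x)$. With this definition the isomorphism is not a relabelling: given a stable map $f:(C,p_1,\ldots,p_n)\to\PP^1$ in $T_n$, one observes that there is a unique component $C'$ with $f_*[C']=1$, that $C'$ carries none of the $p_i$ (they all map to $x$, while $f|_{C'}$ is an isomorphism), and that $C'$ meets $\overline{C\setminus C'}$ in a single node; forgetting $C'$ and placing the $(n+1)$st marking at that node yields the $(n+1)$-pointed stable curve, with that marking playing the role of the root.

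Your plan instead posits that $T_n$ already parametrizes genus-zero curves with $n$ marked sections plus a smooth root section $r$, after which the isomorphism is a tautology. But under the sublocus definition the root is \emph{not} part of the input data --- the underlying curve of a point of $T_n$ still contains the degree-one component $C'$, and the ``root section'' only emerges after one contracts $C'$. Extracting $r$ is exactly the geometric step above, so your argument is assuming the conclusion. You correctly flag ``unpacking the precise definition of $T_n$'' as the main obstacle, but you should recognize that with the definition used in the paper this obstacle is the entire content of the proposition, not a side verification.
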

By definition, $T_{n}$ is the sublocus of the Fulton-MacPherson compactification $\PP^1[n]$ of the configuration space of $n$ marked points in $\PP^1$ where the $n$ marked points collide to a given point $x$ in $\PP^1$ (\cite[Definition~3.3.2]{CGK}). Equivalently, under the identification $\PP^1[n]\cong \overline{\mathcal{M}}_{0,n}(\PP^1,1)$,
\beq \label{CGK.2}T_{n}= \bigcap_{i=1}^n ev_i^{-1}(x)\eeq
where $ev_i:\overline{\mathcal{M}}_{0,n}(\PP^1,1) \to \PP^1$ is the evaluation map at the $i$-th marking.

Let $f:(C, p_1, \ldots, p_n)\to \PP^1$ be a stable map in $T_n$ under the identification \eqref{CGK.2}. Suppose that $C$ has irreducible components $C', D_1, \ldots, D_r$ with $f_*([C'])=1$ and $f_*([D_i])=0$. Note that none of the marked points $p_i$ may lie on $C'$ and $C'$ has a unique node attached to $\cup_i D_i$. By forgetting $C'$, we get an $(n+1)$-pointed stable curve where the $(n+1)$st marking is given by the attaching node of $C'$. This gives the isomorphism in Theorem \ref{CGKthm}. The $(n+1)$st marking corresponds to the \emph{root} on an $n$-pointed rooted tree of projective lines (\cite[Definition 2.0.1]{CGK}).

Fix $n\geq 2$. Let $N=\{1,\ldots,n\}$ and $N^+=N\cup \{n+1\}$. For a subset $S\subset N$, let $S^+=S\cup \{n+1\}$. Denote by $\bullet$ the root on each $n$-pointed rooted tree of projective lines.
We identify
\begin{equation*}
	\begin{split}
		T_{n}&\cong \overline{\mathcal{M}}_{0,N\cup\{\bullet\}} ~(\cong \overline{\mathcal{M}}_{0,n+1}) \ \ \text{ and}\\
		T_{n+1}&\cong \overline{\mathcal{M}}_{0,N^+\cup\{\bullet\}}~(\cong \overline{\mathcal{M}}_{0,n+2}).
	\end{split}
\end{equation*}

\begin{theorem} \cite[Theorem 3.3.1]{CGK}
	The forgetful morphism
	\beq \label{CGK.1} \overline{\mathcal{M}}_{0,N^+\cup\{\bullet\}}\lra \overline{\mathcal{M}}_{0,N\cup\{\bullet\}}\eeq
	which forgets the $(n+1)$st marking, admits the factorization
	\beq \label{fact.CGK} T_{n+1}=F^n_{n}\xrightarrow{b_{n-1}}F^{n-1}_{n}\xrightarrow{b_{n-2}}\cdots \xrightarrow{~b_1~}F^1_{n}\xrightarrow{~b_0~}F^0_{n}=T_{n},\eeq
	where
	\begin{enumerate}
            \item for $0\leq i\leq n$, $F_n^i$ is smooth and it has subvarieties $F_n^i(S)$ indexed by $S\subset N^+$ with $\lvert S\rvert\ge 2$,
            \item the morphism $b_0$ is a $\PP^1$-bundle,
		\item for $1\leq i \leq n-1$, the morphism $b_i$ is the blowup along the disjoint union of $F_n^i(S^+)$ for $S\subset N$ with $\lvert S\rvert=n-i$,
		\item $F_n^i(S^+) \cong \overline{\mathcal{M}}_{0, S\cup\{\star\}}\times \overline{\mathcal{M}}_{0,S^c\cup\{\star, \bullet\}}$ if $\lvert S\rvert=n-i\ne 1$,
            \item $F_n^{n-1}(S^+) \cong \overline{\mathcal{M}}_{0,N \cup\{\bullet\}}$ if $\lvert S\rvert=1$.
	\end{enumerate}
\end{theorem}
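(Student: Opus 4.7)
The plan is to deduce the theorem from the Fulton--MacPherson recursion for $\PP^1[n+1]$, restricted to the fiber over the small diagonal in $(\PP^1)^{n+1}$. Under the identification $T_k \cong \bigcap_{i=1}^k ev_i^{-1}(x) \subset \PP^1[k]$ coming from Theorem~\ref{CGKthm}, the morphism \eqref{CGK.1} is the restriction of the composition $\PP^1[n+1] \to \PP^1[n] \times \PP^1 \to \PP^1[n]$, where the first arrow is itself a tower of smooth blowups by the FM recursion and the second is the projection.

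The first step would be to unwind this FM recursion: $\PP^1[n+1]$ is obtained from $\PP^1[n] \times \PP^1$ by an iterated blowup along proper transforms of subvarieties $\widetilde{D}_{S^+}$ indexed by $S \subset N$ with $|S| \ge 1$, performed in order of decreasing codimension of the centers (equivalently, decreasing $|S|$). Pulling back the entire tower along $T_n \times \{x\} \hookrightarrow \PP^1[n] \times \PP^1$ then produces the factorization
\[
T_{n+1} = F_n^n \xrightarrow{b_{n-1}} \cdots \xrightarrow{b_1} F_n^1 \xrightarrow{b_0} F_n^0 = T_n,
\]
in which $b_0 \colon T_n \times \PP^1 \to T_n$ is the trivial $\PP^1$-bundle giving (2), and each $b_i$ for $1 \le i \le n-1$ is the restriction of the corresponding FM blowup, hence a smooth blowup along a smooth center; the indexing $|S| = n-i$ is inherited directly from the FM blowup order, giving (1) and (3). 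Disjointness of the centers at each step reduces to the combinatorial fact that boundary-divisor labels $S \cup \{n+1\} \subset N^+$ of equal size $\ge 2$ inside $\overline{\mathcal{M}}_{0, N^+ \cup \{\bullet\}}$ are pairwise incompatible.

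The main step, and main obstacle, would be identifying each restricted center with the claimed product in (4)--(5). For $S \subset N$ with $|S| = n-i \ge 2$, the center $F_n^i(S^+)$ parameterizes rooted trees in which the $(n+1)$st marking and the $S$-markings have collapsed onto a single component attached at a node $\star$; the natural ``inside vs.\ outside the node $\star$'' decomposition then yields
\[
F_n^i(S^+) \cong \overline{\mathcal{M}}_{0, S\cup\{\star\}} \times \overline{\mathcal{M}}_{0, S^c\cup\{\star,\bullet\}}.
\]
When $|S|=1$ the ``inside'' factor has only one non-nodal marking, the rational bridge collapses, and the remaining factor absorbs the $(n+1)$st marking, yielding $F_n^{n-1}(S^+) \cong \overline{\mathcal{M}}_{0, N\cup\{\bullet\}}$ as in (5). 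The delicacy here is that the description of $\widetilde{D}_{S^+}$ inside $\PP^1[n] \times \PP^1$ already involves $\PP^1[S]$, so the identification must be established recursively on smaller $n$, which is the genuine combinatorial crux of the argument. An alternative strategy would be to realize the $F_n^i$ directly as Hassett moduli $\overline{\mathcal{M}}_{0, \mathcal{A}_i}$ for a suitable sequence of weights that assigns shrinking weight only to the $(n+1)$st marking while keeping the $N\cup\{\bullet\}$-weights at $1$, and then invoking Proposition~\ref{prop:reduction}; however, tuning the weights so that the wall-crossings occur in exactly the prescribed CGK order (with blowup centers appearing from $|S|=n-1$ down to $|S|=1$) appears combinatorially subtle, and I would attempt the FM approach first.
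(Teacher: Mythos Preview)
Your approach is essentially the same as the paper's. Note that this theorem is quoted from \cite{CGK} and the paper does not give its own proof; the paragraph immediately following the statement merely sketches the construction, explaining that \eqref{fact.CGK} is obtained as the restriction to $T_{n+1}$ of the Fulton--MacPherson recursion $\PP^1[n+1]\to\cdots\to\PP^1[n]\times\PP^1$, with $T_n$ sitting inside $\PP^1[n]\times\{x\}$ --- precisely your plan.
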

The factorization \eqref{fact.CGK} is obtained as the restriction to $T_{n+1}$ of the construction of the Fulton-MacPherson configuration space in \cite{FM}
\[\PP^1[n+1]=:\PP^1[n,n]\xrightarrow{~\rho_n~}\PP^1[n,n-1]\xrightarrow{\rho_{n-1}}\cdots \xrightarrow{\rho_1}\PP^1[n,0]:=\PP^1[n]\times \PP^1, \]
where the composition $\PP^1[n+1]\to \PP^1[n]\times \PP^1$ is given by the forgetful map of the $(n+1)$st marking and the evaluation map $ev_{n+1}$. Here, the image $T_n$ of $T_{n+1}$ sits inside $\PP^1[n]\times\{x\}$ for $x\in \PP^1$. See \cite[\S3.1]{CGK}.

The above map $\PP^1[n+1]\to \PP^1[n]\times \PP^1$ can be viewed as the contraction of the universal curve over $\PP^1[n]\cong \overline{\mathcal{M}}_{0,n}(\PP^1,1)$ to the degree 1 component. 
Furthermore, for each $k$, the composition $\rho_{n-k+1}\circ \cdots \circ \rho_n$ contracts all the connected degree 0 subcurves $D$ of the universal curve $C$ such that $D$ has precisely $k$ markings and $C-D$ is connected, to the attaching node $D\cap \overline{C-D}$.

Likewise, for each $k$, the composition $b_{n-k}\circ \cdots \circ b_{n-1}$ contracts all the connected subcurves $D$ of the universal curve $C$ over $T_n\cong \overline{\mathcal{M}}_{0,N\cup\{\bullet\}}$ such that $D$ has precisely $k$ markings in $N$ but not the root $\bullet$ and $C-D$ is connected, to the attaching node $D\cap \overline{C-D}$.

Note that in our case, $\rho_n$ and $b_{n-1}$ are isomorphisms since $\dim ~\PP^1=1$.

\medskip

It is obvious by construction that \eqref{fact.CGK} is equivariant under the action of $S_N\cong S_n$ which permutes the markings indexed by $N$ and fixes the root $\bullet$. However, it is not equivariant under the action of $S_{N\cup\{\bullet\}}\cong S_{n+1}$, which is the full automorphism group of $T_n$ for $n\geq 4$.

\medskip
\noindent\textbf{Claim.} \eqref{fact.CGK} is not equivariant under the action of $S_{N\cup\{\bullet\}}\cong S_{n+1}$.
\begin{proof}
	By the above description, the composition
	\[b_1\circ \cdots \circ b_{n-1}: \overline{\mathcal{M}}_{0,N^+\cup\{\bullet\}}\lra F^1_n\]
    contracts the universal curve over $\overline{\mathcal{M}}_{0,N\cup\{\bullet\}}$ to the irreducible component containing the root $\bullet$. Since this component is not preserved by the $S_{N\cup\{\bullet\}}$-action, \eqref{fact.CGK} is not $S_{N\cup\{\bullet\}}$-equivariant.
\end{proof}

\smallskip

In particular, \eqref{fact.CGK} is different from the factorization \eqref{22}
\[
\overline{\mathcal{M}}_{0,n+2}\cong \fQ^{\d=\infty}\xrightarrow{~\fq_2~ } \fQ^{\d_3^+}\xrightarrow{~\fq_3~ }\cdots \xrightarrow{~\fq_{\ell-1}~ }\fQ^{\d_\ell^+} \xrightarrow{~\fq_\ell~} \fQ^{\d=0^+}\xrightarrow{~\fp~}\overline{\mathcal{M}}_{0,n+1}~\]
in Theorem~\ref{26} with $\ell=\lfloor \frac{n}{2}\rfloor$, as this is $S_{n+1}$-equivariant.
Here, $\fp$
parametrizes the \emph{balanced components} of the universal curve over $\overline{\mathcal{M}}_{0,n+1}$. See Remark~\ref{rmk:delta0}.
\begin{example}[$n=4$]\label{CGK.ex}
	Let $N=\{1,2,3,4\}$ and $N^+=N\cup \{5\}$ so that
	\[T_{5}\cong \overline{\mathcal{M}}_{0,N^+\cup \{\bullet\}} \cong \overline{\mathcal{M}}_{0,6} \and T_4\cong \overline{\mathcal{M}}_{0,N\cup\{\bullet\}}\cong \overline{\mathcal{M}}_{0,5}.\]
	 For each subset $I\subset N^+\cup\{\bullet\}$ with $2\leq \lvert I\rvert \leq 4$, denote by
	 \[D_{I,I^c} \cong \overline{\mathcal{M}}_{0,I\cup\{\star\}}\times \overline{\mathcal{M}}_{0,I^c\cup\{\star\}}\]
	 the boundary divisor of $\overline{\mathcal{M}}_{0,N^+\cup \{\bullet\}}$, which generically parametrizes nodal curves with two components, one with markings of $I$ and the other with markings of $I^c=N^+\cup\{\bullet\}-I$. Here we denote by $\star$ the attaching node.
	 \begin{itemize}
	 	\item In \eqref{fact.CGK}, the composition $T_5\to F^1_4$ contracts precisely the boundary divisors
	 		\[D_{I\cup\{5\},(N-I)\cup\{\bullet\}} \quad \text{ for }~ I\subset N,~2\leq \lvert I\rvert \leq 3.\]
	 		In particular, \eqref{fact.CGK} is not $S_{N\cup \{\bullet\}}$-equivariant, although one can easily check that it is $S_N$-equivariant.
	 	\item In \eqref{22}, the composition $\overline{\mathcal{M}}_{0,6}\to \fQ^{\d=0^+}$ contracts precisely the boundary divisors
	 		\[D_{I\cup\{5\},(N-I)} \quad \text{ for }I\subset N\cup\{\bullet\}, ~ \lvert I \rvert =2.\]
	 		Note that this also contracts such divisors with $I=\{i, \bullet\}$ for $i\in N$. 
	 \end{itemize}
	Therefore, two above contractions onto $\PP^1$-bundles over $\overline{\mathcal{M}}_{0,5}$ are different, and \eqref{fact.CGK} is not equivariant with respect to the actions of $S_{N\cup\{\bullet\}}\cong S_5$.
\end{example}

\clearpage
\section{Character of $A^k(\overline{\mathcal{M}}_{0,n})$} \label{App.A}
We list the characters of the $S_n$-representation $A^k(\overline{\mathcal{M}}_{0,n})$ for $n\leq 11$. Note that $\ch(A^0(\overline{\mathcal{M}}_{0,n}))=s_{(n)}$ for all $n$ and $\ch(A^{n-3-k}(\overline{\mathcal{M}}_{0,n}))=\ch(A^k(\overline{\mathcal{M}}_{0,n}))$.
\medskip
\begin{center}
{\small
\begin{tabular}{|c|c|l|}
\hline
$n$ &$k$ & \parbox[c][1.5em]{11cm}{$\ch_n(A^k(\overline{\mathcal{M}}_{0,n}))$}\\
\hline
5 & 1 &
\parbox[c][1.5em]{11cm}{$s_{(5)} + s_{(4, 1)}$
} \\
\hline
6 & 1 &
\parbox[c][1.5em]{11cm}{$2 s_{(6)} + s_{(4, 2)} + s_{(5, 1)}$
}  \\
\hline
\multirow{2}{*}{$7$} & $1$ &
\parbox[c][1.5em]{11cm}{$2 s_{(7)} + s_{(4, 3)} + s_{(5, 2)} + 2 s_{(6, 1)}$ }\\
\cline{2-3}
& 2 &
\parbox[c][1.5em]{11cm}{$4 s_{(7)} + 2 s_{(4, 3)} + 3 s_{(5, 2)} + 3 s_{(6, 1)} + s_{(4, 2, 1)}$
} \\
\hline
\multirow{2}{*}{$8$} &  $1$ &
\parbox[c][1.5em]{11cm}{$3 s_{(8)} + s_{(4, 4)} + s_{(5, 3)} + 2 s_{(6, 2)} + 2 s_{(7, 1)}$
} \\
\cline{2-3}
 &  2 &
\parbox[c][1.5em]{11cm}{$6 s_{(8)} + 3 s_{(4, 4)} + 5 s_{(5, 3)} + 7 s_{(6, 2)} + 6 s_{(7, 1)} + s_{(4, 2, 2)} +
 2 s_{(4, 3, 1)} + 2 s_{(5, 2, 1)} + s_{(6, 1, 1)}$
}\\
\hline
\multirow{3}{*}{$9$} &  $1$ &
\parbox[c][1.5em]{11cm}{$3 s_{(9)} + s_{(5, 4)} + 2 s_{(6, 3)} + 2 s_{(7, 2)} + 3 s_{(8, 1)} $
}\\
\cline{2-3}
 &  2 &
\parbox[c][2.7em]{11cm}{$ 9 s_{(9)} + 7 s_{(5, 4)} + 11 s_{(6, 3)} + 12 s_{(7, 2)} + 10 s_{(8, 1)} +
 2 s_{(4, 3, 2)} + 3 s_{(4, 4, 1)} + 2 s_{(5, 2, 2)} + 4 s_{(5, 3, 1)} +
 5 s_{(6, 2, 1)} + 2 s_{(7, 1, 1)}$
}\\
\cline{2-3}
&  3 &
\parbox[c][2.7em]{11cm}{$11 s_{(9)} + 12 s_{(5, 4)} + 20 s_{(6, 3)} + 19 s_{(7, 2)} +
 16 s_{(8, 1)} + s_{(3, 3, 3)} + 5 s_{(4, 3, 2)} +
 6 s_{(4, 4, 1)} + 5 s_{(5, 2, 2)} + 11 s_{(5, 3, 1)} +
 10 s_{(6, 2, 1)} + 6 s_{(7, 1, 1)} + s_{(4, 2, 2, 1)} +
 s_{(4, 3, 1, 1)} + s_{(5, 2, 1, 1)} $
}\\
\hline

\multirow{3}{*}{$10$} &  $1$ &
\parbox[c][1.5em]{11cm}{$ 4 s_{(10)} + 2 s_{(6, 4)} + 2 s_{(7, 3)} + 3 s_{(8, 2)} + 3 s_{(9, 1)}$
}\\
\cline{2-3}
 &  2 &
\parbox[c][2.7em]{11cm}{$12 s_{(10)} + 4 s_{(5, 5)} + 15 s_{(6, 4)} + 18 s_{(7, 3)} + 19 s_{(8, 2)} +
 15 s_{(9, 1)} + s_{(4, 3, 3)} + 3 s_{(4, 4, 2)} + 4 s_{(5, 3, 2)} +
 6 s_{(5, 4, 1)} + 4 s_{(6, 2, 2)} + 9 s_{(6, 3, 1)} + 8 s_{(7, 2, 1)} +
 4 s_{(8, 1, 1)} $
}\\
\cline{2-3}
&  3 &
\parbox[c][5.1em]{11cm}{$21 s_{(10)} + 12 s_{(5, 5)} + 41 s_{(6, 4)} + 48 s_{(7, 3)} +
 45 s_{(8, 2)} + 32 s_{(9, 1)} + 6 s_{(4, 3, 3)} +
 13 s_{(4, 4, 2)} + 21 s_{(5, 3, 2)} + 26 s_{(5, 4, 1)} +
 17 s_{(6, 2, 2)} + 36 s_{(6, 3, 1)} + 30 s_{(7, 2, 1)} +
 14 s_{(8, 1, 1)} + s_{(3, 3, 3, 1)} + s_{(4, 2, 2, 2)} +
 4 s_{(4, 3, 2, 1)} + 3 s_{(4, 4, 1, 1)} + 4 s_{(5, 2, 2, 1)} +
 6 s_{(5, 3, 1, 1)} + 4 s_{(6, 2, 1, 1)} + 2 s_{(7, 1, 1, 1)} $
}\\
\hline

\multirow{3}{*}{$11$} &  $1$ &
\parbox[c][1.5em]{11cm}{$4 s_{(11)} + s_{(6, 5)} + 2 s_{(7, 4)} + 3 s_{(8, 3)} + 3 s_{(9, 2)} + 4 s_{(10, 1)} $
}\\
\cline{2-3}
 &  2 &
 \parbox[c][3.9em]{11cm}{$ 16 s_{(11)} + 13 s_{(6, 5)} + 24 s_{(7, 4)} + 28 s_{(8, 3)} + 27 s_{(9, 2)} +
 21 s_{(10, 1)} + 2 s_{(4, 4, 3)} + 2 s_{(5, 3, 3)} + 6 s_{(5, 4, 2)} +
 3 s_{(5, 5, 1)} + 8 s_{(6, 3, 2)} + 13 s_{(6, 4, 1)} + 6 s_{(7, 2, 2)} +
 14 s_{(7, 3, 1)} + 13 s_{(8, 2, 1)} + 6 s_{(9, 1, 1)}$
 }\\
\cline{2-3}
&  3 &
\parbox[c][6.3em]{11cm}{$ 32 s_{(11)} + 54 s_{(6, 5)} + 92 s_{(7, 4)} + 101 s_{(8, 3)} +
 84 s_{(9, 2)} + 59 s_{(10, 1)} + 15 s_{(4, 4, 3)} +
 22 s_{(5, 3, 3)} + 47 s_{(5, 4, 2)} + 28 s_{(5, 5, 1)} +
 60 s_{(6, 3, 2)} + 78 s_{(6, 4, 1)} + 38 s_{(7, 2, 2)} +
 88 s_{(7, 3, 1)} + 66 s_{(8, 2, 1)} + 31 s_{(9, 1, 1)} +
 s_{(3, 3, 3, 2)} + 3 s_{(4, 3, 2, 2)} + 5 s_{(4, 3, 3, 1)} +
 10 s_{(4, 4, 2, 1)} + 3 s_{(5, 2, 2, 2)} + 16 s_{(5, 3, 2, 1)} +
 14 s_{(5, 4, 1, 1)} + 12 s_{(6, 2, 2, 1)} + 17 s_{(6, 3, 1, 1)} +
 13 s_{(7, 2, 1, 1)} + 4 s_{(8, 1, 1, 1)}$
 }\\
\cline{2-3}
&  4 &
\parbox[c][7.5em]{11cm}{$44 s_{(11)} + 85 s_{(6, 5)} + 145 s_{(7, 4)} + 152 s_{(8, 3)} +
 125 s_{(9, 2)} + 81 s_{(10, 1)} + 30 s_{(4, 4, 3)} +
 41 s_{(5, 3, 3)} + 90 s_{(5, 4, 2)} + 49 s_{(5, 5, 1)} +
 111 s_{(6, 3, 2)} + 139 s_{(6, 4, 1)} + 70 s_{(7, 2, 2)} +
 149 s_{(7, 3, 1)} + 111 s_{(8, 2, 1)} + 47 s_{(9, 1, 1)} +
 2 s_{(3, 3, 3, 2)} + 9 s_{(4, 3, 2, 2)} + 14 s_{(4, 3, 3, 1)} +
 23 s_{(4, 4, 2, 1)} + 8 s_{(5, 2, 2, 2)} + 41 s_{(5, 3, 2, 1)} +
 33 s_{(5, 4, 1, 1)} + 27 s_{(6, 2, 2, 1)} + 41 s_{(6, 3, 1, 1)} +
 29 s_{(7, 2, 1, 1)} + 10 s_{(8, 1, 1, 1)} + s_{(3, 3, 3, 1, 1)} +
 s_{(4, 2, 2, 2, 1)} + 2 s_{(4, 3, 2, 1, 1)} +
 s_{(4, 4, 1, 1, 1)} + 2 s_{(5, 2, 2, 1, 1)} +
 3 s_{(5, 3, 1, 1, 1)} + s_{(6, 2, 1, 1, 1)} + s_{(7, 1, 1, 1, 1)} $
 }\\
\hline
\end{tabular}
}
\end{center}

\clearpage

\section{Character of $A^k(\PP^1[n])$} \label{App.B}
We list the characters of the $S_n$-representation $A^k(\PP^1[n])$ for $n\leq 11$. Note that $\ch(A^0(\PP^1[n]))=s_{(n)}$ for all $n$ and $\ch(A^{n-k}(\PP^1[n]))=\ch(A^k(\PP^1[n]))$.
\medskip
\begin{center}
{\small
\begin{tabular}{|c|c|l|}
\hline
$n$ &$k$ & \parbox[c][1.5em]{11cm}{$\ch_n(A^k(\PP^1[n]))$} \\
\hline
3 & 1 &
\parbox[c][1.5em]{11cm}{ $ 2 s_{(3)} + s_{(2, 1)} $
} \\
\hline
\multirow{2}{*}{$4$}  & 1 &
\parbox[c][1.5em]{11cm}{$ 3 s_{(4)} + 2 s_{(3, 1)}  $ }\\
\cline{2-3}
& 2 &
\parbox[c][1.5em]{11cm}{$5 s_{(4)} + s_{(2, 2)}+ 3 s_{(3, 1)} $
} \\
\hline

\multirow{2}{*}{$5$} & $1$ &
\parbox[c][1.5em]{11cm}{$ 4 s_{(5)}+ s_{(3, 2)}+ 3 s_{(4, 1)} $ }\\
\cline{2-3}
& 2 &
\parbox[c][1.5em]{11cm}{$9 s_{(5)}+ 4 s_{(3, 2)}+ 8 s_{(4, 1)}+ s_{(3, 1, 1)}$
} \\
\hline

\multirow{3}{*}{$6$} & $1$ &
\parbox[c][1.5em]{11cm}{$5 s_{(6)}+ s_{(3, 3)}+ 2 s_{(4, 2)}+ 4 s_{(5, 1)}$ }\\
\cline{2-3}
& 2 &
\parbox[c][1.5em]{11cm}{$ 15 s_{(6)}+ 4 s_{(3, 3)}+ 12 s_{(4, 2)}+ 15 s_{(5, 1)}+ 2 s_{(3, 2, 1)}+
 3 s_{(4, 1, 1)}$
} \\
\cline{2-3}
& 3 &
\parbox[c][1.5em]{11cm}{$ 20 s_{(6)}+ 8 s_{(3, 3)}+ 18 s_{(4, 2)}+ 24 s_{(5, 1)}+ 4 s_{(3, 2, 1)}+
 6 s_{(4, 1, 1)}$
} \\
\hline

\multirow{3}{*}{$7$} & $1$ &
\parbox[c][1.5em]{11cm}{$ 6 s_{(7)}+ 2 s_{(4, 3)}+ 3 s_{(5, 2)}+ 5 s_{(6, 1)} $ }\\
\cline{2-3}
& 2 &
\parbox[c][1.5em]{11cm}{$22 s_{(7)}+ 15 s_{(4, 3)}+ 23 s_{(5, 2)}+ 26 s_{(6, 1)}+ s_{(3, 2, 2)}+
 3 s_{(3, 3, 1)}+ 7 s_{(4, 2, 1)}+ 6 s_{(5, 1, 1)}$
} \\
\cline{2-3}
& 3 &
\parbox[c][2.7em]{11cm}{$40 s_{(7)}+ 37 s_{(4, 3)}+ 54 s_{(5, 2)}+ 56 s_{(6, 1)}+ 4 s_{(3, 2, 2)}+
 10 s_{(3, 3, 1)} + 23 s_{(4, 2, 1)} + 19 s_{(5, 1, 1)} + s_{(3, 2, 1, 1)} +
 s_{(4, 1, 1, 1)} $
} \\
\hline

\multirow{4}{*}{$8$} &  $1$ &
\parbox[c][1.5em]{11cm}{$7 s_{(8)} + s_{(4, 4)} + 3 s_{(5, 3)} + 4 s_{(6, 2)} + 6 s_{(7, 1)} $
} \\
\cline{2-3}
 &  2 &
\parbox[c][2.7em]{11cm}{$31 s_{(8)} + 13 s_{(4, 4)} + 30 s_{(5, 3)} + 40 s_{(6, 2)} + 39 s_{(7, 1)} +
 2 s_{(3, 3, 2)} + 4 s_{(4, 2, 2)} + 11 s_{(4, 3, 1)} + 14 s_{(5, 2, 1)} +
 11 s_{(6, 1, 1)} $
}\\
\cline{2-3}
 &  3 &
\parbox[c][3.9em]{11cm}{$70 s_{(8)} + 43 s_{(4, 4)} + 107 s_{(5, 3)} + 126 s_{(6, 2)} + 113 s_{(7, 1)} +
 13 s_{(3, 3, 2)} + 23 s_{(4, 2, 2)} + 56 s_{(4, 3, 1)} + 70 s_{(5, 2, 1)} +
 49 s_{(6, 1, 1)} + 2 s_{(3, 2, 2, 1)} + 3 s_{(3, 3, 1, 1)} +
 8 s_{(4, 2, 1, 1)} + 4 s_{(5, 1, 1, 1)}  $
}\\
\cline{2-3}
 &  4 &
\parbox[c][3.9em]{11cm}{$ 94 s_{(8)} + 67 s_{(4, 4)} + 155 s_{(5, 3)} + 185 s_{(6, 2)} + 156 s_{(7, 1)} +
 21 s_{(3, 3, 2)} + 39 s_{(4, 2, 2)} + 90 s_{(4, 3, 1)} + 112 s_{(5, 2, 1)} +
 75 s_{(6, 1, 1)} + 4 s_{(3, 2, 2, 1)} + 7 s_{(3, 3, 1, 1)} +
 15 s_{(4, 2, 1, 1)} + 9 s_{(5, 1, 1, 1)} $
}\\
\hline

\multirow{4}{*}{$9$} &  $1$ &
\parbox[c][1.5em]{11cm}{$8 s_{(9)} + 2 s_{(5, 4)} + 4 s_{(6, 3)} + 5 s_{(7, 2)} + 7 s_{(8, 1)}  $
}\\
\cline{2-3}
 &  2 &
\parbox[c][2.7em]{11cm}{$ 41 s_{(9)} + 31 s_{(5, 4)} + 53 s_{(6, 3)} + 60 s_{(7, 2)} + 56 s_{(8, 1)} +
 s_{(3, 3, 3)} + 8 s_{(4, 3, 2)} + 10 s_{(4, 4, 1)} + 8 s_{(5, 2, 2)} +
 22 s_{(5, 3, 1)} + 25 s_{(6, 2, 1)} + 17 s_{(7, 1, 1)}  $
}\\
\cline{2-3}
&  3 &
\parbox[c][3.9em]{11cm}{$  115 s_{(9)} + 151 s_{(5, 4)} + 244 s_{(6, 3)} + 251 s_{(7, 2)} + 201 s_{(8, 1)} +
 11 s_{(3, 3, 3)} + 71 s_{(4, 3, 2)} + 74 s_{(4, 4, 1)} + 69 s_{(5, 2, 2)} +
 164 s_{(5, 3, 1)} + 169 s_{(6, 2, 1)} + 102 s_{(7, 1, 1)} + s_{(3, 2, 2, 2)} +
 7 s_{(3, 3, 2, 1)} + 13 s_{(4, 2, 2, 1)} + 22 s_{(4, 3, 1, 1)} +
 25 s_{(5, 2, 1, 1)} + 13 s_{(6, 1, 1, 1)}  $
}\\
\cline{2-3}
&  4 &
\parbox[c][5.1em]{11cm}{$192 s_{(9)} + 317 s_{(5, 4)} + 497 s_{(6, 3)} + 494 s_{(7, 2)} + 370 s_{(8, 1)} +
 28 s_{(3, 3, 3)} + 181 s_{(4, 3, 2)} + 177 s_{(4, 4, 1)} + 173 s_{(5, 2, 2)} +
 391 s_{(5, 3, 1)} + 390 s_{(6, 2, 1)} + 222 s_{(7, 1, 1)} + 4 s_{(3, 2, 2, 2)} +
 25 s_{(3, 3, 2, 1)} + 44 s_{(4, 2, 2, 1)} + 73 s_{(4, 3, 1, 1)} +
 82 s_{(5, 2, 1, 1)} + 42 s_{(6, 1, 1, 1)} + s_{(3, 2, 2, 1, 1)} +
 s_{(3, 3, 1, 1, 1)} + 3 s_{(4, 2, 1, 1, 1)} + s_{(5, 1, 1, 1, 1)}  $
}\\
\hline
\multirow{2}{*}{$10$} &  $1$ &
\parbox[c][1.5em]{11cm}{$9 s_{(10)} + s_{(5, 5)} + 3 s_{(6, 4)} + 5 s_{(7, 3)} + 6 s_{(8, 2)} + 8 s_{(9, 1)} $
}\\
\cline{2-3}
 &  2 &
\parbox[c][2.7em]{11cm}{$53 s_{(10)} + 19 s_{(5, 5)} + 59 s_{(6, 4)} + 80 s_{(7, 3)} + 86 s_{(8, 2)} +
 75 s_{(9, 1)} + 4 s_{(4, 3, 3)} + 8 s_{(4, 4, 2)} + 16 s_{(5, 3, 2)} +
 24 s_{(5, 4, 1)} + 14 s_{(6, 2, 2)} + 39 s_{(6, 3, 1)} + 38 s_{(7, 2, 1)} +
 25 s_{(8, 1, 1)} $
}\\
\hline
\end{tabular}
\clearpage

\begin{tabular}{|c|c|l|}
\hline
$n$ &$k$ & \parbox[c][1.5em]{11cm}{$\ch_n(A^k(\PP^1[n]))$}\\
\hline
\multirow{3}{*}{$10$}
&  3 &
\parbox[c][5.1em]{11cm}{ $175 s_{(10)} + 134 s_{(5, 5)} + 366 s_{(6, 4)} + 474 s_{(7, 3)} + 443 s_{(8, 2)} +
 331 s_{(9, 1)} + 57 s_{(4, 3, 3)} + 99 s_{(4, 4, 2)} + 204 s_{(5, 3, 2)} +
 254 s_{(5, 4, 1)} + 159 s_{(6, 2, 2)} + 381 s_{(6, 3, 1)} + 338 s_{(7, 2, 1)} +
 187 s_{(8, 1, 1)} + 4 s_{(3, 3, 2, 2)} + 7 s_{(3, 3, 3, 1)} +
 6 s_{(4, 2, 2, 2)} + 42 s_{(4, 3, 2, 1)} + 33 s_{(4, 4, 1, 1)} +
 39 s_{(5, 2, 2, 1)} + 66 s_{(5, 3, 1, 1)} + 63 s_{(6, 2, 1, 1)} +
 29 s_{(7, 1, 1, 1)} $
}\\
\cline{2-3}
&  4 &
\parbox[c][7.5em]{11cm}{ $361 s_{(10)} + 380 s_{(5, 5)} + 1032 s_{(6, 4)} + 1272 s_{(7, 3)} +
 1132 s_{(8, 2)} + 767 s_{(9, 1)} + 218 s_{(4, 3, 3)} + 369 s_{(4, 4, 2)} +
 747 s_{(5, 3, 2)} + 866 s_{(5, 4, 1)} + 561 s_{(6, 2, 2)} + 1253 s_{(6, 3, 1)} +
 1060 s_{(7, 2, 1)} + 538 s_{(8, 1, 1)} + 24 s_{(3, 3, 2, 2)} +
 41 s_{(3, 3, 3, 1)} + 39 s_{(4, 2, 2, 2)} + 228 s_{(4, 3, 2, 1)} +
 160 s_{(4, 4, 1, 1)} + 204 s_{(5, 2, 2, 1)} + 338 s_{(5, 3, 1, 1)} +
 300 s_{(6, 2, 1, 1)} + 136 s_{(7, 1, 1, 1)} + 2 s_{(3, 2, 2, 2, 1)} +
 8 s_{(3, 3, 2, 1, 1)} + 14 s_{(4, 2, 2, 1, 1)} + 19 s_{(4, 3, 1, 1, 1)} +
 20 s_{(5, 2, 1, 1, 1)} + 8 s_{(6, 1, 1, 1, 1)} $
}\\
\cline{2-3}
& 5 &
\parbox[c][7.5em]{11cm}{ $454 s_{(10)} + 542 s_{(5, 5)} + 1430 s_{(6, 4)} + 1758 s_{(7, 3)} +
 1526 s_{(8, 2)} + 1014 s_{(9, 1)} + 332 s_{(4, 3, 3)} + 550 s_{(4, 4, 2)} +
 1122 s_{(5, 3, 2)} + 1272 s_{(5, 4, 1)} + 826 s_{(6, 2, 2)} +
 1824 s_{(6, 3, 1)} + 1520 s_{(7, 2, 1)} + 756 s_{(8, 1, 1)} +
 42 s_{(3, 3, 2, 2)} + 66 s_{(3, 3, 3, 1)} + 64 s_{(4, 2, 2, 2)} +
 372 s_{(4, 3, 2, 1)} + 260 s_{(4, 4, 1, 1)} + 332 s_{(5, 2, 2, 1)} +
 538 s_{(5, 3, 1, 1)} + 478 s_{(6, 2, 1, 1)} + 210 s_{(7, 1, 1, 1)} +
 4 s_{(3, 2, 2, 2, 1)} + 16 s_{(3, 3, 2, 1, 1)} + 28 s_{(4, 2, 2, 1, 1)} +
 38 s_{(4, 3, 1, 1, 1)} + 40 s_{(5, 2, 1, 1, 1)} + 16 s_{(6, 1, 1, 1, 1)} $
}\\
\hline

\multirow{3}{*}{$11$} &  $1$ &
\parbox[c][1.5em]{11cm}{$10 s_{(11)} + 2 s_{(6, 5)} + 4 s_{(7, 4)} + 6 s_{(8, 3)} + 7 s_{(9, 2)} + 9 s_{(10, 1)} $
}\\
\cline{2-3}
 &  2 &
 \parbox[c][3.9em]{11cm}{$ 66 s_{(11)} + 50 s_{(6, 5)} + 92 s_{(7, 4)} + 115 s_{(8, 3)} + 115 s_{(9, 2)} +
 98 s_{(10, 1)} + 5 s_{(4, 4, 3)} + 8 s_{(5, 3, 3)} + 19 s_{(5, 4, 2)} +
 15 s_{(5, 5, 1)} + 28 s_{(6, 3, 2)} + 46 s_{(6, 4, 1)} + 21 s_{(7, 2, 2)} +
 59 s_{(7, 3, 1)} + 55 s_{(8, 2, 1)} + 34 s_{(9, 1, 1)} $
 }\\
\cline{2-3}
&  3 &
\parbox[c][6.3em]{11cm}{$255 s_{(11)} + 420 s_{(6, 5)} + 725 s_{(7, 4)} + 823 s_{(8, 3)} + 721 s_{(9, 2)} +
 508 s_{(10, 1)} + 96 s_{(4, 4, 3)} + 158 s_{(5, 3, 3)} + 336 s_{(5, 4, 2)} +
 228 s_{(5, 5, 1)} + 458 s_{(6, 3, 2)} + 618 s_{(6, 4, 1)} + 308 s_{(7, 2, 2)} +
 739 s_{(7, 3, 1)} + 601 s_{(8, 2, 1)} + 311 s_{(9, 1, 1)} + 5 s_{(3, 3, 3, 2)} +
 21 s_{(4, 3, 2, 2)} + 38 s_{(4, 3, 3, 1)} + 62 s_{(4, 4, 2, 1)} +
 18 s_{(5, 2, 2, 2)} + 121 s_{(5, 3, 2, 1)} + 111 s_{(5, 4, 1, 1)} +
 89 s_{(6, 2, 2, 1)} + 157 s_{(6, 3, 1, 1)} + 127 s_{(7, 2, 1, 1)} +
 56 s_{(8, 1, 1, 1)}  $
 }\\
\cline{2-3}
&  4 &
\parbox[c][10em]{11cm}{$624 s_{(11)} + 1571 s_{(6, 5)} + 2621 s_{(7, 4)} + 2811 s_{(8, 3)} +
 2288 s_{(9, 2)} + 1446 s_{(10, 1)} + 525 s_{(4, 4, 3)} + 853 s_{(5, 3, 3)} +
 1741 s_{(5, 4, 2)} + 1082 s_{(5, 5, 1)} + 2274 s_{(6, 3, 2)} +
 2808 s_{(6, 4, 1)} + 1441 s_{(7, 2, 2)} + 3209 s_{(7, 3, 1)} +
 2431 s_{(8, 2, 1)} + 1136 s_{(9, 1, 1)} + 48 s_{(3, 3, 3, 2)} +
 202 s_{(4, 3, 2, 2)} + 321 s_{(4, 3, 3, 1)} + 489 s_{(4, 4, 2, 1)} +
 166 s_{(5, 2, 2, 2)} + 960 s_{(5, 3, 2, 1)} + 809 s_{(5, 4, 1, 1)} +
 661 s_{(6, 2, 2, 1)} + 1102 s_{(6, 3, 1, 1)} + 838 s_{(7, 2, 1, 1)} +
 342 s_{(8, 1, 1, 1)} + s_{(3, 2, 2, 2, 2)} + 13 s_{(3, 3, 2, 2, 1)} +
 16 s_{(3, 3, 3, 1, 1)} + 20 s_{(4, 2, 2, 2, 1)} + 81 s_{(4, 3, 2, 1, 1)} +
 49 s_{(4, 4, 1, 1, 1)} + 70 s_{(5, 2, 2, 1, 1)} + 98 s_{(5, 3, 1, 1, 1)} +
 80 s_{(6, 2, 1, 1, 1)} + 30 s_{(7, 1, 1, 1, 1)}  $
 }\\
 \cline{2-3}
&  5 &
\parbox[c][11.2em]{11cm}{$ 971 s_{(11)} + 2958 s_{(6, 5)} + 4857 s_{(7, 4)} + 5085 s_{(8, 3)} +
 4000 s_{(9, 2)} + 2411 s_{(10, 1)} + 1146 s_{(4, 4, 3)} + 1857 s_{(5, 3, 3)} +
 3721 s_{(5, 4, 2)} + 2234 s_{(5, 5, 1)} + 4779 s_{(6, 3, 2)} +
 5698 s_{(6, 4, 1)} + 2949 s_{(7, 2, 2)} + 6388 s_{(7, 3, 1)} +
 4692 s_{(8, 2, 1)} + 2100 s_{(9, 1, 1)} + 127 s_{(3, 3, 3, 2)} +
 525 s_{(4, 3, 2, 2)} + 800 s_{(4, 3, 3, 1)} + 1196 s_{(4, 4, 2, 1)} +
 426 s_{(5, 2, 2, 2)} + 2343 s_{(5, 3, 2, 1)} + 1916 s_{(5, 4, 1, 1)} +
 1578 s_{(6, 2, 2, 1)} + 2569 s_{(6, 3, 1, 1)} + 1908 s_{(7, 2, 1, 1)} +
 752 s_{(8, 1, 1, 1)} + 4 s_{(3, 2, 2, 2, 2)} + 46 s_{(3, 3, 2, 2, 1)} +
 54 s_{(3, 3, 3, 1, 1)} + 70 s_{(4, 2, 2, 2, 1)} + 274 s_{(4, 3, 2, 1, 1)} +
 160 s_{(4, 4, 1, 1, 1)} + 233 s_{(5, 2, 2, 1, 1)} + 324 s_{(5, 3, 1, 1, 1)} +
 260 s_{(6, 2, 1, 1, 1)} + 96 s_{(7, 1, 1, 1, 1)} + s_{(3, 2, 2, 2, 1, 1)} +
 3 s_{(3, 3, 2, 1, 1, 1)} + 5 s_{(4, 2, 2, 1, 1, 1)} +
 6 s_{(4, 3, 1, 1, 1, 1)} + 6 s_{(5, 2, 1, 1, 1, 1)} + 2 s_{(6, 1, 1, 1, 1, 1)} $
 }\\
\hline
\end{tabular}
}
\end{center}

\bibliographystyle{amsplain}

\end{document}